\numberwithin{equation}{section}
\theoremstyle{plain}
\newtheorem{thm}{Theorem}[section]
\theoremstyle{remark}
\numberwithin{equation}{section}
\newtheorem{lemma}[thm]{Lemma}
\newtheorem{remark}[thm]{Remark}
\newtheorem{definition}[thm]{Definition}
\newcommand{\bss}{\boldsymbol}
\newcommand{\bs}{{\bf S}}
\newcommand{\br}{{\bf R}}
\newcommand{\bi}{{\bf I}}
\newcommand{\bx}{{\bf X}}
\newcommand{\by}{{\bf Y}}
\newcommand{\bh}{{\bf H}}
\newcommand{\bt}{{\bf T}}
\newcommand{\cC}{{\mathcal C}}
\newcommand{\ba}{{\bf A}}
\newcommand{\bb}{{\bf B}}
\newcommand{\bw}{{\bf W}}
\newcommand{\bm}{{\bf M}}
\newcommand{\bS}{{\boldsymbol\Sigma}}
\newcommand{\bd}{{\bf D}}
\newcommand{\be}{{\bf e}}
\newcommand{\bbx}{{\bf x}}
\newcommand{\bq}{{\bf q}}
\newcommand{\E}{{\rm E}}
\newcommand{\tr}{{\rm tr}}
\newcommand{\mb}{\mathbf}
\newcommand{\re}{{\rm E}}
\newcommand{\rtr}{{\rm tr}}
\renewcommand{\(}{\left(}
\renewcommand{\)}{\right)}
\begin{document}

\begin{frontmatter}
\title{Central limit theorem for linear spectral statistics of general separable sample covariance matrices with applications}
\runtitle{CLT of LSS for separable sample covariance matrices}

\begin{aug}
\author{\snm{Huiqin Li}\thanksref{a}\ead[label=e1]{huiqinli@jsnu.edu.cn}}
\author{\snm{Yanqing Yin}\thanksref{b}\ead[label=e2]{yinyq@jsnu.edu.cn}}
\author{\snm{Shurong Zheng}\thanksref{c}\ead[label=e3]{zhengsr@nenu.edu.cn}}

\thanks{{Huiqin Li was partially supported by NSFC 11701234 and the Priority Academic Program Development of Jiangsu Higher Education Institutions. Yanqing Yin was partially supported by a project of a Grant NSFC11801234, a
Project of Natural Science Foundation of Jiangsu Province (BK20181000), a project of The Natural Science
Foundation of the Jiangsu Higher Education Institutions of China (18KJB110008) and the Priority Academic
Program Development of Jiangsu Higher Education Institutions. Shurong Zheng is supported by NSFC 11522105 and 11690012.}}

\address[a]{{School of Mathematics and Statistics, Jiangsu Normal University, Xuzhou, China, 221116.}\printead{e1}}
\address[b]{{School of Mathematics and Statistics, Jiangsu Normal University, Xuzhou, China, 221116.}\printead{e2}}
\address[c]{{KLASMOE, Northeast Normal University, China, 130024.}\printead{e3}}
%\printead{e1}

\runauthor{Huiqin Li, Yanqing Yin and Shurong Zheng}

%\affiliation{Some University and Another University}

\end{aug}

\begin{abstract}
In this paper, we consider the separable covariance model, which plays an important role in wireless communications and spatio-temporal statistics and describes a process where the time correlation does not depend on the spatial location and the spatial correlation does not depend on time. We established a central limit theorem for linear spectral statistics of general separable sample covariance matrices in the form of $\bs_n=\frac1n\bt_{1n}\bx_n\bt_{2n}\bx_n^*\bt_{1n}^*$ where $\bx_n=(x_{jk})$ is of $m_1\times m_2$ dimension, the entries $\{x_{jk}, j=1,...,m_1, k=1,...,m_2\}$ are independent and identically distributed complex variables with zero means and unit variances, $\bt_{1n}$ is a $p\times m_1 $ complex matrix and $\bt_{2n}$ is an $m_2\times m_2$ Hermitian matrix. We then apply this general central limit theorem to the problem of testing white noise in time series.
\end{abstract}

\begin{keyword}
\kwd{central limit theorem, linear spectral statistics, separable covariance matrices, white noise test.}
\end{keyword}

\end{frontmatter}

\section{Introduction}
\subsection{Background and Motivation}
 Covariance matrices play an important role in modern multivariate analysis \citep{Anderson1983An}. In the framework of the classical statistical theory, suppose that $\mb y_1,\mb y_2,\cdots,\mb y_n$ are the samples with the sample size $n$ drawn from a centered $p$ dimensional population $\mathbb{Y}\in \mathbb{R}^n$. Then if $n$ tends to infinity while the dimension $p$ is fixed, the so-called sample covariance matrix, defined as $$\mb S_n=n^{-1}\sum_{i=1}^n\mb y_i\mb y_i^*,$$ is a good estimator of the population covariance matrix $\Sigma=\E\mathbb{Y}\mathbb{Y}^*$ where $*$ denotes the transpose and conjugate. However, in the biological and genetic study, millions of genes are measured for individuals where the number of individuals is small compared with the number of genes \citep{Patterson2006Population}. That is to say, in many situations in modern statistics, we need to deal with data sets where the dimension $p$ is comparable or even large compared with the sample size $n$. In this setting, we face the ``curse of dimensionality" \citep{donoho2000high}, which drives the drastic changes for modern statistical theory and promotes the development of high-dimensional statistical inference \citep{meinshausen2006high,B2014High,Goia2016An}. The following fact gives us a look into this phenomenon. By the strong law of large number, for any $1\leq j, k\leq p$, the $(j, k)$ entry $s_{j,k,n}$ of $\mb S_n$ is a consistent estimator of the corresponding element $\sigma_{j,k}$ being the $(j,k)$ entry of $\mb \Sigma$. Then applying the eigenvalue perturbation theorem, we know that for any $1\leq j\leq p$, the distance between the $j$-th largest eigenvalues of $\mb S_n$ and $\mb \Sigma$ is $o(p)$, which tends to 0 as $n\to\infty$ when $p$ is of constant order. However, when $p$ is of the same order with $n$ or of a larger order than $n$, the bounds of the distances between the eigenvalues of $\mb S_n$ and $\mb \Sigma$ will blow up. Then, $\Sigma$ cannot be estimated through $\mb S_n$ directly \citep{Fan2008High,Chen2013Covariance,Cai2010OPTIMAL}.

Although $\mb S_n$ is no longer a good estimator for the population covariance matrix in high dimensional framework, some properties of $\Sigma$ can still be obtained through the eigenvalue statistics of $\mb S_n$, such as in the standard technique of multivariate statistics, principal components analysis
(PCA) \citep{Johnstone2001On}. This leads to the high-dimensional statistical inference and the random matrix theory, which mainly focuses on the properties of the eigenvalue and eigenvectors of random matrices. In the random matrix theory, the sample covariance type matrices are one of the different types of random matrices that have been investigated by many authors. We firstly introduce the following definitions. Let $\ba$ be any $n \times n$ square matrix having real eigenvalues and denote its eigenvalues  by ${\lambda_j}, j = 1,2, \cdots, n$. Then the $\mb {Empirical\ Spectral\ Distribution} $ (ESD) of $\ba$ is defined by
$$F^{\ba}\left(x\right) =\frac{1}{n}\sum\limits_{j = 1}^n {I\left({\lambda _j} \le x\right)},$$
where $I_{(\cdot)}$ is the indicator function and the Stieltjes transform of ${F^{\mathbf A}}\left(x\right)$ is given by
$${ m}_{F^{\ba}}\left(z\right)=\int_{-\infty}^{+\infty}(x-z)^{-1}d{F^{\mathbf A}}\left(x\right),$$
where $z=u+ iv\in\mathbb{C}^+$.

In this paper, we consider a more general covariance matrix, whose spectrum properties, to our best of knowledge, have not been considered before. To be specific, we consider the general separable covariance matrices $${\mb T_{1n}\mb X_n\mb T_{2n}\mb X_n^*\mb T_{1n}^*}/{n},$$ where $\mb X_n$ is an $m_1\times m_2$ random matrix whose entries are i.i.d. with zero means and unit variances while $\mb T_{1n}$ a $p\times m_1$ matrix and $\mb T_{2n}$ an $m_2\times m_2$ Hermitian matrix. In fact, the sample covariance matrix has applications in many fields such as wireless communications \citep{Verdu2002Spectral} and spatio-temporal statistics \citep{Li2008Testing,Mitchell2003Spatio}. Indeed, if $\mb T_{2n}=\mb T_{3n}\mb T_{3n}^*$ with $\mb T_{3n}$ being of $m_2\times n$ dimension, then the joint covariance of $\mb Y_n=\mb T_{1n}\mb X_n\mb T_{3n}$, viewed as a $pn\times 1$ data vector, is given by $$(\mb T_{1n}\mb T_{1n}^*)\otimes(\mb T_{3n}^*\mb T_{3n}).$$  Thus $\mb Y_n$ is a data matrix whose rows correspond to indices of spatial locations and columns correspond to indices of point in time. In particular, when the entries of $\mb X_n$ are Gaussian, the joint distribution of $\mb Y_n$ is $N_{pn}\(0,\(\mb T_{1n}\mb T_{1n}^*\)\otimes\(\mb T_{3n}^*\mb T_{3n}\)\)$. Note that the separable model describes a process where the time correlation does not depend on the spatial location and the spatial correlation does not depend on time, i.e. there is no space-time interaction. The introduced general separable covariance matrix model covers many covariance type matrices that have been well studied in random matrix theory as special cases.

\subsection{Some Primary Results}
Under the condition that $m_1=p, m_2=n$, $\mb T_{1n}=\mb I_p$ and $\mb T_{2n}=\mb I_n$ where $\mb I_p$ and $\mb I_n$ are the $p\times p$ and $n\times n$ identity matrix, our models reduce to the well studied ordinary sample covariance matrices. On the framework of high-dimensional setting, i.e., $p/n\to c\in(0,\infty)$ as $n\to \infty$, the ESDs of this kind of matrices converge to the famous M-P law \citep{marchenko1967distribution} almost surely as $n\to \infty$. The strong convergence of the extreme eigenvalues of ordinary sample covariance matrices was considered in \citep{geman1980limit, bai1988necessary, BaiYin1993, Tikhomirov2015The}. Suppose that the entries of $\mb X_n$ have finite fourth moment, then almost surely, the largest eigenvalue (the spectrum norm) of $\mb S_n=\mb X_n\mb X_n^*/{n}$, denoted as $\lambda_{max}(\mb S_n)$, tends to $(1+\sqrt{c})^2$ (the right edge of the support of standard M-P law) while the smallest non-zero eigenvaleu $\lambda_{min}(\mb S_n)$ tends to $(1-\sqrt{c})^2$ (the left edge of the support of standard M-P law). It is also known that the finite fourth moment is a necessary condition for the strong convergence of $\lambda_{max}(\mb S_n)$ while surprisingly, the existence of variance is enough for the almost sure convergence of $\lambda_{min}(\mb S_n)$. The fluctuation of the extreme eigenvalues was considered in \citep{tracy2002distribution,Johnstone2001On} and it was proved that the standardized largest eigenvalue follows the famous T-W law as $n\to\infty$. It is worth to note that the strong convergence and fluctuations of extreme eigenvalues are two independent results and one can not deduce the strong convergence result from T-W law.

When $m_1=p, m_2=n$, $\mb T_{2n}=\mb I_n$ while $\mb T_{1n}$ is non-negative definite with its limiting spectral distribution exists, then M-P law still valid. This extensive model relaxes the condition of uncorrelation between entries of population $\mathbb{Y}$. At this stage, if (1): $p/n\to c\in(0,\infty),$ (2):$F^{\mb T_{1n}^2}\xrightarrow{D}H$, where $F^{\mb T_{1n}^2}$ is the ESD of ${\mb T_{1n}^2}$, $H$ is a c.d.f and (3): $\mb T_{1n}$ is bounded in spectral norm, then almost surely, the ESD $F^{\bs_n}$ of $\mb S_n=\frac{1}{n}\mb T_{1n}\mb X_n\mb X_n^*\mb T_{1n}$, tends weakly to a nonrandom p.d.f. $F$ as $n\to \infty$. And for each $z\in\mathbb{C}^+$, $m(z)=m_F(z)$ is a solution to the equation
\begin{align}\label{al3}
m(z)=\int\frac1{t(1-c-czm(z))-z}dH(t),
\end{align}
which is unique in the set $\left\{m(z)\in\mathbb{C}^+: -(1-c)/z+cm(z)\in\mathbb{C}^+\right\}$. Notice that $$\mb S_n=n^{-1}\sum_{i=1}^n \mb T_{1n}\mb x_i\mb x_i^*\mb T_{1n},$$ where $\mb x_i$ is the i-th column of $\mb X_n$. $\mb S_n$ can be viewed as the sample covariance matrix of the samples $\mb y_i=\mb T_{1n}\mb x_i, i=1,...,n$ drawn from the $p$ dimensional population $\mathbb{Y}=\mb T_{1n}\mathbb{X}$ where $\mathbb{X}$ is a random vector with standard i.i.d entries (mean 0 and variance 1). This ``linear" model, combined with conditions (1)-(3) ensures the convergence of ESD of $n^{-1}\sum_{i=1}^n \mb y_i\mb y_i^*$ to M-P distribution. Then the fluctuations of linear spectral statistics (LSS) of sample covariance matrices $\mb S_n$ was considered in \cite{bai2004clt}. For more results concerning the CLT of LSS of random matrices, we refer the readers to \citep{AndersonZ06C,LytovaP09C,LytovaP09Ca,ZhengB15S,ZhengB17C} and reference therein.

The case that $m_2=n$, $\mb T_{2n}=\mb I_n$ and $m_1>p$ can be arbitrary, has been considered by some authors recently. The LSD and CLT of LSS was established in \cite{zheng2017clt} while \cite{yin2018no} shows that no eigenvalues outside the limiting support for large sample size.
As for the separable covariance matrices, i.e., $m_1=p, m_2=n$ and $\mb T_{1n}$ being non-negative definite, \cite{zhang} firstly obtained the LSD of $\mb S_n$ under some conditions. Then \cite{paul2009no} proved the no eigenvalue outside result by assuming that $\mb T_{2n}$ is diagonal with nonnegative entries. Recently, \cite{liclt} established the CLT for LSS of $\mb S_n$ under the condition that the fourth moments of the variables in $\mb X_n$ equals 3.

\subsection{Model Definition and Main Results} We consider a more general model which could be useful in many statistical problems. The general separable sample covariance matrices is defined as follows.
\begin{definition}\label{def}
The sample covariance matrices $\bs_n=\frac1n\bt_{1n}\bx_n\bt_{2n}\bx_n^*\bt_{1n}^*$ is defined as the general separated sample covariance matrices if the following conditions are satisfied:
\begin{itemize}
\item[(a)] $\bx_n=(x_{jk})$ is of $m_1\times m_2$ dimension where $\{x_{jk},j=1,2,\cdots,m_1,k=1,2,\cdots,m_2\}$ are independent and identically distributed complex variables with mean zero and variance 1;
\item[(b)] $\bt_{1n}$ is a $p\times m_1 $ nonrandom complex matrix and $\bt_{2n}$ is a nonrandom $m_2\times m_2$ Hermitian matrix;
\item[(c)] With probability $1$, as $n\to\infty$, the empirical spectral distributions of $\boldsymbol\Sigma_1=\bt_{1n}\bt_{1n}^*$ and $\bt_{2n}$, denoted by $H_{1n}$ and $H_{2n}$, converge weakly to two probability functions $H_1$ and $H_2$, respectively;
\item[(d)] $c_n=p/n\to c\in(0,\infty)$ as $n\to\infty$ and $m_1\geq p, m_2\geq n$;
%\item[(e)] $\bx_n$, $\bt_{1n}$, $\bt_{2n}$ are independent.
\end{itemize}
\end{definition}

Let us investigate the LSD of $\bs_n$ first.  It is well known that there exists unitary matrices $p\times p$ dimensional ${\bf U}_1$, $m_2\times m_2$ dimensional ${\bf U}_2$, $m_1\times m_1$ dimensional ${\bf V}_1$ and diagonal matrices $p\times p$ dimensional $\Lambda_1$ and $n\times n$ dimensional $\Lambda_2$ such that
  $$\bt_{1n}={\bf U}_1\left(\Lambda_1, {\bf 0}_{p\times(m_1-p)}\right){\bf V}_1^*,~\bt_{2n}={\bf U}_2{\rm diag}\left(\Lambda_2,{\bf 0}_{(m_2-n)\times(m_2-n)}\right){\bf U}_2^*.$$
Let
\begin{align*}
  \widetilde\bx_n\triangleq{\bf V}_1^*\bx_n{\bf U}_2=\begin{pmatrix}
  \widetilde\bx_{11}&\widetilde\bx_{12}\\
  \widetilde\bx_{21}&\widetilde\bx_{22}
  \end{pmatrix}.
\end{align*}
If we suppose that the entries of $\bx_n$ are standard complex normal random variables, then $\bx_n$ has the same distribution as $\widetilde\bx_n$. Note that the sample covariance matrix
$\bs_n=\frac1n{\bf U}_1\Lambda_1\widetilde\bx_{11}\Lambda_2\widetilde\bx_{11}^*\Lambda_1{\bf U}_1^*$
has the same eigenvalues with
$\widetilde\bs_n=\frac1n\Lambda_1\widetilde\bx_{11}\Lambda_2\widetilde\bx_{11}^*\Lambda_1.$
Then from \cite{zhang}, with probability $1$, as $n\to\infty$, the empirical spectral distribution function of $\bs_n$ converges weakly to a non-random probability distribution function $ F$ for which if $H_1=1_{[0,\infty)}$ or $H_{2}=1_{[0,\infty)}$, $ F=1_{[0,\infty)}$; otherwise if for each $z\in \mathbb{C}^+$,
\begin{align}\label{gal1}
\begin{cases}
m(z)=-z^{-1}(1-c^{-1})-z^{-1}c^{-1}\int\frac1{1+q_1(z)y}dH_2(y),\\
m(z)=-z^{-1}\int\frac1{1+q_2(z)x}dH_1(x),\\
m(z)=-z^{-1}-c^{-1}q_1(z)q_2(z),
\end{cases}
\end{align}
is viewed as a system of equations for the complex vector $\left(m(z),q_1(z),q_2(z)\right)$, then the Stieltjes transform of $F$, denoted by $m_F(z)$, together with the two other functions, denoted by $g_1(z)$ and $g_2(z)$, both of which are analytic on $\mathbb{C}^+$, will satisfy that $\left(m_F(z),g_1(z),g_2(z)\right)$ is the unique solution to (\ref{gal1}) in the set
$$U=\left\{\left(m(z),q_1(z),q_2(z)\right):\Im m(z)>0,\Im(zq_1(z))>0,\Im q_2(z)>0 \right\}.$$
On the other hand, if  \ $\bt_{1n}$ is real, $m_1=p$ and $\bt_{2n}$ is diagonal, then by \cite{paul2009no}, with probability 1, $F^{\bs_n}$ converges weakly to a probability distribution function $F$ whose Stieltjes transform $m(z)$, for $z\in\mathbb{C}^+$, is given by
\begin{align*}
  m(z)=\int\frac1{x\int\frac{y}{1+cye}dH_2(y)-z}dH_1(x)
\end{align*}
where $e=e(z)$ is the unique solution in $\mathbb{C}^+$ of the equation
\begin{align*}
  e=\int\frac{x}{x\int\frac{y}{1+cye}dH_2(y)-z}dH_1(x).
\end{align*}
Let $g_1(z)=ce(z)$, and $g_2(z)=-z^{-1}\int\frac{y}{1+cye(z)}dH_2(y).$
Then $\left(m(z),g_1(z),g_2(z)\right)$ also satisfies the equations (\ref{gal1}). Furthermore, we have
\begin{align}\label{cl2}
\begin{cases}
  zg_1(z)=&-c\int\frac x{1+g_2(z)x}dH_1(x)\\
  zg_2(z)=&-\int\frac y{1+g_1(z)y}dH_2(y)
  \end{cases}.
\end{align}

If we let $F^{c,H_{1},H_{2}}$ denote $F$, then $F^{c_n,H_{1n},H_{2n}}$ is obtained from $F^{c,H_{1},H_{2}}$ with ${c,H_{1},H_{2}}$ replaced by $c_n,H_{1n},H_{2n}$ respectively. Define
$$
G_n(x)=p\Big(F^{\bs_n}(x)-F^{c_n,H_{1n},H_{2n}}(x)\Big).
$$
The main result of the present paper, which establishes the CLT of LSS of general separable sample covariance matrices, is stated in the following theorem.

\begin{thm}\label{th1}
Denote by $s_1\ge\cdots\ge s_n$ ($s_1> 0$) the eigenvalues of $\bt_{2n}$. Let $f_1,\cdots,f_{\kappa}$
be functions on $\mathbb{R}$ analytic on an open interval containing
\begin{align}\label{int}
\Bigg[&\liminf_ns_n\left(\lambda_{\min}\({\bS_1}\)I_{(0,1)}(c)\left(1-\sqrt c\right)^2I(s_n\ge0)+\lambda_{\max}\({\bS_1}\)\left(1+\sqrt c\right)^2I(s_n<0)\right),\notag\\
&\limsup_ns_1\left(\lambda_{\max}\({\bS_1}\)\left(1+\sqrt c\right)^2\right)\Bigg].
\end{align}
 Suppose that $\bt_{1n}$ and $\bt_{2n}$ are nonrandom matrices, and their spectral norms are both bounded in n. What is more, we assume that ${\rm Rank}\(\bt_{2n}\)=O(n)$. Let $\alpha_x=|\re x_{jk}^2|^2$, $\kappa_x=\re|x_{jk}|^4-|\re x_{jk}^2|^2-2$.
Then
\begin{itemize}
\item[(i)] If $\bx_n=(x_{jk})$, $\bt_{1n}$, $\bt_{2n}$ are real and $\re{x_{jk}^4}=3,j=1,\cdots,m_1,k=1,\cdots,m_2$, there exists $\delta>0$ such that $\sup_{j,k}\re |x_{jk}|^{6+\delta}\le M<\infty$, then
\begin{align}\label{a3}
  \left(\int f_1(x)dG_n(x),\cdots,\int f_{\kappa}(x)dG_n(x)\right)
\end{align}
converges weakly to a Gaussian vector $\left(X_{f_1},\cdots,X_{f_{\kappa}}\right)$ with mean
\begin{align}\label{ee}
\re X_{f}=&\frac1{2\pi i}\oint_{\cC} \frac{f(z)}{1-cz^{-2}d_3(z)d_4(z)}\left(\frac{\alpha_x}{1-{{\alpha_x}c}{z^{-2}}d_{3}(z)d_{4}(z)}+{\kappa_x}\right)\\
&\times\left[\frac{c d_3(z)d_{4}(z)}{z^3}-\frac{c^2 d_{3}^2(z)d_{4}^2(z)}{z^5}+\frac{c d_{5}(z)}{z^4}+\frac{c^2d_{6}(z)}{z^4}\right]dz\notag
\end{align}
and covariance function
\begin{align}\label{cc}
{\rm Cov}\bigg( X_{f},X_g\bigg)=&-\frac1{4\pi^2}\oint_{\cC_1}\oint_{\cC_2}f(z_1)g(z_2)\frac{\partial^2}{\partial z_2\partial z_1}\Bigg\{\int_0^{d(z_1,z_2)}\frac{1}{1-z}dz\notag\\
&+\int_0^{\alpha_xd(z_1,z_2)}\frac{1}{1-z}dz+\kappa_xd(z_1,z_2)\Bigg\}dz_1dz_2
\end{align}
where $f,g\in\left\{f_1,\cdots,f_{\kappa}\right\}$. Here
\begin{align*}
d_3(z)=\int\frac{x^2}{\left({1+xg_{2}(z)}\right)^2}dH_{1}(x),\quad d_4(z)=\int\frac{y^2}
{\left(g_{1}(z)y+1\right)^2}dH_{2}(y),
\end{align*}
\begin{align*}
  d_5(z)=d_4(z)\int \frac{x^3}{\left(1+g_{2}(z)x\right)^3}dH_{1}(x){\int\frac{y}{(1+g_{1}(z)y)^2}dH_{2}(y)},
\end{align*}
\begin{align*}
  d_6(z)=d_3(z)\int\frac{x}
{\left({1+g_{2}(z)x}\right)^2}dH_{1}(x) \int\frac{y^3}{(1+g_{1}(z)y)^3}dH_{2}(y),
\end{align*}

and
\begin{align*}
d(z_1,z_2)=&\frac1{z_1z_2}\frac{z_1g_1(z_1)-z_2g_1(z_2)}
{g_2(z_1)-g_2(z_2)}\frac{z_1g_2(z_1)-z_2g_2(z_2)}{g_1(z_1)-g_1(z_2)}.
\end{align*} The contours in (\ref{ee}) and (\ref{cc}) (two contours in (\ref{cc}), which we may assume to be nonoverlapping) are closed and are taken in the positive direction in the complex plane, each enclosing the support of $F^{c,H_1,H_2}$.
\item[(ii)] If \ $\bx_n=(x_{jk})=(u_{jk}+i v_{jk}),u_{jk},v_{jk}\in\mathbb{R}$, is a complex matrix with $\re(u_{jk})=\re(v_{jk})=0$, $\re(u_{jk}^2)=\re(v_{jk}^2)=\frac12$, $\re(u_{jk}^4)=\re(v_{jk}^4)=\frac34$,  $u_{jk}$ and $v_{jk}$ are independent, there exists $\delta>0$ such that $\sup_{j,k}\re |x_{jk}|^{6+\delta}\le M<\infty$, then (\ref{a3})-(\ref{cc}) also hold.
\item[(iii)] If $\bt_{1n}$ is real and $\bt_{1n}^*\bt_{1n}, \bt_{2n}$ are diagonal, then (\ref{a3})-(\ref{cc}) still hold.
\end{itemize}
\end{thm}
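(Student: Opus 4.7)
The overall strategy follows the Stieltjes-transform / martingale framework introduced by \cite{bai2004clt}, adapted to the three-way sandwich $\bs_n=\frac1n\bt_{1n}\bx_n\bt_{2n}\bx_n^*\bt_{1n}^*$ and to the auxiliary variables $g_1,g_2$ that accompany $m_F$ in the defining system (\ref{gal1}).

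First I would carry out the usual truncation, centering and rescaling of the entries of $\bx_n$ to replace them by bounded variables of zero mean and unit variance without affecting the limiting distribution; the $(6+\delta)$-th moment bound is what justifies this. The spectral decompositions of $\bt_{1n}\bt_{1n}^*$ and $\bt_{2n}$ can be absorbed into $\bx_n$ to reduce to the case where both matrices are diagonal (this holds directly in (iii); in (i) and (ii) the second- and fourth-moment structure of $\bx_n$ is preserved under the relevant orthogonal/unitary rotations, which is all the subsequent variance and cumulant computations require). Writing $m_n(z)=m_{F^{\bs_n}}(z)$ and $m_n^0(z)$ for the solution of (\ref{gal1}) with $(c,H_1,H_2)$ replaced by $(c_n,H_{1n},H_{2n})$, Cauchy's formula gives
\begin{align*}
\int f(x)\,dG_n(x)=-\frac{1}{2\pi i}\oint_{\cC}f(z)M_n(z)\,dz,\qquad M_n(z)=p\bigl[m_n(z)-m_n^0(z)\bigr],
\end{align*}
for a contour $\cC$ enclosing the interval (\ref{int}). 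The theorem thus reduces to showing that $M_n$ converges as a process on $\cC$ to a Gaussian field with the mean and covariance appearing in (\ref{ee})--(\ref{cc}).

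I would then split $M_n=M_n^{(1)}+M_n^{(2)}$ into the centered random part $M_n^{(1)}=p[m_n-\E m_n]$ and the deterministic bias $M_n^{(2)}=p[\E m_n-m_n^0]$. For $M_n^{(1)}$ I use the column-by-column martingale decomposition
\begin{align*}
M_n^{(1)}(z)=\sum_{k=1}^{m_2}\gamma_k(z),\qquad \gamma_k(z)=(\E_k-\E_{k-1})\tr(\bs_n-zI)^{-1},
\end{align*}
where $\E_k$ conditions on the first $k$ columns of $\bx_n$, combined with a Sherman--Morrison rank-one identity that removes column $k$ of $\bx_n$ weighted by the $k$-th eigenvalue $s_k$ of $\bt_{2n}$. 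Tightness and the Lyapunov condition follow from Burkholder's inequality and Bai--Silverstein-type quadratic-form concentration bounds applied to the resolvents of $\bs_n$ on $\cC$. The conditional second moments $\sum_k\E_{k-1}[\gamma_k(z_1)\gamma_k(z_2)]$ reduce, via bilinear-form concentration, to deterministic integrals in $g_1(z_j),g_2(z_j),H_1,H_2$; the identity $\partial_{z_1}\partial_{z_2}[-\log(1-d(z_1,z_2))]=\sum_{k\ge 1}k^{-1}\partial_{z_1}\partial_{z_2}d(z_1,z_2)^k$ then reorganises those integrals into the form (\ref{cc}), with the $\alpha_x$-term arising from the contribution of $\E x_{jk}^2$ and the $\kappa_x$-term from the fourth cumulant.

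For the bias $M_n^{(2)}$ I would start from the identity $\bs_n(\bs_n-zI)^{-1}=I+z(\bs_n-zI)^{-1}$, take expectations, and expand via leave-one-column-out decompositions of $\bx_n$. This produces a perturbed version of the system (\ref{gal1}) for $(\E m_n,\E g_{1,n},\E g_{2,n})$ with $O(p^{-1})$ correction terms whose leading coefficients are combinations of $\alpha_x$, $\kappa_x$ and integrals against $H_1,H_2$; these coefficients, packaged as the integrals $d_3,d_4,d_5,d_6$, arise naturally from implicit differentiation of (\ref{gal1}) in $z$. Inverting the resulting $3\times 3$ Jacobian along $\cC$ and multiplying by $p$ yields the closed-form mean (\ref{ee}). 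The main obstacle is precisely this last step: unlike the Bai--Silverstein setting with a single scalar fixed-point equation, here one has three coupled equations, so one must track perturbations of $m,g_1,g_2$ simultaneously and verify that the raw expressions collapse into the compact form (\ref{ee}). A secondary difficulty is the generality of $\bt_{2n}$, which is Hermitian but possibly indefinite (and only assumed rank $O(n)$); since one cannot in general write $\bt_{2n}=\bt_{3n}\bt_{3n}^*$, the martingale and resolvent bookkeeping must be performed directly through the spectral decomposition of $\bt_{2n}$, carrying the sign of each $s_k$ through every rank-one update.
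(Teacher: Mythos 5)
There is a genuine gap at the very first step of your reduction. You propose to absorb the spectral decompositions of $\bt_{1n}\bt_{1n}^*$ and $\bt_{2n}$ into $\bx_n$ and work with diagonal matrices in all three cases, claiming that in (i) and (ii) ``the second- and fourth-moment structure of $\bx_n$ is preserved under the relevant orthogonal/unitary rotations, which is all the subsequent variance and cumulant computations require.'' This is false for non-Gaussian entries: after replacing $\bx_n$ by ${\bf V}_1^*\bx_n{\bf U}_2$ the entries are in general neither independent nor identically distributed, and independence (of the columns for the martingale decomposition $\sum_k(\E_k-\E_{k-1})$, and of the entries within a column for the quadratic-form concentration lemmas of Bai--Silverstein type) is exactly what the entire machinery you invoke afterwards rests on. The rotation trick is legitimate only when the entries are Gaussian, which is why the paper uses it only in that case. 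Indeed, the remark following Theorem \ref{th1} points out that when $\kappa_x\neq 0$ and the diagonality in (iii) fails, the CLT itself may fail (counterexamples in \cite{ZhengB15S}); any argument that treats the rotation as harmless for general entries would ``prove'' a statement that is not true, so the gap cannot be patched by more careful bookkeeping along the same route.

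The paper's actual structure avoids this: (1) for Gaussian entries it uses unitary invariance to reduce to $\frac1n\Lambda_1\widetilde\bx_{11}\Lambda_2\widetilde\bx_{11}^*\Lambda_1$ and quotes the known CLT of \cite{liclt}; (2) for (i) and (ii) it does \emph{not} redo the resolvent computation but runs a Lindeberg-type interpolation $\bw_n(\theta)=\bx_n\sin\theta+\by_n\cos\theta$ and shows, via the Fourier representation $\widetilde f({\bf G}_n)=i\int u\widehat f(u){\bf H}_n(u)du$ and a fourth-order Taylor expansion in each entry $w_{jk}$, that $\partial_\theta Z_n(x,\theta)\to0$ uniformly; here the hypothesis $\re x_{jk}^4=3$ is what kills the third-order term, so the moment matching is doing real work that your plan has no substitute for; (3) only under (iii), where $\bt_{1n}^*\bt_{1n}$ and $\bt_{2n}$ are \emph{assumed} diagonal, does it carry out the direct martingale/resolvent analysis you sketch (decomposition $M_n=M_{n1}+M_{n2}$, contour truncation, martingale CLT, tightness, and the deterministic bias via the coupled equations for $(m,g_1,g_2)$). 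Your outline of that last computation is broadly consistent with the paper's Section on case (iii), but as a proof of (i) and (ii) the proposal does not go through: you either need the Gaussian-plus-interpolation comparison, or some other mechanism exploiting $\E x_{jk}^4=3$, and the proposal supplies neither.
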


\begin{remark}
The existence of the $(6+\delta)$th moment in $(i)$ and $(ii)$ and the condition that ${\rm Rank}\(\bt_{2n}\)=O(n)$ are combined to ensure the a.s. bound of the spectral norm of $\bs_n$ as indicated in \cite{yin2018no}. We note that for the cases where $m_1$ and $m_2$ are both of the same order as $n$, then the moment condition could be relaxed and the existence of the fourth moment is enough. When $\kappa_x\neq 0$, then if the conditions in $(iii)$ are not satisfied, the CLT of LSS may not hold. Such
counter examples with $m_1=p$ (or $m_2=n$) can be found in \cite{ZhengB15S}.
\end{remark}

\subsection{Organization of this paper and Contributions} The rest of the present paper is organized as follows. In Section \ref{ap}, an application of our main theorem to high dimensional white noise test is showed. The main theorem is proved in Section \ref{prm}. Lemmas and some technical details are postponed to Appendix.

\section{An application of the main theorem on high dimensional white noise test}\label{ap}
\subsection{Describe of the test problem}
Consider $\{\varepsilon_i\}_{i=1}^n$ be a p-dimensional linear process of the form
$$\bss\varepsilon_i=\sum_{k\geq 0}\mb B_k\mb x_{i-k},$$
where $\mb B_k$ are $p\times m$ coefficient matrices and $\mb x_t$ is a sequence of $m$ dimensional random vectors satisfy that their entries are i.i.d. with 0 means and unit variances.
% Let ${\rm Diag}(\mb A)$ denotes the diagonal matrix consisting of
%the diagonal elements of matrix $\mb A$.
Define $\mb \Sigma_{(\tau)}={\rm Cov}\(\bss\varepsilon_{i+\tau},\bss\varepsilon_{i}\)$, the so called autocovariance matrix at lag $\tau$ and  $\widehat \Sigma_{(\tau)}=\frac{1}{n}\sum_{i=1}^{n-\tau}\bss\varepsilon_{i+\tau}\bss\varepsilon_{i}^*$, the sample autocovariance matrix at lag $\tau$.
%Also define
%$$\mb R_{(\tau)}={\rm Diag}(\mb \Sigma_{(0)})^{-1/2}\mb \Sigma_{(\tau)}{\rm Diag}(\mb \Sigma_{(0)})^{-1/2}$$ and $\hat R_{(\tau)}=\frac{1}{n}\sum_{i=1}^{n-\tau}{\rm Diag}(\mb \Sigma_{(0)})^{-1/2}\varepsilon_{i+\tau}\varepsilon_{i}^*{\rm Diag}(\mb \Sigma_{(0)})^{-1/2}$, the  autocorrelation matrix and  sample autocorrelation matrix at lag $\tau$ respectively.
The goal is to test whether $\{\bss\varepsilon_i\}_{i=1}^n$ is a white noise. The hypothesis testing problem is then
$$H_0: {\rm Cov}\(\bss\varepsilon_{i+\tau},\bss\varepsilon_{i}\)=0, \quad \tau=1,\cdots,q$$
where $q$ is a prescribed integer, against the alternate
$$H_{1,\tau}: {\mbox{For given}} 1\leq \tau\leq q \ {\mbox{ we \ have \ }}{\rm Cov}\(\bss\varepsilon_{i+\tau},\bss\varepsilon_{i}\)\neq 0.$$

\subsection{The proposed test procedure}
Define $\Xi_{(\tau)}=\(\Xi_{(\tau),j,k}\)=\frac{1}{2}\(\widehat \Sigma_{(\tau)}+\widehat \Sigma_{(\tau)}^*\)$, $\mb X_n=\(\mb x_1,\cdots,\mb x_n\)$. It is easy to see that under null
\begin{align}
&\Lambda_{(\tau)}\triangleq \sum_{j,k=1}^p\Xi_{(\tau),j,k}^2=\tr\Xi_{(\tau)}\Xi_{(\tau)}^*\\\notag
=&\frac{1}{n^2}\tr\Gamma_n\mb X_n\(\left(
                                                                                           \begin{array}{cc}
                                                                                             \mb 0 & \frac12\mb I_{n-\tau} \\
                                                                                             \frac12\mb I_{\tau} & \mb 0 \\
                                                                                           \end{array}
                                                                                         \right)+\left(
                                                                                           \begin{array}{cc}
                                                                                             \mb 0 & \frac12\mb I_{\tau}\\
                                                                                            \frac12\mb I_{n-\tau} & \mb 0 \\
                                                                                           \end{array}
                                                                                         \right)\)
\mb X_n^*\Gamma_n^*\(\Gamma_n\mb X_n\(\left(
                                                                                           \begin{array}{cc}
                                                                                             \mb 0 & \frac12\mb I_{n-\tau} \\
                                                                                             \frac12\mb I_{\tau} & \mb 0 \\
                                                                                           \end{array}
                                                                                         \right)+\left(
                                                                                           \begin{array}{cc}
                                                                                             \mb 0 & \frac12\mb I_{\tau}\\
                                                                                            \frac12\mb I_{n-\tau} & \mb 0 \\
                                                                                           \end{array}
                                                                                         \right)\)
\mb X_n^*\Gamma_n^*\)^*,
\end{align}
where $\Gamma_n\Gamma_n^*=\mb \Sigma_{(0)}.$
Then our test statistics can be written as

\begin{align}
\widehat \Lambda_{(\tau)} =&\frac{1}{n^2}\tr\Gamma_n\mb X_n\(\left(
                                                                                           \begin{array}{cc}
                                                                                             \mb 0 & \frac12\mb I_{n-\tau} \\
                                                                                             \mb 0 & \mb 0 \\
                                                                                           \end{array}
                                                                                         \right)+\left(
                                                                                           \begin{array}{cc}
                                                                                             \mb 0 & \mb 0\\
                                                                                            \frac12\mb I_{n-\tau} & \mb 0 \\
                                                                                           \end{array}
                                                                                         \right)\)
\mb X_n^*\Gamma_n^*\(\Gamma_n\mb X_n\(\left(
                                                                                           \begin{array}{cc}
                                                                                             \mb 0 & \frac12\mb I_{n-\tau} \\
                                                                                             \mb 0 & \mb 0 \\
                                                                                           \end{array}
                                                                                         \right)+\left(
                                                                                           \begin{array}{cc}
                                                                                             \mb 0 & \mb 0\\
                                                                                            \frac12\mb I_{n-\tau} & \mb 0 \\
                                                                                           \end{array}
                                                                                         \right)\)
\mb X_n^*\Gamma_n^*\)^*\\\notag
&\triangleq \frac{1}{n^2}\tr \mb T_{1n}\mb X_n\mb T_{2n}\mb X_n^*\mb T_{1n}^*\(\mb T_{1n}\mb X_n\mb T_{2n}\mb X_n^*\mb T_{1n}^*\)^*.
\end{align}
We reject the null hypothesis for large $\widehat \Lambda_{(\tau)}.$
\begin{thm}\label{tha}
Assume that the conditions (a)-(e) and those in Theorem \ref{th1} are satisfied. Then under null for given $\tau$,
$$\widehat \Lambda_{(\tau)}-\(\frac{pc}{2}-\frac{\tau c^2}{2}\)\left(\int x dH_{1}(x)\right)^2\stackrel{d}{\rightarrow}{N}(\mu,\sigma^2),$$
where $\mu=\frac{{c }\left({\alpha_x}+{\kappa_x}\right)}{2}\int x^2dH_1(x),$ and $$\sigma^2=\frac{ c^2(1+\alpha_x^2)} {2}\left(\int x^2dH_1(x)\right)^2
+\frac32{c^3}(\kappa_x+2)\left(\int xdH_1(x)\right)^2\int x^2dH_1(x).$$
\end{thm}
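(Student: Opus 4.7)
The plan is to recognize $\widehat\Lambda_{(\tau)}$ as a quadratic linear spectral statistic of a general separable sample covariance matrix and then invoke Theorem~\ref{th1} with test function $f(x)=x^{2}$. Writing $\bs_n=n^{-1}\bt_{1n}\bx_n\bt_{2n}\bx_n^{*}\bt_{1n}^{*}$ with $\bt_{1n}=\Gamma_n$ and with $\bt_{2n}$ the Hermitian shift of the excerpt (value $1/2$ on the $\pm\tau$ off-diagonals, zero elsewhere), the Hermiticity of $\bt_{2n}$ gives $\bs_n=\bs_n^{*}$, so
\[
\widehat\Lambda_{(\tau)}=\tfrac{1}{n^{2}}\tr\!\bigl[\bt_{1n}\bx_n\bt_{2n}\bx_n^{*}\bt_{1n}^{*}\bigl(\bt_{1n}\bx_n\bt_{2n}\bx_n^{*}\bt_{1n}^{*}\bigr)^{*}\bigr]=\tr(\bs_n^{2})=p\!\int\! x^{2}\,dF^{\bs_n}(x).
\]
The hypotheses of Theorem~\ref{th1} hold: $\bS_1=\bt_{1n}\bt_{1n}^{*}=\boldsymbol\Sigma_{(0)}$ has LSD $H_1$ and bounded spectral norm, while $\bt_{2n}$ has spectral norm $\le 1$, rank $O(n)$, and LSD $H_2$ equal to the arcsine law on $[-1,1]$ (its spectrum is a union of $\tau$ decoupled tridiagonal blocks with $\cos(k\pi/(N+1))$ eigenvalues). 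Setting $G_n(x)=p(F^{\bs_n}(x)-F^{c_n,H_{1n},H_{2n}}(x))$,
\[
\widehat\Lambda_{(\tau)}-\Bigl(\tfrac{pc}{2}-\tfrac{\tau c^{2}}{2}\Bigr)\!\Bigl(\!\int\! x\,dH_{1}(x)\Bigr)^{\!2}=\int\! x^{2}\,dG_n(x)+R_n,
\]
where $R_n=p\!\int x^{2}\,dF^{c_n,H_{1n},H_{2n}}(x)-(\tfrac{pc}{2}-\tfrac{\tau c^{2}}{2})(\int x\,dH_1(x))^{2}$, and by Theorem~\ref{th1}(i) or (ii), $\int x^{2}\,dG_n(x)\stackrel{d}{\to}X_{x^{2}}$ is Gaussian.

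To identify the limit of $R_n$ I would expand $\re\tr(\bs_n^{2})$ by Isserlis/Wick-type pairings of the four factors of $\bx_n$. The two Gaussian contractions produce $n^{-2}(\tr\bt_{2n})^{2}\tr\bigl((\bt_{1n}^{*}\bt_{1n})^{2}\bigr)$, which vanishes because $\tr\bt_{2n}=0$, and $n^{-2}(\tr\bt_{1n}^{*}\bt_{1n})^{2}\tr(\bt_{2n}^{2})$; using $\tr(\bt_{2n}^{2})=(n-\tau)/2$ and $\tr\bt_{1n}^{*}\bt_{1n}=p\!\int\! x\,dH_{1n}(x)$, the second contraction equals $\bigl(\tfrac{pc_n}{2}-\tfrac{\tau c_n^{2}}{2}\bigr)(\int x\,dH_{1n})^{2}$, which is exactly what is absorbed by the subtracted centering. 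The $O(1)$ residual in $R_n$ then comes only from the higher-order ($\alpha_x,\kappa_x$-weighted) diagonal contractions and the $H_{1n}\to H_1$ correction, and this constant combines with $\re X_{x^{2}}$ to produce the announced $\mu$.

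The last step is to evaluate the contour integrals~\eqref{ee} and~\eqref{cc} at $f(x)=g(x)=x^{2}$. Using the implicit parameterization $z\mapsto(g_1(z),g_2(z))$ given by~\eqref{gal1} and~\eqref{cl2}, together with the arcsine moments $\int y\,dH_2=0$, $\int y^{2}\,dH_2=\tfrac12$, $\int y^{3}\,dH_2=0$, the quantities $d_3,d_4,d_5,d_6$ collapse to rational expressions in $g_1,g_2$ and in the first three moments of $H_1$. Deforming the contour in~\eqref{ee} past its singularities and taking residues then yields a polynomial in those moments which, added to $\lim R_n$, reproduces $\mu=\tfrac{c(\alpha_x+\kappa_x)}{2}\int x^{2}\,dH_1(x)$.

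The main obstacle will be the double contour integral~\eqref{cc}. After differentiating the three pieces of the integrand (the two logarithmic ones $\int_0^{d}(1-z)^{-1}dz$ and $\int_0^{\alpha_x d}(1-z)^{-1}dz$ and the linear $\kappa_x d$) with respect to $z_1$ and $z_2$ via implicit differentiation of~\eqref{cl2}, one must carry out iterated residue computations on the non-overlapping contours $\cC_1,\cC_2$. The coupled factor $d(z_1,z_2)$ makes the two integrations interact, and the key difficulty is to simplify the resulting expression so that it collapses into the two clean target pieces $\tfrac{c^{2}(1+\alpha_x^{2})}{2}(\int x^{2}\,dH_1)^{2}$ and $\tfrac{3}{2}c^{3}(\kappa_x+2)(\int x\,dH_1)^{2}\int x^{2}\,dH_1$. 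The $\kappa_x$-linear contribution reduces to a single clean residue, but the $(1+\alpha_x^{2})$ coefficient emerges only after expanding $\int_0^{\alpha_x d}(1-z)^{-1}dz=\sum_{k\ge 0}(\alpha_x d)^{k+1}/(k+1)$ and carefully combining the series with the $\alpha_x=0$ contribution, and this bookkeeping is where the bulk of the algebraic work lies.
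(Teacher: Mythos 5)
Your overall framework is the paper's: recognize $\widehat\Lambda_{(\tau)}=\tr(\bs_n^2)=p\int x^2dF^{\bs_n}$, note that $\bt_{2n}$ has arcsine limiting spectral distribution, apply Theorem \ref{th1} with $f(x)=x^2$, and evaluate \eqref{ee}--\eqref{cc} by residues using the arcsine moments. But your treatment of the centering and the mean contains a genuine error. The quantity you call $R_n=p\int x^2\,dF^{c_n,H_{1n},H_{2n}}(x)-\bigl(\tfrac{pc}{2}-\tfrac{\tau c^2}{2}\bigr)\bigl(\int x\,dH_1\bigr)^2$ is purely deterministic and depends only on $(c_n,H_{1n},H_{2n})$; it cannot be computed by Wick/Isserlis expansion of $\E\tr(\bs_n^2)$, and it carries no $\alpha_x$- or $\kappa_x$-dependence whatsoever, since $F^{c_n,H_{1n},H_{2n}}$ is determined by the equations \eqref{gal1} which involve only the first two moments of the entries. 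What your Wick expansion actually computes is $\E\widehat\Lambda_{(\tau)}$, and its $O(1)$ residual after removing the leading contraction \emph{is} (asymptotically) the CLT mean $\E X_{x^2}$ itself: in the Gaussian case the ``crossing'' pairing gives $n^{-2}\tr(\bt_{2n}^2)\tr(\bS_1^2)\to\tfrac{c}{2}\int x^2dH_1=\E X_{x^2}$. So declaring that this residual ``combines with $\E X_{x^2}$ to produce $\mu$'' double counts the mean; following your recipe literally would yield $2\mu$ rather than $\mu$.

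The correct route, which is what the paper does, is to evaluate the deterministic centering directly from the limiting equations: $p\int x^2dF^{c_n,H_{1n},H_{2n}}=-\frac{p}{2\pi i}\oint z^2 m_n^0(z)\,dz$, which after the change of variables $z\mapsto g_{1n}^0(z)$ and a residue at $g_{1n}^0=0$ equals exactly $\frac{n-\tau}{2n}\,pc_n\bigl(\int x\,dH_{1n}\bigr)^2=\bigl(\tfrac{pc_n}{2}-\tfrac{\tau c_n^2}{2}\bigr)\bigl(\int x\,dH_{1n}\bigr)^2$; hence $R_n$ vanishes (up to the usual $c_n\to c$, $H_{1n}\to H_1$ substitution) and $\mu$ is nothing but $\E X_{x^2}$, computed from \eqref{ee} using $\int y^2dH_2=\tfrac12$. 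Your variance discussion is a plan rather than an argument, but it points in the right direction; note that the paper avoids your proposed series expansion of $\int_0^{\alpha_x d}(1-z)^{-1}dz$ by changing variables to $(g_1(z_1),g_1(z_2))$ and taking iterated residues at $g_{11}=g_{12}=0$ (using $g_1=0\Leftrightarrow g_2=0$ and the arcsine moments $\int y\,dH_2=\int y^3dH_2=0$, $\int y^4dH_2=\tfrac38$), which is where the coefficients $\tfrac{c^2(1+\alpha_x^2)}{2}$ and $\tfrac32 c^3(\kappa_x+2)$ actually come from.
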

The proof of Theorem \ref{tha} is postponed to Section \ref{pth2}, which is also an example on how to determine the mean and variance functions in our main Theorem \ref{th1}.

\subsection{Simulation Study}
In this subsection, we conduct some simulations to investigate the performance our proposed test procedure. Suppose that the entries in $\mb X_n$ follow the gaussian distribution. We consider the following two models
\begin{description}
  \item[Model 1:] $\bss\varepsilon_i=\mb \Sigma_{(0)}^{1/2}\mb x_{i},$ where $\mb \Sigma_{(0)}=\(\sigma_{0,i,j}\)_{p\times p}$ with
\begin{align}
\sigma_{0,i,j}=
\begin{cases}
2+(-1)^i \quad & {\mbox{for \ }} i=j, \\
0 \quad & {\mbox{for \ }} i\neq j.
  \end{cases}
\end{align}
  \item[Model 2:] $\bss\varepsilon_i=\mb \Sigma_{(0)}^{1/2}\(\mb x_{i}+0.3\mb x_{i-1}+0.1\mb x_{i-2}\),$ where $\mb \Sigma_{(0)}$ is defined the same way as Model 1.
\end{description}
Table \ref{table1} and \ref{table2} below show the empirical size and empirical power of our test under Model 1 and 2 respectively with different pairs of $n$ and $p$.
  \begin{table}[!hbp]
  \center
    \begin{tabular}{|c c|cc||cc|cc|cc|}
       \hline
       % after \\: \hline or \cline{col1-col2} \cline{col3-col4} ...
        \hline
       % after \\: \hline or \cline{col1-col2} \cline{col3-col4} ...
        $p$ & $n$ & $q=1$ & $q=3$ &  $p$ & $n$ & $q=1$ & $q=3$ \\
       \hline
       5 &  50 &0.094 & 0.086 & 20 & 40 & 0.078 & 0.092 \\
       \hline
       25 & 250 &0.063 & 0.071 & 50 & 100 & 0.065 & 0.074 \\
       \hline
       50 & 500 & 0.062 & 0.072 & 100 & 200 & 0.054 & 0.061 \\
       \hline
       100 & 1000 &0.063 & 0.058 & 300 & 600 & 0.050 & 0.056 \\
       \hline
       10 & 50 & 0.084 & 0.081 & 50 & 25 & 0.066 & 0.061  \\
       \hline
       50 & 250 & 0.056 & 0.075 & 100 & 50 & 0.054 & 0.053  \\
       \hline
       100 & 500 & 0.060 & 0.057 & 200 & 100 & 0.055 & 0.047  \\
       \hline
       200 & 1000 & 0.055 & 0.056 & 500 & 250 & 0.047 & 0.045  \\
       \hline
     \end{tabular}
  \caption{Empirical Size under model 1}\label{table1}
  \end{table}

    \begin{table}[!hbp]
  \center
    \begin{tabular}{|c c|cc||cc|cc|cc|}
       \hline
       % after \\: \hline or \cline{col1-col2} \cline{col3-col4} ...
        \hline
       % after \\: \hline or \cline{col1-col2} \cline{col3-col4} ...
        $p$ & $n$ & $q=1$ & $q=3$ &  $p$ & $n$ & $q=1$ & $q=3$ \\
       \hline
       5 &  50 &0.857 & 0.741 & 20 & 40 & 0.946 & 0.908 \\
       \hline
       25 & 250 &1.000 & 1.000 & 50 & 100 & 1.000 & 1.000 \\
       \hline
       50 & 500 & 1.000 & 1.000 & 100 & 200 & 1.000 & 1.000 \\
       \hline
       100 & 1000 &1.000 & 1.000 & 300 & 600 & 1.000 & 1.000 \\
       \hline
       10 & 50 & 0.863 & 0.872 & 50 & 25 & 0.968 & 0.946  \\
       \hline
       50 & 250 & 1.000 & 1.000 & 100 & 50 & 1.000 & 1.000  \\
       \hline
       100 & 500 & 1.000 & 1.000 & 200 & 100 & 1.000 & 1.000  \\
       \hline
       200 & 1000 & 1.000 & 1.000 & 500 & 250 & 1.000 & 1.000 \\
       \hline
     \end{tabular}
  \caption{Empirical Power under model 2}\label{table2}
  \end{table}

\section{Proof of the main theorem}\label{prm}
This section is to prove our main theorem. For the reader's convenience, we give here the outline. The proof of Theorem \ref{th1} is split into the following steps.
\begin{description}
\item[Step 1:] In subsection \ref{re2}, we will show that the results of Theorem \ref{th1} are true under gaussian case.
\item[Step 2:] We shall prove Theorem \ref{th1} when the underline distribution is non-gaussian but share the same kurtosis with gaussian distribution. Here the strategy is to compare the characteristic functions of the linear spectral statistics. This part is showed in subsection \ref{ng}.
\item[Step 3:] Then in subsection \ref{sec3} we give the proof of the theorem for the general underline distribution under condition $(iii)$.
\end{description}
We now proceed our proof step by step.

\subsection{The Proof Under Gaussian Case}\label{re2}
Assume now the entries in $\bx_n$ follow normal distribution. By the arguments presented below Definition \ref{def}, it follows that
  $F^{\bs_n}(x)=F^{\widetilde\bs_n}(x).$
Let $\widetilde{\underline\bs}_n=\frac1n\widetilde\bx_{11}^*\Lambda_1^2\widetilde\bx_{11}\Lambda_2$, then we know that $\widetilde{\underline\bs}_n$ and $\widetilde\bs_n$ have the same non-zero eigenvalues. It is obvious that
\begin{align*}
F^{\underline\bs_n}(x)=c_nF^{\bs_n}(x)+(1-c_n)1_{[0,\infty)}(x).
\end{align*}
Denote by $\underline F^{c,H_1,H_2}$ the limiting spectral distribution of $F^{\widetilde{\underline\bs}_n}$, then one finds
\begin{align*}
\underline F^{c,H_1,H_2}(x)=cF(x)+(1-c)1_{[0,\infty)}(x).
\end{align*}
Likewise, $\underline F^{c_n,H_{1n},H_{2n}}$ is obtained from $\underline F^{c,H_{1},H_{2}}$ with ${c,H_{1},H_{2}}$ replaced by $c_n,H_{1n},H_{2n}$ respectively.
Hence, $G_n$ can be rewritten as
\begin{align*}
  n[F^{\widetilde{\underline\bs}_n}-\underline F^{c_n,H_{1n},H_{2n}}].
\end{align*}
Applying Theorem 2.1 in \cite{liclt}, some elementary calculations shows that Theorem \ref{th1} holds under the Gaussian case with mean
\begin{align*}
\re X_{f}=&\frac1{2\pi i}\oint_{\cC} \frac{f(z)}{1-cz^{-2}d_3(z)d_4(z)}\frac{\alpha_x}{1-{{\alpha_x}c}{z^{-2}}d_{3}(z)d_{4}(z)}\\
&\times\left[\frac{c d_3(z)d_{4}(z)}{z^3}-\frac{c^2 d_{3}^2(z)d_{4}^2(z)}{z^5}+\frac{c d_{5}(z)}{z^4}+\frac{c^2d_{6}(z)}{z^4}\right]dz\notag
\end{align*}
and covariance function
\begin{align*}
{\rm Cov}\left( X_{f},X_g\right)=&-\frac1{4\pi^2}\oint_{\cC_1}\oint_{\cC_2}f(z_1)g(z_2)\frac{\partial^2}{\partial z_2\partial z_1}\bigg\{\int_0^{d(z_1,z_2)}\frac{1}{1-z}dz\\
&\qquad+\int_0^{\alpha_xd(z_1,z_2)}\frac{1}{1-z}dz\bigg\}dz_1dz_2.
\end{align*}

\subsection{The proof of Theorem \ref{th1} under $(i)$ and $(ii)$}\label{ng}

From last section, it suffices to show the conclusion holds under the non-Gaussian case. The strategy is to compare the characteristic functions of the linear spectral statistics under the Gaussian case and the non-Gaussian case.

It is worth mentioning that for the complex case, $u_{jk}$, the real part of $x_{jk}$ and $v_{jk}$, the imaginary part of $x_{jk}$ are independent. Thus it is enough to consider the real case only.
The proof of this part are split into two steps. Firstly, we truncate and recentralize the entries in $\mb X_n$. Then complete our proof by handling  the truncated one.
\subsubsection{Step 1: Truncation and Recentralization}

Denoting $\bt_{1n}=(t_{1jk})$, $\bt_{2n}=(t_{2jk})$ and
\begin{align*}
  t_{\cdot k}=\sqrt{\sum_{j=1}^pt_{1jk}^2},\quad t_{ j\cdot}=\sqrt{t_{2jj}},
\end{align*}
then one has $|t_{\cdot k}|<K_1$, $|t_{ j\cdot}|<K_2$ and
\begin{align}\label{al1}
\sum_{k=1}^{m_1}t_{\cdot k}^2\le p\|\boldsymbol\Sigma_1\|_2\le K_3 n,\quad \sum_{j=1}^{m_2}t_{j\cdot }^2\le \rtr\left(\bt_{2n}\right)\le K_4 n.
\end{align}

At first, we can select a arbitrarily slowly decreasing sequence of constants $\eta_n\to0$ such that $\eta_n\sqrt n\to\infty$ and truncate the variables $x_{jk}$ at ${\eta_n\sqrt n}/({t_{\cdot j}t_{k\cdot}}),j=1,\cdots,m_1,k=1,\cdots,m_2$. Define $\hat x_{jk}=x_{jk}I\left(|x_{jk}|\le {\eta_n\sqrt n}/({t_{\cdot j}t_{k\cdot}}) \right),\widehat\bx_n=(\hat x_{jk})$ and
$$ \widehat\bs_n=\frac1n\bt_{1n}\widehat\bx_n\bt_{2n}\widehat\bx_n'\bt_{1n}',$$
then it yields from (\ref{al1})
\begin{align*}
&P(\bs_n\neq \widehat\bs_n,i.o.)
\le\lim_{N\to\infty}\sum_{n=N}^{\infty}{\rm P}\left(\bx_n\neq \widehat\bx_n\right)\\
\le&\lim_{N\to\infty}\sum_{n=N}^{\infty}{\rm P}\left(\bigcup_{j=1}^{m_1}\bigcup_{k=1}^{m_2}\{|x_{jk}|\ge\eta_n\sqrt n/({t_{\cdot j}t_{k\cdot}})\}\right)\\
\le&\lim_{N\to\infty}\sum_{n=N}^{\infty}\sum_{j=1}^{m_1}\sum_{k=1}^{m_2}{t_{\cdot j}^{6+\delta}t_{k\cdot}^{6+\delta}}\frac{\re x_{jk}^{6+\delta}I\left(|x_{jk}|\ge\eta_n\sqrt n/({t_{\cdot j}t_{k\cdot}})\right)}{\eta_n^{6+\delta}n^{3+\delta/2}}\\
\le&K_1^{4+\delta}K_2^{4+\delta}K_3K_4M\lim_{N\to\infty}\sum_{n=N}^{\infty}\frac1{\eta_n^{6+\delta}n^{1+\delta/2}}\to0.
\end{align*}

Next, define $\ddot{\bs}_n=\frac1n\bt_{1n}\left(\widehat\bx_n-\re\widehat\bx_n\right)\bt_{2n}\left(\widehat\bx_n-\re\widehat\bx_n\right)'\bt_{1n}'$. We use $\hat G_n(x)$ and $\ddot G_n(x)$ to denote the analogues of $G_n(x)$ with the matrix $\bs_n$ replaced by $\widehat\bs_n$ and $\ddot\bs_n$, respectively. Let $\lambda_j({\ba})$ denote the $j$-th smallest eigenvalue of Hermitian $\ba$. Using Lemma \ref {lel1}, it implies that
\begin{align*}
  &\re\left|\int f_j(x)d\hat G_n(x)-\int f_j(x)d\ddot G_n(x)\right|\le C_j\sum_{k=1}^p\re\left|\lambda_k({\widehat\bs_n})-\lambda_k({\ddot\bs_n})\right|\\
\le&C_j\left(\re\sum_{k=1}^p\left|\lambda_k^{1/2}({\widehat\bs_n})-\lambda_k^{1/2}({\ddot\bs_n})\right|^2\right)^{1/2}
\left(\re\sum_{k=1}^p\left|\lambda_k^{1/2}({\widehat\bs_n})+\lambda_k^{1/2}({\ddot\bs_n})\right|^2\right)^{1/2}\\
\le&\frac{\sqrt 2C_j}{\sqrt n}\left(\rtr\bt_{1n}\re{\widehat\bx_n}\bt_{2n}\re{\widehat\bx_n}'\bt_{1n}'\right)^{1/2}
\left(\re\sum_{k=1}^p\left(\lambda_k({\widehat\bs_n})+\lambda_k({\ddot\bs_n})\right)\right)^{1/2}\\
=&\frac{\sqrt 2C_j}{\sqrt n}\left(\rtr\bt_{1n}\re{\widehat\bx_n}\bt_{2n}\re{\widehat\bx_n}'\bt_{1n}'\right)^{1/2}
\left(\re\rtr({\widehat\bs_n})+\re\rtr({\ddot\bs_n})\right)^{1/2}
\end{align*}
where $C_j$ is a bound on $f'_j(z)$. Note that
\begin{align}\label{al2}
  \re|\hat x_{jk}|\le\frac{Mt_{\cdot j}^{5+\delta}t_{k\cdot}^{5+\delta}}{\eta_n^{5+\delta}n^{5/2+\delta/2}}\le\frac{MK_1^{4+\delta}K_2^{4+\delta}
  t_{\cdot j}t_{k\cdot}}{\eta_n^{5+\delta}n^{5/2+\delta/2}}.
\end{align}
Using (\ref{al2}) and Cauchy-Schwarz inequality, one finds
\begin{align*}
  &\rtr\left(\bt_{1n}\re{\widehat\bx_n}\bt_{2n}\re{\widehat\bx_n}'\bt_{1n}'\right)=\sum_{l=1}^p\sum_{j_1,j_2=1}^{m_1}\sum_{k_1,k_2=1}^{m_2} t_{1lj_1}\re\hat x_{j_1k_1}t_{2k_1k_2}\re\hat x_{j_2k_2}t_{1lj_2}\\
\le&\sum_{j_1,j_2=1}^{m_1}\sum_{k_1,k_2=1}^{m_2}|t_{2k_1k_2}|\left|\re\hat x_{j_1k_1}\re\hat x_{j_2k_2}\right|\left(\sum_{l=1}^p t_{1lj_1}^2 \sum_{l=1}^p
t_{1lj_2}^2\right)^{1/2}\\
\le&\sum_{j_1,j_2=1}^{m_1}\sum_{k_1,k_2=1}^{m_2}t_{\cdot j_1}t_{\cdot j_2}|t_{2k_1k_2}|\left|\re\hat x_{j_1k_1}\re\hat x_{j_2k_2}\right|\\
\le&M^2 K_1^{8+2\delta}K_2^{6+2\delta}\sum_{j_1,j_2=1}^{m_1}\sum_{k_1,k_2=1}^{m_2}t_{\cdot j_1}t_{\cdot j_2}|t_{2k_1k_2}|
\frac{t_{\cdot j_1}t_{k_1\cdot}^2t_{\cdot j_2}t_{k_2\cdot}^2}{\eta_n^{10+2\delta}n^{5+\delta}}
\le\frac C{\eta_n^{10+2\delta}n^{1+\delta}}=o(n^{-1})
\end{align*}
and
\begin{align*}
&\re\rtr({\widehat\bs_n})+\re\rtr({\ddot\bs_n})=\sum_{l=1}^p\sum_{j_1,j_2=1}^{m_1}\sum_{k_1,k_2=1}^{m_2}\bigg(\re t_{1lj_1}\hat x_{j_1k_1}t_{2k_1k_2}\hat x_{j_2k_2}t_{1lj_2}\\
&+\re t_{1lj_1}(\hat x_{j_1k_1}-\re\hat x_{j_1k_1})t_{2k_1k_2}(\hat x_{j_2k_2}-\re\hat x_{j_2k_2})t_{1lj_2}\bigg)\\
=&\sum_{l=1}^p\sum_{j_1\neq j_2=1}^{m_1}\sum_{k_1,k_2=1}^{m_2} t_{1lj_1}t_{2k_1k_2}t_{1lj_2}\re\hat x_{j_1k_1}\re\hat x_{j_2k_2}+\sum_{l=1}^p\sum_{j=1}^{m_1}\sum_{k_1\neq k_2=1}^{m_2}t_{1lj}^2t_{2k_1k_2}\re \hat x_{jk_1}\re\hat x_{jk_2}\\
&+\sum_{l=1}^p\sum_{j=1}^{m_1}\sum_{k=1}^{m_2} t_{1lj}^2t_{2kk}\re\hat x_{jk}^2+\sum_{l=1}^p\sum_{j=1}^{m_1}\sum_{k=1}^{m_2}
t_{1lj}^2t_{2kk}\re(\hat x_{jk}-\re\hat x_{jk})^2\\
\le&\sum_{l=1}^p\sum_{j_1,j_2=1}^{m_1}\sum_{k_1,k_2=1}^{m_2}{| t_{1lj_1}t_{2k_1k_2}t_{1lj_2}|}|\re\hat x_{j_1k_1}\re\hat x_{j_2k_2}|+2\sum_{j=1}^{m_1}\sum_{k=1}^{m_2} t_{\cdot j}^2t_{k\cdot}^2\\
\le&\sum_{j_1,j_2=1}^{m_1}\sum_{k_1,k_2=1}^{m_2}t_{\cdot j_1}t_{\cdot j_2}|t_{2k_1k_2}||\re\hat x_{j_1k_1}\re\hat x_{j_2k_2}|+2\sum_{j=1}^{m_1}\sum_{k=1}^{m_2} t_{\cdot j}^2t_{k\cdot}^2\\
\le&M^2 K_1^{8+2\delta}K_2^{6+2\delta}\sum_{j_1,j_2=1}^{m_1}\sum_{k_1,k_2=1}^{m_2}|t_{2k_1k_2}|
\frac{t_{\cdot j_1}^2t_{k_1\cdot}^2t_{\cdot j_2}^2t_{k_2\cdot}^2}{\eta_n^{10+2\delta}n^{5+\delta}}+2\sum_{j=1}^{m_1}\sum_{k=1}^{m_2} t_{\cdot j}^2t_{k\cdot}^2
\le C{n^{2}}.
\end{align*}
Hence, we see
\begin{align*}
  \re\left|\int f_j(x)d\hat G_n(x)-\int f_j(x)d\ddot G_n(x)\right|=o(1).
\end{align*}

Thirdly, let $\breve{\bs}_n=\frac1n\bt_{1n}\breve\bx_n\bt_{2n}\breve\bx_n'\bt_{1n}'$ with $\breve\bx_n$ $m_1\times m_2$ having $(j,k)$-th entry $\breve x_{jk}=(\hat x_{jk}-\re \hat x_{jk})/\sigma_{jk}$, where $\sigma_{jk}=\re\left(\hat x_{jk}-\re\hat x_{jk}\right)^2$. Likewise, we still use $\breve G_n(x)$ to denote the analogues of $G_n(x)$ with the matrix $\bs_n$ replaced by $\breve\bs_n$. Due to Lemma \ref {lel1}, one has
\begin{align*}
  &\re\left|\int f_j(x)d\ddot G_n(x)-\int f_j(x)d\breve G_n(x)\right|\le C_j\sum_{k=1}^p\re\left|\lambda_k({\ddot\bs_n})-\lambda_k({\breve\bs_n})\right|\\
\le&C_j\left(\re\sum_{k=1}^p\left|\lambda_k^{1/2}({\ddot\bs_n})-\lambda_k^{1/2}({\breve\bs_n})\right|^2\right)^{1/2}
\left(\re\sum_{k=1}^p\left|\lambda_k^{1/2}({\ddot\bs_n})+\lambda_k^{1/2}({\breve\bs_n})\right|^2\right)^{1/2}\\
\le&\frac{\sqrt 2C_j}{\sqrt n}\left(\re\rtr\bt_{1n}\left({\widehat\bx_n}-\re{\widehat\bx_n}-{\breve\bx_n}\right)\bt_{2n}\left({\widehat\bx_n}-\re{\widehat\bx_n}-{\breve\bx_n}\right)'\bt_{1n}'\right)^{1/2}\\
&\times\left(\re\sum_{k=1}^p\left(\lambda_k({\ddot\bs_n})+\lambda_k({\breve\bs_n})\right)\right)^{1/2}.
\end{align*}
From the fact that
\begin{align*}
 1- \sigma_{jk}\le &1-\sigma_{jk}^2\le2C\re x_{jk}^2I\left(|x_{jk}|\ge\eta_n\sqrt n/(t_{\cdot j}t_{k\cdot})\right)\\
  \le&\frac{Mt_{\cdot j}^{4+\delta}t_{k\cdot}^{4+\delta}}{\eta_n^{4+\delta}n^{2+\delta/2}}\le\frac{MK_1^{4+\delta}K_2^{4+\delta}}{\eta_n^{4+\delta}n^{2+\delta/2}}=o(n^{-2}),\notag
\end{align*}
 we  obtain
\begin{align*}
 &\re\rtr\bt_{1n}\left({\widehat\bx_n}-\re{\widehat\bx_n}-{\breve\bx_n}\right)\bt_{2n}\left({\widehat\bx_n}-\re{\widehat\bx_n}-{\breve\bx_n}\right)'\bt_{1n}'\\
 =&\sum_{l=1}^p\sum_{j_1,j_2=1}^{m_1}\sum_{k_1,k_2=1}^{m_2}t_{1lj_1}t_{2k_1k_2}t_{1lj_2}(1-\sigma_{j_1k_1}^{-1})(1-\sigma_{j_2k_2}^{-1})\\
 &\qquad\qquad\qquad\quad\times\re(\hat x_{j_1k_1}-\re \hat x_{j_1k_1})(\hat x_{j_2k_2}-\re\hat x_{j_2k_2})\\
 =&\sum_{l=1}^p\sum_{j=1}^{m_1}\sum_{k=1}^{m_2}t_{1lj}^2t_{2kk}(1-\sigma_{jk}^{-1})^2\re(\hat x_{jk}-\re \hat x_{jk})^2\sum_{j=1}^{m_1}\sum_{k=1}^{m_2}t_{\cdot j}^2t_{k\cdot}^2(1-\sigma_{jk})^2=o(n^{-2}).
\end{align*}
Moreover,
\begin{align*}
\re\sum_{k=1}^p&\left(\lambda_k({\ddot\bs_n})+\lambda_k({\breve\bs_n})\right)=\re\left(\rtr{\ddot\bs_n}+\rtr{\breve\bs_n}\right)=
\sum_{l=1}^p\sum_{j_1,j_2=1}^{m_1}\sum_{k_1,k_2=1}^{m_2} t_{1lj_1}t_{1lj_2}\\
&\qquad\qquad\times t_{2k_1k_2}(1+\sigma_{j_1k_1}^{-1}\sigma_{j_2k_2}^{-1})\re(\hat x_{j_1k_1}-\re\hat x_{j_1k_1})(\hat x_{j_2k_2}-\re\hat x_{j_2k_2})\\
=&\sum_{l=1}^p\sum_{j=1}^{m_1}\sum_{k=1}^{m_2}t_{1lj}^2t_{2kk}(1+\sigma_{jk}^{-2})\re(\hat x_{jk}-\re\hat x_{jk})^2
=\sum_{j=1}^{m_1}\sum_{k=1}^{m_2} t_{\cdot j}^2t_{k\cdot}^2(1+\sigma_{jk}^{2})\le C{n^{2}}.
\end{align*}
Therefore, it yields that
\begin{align*}
  &\re\left|\int f_j(x)d\ddot G_n(x)-\int f_j(x)d\breve G_n(x)\right|\le\frac{C}{\sqrt n}.
\end{align*}

 We below assume that $x_{jk},j=1,\cdots,m_1,k=1,\cdots,m_2$ are truncated at $\eta_n\sqrt n/(t_{\cdot j}t_{k\cdot})$, centralized and renormalized. That is to say,
\begin{align*}
|x_{jk}|\le\eta_n\sqrt n/(t_{\cdot j}t_{k\cdot}), \ \re x_{jk}=0, \ \re x_{jk}^2=1, \ \re x_{jk}^4=3+o(1).
\end{align*}

\subsubsection{Step 2: Complete the proof of Theorem \ref{th1} under $(i)$ and $(ii)$}\label{see2}

Denote $$\bb_n=\frac1n\bt_{1n}\by_n\bt_{2n}\by_n'\bt_{1n}'$$ where the entries of $\by_n=(y_{jk})$ are independent real Gaussian random variables such that
\begin{align*}
\re y_{jk}=0,\quad \re y_{jk}^2=1,\quad {\rm for} \ j=1\cdots m_1,k=1,\cdots,m_2.
\end{align*}
Moreover, suppose that $\bx_n$ and $\by_n$ be independent random matrices. As in \cite{liclt}, for any $\theta\in[0,\pi/2]$, we introduce the following matrices
\begin{align}\label{eq31}
{\bf W}_n(\theta)=\bx_n\sin\theta+\by_n\cos\theta, \quad{\rm and}\quad{\bf G}_n(\theta)=\frac1n\bt_{1n}{\bf W}_n(\theta)\bt_{2n}{\bf W}_n'(\theta)\bt_{1n}'
\end{align}
where
$$\left({\bf W}_n(\theta)\right)_{jk}=w_{jk}=x_{jk}\sin\theta+a_{jk}\cos\theta.$$
Furthermore, let
\begin{align}\label{eq32}
&{\bf H}_n(t,\theta)=e^{it{\bf G}_n(\theta)},\ S(\theta)=\rtr f({\bf G}_n(\theta)),\\
&S^0(\theta)=S(\theta)-p\int f(x)dF^{c_n,H_{1n},H_{2n}}(x), \ Z_{n}(x,\theta)=\re e^{ixS^0(\theta)}\notag.
\end{align}
For simplicity, we omit the argument $\theta$ from the notations of ${\bf W}_n(\theta),{\bf G}_n(\theta),{\bf H}_n(t,\theta)$ and denote them by ${\bf W}_n,{\bf G}_n,{\bf H}_n(t)$ respectively.

Note that
\begin{align}\label{eq29}
Z_n(x,\pi/2)-Z_n(x,0)=\int_0^{\pi/2}\frac{\partial Z_n(x,\theta)}{\partial \theta}d\theta.
\end{align}
The aim is to prove that $\frac{\partial Z_n(x,\theta)}{\partial \theta}$ converges to zero uniformly in $\theta$ over the interval $[0,\pi/2]$, which ensures Theorem \ref{th1} under condition (i).

To this end, let $f(\lambda)$ be a smooth function with the Fourier transform given by
$\widehat f(t)=\frac1{2\pi}\int_{-\infty}^{\infty}f(\lambda)e^{-it\lambda}d\lambda.$
Then, $f(\lambda)=\int_{-\infty}^{\infty}\widehat f(t)e^{it\lambda}dt.$
From Lemma \ref{le3}, we have
\begin{align*}
\frac{\partial Z_{n}(x,\theta)}{\partial \theta}=\frac {2x i} n\sum_{j=1}^{m_1}\sum_{k=1}^{m_2}\re w_{jk}'\left[\bt_{1n}'\widetilde f({\bf G}_n)\bt_{1n}\bw_n\bt_{2n}\right]_{jk}e^{ixS^0(\theta)}
\end{align*}
where
$w_{jk}'=\frac{d w_{jk}}{d\theta}=x_{jk}\cos\theta-{y}_{jk}\sin\theta$
and
\begin{align}\label{eq30}
\widetilde f({\bf G}_n)=i\int_{-\infty}^{\infty}u\widehat f(u){\bf H}_n(u)du.
\end{align}
Let ${\bf W}_{n jk}(w,\theta)$ denote the corresponding matrix ${\bf W}_{n}$ with $w_{jk}$ replaced by $w$. And let
\begin{align}
&{\bf G}_{n jk}(w,\theta)=\frac1n\bt_{1n}{\bf W}_{n jk}(w,\theta)\bt_{2n}{\bf W}_{n jk}'(w,\theta)\bt_{1n}',\notag\\
& {\bf H}_{njk}(w,t,\theta)=e^{it{\bf G}_{njk}(w,\theta)},\quad S(w,\theta)=\rtr f({\bf G}_{njk}(w,\theta)),\notag
\end{align}
and
\begin{align}
 &S^0(w,\theta)=S(w,\theta)-p\int f(x)dF^{c_n,H_{1n},H_{2n}}(x),\notag\\
 &\widetilde f({\bf G}_{njk}(w,\theta))=i\int_{-\infty}^{\infty}u\widehat f(u){\bf H}_{njk}(w,u,\theta)du,\notag\\
&\varphi_{jk}(w)=\left[\bt_{1n}'\widetilde f({\bf G}_{njk}(w,\theta))\bt_{1n}{\bf W}_{njk}(w,\theta)\bt_{2n}\right]_{jk}e^{ixS^0(w,\theta)}.\notag
\end{align}
For simplicity, we still omit the argument $\theta$ from the notations of ${\bf W}_{njk}(w,\theta)$, ${\bf G}_{njk}(w,\theta)$, ${\bf H}_{njk}(w,t,\theta)$ and denote them by ${\bf W}_{njk w},{\bf G}_{njk w},{\bf H}_{njk w}(t)$ respectively.
By Taylor's formula, one finds
\begin{align*}
\varphi_{jk}(w_{jk})=\sum_{l=0}^3\frac1{l!}w_{jk}^l\varphi_{jk}^{(l)}(0)+\frac1{4!}w_{jk}^4\varphi_{jk}^{(4)}(\varrhoup w_{jk})\quad\varrhoup\in(0,1)
\end{align*}
which implies that
\begin{align*}
\frac{\partial Z_{n}(x,\theta)}{\partial \theta}=\frac {2x i} n\sum_{l=0}^3\frac1{l!}\sum_{j=1}^{m_1}\sum_{k=1}^{m_2}\re w_{jk}'w_{jk}^l\re\varphi_{jk}^{(l)}(0)+\frac {2x i}{4!n} \sum_{j=1}^{m_1}\sum_{k=1}^{m_2}\re w_{jk}' w_{jk}^4\varphi_{jk}^{(4)}(\varrhoup w_{jk}).
\end{align*}
It is easy to obtain
\begin{align*}
&\re w_{jk}'w_{jk}^0=0,\quad\quad\quad\qquad\qquad\re w_{jk}'w_{jk}^1=0,\\
&\re w_{jk}'w_{jk}^2=\re w_{jk}^3\sin^2\theta\cos\theta,\quad\re w_{jk}'w_{jk}^3=o(1)\sin^3\theta\cos\theta.
\end{align*}
It follows that
\begin{align*}
\frac{\partial Z_{n}(x,\theta)}{\partial \theta}=&\frac {x i} {n}\sum_{j=1}^{m_1}\sum_{k=1}^{m_2}\re w_{jk}^3\sin^2\theta\cos\theta\re\varphi_{jk}^{(2)}(0)
+\frac {x i}{12n} \sum_{j=1}^{m_1}\sum_{k=1}^{m_2} \re w_{jk}' w_{jk}^4\varphi_{jk}^{(4)}(\varrhoup w_{jk})\\
\triangleq&\mathcal{I}_1+\mathcal{I}_2.
\end{align*}
We shall prove that both $\mathcal{I}_1$ and $\mathcal{I}_2$ convergence to 0 as $n\to \infty$. The proof is left to Section \ref{appes}.

Thus we complete the proof of our Theorem under $(i)$ and $(ii)$.

\subsection{The proof of Theorem \ref{th1} under $(iii)$}\label{sec3}
This section is to prove our main theorem under $(iii)$. To begin with, we need to derive the LSD of $\bs_n$ under this case.
\subsubsection{ LSD when {$\bt_{1n}$ is real and $\bt_{2n}$ is diagonal}}\label{see1}
We below give a more general results than we need. We note that the result present in this subsection is also new and may have its own interest. Denote $\bt_{2n}={\rm diag}\left(s_1,\cdots,s_{m_2}\right).$
Since the rank of $\bt_{2n}$ is $O(n)$, we know that the nonzero entries of ${\rm diag}\left(s_1,\cdots,s_{m_2}\right)$ is $O(n)$. So, we shall replace $m_2$ with $n$ in the following without lose of generality.

 Introduce $\bx_n=\left(\bbx_1,\cdots,\bbx_n\right), \ \bq_k=\bt_{1n}\bbx_k$,
\begin{align*}
&\bd(z)=\bs_n-z\bi_p, \ \bd_k(z)=\bd(z)-\frac1ns_k\bq_k\bq_k^*, \notag\\
 &\bd_{jk}(z)=\bd_k(z)-\frac1ns_j\bq_j\bq_j^*, \ \bs_{nk}=\bs_n-\frac1ns_k\bq_k\bq_k^*,
\end{align*} and
\begin{align*}
&\varepsilon_k(z)=\bq_k^*\bd_k^{-1}(z)\bq_k-\rtr(\bd_k^{-1}(z)\bS_1), \ \gamma_k(z)={\bq}_k^*\bd_k^{-2}(z){\bq}_k-\rtr(\bd_k^{-2}(z)\bS_1)\notag\\
&\beta_k(z)=\frac1{1+n^{-1}s_k\bq_k^*\bd_k^{-1}(z)\bq_k}, \ \widetilde{\beta}_k(z)=\frac1{1+n^{-1}s_k\rtr(\bd_k^{-1}(z)\bS_1)},\\%\label{beq5}
&\beta_{jk}(z)=\frac1{1+n^{-1}s_j\bq_j^*\bd_{jk}^{-1}(z)\bq_j}, \ \psi_{jk}(z)=\frac1{{1+n^{-1}s_j\rtr(\bd_k^{-1}(z)\bS_1)}}.
\end{align*}
From the proof of Lemma 0.1 in \cite{liclt} and \cite{yin2018no}, it can be verified that
\begin{align}\label{cal11}
  |\psi_{jk}|\le C.
\end{align}

Let ${\bf R}_{k}(z)=z\bi_p-\frac{1}n\sum_{j\neq k}s_j\psi_{jk}(z)\bS_{1}$,
Write
\begin{align*}
\bd_k(z)+{\bf R}_{k}(z)=\frac1n\sum_{j\neq k}s_j\bq_j\bq_j^*-\frac{1}n\sum_{j\neq k}s_j\psi_{jk}(z_1)\bS_1
\end{align*}
which implies that
\begin{align*}
&{\bf R}_{k}^{-1}(z)+\bd_k^{-1}(z)=\frac1n\sum_{j\neq k}s_j{\bf R}_{k}^{-1}(z)\bq_j\bq_j^*\bd_k^{-1}(z)-\frac{1}n\sum_{j\neq k}s_j\psi_{jk}(z){\bf R}_{k}^{-1}(z)\bS_1\bd_k^{-1}(z).
\end{align*}
Using the formula
\begin{align}\label{al:2}
\left(\boldsymbol\Sigma+q\boldsymbol{\alpha\beta}^*\right)^{-1}\boldsymbol{\alpha}=\frac{\boldsymbol\Sigma^{-1}\boldsymbol{\alpha}}
{1+q\boldsymbol{\beta}^*
\boldsymbol\Sigma^{-1}\boldsymbol\alpha},
\end{align}
we have for a $p\times p$ matrix $\bm$
\begin{align}
&\frac1p\rtr\left(\bm{\bf R}_{k}^{-1}(z)\right)+\frac1p\rtr\left(\bm\bd_k^{-1}(z)\right)\label{beq14}\\
=&\frac1{pn}\sum_{j\neq k}s_j\psi_{jk}(z)\left[\bq_j^*\bd_{jk}^{-1}(z)\bm{\bf R}_{k}^{-1}(z)\bq_j-\rtr\left(\bm{\bf R}_{k}^{-1}(z)\bS_1\bd_{jk}^{-1}(z)\right)\right]\notag\\
&+\frac1{pn}\sum_{j\neq k}s_j\left(\beta_{jk}(z)-\psi_{jk}(z)\right)\bq_j^*\bd_{jk}^{-1}(z)\bm{\bf R}_{k}^{-1}(z)\bq_j\notag\\
&+\frac1{pn}\sum_{j\neq k}s_j\psi_{jk}(z)\rtr\left[\bm{\bf R}_{k}^{-1}(z)\bS_1\left(\bd_{jk}^{-1}(z)-\bd_k^{-1}(z)\right)\right]\notag\\
\triangleq&{r_1}(z)+{r_2}(z)+{r_3}(z)\notag.
\end{align}

By a direct calculation, we have for any positive number $t\ge0$
\begin{align*}
\Im\bigg(z-&\frac{1}n\sum_{j\neq k}s_j\psi_{jk}(z)t\bigg)=v_0-\frac{1}{n^2}\sum_{j\neq k}\frac{s_j^2 t}{|1+n^{-1}s_j\rtr(\bd_k^{-1}(z)\bS_1)|^2}\Im\rtr(\bd_k^{-1}(\bar z)\bS_1)\\
=&v_0\left(1+\frac{1}{n^2}\sum_{j\neq k}\frac{s_j^2 t}{|1+n^{-1}s_j\rtr(\bd_k^{-1}(z)\bS_1)|^2} \rtr\left(\bd_k^{-1}(z)\bd_k^{-1}(\bar z)\bS_1\right)\right)\ge v_0
\end{align*}
which yields
$\left\|{\bf R}_{k}^{-1}(z)\right\|\le\frac1{v_0}.$
 By (\ref{cal11}) and (\ref{mal10}), one gets
\begin{align}\label{bal4}
\re\left|r_1(z)\right|^3
\le &\frac{C}{n^3}\re\left|\bq_j^*\bd_{jk}^{-1}(z)\bm{\bf R}_{k}^{-1}(z)\bq_j
-\rtr\left(
{\bf R}_{k}^{-1}(z)\bS_1\bd_{jk}^{-1}(z)\bm\right)\right|^3\\
\le&\frac{C}{n^3}n^{3/2}\re\left\|\bd_{jk}^{-1}(z)\bm{\bf R}_{k}^{-1}(z)\right\|^3\le\frac C{n^{3/2}}.\notag
\end{align}
Let $\bd_{ljk}=\bd_{jk}-\frac1ns_l\bq_l\bq_l^*$, and $$\widetilde\beta_{jk}(z)=\frac1{1+n^{-1}s_j\rtr(\bd_{jk}^{-1}(z)\bS_1)}, \ \beta_{ljk}(z)=\frac1{1+n^{-1}s_l\bq_l^*\bd_{ljk}^{-1}(z)\bS_1\bq_l}.$$
Using the above inequality and (\ref{mal10}), we obtain
\begin{align}\label{bal8}
\re|\beta_{jk}(z)-{\psi_{jk}(z)}|^3
\le&C\left[\re|\beta_{jk}(z)-\widetilde\beta_{jk}(z)|^3+|\widetilde\beta_{jk}(z)-\psi_{jk}(z)|^3\right]\\
\le&\frac C{n^3}\re|\bq_j^*\bd_{jk}^{-1}(z)\bq_j-\rtr(\bd_{jk}^{-1}(z)\bS_1)|^3\notag\\
&+\frac C{n^3}\re|\rtr(\bd_{jk}^{-1}(z)\bS_1)-\rtr(\bd_k^{-1}(z)\bS_1)|^3\notag\\
\le&\frac C{n^{3/2}}+\frac C{n^3}=O(n^{-3/2})\notag
\end{align}
which implies that
\begin{align}\label{bal5}
\re\left|r_2(z)\right|^{2}
\le&\frac C{n^4}\sum_{j\neq k}\re^{2/3}\left|\beta_{jk}(z)-{\psi_{jk}(z)}\right|^3\re^{1/3}\left|\bq_j^*\bd_{jk}^{-1}(z)\bm{\bf R}_{k}^{-1}(z)\bq_j\right|^{6}\\
\le&\frac C{n^3}\left[O(n^{-3/2})\right]^{2/3}\left[O(n^6)\right]^{1/3}=O(n^{-2})\notag.
\end{align}
Note that from (\ref{cal11})
\begin{align*}
|r_3(z)|=&\left|\frac1{pn^2}\sum_{j\neq k}s_j^2\psi_{jk}(z)\beta_{jk}(z)\bq_j^*\bd_{jk}^{-1}(z)\bm{\bf R}_{k}^{-1}(z)\bS_1\bd_{jk}^{-1}(z)\bq_j\right|\\
\le&\frac C{n^3}\sum_{j\neq k}\left|\beta_{jk}(z)-\psi_{jk}(z)\right|\left|\bq_j^*\bq_j\right|+\frac C{n^3}\sum_{j\neq k}\left|\bq_j^*\bq_j\right|.
\end{align*}
By (\ref{mal10}), it follows that
\begin{align*}
  {\rm P}\left(\left|\bq_j^*\bq_j-\rtr\left(\bS_1\right)\right|\ge n\right)\le \frac1{n^3}\re\left|\bq_j^*\bq_j-\rtr\left(\bS_1\right)\right|^3\le\frac C{n^{3/2}}
\end{align*}
which is summable. Hence, by (\ref{bal8}), one has
\begin{align}\label{bal6}
\left|{r_3}(z)\right|\le&\frac C{n^3}\sum_{j\neq k}\left|\bq_j^*\bq_j-\rtr\left(\bS_1\right)\right|+\frac Cn
\le \frac Cn\quad{\rm a.s}.
\end{align}
 Using (\ref{bal4}), (\ref{bal5}), and (\ref{bal6}), (\ref{beq14}) can be represented as
\begin{align}\label{beq15}
&\frac1p\rtr\left(\bm\bd_k^{-1}(z)\right)=-\frac1p\rtr\left(\bm{\bf R}_{k}^{-1}(z)\right)+o_{{\rm a.s.}}(1).
\end{align}
Especially,
\begin{align*}
\frac1p\rtr\left(\bd_k^{-1}(z)\right)=&-\frac1p\rtr\left({\bf R}_{k}^{-1}(z)\right)+o_{{\rm a.s.}}(1)\\
\frac1p\rtr\left(\bS_1\bd_k^{-1}(z)\right)=&-\frac1p\rtr\left(\bS_1{\bf R}_{k}^{-1}(z)\right)+o_{{\rm a.s.}}(1).
\end{align*}
From the above equality and subsection 2.2 in \cite{paul2009no}, we see that
 with probability 1, $F^{\bs_n}$ converges weakly to a probability distribution function $F$ whose Stieltjes transform $m(z)$, for $z\in\mathbb{C}^+$, is given by
\begin{align*}
  m(z)=\int\frac1{x\int\frac{y}{1+cye}dH_2(y)-z}dH_1(x)
\end{align*}
where $e=e(z)$ is the unique solution in $\mathbb{C}^+$ of the equation
\begin{align*}
  e=\int\frac{x}{x\int\frac{y}{1+cye}dH_2(y)-z}dH_1(x).
\end{align*}
Combining the above arguments, it also follows that
\begin{align}\label{cl3}
\frac1n\rtr\left(\bS_1\bd_k^{-1}(z)\right)\to ce(z)=g_1(z).
\end{align}

\subsubsection{Complete the proof of Theorem \ref{th1} under $(iii)$}\label{proofiii}
Under the conditions in $(iii)$,
denote
\begin{align*}
  \bt_{1n}^*\bt_{1n}={\rm diag}\left(\lambda_1,\cdots,\lambda_{m_1}\right).
\end{align*}
Since the rank of $\bt_{1n}^*\bt_{1n}$ is the same as $\bt_{1n}\bt_{1n}^*$ , which is not larger than $p$. Hence, we know that the nonzero entries of ${\rm diag}\left(\lambda_1,\cdots,\lambda_{m_1}\right)$ is not larger than $p$. So, we shall replace respectively $m_1,m_2$ with $p,n$ in the following. In this case, we only need to truncate $x_{jk}$ at $\eta_n\sqrt n$ and then recentralize. This step is same with the truncation step in \cite{liclt} thus omitted.

Rewrite for $z\in\mathcal{C}_n$
\begin{align*}
M_n(z)=p[m_n(z)-\re m_n(z)]+p[\re m_n(z)-m_n^0(z)]\triangleq M_{n1}(z)+M_{n2}(z)
\end{align*}
where $m_n^0(z)=m_{F^{c_n,H_{1n},H_{2n}}}(z)$. Moreover $g_{1n}^0(z)$ and $g_{2n}^0(z)$ are similarly obtained from $g_1(z)$ and $g_2(z)$ respectively. Then $\left(m_n^0(z),g_{1n}^0(z),g_{2n}^0(z)\right)$ satisfies the equations (\ref{gal1}). In other words
\begin{align}
\underline m_n^0(z)=&-z^{-1}\int\frac1{1+g_{1n}^0(z)y}dH_{2n}(y)\label{i1}\\
m_n^0(z)=&-z^{-1}\int\frac1{1+g_{2n}^0(z)x}dH_{1n}(x)\label{i2}\\
m_n^0(z)=&-z^{-1}-c_n^{-1}g_{1n}^0(z)g_{2n}^0(z).\label{i3}
\end{align}
Furthermore,
\begin{align}
&zg_{1n}^0(z)=-c_n\int\frac x{1+g_{2n}^0(z)x}dH_{1n}(x)\label{i4}\\
&zg_{2n}^0(z)=-\int\frac y{1+g_{1n}^0(z)y}dH_{2n}(y)\label{i5}.
\end{align}

By the assumption of Theorem \ref{th1}, we may suppose $\max\left\{\left\|\bS_1\right\|,\left\|\bt_{2n}\right\|\right\}\le\tau$.
Let $v_0$ be any positive number. Let $x_r$ be any positive number if the right end point of interval (\ref{int}) is zero.  Otherwise choose
\begin{align*}
x_r\in
(\limsup_ns_1\lambda_{\max}^{\bS_1}\left(1+\sqrt c\right)^2,\infty).
\end{align*}
Let $x_l$ be any negative number if the left end point of interval (\ref{int}) is zero. Otherwise choose
\begin{align*}
x_l\in
\begin{cases}
(0,\liminf_ns_n\lambda_{\min}^{\bS_1}I_{(0,1)}(c)\left(1-\sqrt c\right)^2),&{\rm if} \ \liminf_ns_n\lambda_{\min}^{\bS_1}I_{(0,1)}(c)>0,\\
(-\infty,\liminf_ns_n\lambda_{\max}^{\bS_1}\left(1+\sqrt c\right)^2),&{\rm if} \ \liminf_ns_n\lambda_{\min}^{\bS_1}I_{(0,1)}(c)\le 0.
\end{cases}
\end{align*}
Let $\mathcal{C}_u=\left\{x+iv_0:x\in[x_l,x_r]\right\}.$
Define the contour $\mathcal{C}$
\begin{align*}
\mathcal{C}=\left\{x_l+iv:v\in[0,v_0]\right\}\cup\mathcal{C}_u\cup\left\{x_r+iv:v\in[0,v_0]\right\}.
\end{align*}

To avoid dealing with the small $\Im z$, we truncate $M_n(z)$ on a contour $\mathcal{C}$ of the complex plane. We define now the subsets $\mathcal{C}_n$ of $\mathcal{C}$ on which $M_n(\cdot)$ agrees with $\widehat M_n(\cdot)$. Choose sequence $\{\varepsilon_n\}$ decreasing to zero satisfying for some $\alpha\in(0,1),$
$\varepsilon_n\ge n^{-\alpha}.$

Let $\mathcal{C}_l=\left\{x_l+iv:v\in[n^{-1}\varepsilon_n,v_0]\right\}$ and $
\mathcal{C}_r=\left\{x_r+iv:v\in[n^{-1}\varepsilon_n,v_0]\right\}.$

Then $\mathcal{C}_n=\mathcal{C}_l\cup\mathcal{C}_u\cup\mathcal{C}_r$. For $z=x+iv$, the process $\widehat M_n(\cdot)$ can now be defined as
\begin{align}\label{aa}
\widehat M_n(\cdot)=
\begin{cases}
M_n(z),&{\rm for} \ z\in\mathcal{C}_n,\\
M_n(x_l+in^{-1}\varepsilon_n),& {\rm for} \ x=x_l,v\in[0,n^{-1}\varepsilon_n],\\
M_n(x_r+in^{-1}\varepsilon_n),& {\rm for} \ x=x_r,v\in[0,n^{-1}\varepsilon_n].
\end{cases}
\end{align}

The central limit theorem of $\widehat M_n(z)$ is specified below.
\begin{lemma}\label{th2}
Under the condition (iii) of Theorem \ref{th1}, $\widehat M_n(z)$ converges weakly to a two-dimensional Gaussian process $M(\cdot)$ satisfying for $z\in\mathcal{C}$ under the assumptions in $(i)$
\begin{align*}
\re M(z)=&-\frac1{1-cz^{-2}d_3(z)d_4(z)}\left(\frac{\alpha_x}{1-{{\alpha_x}c}{z^{-2}}d_{3}(z)d_{4}(z)}+{\kappa_x}\right)\\
&\times\left[\frac{c d_3(z)d_{4}(z)}{z^3}-\frac{c^2 d_{3}^2(z)d_{4}^2(z)}{z^5}+\frac{c d_{5}(z)}{z^4}+\frac{c^2d_{6}(z)}{z^4}\right]\notag
\end{align*}
and for $z_1,z_2\in \mathcal{C}\cup\overline{\mathcal{C}}$ with $\overline{\mathcal{C}}=\{\bar z:z\in\mathcal{C}\}$,
\begin{align*}
{\rm Cov}\bigg( M(z_1),&M(z_2)\bigg)=\frac{\partial^2}{\partial z_2\partial z_1}\Bigg\{\int_0^{d(z_1,z_2)+{\alpha_xd(z_1,z_2)}}\frac{1}{1-z}dz\notag\\
&+\frac{c\kappa_x(z_1g_2(z_1)-z_2g_2(z_2))}{z_1z_2(g_1(z_1)-g_1(z_2))} \int\frac{x^2}{(1+g_2(z_1)x)(1+g_2(z_2)x)}dH_1(x)\Bigg\}.
\end{align*}
\end{lemma}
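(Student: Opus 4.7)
The plan is to follow the Bai–Silverstein martingale approach: decompose $\widehat M_n(z) = M_{n1}(z) + M_{n2}(z)$ with $M_{n1}(z) = p[m_n(z) - \re m_n(z)]$ the centered stochastic part and $M_{n2}(z) = p[\re m_n(z) - m_n^0(z)]$ the deterministic bias, prove finite-dimensional convergence of $M_{n1}$ by a martingale CLT, identify the limit of $M_{n2}$ by comparing an expected resolvent identity with the defining system (\ref{i1})–(\ref{i5}), and conclude via a tightness argument on $\mathcal{C}_n$.

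For the stochastic part, let $\re_k$ denote conditional expectation given the first $k$ columns $\bbx_1,\ldots,\bbx_k$ of $\bx_n$. Using the rank-one perturbation identity I would write
\begin{equation*}
M_{n1}(z)=\sum_{k=1}^n(\re_k-\re_{k-1})\rtr\bigl[\bd^{-1}(z)-\bd_k^{-1}(z)\bigr]=-\sum_{k=1}^n\frac{s_k}{n}(\re_k-\re_{k-1})\bigl[\beta_k(z)\bq_k^*\bd_k^{-2}(z)\bq_k\bigr].
\end{equation*}
Replacing $\beta_k$ by $\widetilde\beta_k$ (justified by (\ref{bal8})) and using quadratic-form concentration to split $\bq_k^*\bd_k^{-2}(z)\bq_k$ into its mean plus a centered fluctuation $\gamma_k(z)$ reduces each martingale difference to linear combinations of $\re_k\gamma_k(z)$ and $\re_k\varepsilon_k(z)$. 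Finite-dimensional convergence then follows from the martingale CLT (Billingsley, Thm.~35.12) once the quadratic variation is shown to converge to the stated kernel, with a Lyapunov-type bound supplied by (\ref{bal4}).

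The decisive role of condition (iii) enters in the covariance computation: since $\bt_{1n}^*\bt_{1n}$ and $\bt_{2n}$ are both diagonal, the fourth-cumulant contribution of each entry $x_{jk}$ localizes on the diagonal index $(j,k)$ and, after integration against $H_{1n}$ and $H_{2n}$, reduces exactly to the integral $\int x^2/[(1+g_2(z_1)x)(1+g_2(z_2)x)]\,dH_1(x)$ with the prefactor $c\kappa_x(z_1g_2(z_1)-z_2g_2(z_2))/[z_1z_2(g_1(z_1)-g_1(z_2))]$ displayed in the lemma, while the Gaussian-type piece emerges from the $g_1,g_2$ algebra in (\ref{i4})–(\ref{i5}). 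The bias $M_{n2}$ is handled in parallel: taking expectation in the identity $\beta_k^{-1}(z)=1+n^{-1}s_k\bq_k^*\bd_k^{-1}(z)\bq_k$, subtracting (\ref{i1})–(\ref{i3}), and linearizing around $(m_n^0,g_{1n}^0,g_{2n}^0)$ yields a fixed-point equation for $\re m_n(z)-m_n^0(z)$ whose resolvent factor $1/(1-cz^{-2}d_3(z)d_4(z))$ shows up in the stated mean, and whose fourth-cumulant source term produces the kurtosis factor $\alpha_x/(1-\alpha_xcz^{-2}d_3d_4)+\kappa_x$ multiplying the bracket $[cd_3d_4/z^3-c^2d_3^2d_4^2/z^5+cd_5/z^4+c^2d_6/z^4]$.

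Finally, tightness of $\widehat M_n$ on $\mathcal{C}_n$ follows from uniform second-moment estimates of the form $\re|\widehat M_n(z_1)-\widehat M_n(z_2)|^2 \le C|z_1-z_2|^2$, obtained from (\ref{bal4})–(\ref{bal6}) combined with the Cauchy integral formula on $\mathcal{C}_u$. I expect the main obstacle to be the careful bookkeeping of the $\kappa_x$-contribution: one must verify that the simultaneous diagonality of $\bt_{1n}^*\bt_{1n}$ and $\bt_{2n}$ is precisely what forces the fourth-cumulant term to reduce to a functional of $H_1,H_2,g_1,g_2$ alone, whereas for non-diagonal $\bt_{1n}^*\bt_{1n}$ this sum would couple to the eigenvectors and need not admit such a universal form, consistent with the counterexample alluded to in the Remark.
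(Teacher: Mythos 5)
Your proposal follows essentially the same route as the paper's own proof: the identical decomposition $M_{n1}(z)+M_{n2}(z)$, the same martingale-difference representation reduced (after replacing $\beta_k$ by $\widetilde\beta_k$) to terms in $\re_k\varepsilon_k(z)$ and $\re_k\gamma_k(z)$ handled by a martingale CLT, the same exploitation of the diagonality in condition (iii) so that the $\kappa_x$-contribution collapses to a trace functional of $H_1,g_1,g_2$, and the same linearization of the expected resolvent identity around $(m_n^0,g_{1n}^0,g_{2n}^0)$ to extract the mean together with a second-moment tightness bound on $\mathcal{C}_n$. The differences are cosmetic only (citing Billingsley's martingale CLT in place of the paper's Lemma 2.4 of Bai and Silverstein, and loose internal pointers for the Lyapunov and tightness estimates), so the approach matches the paper's.
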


From \cite{BaiYin1993} and \cite{Yin1988}, we conclude that
\begin{align*}
 \lambda_{\max}\left(\frac1n\bx_n\bx_n^*\right)\to \left(1+\sqrt c\right)^2\quad{\rm a.s.}
\end{align*}
and
\begin{align*}
 \lambda_{\min}\left(\frac1n\bx_n\bx_n^*\right)\to \left(1-\sqrt c\right)^2\quad{\rm a.s.}
\end{align*}
The upper and lower bounds of the extreme eigenvalues of $\bs_n$ depends largely on the signs of $s_1$ and $s_n$.  Since $s_1>0$, we have
\begin{align*}
\lambda_{\max}(\bs_n)\le s_1\lambda_{\max}^{\bS_1}  \lambda_{\max}\left(\frac1n\bx_n\bx_n^*\right)\le s_1\lambda_{\max}^{\bS_1}  \left(1+\sqrt c\right)^2\quad{\rm a.s.}
\end{align*}
If $s_n>0$, then we have%\lambda_{\min}^{\bt_{2n}}
\begin{align*}
\lambda_{\min}(\bs_n)\ge s_n\lambda_{\min}^{\bS_1}  I_{(0,1)}(c)\lambda_{\min}\left(\frac1n\bx_n\bx_n^*\right)\ge s_n\lambda_{\min}^{\bS_1} I_{(0,1)}(c)\left(1-\sqrt c\right)^2\quad{\rm a.s.}
\end{align*}
Otherwise, we get
\begin{align*}
\lambda_{\min}(\bs_n)\ge s_n \lambda_{\max}^{\bS_1}  \lambda_{\max}\left(\frac1n\bx_n\bx_n^*\right)\ge s_n \lambda_{\max}^{\bS_1}  \left(1+\sqrt c\right)^2\quad{\rm a.s.}
\end{align*}
Combining the definitions of $x_l,x_r$, we find with probability $1$
\begin{align*}
\liminf_{n\to\infty}\min\left(x_r-\lambda_{\max}(\bs_n),\lambda_{\min}(\bs_n)-x_l\right)>0.
\end{align*}
Since $F^{\bs_n}\to F^{c,H_1.H_2}$ with probability $1$, the support of $F^{c_n,H_{1n},H_{2n}}$ is contained in interval (\ref{int}) with probability 1. Thus, by Cauchy integral formula, for $f\in\{f_1,\cdots,f_{\kappa}\}$ and large $n$, with probability $1$,
\begin{align*}
\int f(x)dG_n(x)=-\frac1{2\pi i}\oint f(z)M_{n}(z)dz
\end{align*}
where the complex integral is over ${\mathcal{C}\cup\overline{\mathcal{C}}}$. For $v\in[0,n^{-1}\varepsilon_n]$, note that
\begin{align*}
\left|M_n(x_r+iv)-M_n(x_r+in^{-1}\varepsilon_n)\right|
\le 4n\left|\max\left(\lambda_{\max}(\bs_n), p_r\right)-x_r\right|^{-1}
\end{align*}
and
\begin{align*}
\left|M_n(x_l+iv)-M_n(x_l+in^{-1}\varepsilon_n)\right|
\le 4n\left|\min\left(\lambda_{\min}(\bs_n), p_l\right)-x_l\right|^{-1}.
\end{align*}
It follows that for large $n$, with probability $1$,
\begin{align*}
&\left|\oint f(z)\left(M_{n}(z)-\widehat M_n(z)\right)dz\right|\\
\le &8K\varepsilon_n\left[\left|\max\left(\lambda_{\max}(\bs_n), p_r\right)-x_r\right|^{-1}+\left|\min\left(\lambda_{\min}(\bs_n), p_l\right)-x_l\right|^{-1}\right]\to 0
\end{align*}
where $p_l \ (p_r)$ is the left endpoint (right endpoint) of interval (\ref{int}) and $K$ is the bound on $f$ over $\mathcal{C}$.

Note that the mapping
\begin{align*}
\widehat M_n(\cdot)\rightarrow \left(-\frac1{2\pi i}\oint f_1(z)\widehat M_n(z)dz,\cdots,-\frac1{2\pi i}\oint f_{\kappa}(z)\widehat M_n(z)dz\right)
\end{align*}
is continuous. Using Lemma \ref{th2}, the proof of Theorem \ref{th1} is completed.

\subsection{The proof of Lemma \ref{th2}}\label{pth2}
According to Lemma 3.1 of \cite{Bai2015}, we know that the ESD of $\mb T_{2n}$ tends to $H_2$, which is an Arcsine distribution with density function $$H_2'(t)=\frac{1}{\pi\sqrt{1-t^2}},\quad t\in(-1,1).$$ And for given $n$, the certain $n-\tau+1$ non zero eigenvalues of  $\mb T_{2n}$ are $\lambda_k=\cos\frac{k\pi}{n-\tau+2}, k=1,\cdots,n-\tau+1.$

Applying Theorem \ref{th1} and mapping into our case, we have $f(x)=x^2$ and
\begin{align*}
\sigma^2={\rm Cov}\bigg( X_{f},X_f\bigg)=&-\frac1{4\pi^2}\oint_{\cC_1}\oint_{\cC_2}z_1^2z_2^2\frac{\partial}{\partial z_2}\Bigg\{\frac{1}{1-d(z_1,z_2)}\frac{\partial d(z_1,z_2)}{\partial z_1}\notag\\
&+\frac{\alpha_x}{1-\alpha_xd(z_1,z_2)}\frac{\partial d(z_1,z_2)}{\partial z_1}+\kappa_x\frac{\partial d(z_1,z_2)}{\partial z_1}\Bigg\}dz_1dz_2\\
=&-\frac1{4\pi^2}\oint_{\cC_1}\oint_{\cC_2}z_1^2z_2^2\frac{1}{(1-d(z_1,z_2))^2}\frac{\partial d(z_1,z_2)}{\partial z_1}\frac{\partial d(z_1,z_2)}{\partial z_2}dz_1dz_2\\
&-\frac1{4\pi^2}\oint_{\cC_1}\oint_{\cC_2}z_1^2z_2^2\frac{1}{1-d(z_1,z_2)}\frac{\partial ^2d(z_1,z_2)}{\partial z_1\partial z_2}dz_1dz_2\\
&-\frac{\alpha_x^2}{4\pi^2}\oint_{\cC_1}\oint_{\cC_2}z_1^2z_2^2\frac1{(1-\alpha_xd(z_1,z_2))^2}\frac{\partial d(z_1,z_2)}{\partial z_1}\frac{\partial d(z_1,z_2)}{\partial z_2}dz_1dz_2\\
&-\frac{\alpha_x}{4\pi^2}\oint_{\cC_1}\oint_{\cC_2}z_1^2z_2^2\frac 1{1-\alpha_xd(z_1,z_2)}\frac{\partial ^2d(z_1,z_2)}{\partial z_1\partial z_2}dz_1dz_2\\
&-\frac{\kappa_x}{4\pi^2}\oint_{\cC_1}\oint_{\cC_2}z_1^2z_2^2\frac{\partial^2d(z_1,z_2)}{\partial z_1\partial z_2}dz_1dz_2\\
\triangleq& \mathcal{K}_1(1)+\mathcal{K}_2(1)+\alpha_x^2\mathcal{K}_1(\alpha_x)+\alpha_x\mathcal{K}_2(\alpha_x)+{\kappa_x}\mathcal{K}_3.
\end{align*}
where
\begin{align*}
 \mathcal{K}_1(\alpha)=&-\frac1{4\pi^2}\oint_{\cC_1}\oint_{\cC_2}z_1^2z_2^2\frac{1}{(1-\alpha d(z_1,z_2))^2}\frac{\partial d(z_1,z_2)}{\partial z_1}\frac{\partial d(z_1,z_2)}{\partial z_2}dz_1dz_2\\
 \mathcal{K}_2(\alpha)=&-\frac1{4\pi^2}\oint_{\cC_1}\oint_{\cC_2}z_1^2z_2^2\frac 1{1-\alpha d(z_1,z_2)}\frac{\partial ^2d(z_1,z_2)}{\partial z_1\partial z_2}dz_1dz_2.
\end{align*}

To begin with, we present more relations about $g_1(z)$ and $g_2(z)$. From (\ref{cl2}), we deduce that
\begin{align}\label{aa1}
  z=-\frac{\int\frac{y}{1+g_1(z)y}dH_2(y)}{g_2(z)}=-\frac{c\int\frac{x}{1+g_2(z)x}dH_1(x)}{g_1(z)}
\end{align}
which implies that
\begin{align*}
1-{\int\frac1{1+g_1(z)y}dH_2(y)}=c-{c\int\frac1{1+g_2(z)x}dH_1(x)}.
\end{align*}
It follows that from the above equality
\begin{align}\label{aa2}
  g_1(z)=0 \Leftrightarrow g_2(z)=0.
\end{align}
It can be verified that from (\ref{cl2})
\begin{align}\label{aa5}
  \frac{dg_1(z)}{dz}=&\frac{c\int\frac x{(1+xg_2(z))^2}dH_1(x)}{z^2-cd_3(z)d_4(z)},\quad{\rm and}\quad
  \frac{dg_2(z)}{dz}=\frac{\int\frac y{(1+yg_1(z))^2}dH_2(y)}{z^2-cd_3(z)d_4(z)}.
\end{align}
This yields
\begin{align}\label{aa4}
  \frac{\partial g_2(z)}{\partial g_1(z)}=\frac{\int\frac y{(1+yg_1(z))^2}dH_2(y)}{c\int\frac x{(1+xg_2(z))^2}dH_1(x)}.
\end{align}

Let $h_1(z)={y}/({1+g_1(z)y}),\ h_2(z)={x}/({1+g_2(z)x}),$
$$h_3(z)={\int\frac y{(1+yg_1(z))^2}dH_2(y)}/{\int\frac x{(1+xg_2(z))^2}dH_1(x)},$$
$$ h_{11}=h_1(z_1), \ h_{12}=h_1(z_2), \ h_{21}=h_2(z_1),\ h_{22}=h_2(z_2),$$
$$ g_{11}=g_1(z_1), \ g_{12}=g_1(z_2), \ g_{21}=g_2(z_1),\ g_{22}=g_2(z_2).$$
Rewrite
\begin{align}\label{aa3}
d(z_1,z_2)=&\frac c{z_1z_2}\int h_{21}h_{22}dH_1(x)\int h_{11}h_{12}dH_2(y)
\end{align}
Thus, we get from (\ref{aa1}) and (\ref{aa4})
\begin{align*}
\frac{\partial d(z_1,z_2)}{\partial z_1}=&-\frac{\partial }{\partial z_1}\frac {g_{11}}{z_2\int h_{21}dH_1(x)}\int h_{21}h_{22}dH_1(x)\int h_{11}h_{12}dH_2(y)\\
  =&\frac c{z_1z_2g_{11}}\int h_{21}h_{22}dH_1(x)\int\frac{ h_{11}^2h_{12}}ydH_2(y)\frac{\partial g_{11} }{\partial z_1}\\
    &-\frac {ch_3(z_1)}{z_1^2z_2g_{11}}\int h_{21}^2dH_1(x)\int h_{21}h_{22}dH_1(x)\int h_{11}h_{12}dH_2(y)\frac{\partial g_{11} }{\partial z_1}\\
  &-\frac {h_3(z_1)}{z_1z_2}\int h_{21}^2h_{22}dH_1(x)\int h_{11}h_{12}dH_2(y)\frac{\partial g_{11} }{\partial z_1},
\end{align*}
\begin{align*}
\frac{\partial d(z_1,z_2)}{\partial z_2}=&\frac c{z_1z_2 g_{12}}\int h_{21}h_{22}dH_1(x)\int \frac{h_{11}h_{12}^2}ydH_2(y)\frac{\partial g_{12}}{\partial z_2}\\
   &-\frac {ch_3(z_2)}{z_1z_2^2g_{12}}\int h_{22}^2dH_1(x)\int h_{21}h_{22}dH_1(x)\int h_{11}h_{12}dH_2(y)\frac{\partial g_{12} }{\partial z_1}\\
  &-\frac {h_3(z_2)}{z_1z_2}\int h_{21}h_{22}^2dH_1(x)\int h_{11}h_{12}dH_2(y)\frac{\partial g_{12} }{\partial z_1},
\end{align*}
and
\begin{align*}
&\frac{\partial^2 d(z_1,z_2)}{\partial z_1\partial z_2}\\
=&\frac c{z_1z_2 g_{11}g_{12}}\int h_{21}h_{22}dH_1(x)\int \frac{h_{11}^2h_{12}^2}{y^2}dH_2(y)\frac{\partial g_{11}}{\partial z_1}\frac{\partial g_{12}}{\partial z_2}\\
&\ -\frac {c h_3(z_2)}{z_1 z_2^2 g_{11}g_{12}}\int h_{22}^2dH_1(x)\int h_{21}h_{22}dH_1(x)\int \frac{h_{11}^2h_{12}}{y}dH_2(y)\frac{\partial g_{11}}{\partial z_1}\frac{\partial g_{12}}{\partial z_2}\\
&\ -\frac { h_3(z_2)}{z_1 z_2g_{11}}\int h_{21}h_{22}^2dH_1(x)\int \frac{h_{11}^2h_{12}}{y}dH_2(y)\frac{\partial g_{11}}{\partial z_1}\frac{\partial g_{12}}{\partial z_2}\\
&\ -\frac {c h_3(z_1)}{z_1^2z_2 g_{11}g_{12}}\int h_{21}^2dH_1(x)\int h_{21}h_{22}dH_1(x)\int \frac{h_{11}h_{12}^2}{y}dH_2(y)\frac{\partial g_{11}}{\partial z_1}\frac{\partial g_{12}}{\partial z_2}\\
&\ +\frac {c h_3(z_1)
h_3(z_2)}{z_1^2 z_2^2g_{11}g_{12}}\int h_{21}^2dH_1(x)\int h_{22}^2dH_1(x)\int h_{21}h_{22}dH_1(x)\int h_{11}h_{12}dH_2(y)\frac{\partial g_{11}}{\partial z_1}\frac{\partial g_{12}}{\partial z_2}\\
&\ +\frac {h_3(z_1)
h_3(z_2)}{z_1^2 z_2 g_{11}}\int h_{21}^2dH_1(x)\int h_{21}h_{22}^2dH_1(x)\int h_{11}h_{12}dH_2(y)\frac{\partial g_{11}}{\partial z_1}\frac{\partial g_{12}}{\partial z_2}\\
&\ -\frac {h_3(z_1)}{z_1z_2g_{12}}\int h_{21}^2h_{22}dH_1(x)\int \frac{h_{11}h_{12}^2}{y}dH_2(y)\frac{\partial g_{11}}{\partial z_1}\frac{\partial g_{12}}{\partial z_2}\\
&\ +\frac {h_3(z_1)
h_3(z_2)}{z_1z_2^2g_{12}}\int h_{22}^2dH_1(x)\int h_{21}^2h_{22}dH_1(x)\int h_{11}h_{12}dH_2(y)\frac{\partial g_{11}}{\partial z_1}\frac{\partial g_{12}}{\partial z_2}\\
&\ +\frac { h_3(z_1)
h_3(z_2)}{cz_1z_2}\int h_{21}^2h_{22}^2dH_1(x)\int h_{11}h_{12}dH_2(y)\frac{\partial g_{11}}{\partial z_1}\frac{\partial g_{12}}{\partial z_2}.
\end{align*}

For $\mathcal{K}_1(\alpha)$, we obtain
\begin{align*}
&\mathcal{K}_1(\alpha)\\
=&-\frac{c^2}{4\pi^2}\oint_{\cC_1}\oint_{\cC_2}\frac{1}{g_{11}g_{12}(1-\alpha d(z_1,z_2))^2}
\left(\int h_{21}h_{22}dH_1(x)\right)^2\\
&\qquad\times\int \frac{h_{11}^2h_{12}}ydH_2(y)\int \frac{h_{11}h_{12}^2}ydH_2(y)d g_{11}d g_{12}\\
=&\frac{c^2}{2\pi i}\oint_{\cC_1}\frac{1}{g_{11}}
\left(\int xh_{21}dH_1(x)\right)^2\int {h_{11}^2}dH_2(y)\int {yh_{11}}dH_2(y)d g_{11}d g_{12}\\
=&{c^2}\left(\int x^2dH_1(x)\right)^2\left(\int {y^2}dH_2(y)\right)^2=\frac{c^2}4\left(\int x^2dH_1(x)\right)^2.
\end{align*}
Here
\begin{align*}
\int {y^2}dH_2(y)&=\int_{-1}^1{y^2}\frac1{\pi\sqrt{1-y^2}}dy\\
&\xlongequal{y=\cos\theta}\frac1{\pi}\int_{0}^{\pi}{\cos^2\theta}d\theta=\frac1{2\pi}\int_{0}^{2\pi}{\cos^2\theta}
d\theta\\
&\xlongequal{s=e^{i\theta}}\frac1{2\pi i}\oint_{|s|=1}\frac{\left(s^2+1\right)^2}
{4s^3}ds=\frac12.
\end{align*}
By (\ref{aa3}), it follows that
\begin{align*}
  \left.\frac{\partial d(z_1,z_2)}{\partial g_{11}}\right|_{g_{11}=0}=-\frac{1}{z_2\int xdH_1(x)}\int xh_{22}dH_1(x)\int yh_{12}dH_2(y).
\end{align*}
and
\begin{align*}
  \left.\frac{\partial d(z_1,z_2)}{\partial g_{12}}\right|_{g_{12}=0}=-\frac{1}{z_1\int xdH_1(x)}\int xh_{21}dH_1(x)\int yh_{11}dH_2(y)
\end{align*}
Using (\ref{aa1}) and (\ref{aa2}), one finds that
\begin{align*}
\mathcal{K}_2(\alpha)=&-\frac c{4\pi^2}\oint_{\cC_1}\oint_{\cC_2}\frac {z_1z_2 g_{11}^{-1}g_{12}^{-1}}{1-\alpha d(z_1,z_2)}\int h_{21}h_{22}dH_1(x)\int \frac{h_{11}^2h_{12}^2}{y^2}dH_2(y){dg_{11}}{dg_{12}}\\
=&\frac {c^2}{4\pi^2}\oint_{\cC_1}\oint_{\cC_2}\frac {z_1 g_{11}^{-1}g_{12}^{-2}}{1-\alpha d(z_1,z_2)}\int h_{22}dH_1(x)\int h_{21}h_{22}dH_1(x)\\
&\qquad\times\int \frac{h_{11}^2h_{12}^2}{y^2}dH_2(y)d g_{11}d g_{12}\\
=& -\frac{c^2}{2\pi i}\oint_{\cC_1}\bigg[\frac {z_1 g_{11}^{-1}}{1-\alpha d(z_1,z_2)}\int h_{22}dH_1(x)\int h_{21}h_{22}dH_1(x)\\
&\qquad\times\left.\int \frac{h_{11}^2h_{12}^2}{y^2}dH_2(y)\bigg]'\right|_{g_{12}=0}d g_{11}\\
=& -\frac{c^2}{2\pi i}\oint_{\cC_1}\frac {\alpha z_1}{g_{11}}\left.\frac{\partial d(z_1,z_2)}{\partial g_{12}}\right|_{g_{12}=0}\int xdH_1(x)\int xh_{21}dH_1(x)\int {h_{11}^2}dH_2(y)d g_{11}\\
&+\frac{c^2}{\pi i}\oint_{\cC_1}\frac {z_1}{g_{11}}\int xdH_1(x)\int xh_{21}dH_1(x)\int {yh_{11}^2}dH_2(y)d g_{11}\\
=& \frac{c^2}{2\pi i}\oint_{\cC_1}\frac {\alpha }{g_{11}}\left(\int xh_{21}dH_1(x)\right)^2\int {yh_{11}}dH_2(y)\int {h_{11}^2}dH_2(y)d g_{11}\\
&-\frac{c^3}{\pi i}\oint_{\cC_1}\frac {\int h_{21}dH_1(x)}{g_{11}^2}\int xdH_1(x)\int xh_{21}dH_1(x)\int {yh_{11}^2}dH_2(y)d g_{11}\\
=& {c^2} {\alpha }\left(\int x^2dH_1(x)\right)^2\left(\int {y^2}dH_2(y)\right)^2\\
&-2{c^3}\int xdH_1(x)\left.\left[{\int h_{21}dH_1(x)}\int xh_{21}dH_1(x)\int {yh_{11}^2}dH_2(y)\right]'\right|_{g_{11}=0}\\
=& \frac{\alpha c^2} {4}\left(\int x^2dH_1(x)\right)^2+4{c^3}\left(\int xdH_1(x)\right)^2\int x^2dH_1(x)\int {y^4}dH_2(y)\\
=& \frac{\alpha c^2} {4}\left(\int x^2dH_1(x)\right)^2+\frac{3c^3}2\left(\int xdH_1(x)\right)^2\int x^2dH_1(x)
\end{align*}
where
\begin{align*}
\int {y}dH_2(y)&=\int {y^3}dH_2(y)=0
\end{align*}
and
\begin{align*}
\int {y^4}dH_2(y)&=\int_{-1}^1{y^4}\frac1{\pi\sqrt{1-y^2}}dy\\
&\xlongequal{y=\cos\theta}\frac1{\pi}\int_{0}^{\pi}{\cos^4\theta}d\theta=\frac1{2\pi}\int_{0}^{2\pi}{\cos^4\theta}
d\theta\\
&\xlongequal{s=e^{i\theta}}\frac1{2\pi i}\oint_{|s|=1}\frac{\left(s^2+1\right)^4}
{16s^5}ds=3/8.
\end{align*}

Using the same methods, we have
\begin{align*}
\mathcal{K}_{3}=&-\frac{ c}{4\pi^2}\oint_{\cC_1}\oint_{\cC_2}\frac{z_1z_2}{g_{11}g_{12}}\int h_{21}h_{22}dH_1(x)\int \frac{h_{11}^2h_{12}^2}{y^2}dH_2(y){d g_{11}}{d g_{12}}\\
=&\frac{ c^2}{4\pi^2}\oint_{\cC_1}\oint_{\cC_2}\frac{z_1\int h_{22}dH_1(x)}{g_{11}g_{12}^2}\int h_{21}h_{22}dH_1(x)\int \frac{h_{11}^2h_{12}^2}{y^2}dH_2(y){d g_{11}}{d g_{12}}\\
=&\frac{ c^2}{\pi i}\oint_{\cC_1}\frac{z_1\int xdH_1(x)}{g_{11}}\int xh_{21}dH_1(x)\int {y h_{11}^2}dH_2(y){d g_{11}}\\
=&4{ c^3}{\left(\int xdH_1(x)\right)^2}\int x^2dH_1(x)\int {y^4}dH_2(y)\\
=&\frac{3c^3}2{\left(\int xdH_1(x)\right)^2}\int x^2dH_1(x).
\end{align*}
Hence, we conclude that
\begin{align*}
\sigma^2={\rm Cov}\bigg( X_{f},X_f\bigg)=&\frac{ c^2(1+\alpha_x^2)} {2}\left(\int x^2dH_1(x)\right)^2
+\frac32{c^3}(\kappa_x+2)\left(\int xdH_1(x)\right)^2\int x^2dH_1(x).
\end{align*}

We are now in position to calculating $\mu=\re X_{f}$. Using (\ref{aa5}), it follows that
\begin{align*}
\re X_{f}=&\frac1{2\pi i}\oint \frac{c d_3(z)d_{4}(z)\int\frac x{1+g_2(z)x}dH_1(x)}{g_1(z)\int \frac{x}{(1+g_2(z)x)^2}dH_1(x)}\left(\frac{\alpha_x}{1-\frac{{{\alpha_x}}d_{3}(z)d_{4}(z)g_1^2(z)}{c\int x/(1+g_2(z)x)dH_1(x)}}+{\kappa_x}\right)dg_1(z)\\
=& {c }\left({\alpha_x}+{\kappa_x}\right)\int x^2dH_1(x)\int y^2dH_2(y)=\frac{{c }\left({\alpha_x}+{\kappa_x}\right)}{2}\int x^2dH_1(x).
\end{align*}

Furthermore, we see that
\begin{align*}
  &p\int x^2dF^{c_n,H_{1n},H_{2n}}\\
  =&-\frac{p}{2\pi i}\oint_{\mathcal{C}}z^2m_{n}^0(z)dz=\frac{p}{2\pi i}\oint_{\mathcal{C}}z\int\frac1{1+g_{2n}^0(z)x}dH_{1n}(x)dz\\
  =&\frac{p c_n^2}{2\pi i}\oint_{\mathcal{C}}\frac{\left(\int\frac{x}{1+g_{2n}^0(z)x}dH_{1n}(x)\right)^3\int\frac1{1+g_{2n}^0(z)x}dH_{1n}(x)}{g_{1n}^0(z)^3}
  \frac{1- \frac{(g_{1n}^0(z))^2}{c_n(\int\frac x{1+g_{2n}^0(z)}dH_{1n}(x))^2} d_{n3}(z)d_{n4}(z)}{\int\frac x{(1+xg_{2n}^0(z))^2}dH_{1n}(x)}dg_{1n}^0(z)\\
    =&\frac{p c_n^2}2\left.\left[{\left(\int\frac{x}{1+g_{2n}^0(z)x}dH_{1n}(x)\right)^3\int\frac1{1+g_{2n}^0(z)x}dH_{1n}(x)}
  \frac{1- \frac{(g_{1n}^0(z))^2}{c_n(\int\frac x{1+g_{2n}^0(z)}dH_{1n}(x))^2} d_{n3}(z)d_{n4}(z)}{\int\frac x{(1+xg_{2n}^0(z))^2}dH_{1n}(x)}\right]^{(2)}\right|_{g_{1n}^0=0}\\
  =&{pc_n}\left(\int x dH_{1n}(x)\right)^2\int y^2dH_{2n}(y)={pc_n}\left(\int x dH_{1n}(x)\right)^2\(\frac{1}{n}\sum_{k=1}^{n-\tau+1}\(\cos\frac{k}{n-\tau+1}\)^2\)\\\notag
  =&\frac{n-\tau}{2n}{pc_n}\left(\int x dH_{1n}(x)\right)^2.
\end{align*}
This lemma is thus proved.

%\bibliographystyle{plainnat}
%\bibliography{article}

\section{Appendix}

\subsection{Estimation of $\mathcal{I}_1$ and $\mathcal{I}_2$}\label{appes}

\subsubsection{Estimation of $\mathcal{I}_1$}

To begin with, we see that from Lemma \ref{le3}
\begin{align*}
  \varphi_{jk}'(w_{jk})=&\left[\bt_{1n}'\frac{\partial \widetilde f({\bf G}_{n})}{\partial w_{jk}}\bt_{1n}\bw_n\bt_{2n}\right]_{jk}e^{ixS^0(\theta)}+\left[\bt_{1n}'\widetilde f({\bf G}_{n})\bt_{1n}\right]_{jj}\left[\bt_{2n}\right]_{kk}e^{ixS^0(\theta)}\\
  &+\frac{2xi}{n}\left[\bt_{1n}'\widetilde f({\bf G}_{n})\bt_{1n}\bw_n\bt_{2n}\right]_{jk}^2e^{ixS^0(\theta)}.
\end{align*}
This yields
\begin{align*}
\varphi_{jk}^{(2)}(w_{jk}) =&\left[\bt_{1n}'\frac{\partial^2 \widetilde f({\bf G}_{n})}{\partial w_{jk}^2}\bt_{1n}\bw_n\bt_{2n}\right]_{jk}e^{ixS^0(\theta)}
+2\left[\bt_{1n}'\frac{\partial \widetilde f({\bf G}_{n})}{\partial w_{jk}}\bt_{1n}\right]_{jj}\left[\bt_{2n}\right]_{kk}e^{ixS^0(\theta)}\\
&+\frac{6xi}{n}\left[\bt_{1n}'\widetilde f({\bf G}_{n})\bt_{1n}\bw_n\bt_{2n}\right]_{jk}\left[\bt_{1n}'\frac{\partial \widetilde f({\bf G}_{n})}{\partial w_{jk}}\bt_{1n}\bw_n\bt_{2n}\right]_{jk}e^{ixS^0(\theta)}\\
&+\frac{6xi}{n}\left[\bt_{1n}'\widetilde f({\bf G}_{n})\bt_{1n}\bw_n\bt_{2n}\right]_{jk}\left[\bt_{1n}'\widetilde f({\bf G}_{n})\bt_{1n}\right]_{jj}\left[\bt_{2n}\right]_{kk}e^{ixS^0(\theta)}\\
    &-\frac{4x^2}{n^2}\left[\bt_{1n}'\widetilde f({\bf G}_{n})\bt_{1n}\bw_n\bt_{2n}\right]_{jk}^3e^{ixS^0(\theta)}\\
\triangleq&\mathcal{J}_{jk}^1+\mathcal{J}_{jk}^2+\mathcal{J}_{jk}^3+\mathcal{J}_{jk}^4+\mathcal{J}_{jk}^5.
\end{align*}

Applying (\ref{eq30}) and Lemma \ref{le5}, one finds
\begin{small}
\begin{align*}
  &\left[\bt_{1n}'\frac{\partial^2 \widetilde f({\bf G}_{n})}{\partial w_{jk}^2}\bt_{1n}\bw_n\bt_{2n}\right]_{jk}
  =i\int_{-\infty}^{\infty}u\widehat f(u)\left[\bt_{1n}'\frac{\partial^2 \bh_n(u)}{\partial w_{jk}^2}\bt_{1n}\bw_n\bt_{2n}\right]_{jk}du\\
=&-\frac 2 n\int_{-\infty}^{\infty}u\widehat f(u)\left[\bt_{2n}\right]_{kk}\left[\bt_{1n}'{\bf H}_n\bt_{1n}\right]_{jj}*\left[\bt_{1n}'{\bf H}_n\bt_{1n}\bw_n\bt_{2n}\right]_{jk}(u)du\\
&-\frac {6i}{ n^2}\int_{-\infty}^{\infty}u\widehat f(u)\left[\bt_{1n}'{\bf H}_n\bt_{1n}\right]_{jj}*\left[\bt_{2n}\bw_n'\bt_{1n}'{\bf H}_n\bt_{1n}\bw_n\bt_{2n}\right]_{kk}*\left[\bt_{1n}'{\bf H}_n\bt_{1n}\bw_n\bt_{2n}\right]_{jk}(u)du\\
&-\frac {2i}{ n^2}\int_{-\infty}^{\infty}u\widehat f(u)\left[\bt_{1n}'{\bf H}_n\bt_{1n}\bw_n\bt_{2n}\right]_{jk}*\left[\bt_{1n}'{\bf H}_n\bt_{1n}\bw_n\bt_{2n}\right]_{jk}*\left[\bt_{1n}'{\bf H}_n\bt_{1n}\bw_n\bt_{2n}\right]_{jk}(u)du.
\end{align*}
\end{small}

From \cite{yin2018no}, it can be seen that the moments of $\left\|\bt_{1n}'{\bf H}_n\bt_{1n}\right\|_2$, $\frac1{\sqrt n}\left\|\bt_{1n}'{\bf H}_n\bt_{1n}\bw_n\bt_{2n}\right\|_2$, and
\begin{align}\label{ee}
\frac1n\left\|\bt_{2n}\bw_n'\bt_{1n}'{\bf H}_n\bt_{1n}\bw_n\bt_{2n}\right\|_2
\end{align}
are bounded. Using Lemma \ref{le4} and (\ref{ee}), it is obvious that
\begin{align*}
&\left|\frac1{n}\sum_{j=1}^{m_1}\sum_{k=1}^{m_2}\re\mathcal{J}_{jk}^1\right|\\
\le&\frac 2{n^2}\int_{-\infty}^{\infty}|u\widehat f(u)|\int_0^u\bigg|\sum_{j,k}\re\left[\bt_{2n}\right]_{kk}\left[\bt_{1n}'{\bf H}_n(s)\bt_{1n}\right]_{jj}\left[\bt_{1n}'{\bf H}_n(u-s)\bt_{1n}\bw_n\bt_{2n}\right]_{jk}\bigg|dsdu\\
&+\frac {6}{ n^3}\int_{-\infty}^{\infty}|u\widehat f(u)|\int_0^u\int_0^s\bigg|\sum_{j,k}\re\left[\bt_{1n}'{\bf H}_n(s)\bt_{1n}\right]_{jj}\left[\bt_{2n}\bw_n'\bt_{1n}'{\bf H}_n(q)\bt_{1n}\bw_n\bt_{2n}\right]_{kk}\\
&\times\left[\bt_{1n}'{\bf H}_n(u-s-q)\bt_{1n}\bw_n\bt_{2n}\right]_{jk}\bigg|dqdsdu+\frac {2}{ n^3}\int_{-\infty}^{\infty}|u\widehat f(u)|\int_0^u\int_0^s\bigg|\sum_{j,k}\\
&\re\left[\bt_{1n}'{\bf H}_n(s)\bt_{1n}\bw_n\bt_{2n}\right]_{jk}\left[\bt_{1n}'{\bf H}_n(q)\bt_{1n}\bw_n\bt_{2n}\right]_{jk}\left[\bt_{1n}'{\bf H}_n(u-s-q)\bt_{1n}\bw_n\bt_{2n}\right]_{jk}\bigg|dqdsdu\\
\le&\frac C{n^{1/4}}\int_{-\infty}^{\infty}|u|^2|\widehat{f}(u)|du+\frac C{n^{1/4}}\int_{-\infty}^{\infty}|u|^3|\widehat{f}(u)|du+\frac C{n^{1/2}}\int_{-\infty}^{\infty}|u|^3|\widehat{f}(u)|du\le C n^{-1/4}.
\end{align*}

For $\mathcal{J}_{jk}^2$,
\begin{align*}
  &\left[\bt_{1n}'\frac{\partial \widetilde f({\bf G}_{n})}{\partial w_{jk}}\bt_{1n}\right]_{jj}=i\int_{-\infty}^{\infty}u\widehat f(u)\left[\bt_{1n}'\frac{\partial {\bf H}_{n}(u)}{\partial w_{jk}}\bt_{1n}\right]_{jj}du\\
  =&i\int_{-\infty}^{\infty}u\widehat f(u)\sum_{d,l}\left[\bt_{1n}'\right]_{jd}\left[\frac{\partial {\bf H}_{n}(u)}{\partial w_{jk}}\right]_{dl}\left[\bt_{1n}\right]_{lj}du\\
=&-\frac2 n\int_{-\infty}^{\infty}u\widehat f(u)\left[\bt_{1n}'{\bf H}_n\bt_{1n}\bw_n\bt_{2n}\right]_{jk}*\left[\bt_{1n}'{\bf H}_n\bt_{1n}\right]_{jj}(u)du.
\end{align*}
Due to Lemma \ref{le4} and (\ref{ee}), we have
\begin{align*}
\left|\frac1{n}\sum_{j=1}^{m_1}\sum_{k=1}^{m_2}\re\mathcal{J}_{jk}^2\right|\le&\frac4{n^2}\int_{-\infty}^{\infty}\int_0^u|u\widehat f(u)|\bigg|\sum_{j,k}\re\left[\bt_{1n}'{\bf H}_n(s)\bt_{1n}\right]_{jj}\left[\bt_{2n}\right]_{kk}\\
&\times\left[\bt_{1n}'{\bf H}_n(u-s)\bt_{1n}\bw_n\bt_{2n}\right]_{jk}\bigg|dsdu\\
\le&\frac C{n^{1/4}}\int_{-\infty}^{\infty}u^2|\widehat{f}(u)|du\le C n^{-1/4}.
\end{align*}

For $\mathcal{J}_{jk}^3$,
\begin{align*}
&\left[\bt_{1n}'\frac{\partial \widetilde f({\bf G}_{n})}{\partial w_{jk}}\bt_{1n}\bw_n\bt_{2n}\right]_{jk}=i\int_{-\infty}^{\infty}u\widehat f(u)\left[\bt_{1n}'\frac{\partial {\bf H}_{n}(u)}{\partial w_{jk}}\bt_{1n}\bw_n\bt_{2n}\right]_{jk}du\\
=&i\int_{-\infty}^{\infty}u\widehat f(u)\sum_{d,l}\left[\bt_{1n}'\right]_{jd}\left[\frac{\partial {\bf H}_{n}(u)}{\partial w_{jk}}\right]_{dl}\left[\bt_{1n}\bw_n\bt_{2n}\right]_{lk}du\\
=&-\frac1n\int_{-\infty}^{\infty}u\widehat f(u)\left[\bt_{1n}'{\bf H}_n\bt_{1n}\right]_{jj}*\left[\bt_{2n}\bw_n'\bt_{1n}'{\bf H}_n\bt_{1n}\bw_n\bt_{2n}\right]_{kk}(u)du\\
&-\frac1n\int_{-\infty}^{\infty}u\widehat f(u)\left[\bt_{1n}'{\bf H}_n\bt_{1n}\bw_n\bt_{2n}\right]_{jk}*\left[\bt_{1n}'{\bf H}_n\bt_{1n}\bw_n\bt_{2n}\right]_{jk}(u)du.
\end{align*}
Applying Lemma \ref{le4}, it follows that
\begin{align*}
\left|\frac1{n}\sum_{j=1}^{m_1}\sum_{k=1}^{m_2}\re\mathcal{J}_{jk}^3\right|\le&\frac{6|x|}{n^3}\int_{-\infty}^{\infty}\int_{-\infty}^{\infty}|u\widehat f(u)||s\widehat f(s)|\bigg|\sum_{j,k}\re\left[\bt_{1n}'{\bf H}_n(s)\bt_{1n}\bw_n\bt_{2n}\right]_{jk}\\
&\times\left[\bt_{1n}'{\bf H}_n\bt_{1n}\right]_{jj}*\left[\bt_{2n}\bw_n'\bt_{1n}'{\bf H}_n\bt_{1n}\bw_n\bt_{2n}\right]_{kk}(u)\bigg|dsdu\\
&+\frac{6|x|}{n^3}\int_{-\infty}^{\infty}\int_{-\infty}^{\infty}|u\widehat f(u)||s\widehat f(s)|\bigg|\sum_{j,k}\re\left[\bt_{1n}'{\bf H}_n(s)\bt_{1n}\bw_n\bt_{2n}\right]_{jk}\\
&\times\left[\bt_{1n}'{\bf H}_n\bt_{1n}\bw_n\bt_{2n}\right]_{jk}*\left[\bt_{1n}'{\bf H}_n\bt_{1n}\bw_n\bt_{2n}\right]_{jk}(u)\bigg|dsdu\\
\le&\frac C{n^{1/4}}\int_{-\infty}^{\infty}|s\widehat{f}(s)|ds\int_{-\infty}^{\infty}u^2|\widehat{f}(u)|du\le C n^{-1/4}.
\end{align*}

From Lemma \ref{le4}, one gets
\begin{align*}
\left|\frac1{n}\sum_{j=1}^{m_1}\sum_{k=1}^{m_2}\re\mathcal{J}_{jk}^4\right|\le&\frac{6|x|}{n^2}\int_{-\infty}^{\infty}\int_{-\infty}^{\infty}|u\widehat f(u)||s\widehat f(s)|\bigg|\sum_{j,k}\re\left[\bt_{1n}'{\bf H}_{n}(s)\bt_{1n}\bw_n\bt_{2n}\right]_{jk}\\
&\times\left[\bt_{1n}'{\bf H}_{n}(u)\bt_{1n}\right]_{jj}\left[\bt_{2n}\right]_{kk}\bigg|dtdsdu\\
\le&\frac C{n^{1/4}}\int_{-\infty}^{\infty}|u\widehat{f}(u)|du\int_{-\infty}^{\infty}|s\widehat{f}(s)|ds\le C n^{-1/4}
\end{align*}
and
\begin{align*}
\left|\frac1{n}\sum_{j=1}^{m_1}\sum_{k=1}^{m_2}\re\mathcal{J}_{jk}^5\right|\le&\frac{4x^2}{n^3}\int_{-\infty}^{\infty}\int_{-\infty}^{\infty}\int_{-\infty}^{\infty}|u\widehat f(u)||s\widehat f(s)||t\widehat f(t)|\bigg|\sum_{j,k}\re\left[\bt_{1n}'{\bf H}_{n}(t)\bt_{1n}\bw_n\bt_{2n}\right]_{jk}\\
&\times\left[\bt_{1n}'{\bf H}_{n}(s)\bt_{1n}\bw_n\bt_{2n}\right]_{jk}\left[\bt_{1n}'{\bf H}_{n}(u)\bt_{1n}\bw_n\bt_{2n}\right]_{jk}\bigg|dsdu\\\le&\frac C{n^{1/2}}\int_{-\infty}^{\infty}|u\widehat{f}(u)|du\int_{-\infty}^{\infty}|s\widehat{f}(s)|ds\int_{-\infty}^{\infty}|t\widehat{f}(t)|dt\le C n^{-1/2}.
\end{align*}

Hence,
\begin{align*}
&\left|\mathcal{I}_1\right|\to0\quad{\rm as}\ n\to\infty.
\end{align*}

\subsubsection{Estimation of $\mathcal{I}_2$}

Let $\tilde w$ be a random variable which has the same distribution as $w_{jk}$. Then
\begin{align*}
 \left|\mathcal{I}_2\right|\le&\frac {|x|}{12n} \sum_{j=1}^{m_1}\sum_{k=1}^{m_2} \re \left|w_{jk}' w_{jk}^4\right|\re\sup_{\hat w}\left|\varphi_{jk}^{(4)}(\varrhoup \tilde w)\right|\\
 \le&\frac {M|x|}{12 n } \sum_{j=1}^{m_1}\sum_{k=1}^{m_2}\re\sup_{\tilde w}\left|\varphi_{jk}^{(4)}(\varrhoup \tilde w)\right|.
\end{align*}
Let $w=\varrhoup \tilde w$. It can be verified that from Lemma \ref{le3}
\begin{small}
\begin{align*}
&\varphi_{jk}^{(3)}(w) =\left[\bt_{1n}'\frac{\partial^3 \widetilde f({\bf G}_{njkw})}{\partial w^3}\bt_{1n}\bw_{njkw}\bt_{2n}\right]_{jk}e^{ixS^0(w,\theta)}\\
&+3\left[\bt_{1n}'\frac{\partial^2 \widetilde f({\bf G}_{njkw})}{\partial w^2}\bt_{1n}\right]_{jj}\left[\bt_{2n}\right]_{kk}e^{ixS^0(w,\theta)}\\
&+\frac {8xi} n\left[\bt_{1n}'\frac{\partial^2 \widetilde f({\bf G}_{njkw})}{\partial w^2}\bt_{1n}\bw_{njkw}\bt_{2n}\right]_{jk}\left[\bt_{1n}'\widetilde f({\bf G}_{njkw})\bt_{1n}\bw_{njkw}\bt_{2n}\right]_{jk}e^{ixS^0(w,\theta)}\\
&+\frac {16xi} n\left[\bt_{1n}'\frac{\partial \widetilde f({\bf G}_{njkw})}{\partial w}\bt_{1n}\right]_{jj}\left[\bt_{2n}\right]_{kk}\left[\bt_{1n}'\widetilde f({\bf G}_{njkw})\bt_{1n}\bw_{njkw}\bt_{2n}\right]_{jk}e^{ixS^0(w,\theta)}\\
&+\frac{6xi}{n}\left[\bt_{1n}'\frac{\partial \widetilde f({\bf G}_{njkw})}{\partial w}\bt_{1n}\bw_{njkw}\bt_{2n}\right]_{jk}^2e^{ixS^0(w,\theta)}\\
&+\frac{12xi}{n}\left[\bt_{1n}'\widetilde f({\bf G}_{njkw})\bt_{1n}\right]_{jj}\left[\bt_{2n}\right]_{kk}\left[\bt_{1n}'\frac{\partial \widetilde f({\bf G}_{njkw})}{\partial w}\bt_{1n}\bw_{njkw}\bt_{2n}\right]_{jk}e^{ixS^0(w,\theta)}\\
&-\frac{24x^2}{n^2}\left[\bt_{1n}'\widetilde f({\bf G}_{njkw})\bt_{1n}\bw_{njkw}\bt_{2n}\right]_{jk}^2\left[\bt_{1n}'\frac{\partial \widetilde f({\bf G}_{njkw})}{\partial w}\bt_{1n}\bw_{njkw}\bt_{2n}\right]_{jk}e^{ixS^0(w,\theta)}\\
&+\frac{6xi}{n}\left[\bt_{1n}'\widetilde f({\bf G}_{njkw})\bt_{1n}\right]_{jj}^2\left[\bt_{2n}\right]_{kk}^2e^{ixS^0(w,\theta)}\\
&-\frac{24x^2}{n^2}\left[\bt_{1n}'\widetilde f({\bf G}_{njkw})\bt_{1n}\bw_{njkw}\bt_{2n}\right]_{jk}^2\left[\bt_{1n}'\widetilde f({\bf G}_{njkw})\bt_{1n}\right]_{jj}\left[\bt_{2n}\right]_{kk}e^{ixS^0(w,\theta)}\\
    &-\frac{8x^3i}{n^3}\left[\bt_{1n}'\widetilde f({\bf G}_{njkw})\bt_{1n}\bw_{njkw}\bt_{2n}\right]_{jk}^4e^{ixS^0(w,\theta)}.
\end{align*}
\end{small}
It is easy to obtain from the above equality
\begin{small}
\begin{align*}
&\varphi_{jk}^{(4)}(w) =\left[\bt_{1n}'\frac{\partial^4 \widetilde f({\bf G}_{njkw})}{\partial w^4}\bt_{1n}\bw_{njkw}\bt_{2n}\right]_{jk}e^{ixS^0(w,\theta)}\\
&+4\left[\bt_{1n}'\frac{\partial^3 \widetilde f({\bf G}_{njkw})}{\partial w^3}\bt_{1n}\right]_{jj}\left[\bt_{2n}\right]_{kk}e^{ixS^0(w,\theta)}\\
&+\frac{10xi}{n}\left[\bt_{1n}'\frac{\partial^3 \widetilde f({\bf G}_{njkw})}{\partial w^3}\bt_{1n}\bw_{njkw}\bt_{2n}\right]_{jk}\left[\bt_{1n}'\widetilde f({\bf G}_{njkw})\bt_{1n}\bw_{njkw}\bt_{2n}\right]_{jk}e^{ixS^0(w,\theta)}\\
&+\frac{30xi}{n}\left[\bt_{1n}'\frac{\partial^2 \widetilde f({\bf G}_{njkw})}{\partial w^2}\bt_{1n}\right]_{jj}\left[\bt_{2n}\right]_{kk}\left[\bt_{1n}'\widetilde f({\bf G}_{njkw})\bt_{1n}\bw_{njkw}\bt_{2n}\right]_{jk}e^{ixS^0(w,\theta)}\\
&+\frac {20xi} n\left[\bt_{1n}'\frac{\partial^2 \widetilde f({\bf G}_{njkw})}{\partial w^2}\bt_{1n}\bw_{njkw}\bt_{2n}\right]_{jk}\left[\bt_{1n}'\frac{\partial\widetilde f({\bf G}_{njkw})}{\partial w}\bt_{1n}\bw_{njkw}\bt_{2n}\right]_{jk}e^{ixS^0(w,\theta)}\\
&+\frac {20xi} n\left[\bt_{1n}'\frac{\partial^2 \widetilde f({\bf G}_{njkw})}{\partial w^2}\bt_{1n}\bw_{njkw}\bt_{2n}\right]_{jk}\left[\bt_{1n}'\widetilde f({\bf G}_{njkw})\bt_{1n}\right]_{jj}\left[\bt_{2n}\right]_{kk}e^{ixS^0(w,\theta)}\\
&+\frac {40xi} n\left[\bt_{1n}'\frac{\partial \widetilde f({\bf G}_{njkw})}{\partial w}\bt_{1n}\right]_{jj}\left[\bt_{2n}\right]_{kk}\left[\bt_{1n}'\frac{\partial\widetilde f({\bf G}_{njkw})}{\partial w}\bt_{1n}\bw_{njkw}\bt_{2n}\right]_{jk}e^{ixS^0(w,\theta)}\\
&+\frac {40xi} n\left[\bt_{1n}'\frac{\partial \widetilde f({\bf G}_{njkw})}{\partial w}\bt_{1n}\right]_{jj}\left[\bt_{1n}'\widetilde f({\bf G}_{njkw})\bt_{1n}\right]_{jj}\left[\bt_{2n}\right]_{kk}^2e^{ixS^0(w,\theta)}\\
&-\frac {40x^2} {n^2}\left[\bt_{1n}'\frac{\partial^2 \widetilde f({\bf G}_{njkw})}{\partial w^2}\bt_{1n}\bw_{njkw}\bt_{2n}\right]_{jk}\left[\bt_{1n}'\widetilde f({\bf G}_{njkw})\bt_{1n}\bw_{njkw}\bt_{2n}\right]_{jk}^2e^{ixS^0(w,\theta)}\\
&-\frac {80x^2} {n^2}\left[\bt_{1n}'\frac{\partial \widetilde f({\bf G}_{njkw})}{\partial w}\bt_{1n}\right]_{jj}\left[\bt_{2n}\right]_{kk}\left[\bt_{1n}'\widetilde f({\bf G}_{njkw})\bt_{1n}\bw_{njkw}\bt_{2n}\right]_{jk}^2e^{ixS^0(w,\theta)}\\
&-\frac{60x^2}{n^2}\left[\bt_{1n}'\frac{\partial \widetilde f({\bf G}_{njkw})}{\partial w}\bt_{1n}\bw_{njkw}\bt_{2n}\right]_{jk}^2\left[\bt_{1n}'\widetilde f({\bf G}_{njkw})\bt_{1n}\bw_{njkw}\bt_{2n}\right]_{jk}e^{ixS^0(w,\theta)}\\
&-\frac{60x^2}{n^2}\left[\bt_{1n}'\widetilde f({\bf G}_{njkw})\bt_{1n}\right]_{jj}^2\left[\bt_{2n}\right]_{kk}^2\left[\bt_{1n}'\widetilde f({\bf G}_{njkw})\bt_{1n}\bw_{njkw}\bt_{2n}\right]_{jk}e^{ixS^0(w,\theta)}\\
&-\frac{120x^2}{n^2}\left[\bt_{1n}'\widetilde f({\bf G}_{njkw})\bt_{1n}\right]_{jj}\left[\bt_{2n}\right]_{kk}\left[\bt_{1n}'\frac{\partial \widetilde f({\bf G}_{njkw})}{\partial w}\bt_{1n}\bw_{njkw}\bt_{2n}\right]_{jk}\\
&\quad\times\left[\bt_{1n}'\widetilde f({\bf G}_{njkw})\bt_{1n}\bw_{njkw}\bt_{2n}\right]_{jk}e^{ixS^0(w,\theta)}\\
&-\frac{80x^3i}{n^3}\left[\bt_{1n}'\widetilde f({\bf G}_{njkw})\bt_{1n}\bw_{njkw}\bt_{2n}\right]_{jk}^3\left[\bt_{1n}'\frac{\partial \widetilde f({\bf G}_{njkw})}{\partial w}\bt_{1n}\bw_{njkw}\bt_{2n}\right]_{jk}e^{ixS^0(w,\theta)}\\
&-\frac{80x^3i}{n^3}\left[\bt_{1n}'\widetilde f({\bf G}_{njkw})\bt_{1n}\bw_{njkw}\bt_{2n}\right]_{jk}^3\left[\bt_{1n}'\widetilde f({\bf G}_{njkw})\bt_{1n}\right]_{jj}\left[\bt_{2n}\right]_{kk}e^{ixS^0(w,\theta)}\\
&+\frac{16x^4}{n^4}\left[\bt_{1n}'\widetilde f({\bf G}_{njkw})\bt_{1n}\bw_{njkw}\bt_{2n}\right]_{jk}^5e^{ixS^0(w,\theta)}\\
\triangleq&\bigg(\mathcal{Q}_{jk}^1+\mathcal{Q}_{jk}^2+\mathcal{Q}_{jk}^3+\mathcal{Q}_{jk}^4+\mathcal{Q}_{jk}^5+\mathcal{Q}_{jk}^6+\mathcal{Q}_{jk}^7+\mathcal{Q}_{jk}^8
+\mathcal{Q}_{jk}^9+\mathcal{Q}_{jk}^{10}+\mathcal{Q}_{jk}^{11}+\mathcal{Q}_{jk}^{12}\\
&+\mathcal{Q}_{jk}^{13}+\mathcal{Q}_{jk}^{14}+\mathcal{Q}_{jk}^{15}+\mathcal{Q}_{jk}^{16}\bigg) \ e^{ixS^0(w,\theta)}.
\end{align*}
\end{small}

Using Lemma \ref{le7}, it follows that
\begin{footnotesize}
\begin{align*}
&\mathcal{Q}_{jk}^1
=i\int_{-\infty}^{\infty}u\widehat f(u)\left[\bt_{1n}'\frac{\partial^4 {\bf H}_{njkw}(u)}{\partial w_{jk}^4}\bt_{1n}\bw_{njkw}\bt_{2n}\right]_{jk}e^{ixS^0(w,\theta)}du\\
=&-\frac{24i}{n^2}\int_{-\infty}^{\infty}u\widehat f(u)\left[\bt_{2n}\right]_{kk}^2\left[\bt_{1n}'{\bf H}_{njkw}\bt_{1n}\right]_{jj}*\left[\bt_{1n}'{\bf H}_{njkw}\bt_{1n}\right]_{jj}*\left[\bt_{1n}'{\bf H}_{njkw}\bt_{1n}\bw_{njkw}\bt_{2n}\right]_{jk}(u)du\\
&+\frac {144}{n^3}\int_{-\infty}^{\infty}u\widehat f(u)\left[\bt_{2n}\right]_{kk}\left[\bt_{1n}'{\bf H}_{njkw}\bt_{1n}\right]_{jj}*\left[\bt_{2n}\bw_{njkw}'\bt_{1n}'{\bf H}_{njkw}\bt_{1n}\right]_{kj}*\left[\bt_{2n}\bw_{njkw}'\bt_{1n}'{\bf H}_{njkw}\bt_{1n}\right]_{kj}\\
&\qquad\qquad*\left[\bt_{1n}'{\bf H}_{njkw}\bt_{1n}\bw_{njkw}\bt_{2n}\right]_{jk}(u)du\\
&+\frac {144}{n^3}\int_{-\infty}^{\infty}u\widehat f(u)\left[\bt_{2n}\right]_{kk}\left[\bt_{1n}'{\bf H}_{njkw}\bt_{1n}\right]_{jj}*\left[\bt_{2n}\bw_{njkw}'\bt_{1n}'{\bf H}_{njkw}\bt_{1n}\bw_{njkw}\bt_{2n}\right]_{kk}*\left[\bt_{1n}'{\bf H}_{njkw}\bt_{1n}\right]_{jj}\\
&\qquad\qquad*\left[\bt_{1n}'{\bf H}_{njkw}\bt_{1n}\bw_{njkw}\bt_{2n}\right]_{jk}(u)du\\
&+\frac {240i}{n^4}\int_{-\infty}^{\infty}u\widehat f(u)\left[\bt_{1n}'{\bf H}_{njkw}\bt_{1n}\right]_{jj}*\left[\bt_{2n}\bw_{njkw}'\bt_{1n}'{\bf H}_{njkw}\bt_{1n}\right]_{kj}*\left[\bt_{2n}\bw_{njkw}'\bt_{1n}'{\bf H}_{njkw}\bt_{1n}\right]_{kj}\\
&\qquad\qquad*\left[\bt_{2n}\bw_{njkw}'\bt_{1n}'{\bf H}_{njkw}\bt_{1n}\right]_{kj}*\left[\bt_{2n}\bw_{njkw}'\bt_{1n}'{\bf H}_{njkw}\bt_{1n}\bw_{njkw}\bt_{2n}\right]_{kk}(u)du\\
&+\frac {120i}{n^4}\int_{-\infty}^{\infty}u\widehat f(u)\left[\bt_{1n}'{\bf H}_{njkw}\bt_{1n}\right]_{jj}*\left[\bt_{2n}\bw_{njkw}'\bt_{1n}'{\bf H}_{njkw}\bt_{1n}\bw_{njkw}\bt_{2n}\right]_{kk}*\left[\bt_{1n}'{\bf H}_{njkw}\bt_{1n}\right]_{jj}\\
&\qquad\qquad*\left[\bt_{2n}\bw_{njkw}'\bt_{1n}'{\bf H}_{njkw}\bt_{1n}\right]_{kj}*\left[\bt_{2n}\bw_{njkw}'\bt_{1n}'{\bf H}_{njkw}\bt_{1n}\bw_{njkw}\bt_{2n}\right]_{kk}(u)du\\
&+\frac {24i}{n^4}\int_{-\infty}^{\infty}u\widehat f(u)\left[\bt_{1n}'{\bf H}_{njkw}\bt_{1n}\bw_{njkw}\bt_{2n}\right]_{jk}*\left[\bt_{1n}'{\bf H}_{njkw}\bt_{1n}\bw_{njkw}\bt_{2n}\right]_{jk}*\left[\bt_{1n}'{\bf H}_{njkw}\bt_{1n}\bw_{njkw}\bt_{2n}\right]_{jk}\\
&\qquad\qquad*\left[\bt_{1n}'{\bf H}_{njkw}\bt_{1n}\bw_{njkw}\bt_{2n}\right]_{jk}*\left[\bt_{1n}'{\bf H}_{njkw}\bt_{1n}\bw_{njkw}\bt_{2n}\right]_{jk}(u)du.
\end{align*}
\end{footnotesize}
From Lemma \ref{le8}, we see that
\begin{align*}
  \frac1n\sum_{j,k}\re\left|\mathcal{Q}_{jk}^1e^{ixS^0(w,\theta)}\right|\le\frac Cn\int_{-\infty}^{\infty}\left(|u|^3+u^4+|u|^5\right)\widehat f(u)du\to0.
\end{align*}

For $\mathcal{Q}_{jk}^2$, by Lemma \ref{le6}, one gets
\begin{align*}
 &\mathcal{Q}_{jk}^2=4i\int_{-\infty}^{\infty}u\widehat f(u)\left[\bt_{1n}'\frac{\partial^3 {\bf H}_{njkw}(u)}{\partial w^3}\bt_{1n}\right]_{jj}\left[\bt_{2n}\right]_{kk}du\\
=&-\frac{96i}{n^2}\int_{-\infty}^{\infty}u\widehat f(u)\left[\bt_{2n}\right]_{kk}^2\left[\bt_{1n}'{\bf H}_n\bt_{1n}\right]_{jj}*\left[\bt_{2n}\bw_n'\bt_{1n}'{\bf H}_n\bt_{1n}\right]_{kj}*\left[\bt_{1n}'{\bf H}_n\bt_{1n}\right]_{jj}(u)du\\
&+\frac{96}{n^3} \int_{-\infty}^{\infty}u\widehat f(u)\left[\bt_{2n}\right]_{kk}\left[\bt_{1n}'{\bf H}_n\bt_{1n}\right]_{jj}*\left[\bt_{2n}\bw_n'\bt_{1n}'{\bf H}_n\bt_{1n}\right]_{kj}*\left[\bt_{2n}\bw_n'\bt_{1n}'{\bf H}_n\bt_{1n}\right]_{kj}\\
&\qquad*\left[\bt_{2n}\bw_n'\bt_{1n}'{\bf H}_n\bt_{1n}\right]_{kj}(u)du\\
&+\frac{96}{n^3} \int_{-\infty}^{\infty}u\widehat f(u)\left[\bt_{2n}\right]_{kk}\left[\bt_{1n}'{\bf H}_n\bt_{1n}\right]_{jj}*\left[\bt_{2n}\bw_n'\bt_{1n}'{\bf H}_n\bt_{1n}\bw_n\bt_{2n}\right]_{kk}*\left[\bt_{1n}'{\bf H}_n\bt_{1n}\right]_{jj}\\
&\qquad*\left[\bt_{2n}\bw_n'\bt_{1n}'{\bf H}_n\bt_{1n}\right]_{kj}(u)du .
\end{align*}
Thus, applying Lemma \ref{le8} again, it yields that
\begin{align*}
  \frac1n\sum_{j,k}\re\left|\mathcal{Q}_{jk}^2e^{ixS^0(w,\theta)}\right|\le\frac Cn\int_{-\infty}^{\infty}\left( u^2+|u|^3\right)\widehat f(u)du\to0.
\end{align*}
The remaining terms are similar. Consequently, we conclude that
\begin{align*}
&\left|\mathcal{I}_2\right|\to0\quad{\rm as}\ n\to\infty.
\end{align*}

\subsection{proof of Lemma \ref{th2}}
 We start with two probability inequalities for extreme eigenvalues of $\bs_n$. It is well known (see \cite{bai2004clt,Yin1988}) that for any $l$, $\eta_1>(1+\sqrt c)^2$ and $\eta_2<(1-\sqrt c)^2$
\begin{align*}
{\rm P}\left(\lambda_{\max}\left(\frac1n\bx_n\bx_n^*\right)\ge\eta_1\right)=o(n^{-l})
\end{align*}
and
\begin{align*}
{\rm P}\left(\lambda_{\min}\left(\frac1n\bx_n\bx_n^*\right)\le\eta_2\right)=o(n^{-l}).
\end{align*}
Thus, letting
\begin{align*}
\eta_r\in
\begin{cases}
(0,x_r),&c\ge1,\\
(\limsup_{n}s_1\lambda_{\max}^{\bS_1}\left(1+\sqrt c\right)^2,x_r),& {\rm otherwise},
%(\limsup_{n}s_1\lambda_{\min}^{\bt_{2n}}I_{(0,1)}(c)\left(1-\sqrt c\right)^2,x_r),&{\rm if} \ s_1\le0.{\rm if} \ s_1>0,\\
\end{cases}
\end{align*}
we have for any $l>0$
\begin{align*}
{\rm P}\left(\lambda_{\max}\left(\bs_n\right)\ge\eta_r\right)=o(n^{-l}).
\end{align*}
Likewise, we have
\begin{align*}
{\rm P}\left(\lambda_{\min}\left(\bs_n\right)\le\eta_l\right)=o(n^{-l}).
\end{align*}
where
\begin{align*}
\eta_l\in
\begin{cases}
(x_l ,0),&c\ge1,\\
(x_l,\liminf_ns_n\lambda_{\min}^{\bS_1}I_{(0,1)}(c)\left(1-\sqrt c\right)^2),&{\rm if} \ \liminf_ns_n\lambda_{\min}^{\bS_1}I_{(0,1)}(c)>0,\\
(x_l,\liminf_ns_n\lambda_{\max}^{\bS_1}\left(1+\sqrt c\right)^2),&{\rm if} \ \liminf_ns_n\lambda_{\min}^{\bS_1}I_{(0,1)}(c)\le0.
\end{cases}
\end{align*}
Here $\eta_l,\eta_r,x_l,x_r$ can be chosen such that
\begin{align}\label{eq18}
x_r-\eta_r>2\tau^2\quad {\rm and} \quad\eta_l-x_l>2\tau^2.
\end{align}

\subsection{The limiting distribution of $M_{n1}(z)$}

The aim of this part is to find the limiting distribution of $M_{n1}(z)$. That is to say, we show for any positive integer $r$, the sum
  $$\sum_{j=1}^r\alpha_jM_{n1}(z_j) \qquad\Im z_j\neq0$$
converges in distribution to a Gaussian random variable. Since
\begin{align*}
\lim_{v_0\downarrow 0}\limsup_{n\to\infty}\re\left|\int_{\mathcal{C}_l\cup\mathcal{C}_r}f(z)M_{n1}(z)dz\right|^2\to0,
\end{align*}
 it suffices to consider $z=u+iv_0\in \mathcal{C}_u$.

Note that
\begin{align*}
m_n(z)=\frac1p\rtr\left(\bs_n-z\bi_p\right)^{-1}\triangleq \frac1p\rtr \bd^{-1}(z).
\end{align*}
Let ${\rm E}_{0}(\cdot)$ denote mathematical expectation and ${\rm E}_{k}(\cdot)$ denote conditional expectation with respect to the $\sigma$-field given by ${\bf x}_1,\cdots,{\bf x}_k.$ By the formula
\begin{align}\label{bal2}
\left(\boldsymbol\Sigma+q\boldsymbol{\alpha\beta}^*\right)^{-1}=\boldsymbol\Sigma^{-1}-\frac{q\boldsymbol\Sigma^{-1}\boldsymbol{\alpha\beta}^*\boldsymbol\Sigma^{-1}}
{1+q\boldsymbol{\beta}^*
\boldsymbol\Sigma^{-1}\boldsymbol\alpha},
\end{align} we have
\begin{align}
M_{n1}(z)
=&\sum_{k=1}^n\rtr\left\{\re_{k}\bd^{-1}(z)-\re_{k-1}\bd^{-1}(z)\right\}
=\sum_{k=1}^n\left({\rm E}_{k}-{\rm E}_{k-1}\right){\rm tr}\left[\bd(z)^{-1}-\bd_k^{-1}(z)\right]\notag\\
=&-\frac1n\sum_{k=1}^n\left({\rm E}_{k}-{\rm E}_{k-1}\right)s_k\beta_k(z)\bq_k^*\bd_k^{-2}(z)\bq_k\notag\\
=&-\frac1n\sum_{k=1}^n\left({\rm E}_{k}-{\rm E}_{k-1}\right)s_k\beta_k(z)\gamma_k(z)-\frac1n\sum_{k=1}^n\left({\rm E}_{k}-{\rm E}_{k-1}\right)s_k\beta_k(z)\rtr(\bd_k^{-2}(z)\bS_1)\notag\\
\triangleq&\mathcal{I}_1+\mathcal{I}_2.\label{beq12}
\end{align}
From the identity
\begin{align}\label{bal10}
\beta_k(z)-\widetilde{\beta}_k(z)=-\frac1n s_k \widetilde{\beta}_k(z)\beta_k(z)\varepsilon_k(z),
\end{align}
we have
\begin{align*}
\mathcal{I}_1=&-\frac1n\sum_{k=1}^n{\rm E}_{k}s_k\widetilde{\beta}_k(z)\gamma_k(z)+\frac1{n^2}\sum_{k=1}^n\left({\rm E}_{k}-{\rm E}_{k-1}\right)s_k^2 \widetilde{\beta}_k(z)\beta_k(z) \varepsilon_k(z)\gamma_k(z).
\end{align*}
It is obvious from Lemma 0.1 for $l\ge1$
\begin{align}\label{cal12}
  \re\left|\beta_k(z)\right|^l\le C,\quad\re\left|\widetilde\beta_k(z)\right|^l\le C.
\end{align}
By Lemma \ref{ble3} and (\ref{cal12}), it yields
\begin{align*}
&\frac1{n^4}\sum_{k=1}^n\re|\left({\rm E}_{k}-{\rm E}_{k-1}\right)s_k^2 \widetilde{\beta}_k(z)\beta_k(z) \varepsilon_k(z)\gamma_k(z)|^2\\
\le&\frac C{n^4}\sum_{k=1}^n\re^{1/2}|\widetilde{\beta}_k(z)\beta_k(z)|^4\re^{1/2}|\varepsilon_k(z)\gamma_k(z)|^4\\
\le&\frac C{n^4}\sum_{k=1}^n\re^{1/4}|\varepsilon_k(z)|^8\re^{1/4}|\gamma_k(z)|^8\le C\eta_n^5\to0.
\end{align*}
This implies
\begin{align}\label{beq10}
\mathcal{I}_1=-\frac1n\sum_{k=1}^n{\rm E}_{k}s_k\widetilde{\beta}_k(z)\gamma_k(z)+o_p(1).
\end{align}
 Using the same argument and
\begin{align}\label{bal11}
\beta_k(z)-\widetilde{\beta}_k(z)=-\frac1n s_k \widetilde{\beta}_k^2(z)\varepsilon_k(z)+\frac1{n^2}s_k^2\beta_k(z)\widetilde{\beta}_k^2(z)\varepsilon_k^2(z),
\end{align}
 one gets
\begin{align}\label{beq11}
\mathcal{I}_2=\frac1{n^2}\sum_{k=1}^n{\rm E}_{k}s_k^2\widetilde{\beta}_k^2(z)\varepsilon_k(z)\rtr(\bd_k^{-2}(z)\bS_1)+o_p(1).
\end{align}
From (\ref{beq12}), (\ref{beq10}), and (\ref{beq11}), we conclude that
\begin{align*}
M_{n1}(z)=&-\frac1n\sum_{k=1}^n{\rm E}_{k}s_k\widetilde{\beta}_k(z)\gamma_k(z)+\frac1{n^2}\sum_{k=1}^n{\rm E}_{k}s_k^2\widetilde{\beta}_k^2(z)\varepsilon_k(z)\rtr(\bd_k^{-2}(z)\bS_1)+o_p(1).
\end{align*}
Define
\begin{align*}
h_k(z)=&-\frac1n{\rm E}_{k}s_k\widetilde{\beta}_k(z)\gamma_k(z)+\frac1{n^2}{\rm E}_{k}s_k^2\widetilde{\beta}_k^2(z)\varepsilon_k(z)\rtr(\bd_k^{-2}(z)\bS_1)\\
=&-n^{-1}\frac{d}{dz}{\rm E}_{k}s_k\widetilde{\beta}_k(z)\varepsilon_k(z).
\end{align*}
Thus we only need to prove that $\sum_{j=1}^r\alpha_j\sum_{k=1}^nh_k(z_j)=\sum_{k=1}^n\sum_{j=1}^r\alpha_jh_k(z_j)$ converges in distribution to a Gaussian random variable. By Lemma \ref{lep2}, it suffices to verify condition $(i)$ and $(ii)$. It follows from Lemma \ref{ble3} that
\begin{align*}
\sum_{k=1}^n\re\left|\sum_{j=1}^r\alpha_jh_k(z_j)\right|^4\le&\frac C{n^4}\sum_{k=1}^n\sum_{j=1}^r\alpha_j^4\bigg[\re|\gamma_k(z_j)|^4+\re|\varepsilon_k(z_j)|^4\bigg]\le C \eta_n^2\to0
\end{align*}
which implies that conditions $(ii)$ of Lemma \ref{lep2}. The goal turns into finding a limit in probability of
\begin{align}\label{ben1}
\Phi(z_1,z_2)\triangleq\sum_{k=1}^n\re_{k-1}\left[h_k(z_1)h_k(z_2)\right]
\end{align}
 for $z_1,z_2$ with nonzero fixed imaginary parts.

 It is obvious that
\begin{align*}
\Phi(z_1,z_2)=n^{-2}\frac{\partial^2}{\partial z_2\partial z_1}\sum_{k=1}^n\re_{k-1}\left[\re_k\left(s_k\widetilde{\beta}_k(z_1)\varepsilon_k(z_1)\right)\re_k
\left(s_k\widetilde{\beta}_k(z_2)\varepsilon_k(z_2)\right)\right].
\end{align*}
Due to the analysis on page 571 in \cite{bai2004clt}, it is enough to prove that
$$n^{-2}\sum_{k=1}^n s_k^2\re_{k-1}\left[\re_k\left(\widetilde{\beta}_k(z_1)\varepsilon_k(z_1)\right)\re_k\left(\widetilde{\beta}_k(z_2)\varepsilon_k(z_2)\right)\right]$$
converges in probability to a constant. Using (\ref{cl3}), we get
\begin{align*}
  \widetilde{\beta}_k(z)=\frac1{1+s_kg_1(z)}+o_{\rm a.s.}(1).
\end{align*}
Therefore, our goal is to find the limit in probability of
\begin{align*}
&n^{-2}\sum_{k=1}^n \frac{s_k^2}{(1+s_kg_1(z_1))(1+s_kg_1(z_2))} \re_{k-1}\left[\re_k\left(\varepsilon_k(z_1)\right)\re_k\left(\varepsilon_k(z_2)\right)\right].
\end{align*}
Using the moments of the random variables, we have
\begin{align*}
\re_{k-1}\big[\re_k\left(\varepsilon_k(z_1)\right)&\re_k\left(\varepsilon_k(z_2)\right)\big]
=\rtr\left(\bS_1\re_k\bd_k^{-1}(z_1)\bS_1\re_k\bd_k^{-1}(z_2)\right)\\
&+\alpha_x\rtr\left(\bS_1\re_k\bd_k^{-1}(z_1)\bS_1\re_k\left(\bd_k'(z_2)\right)^{-1}\right)\\
&+\kappa_x\sum_{j\neq k=1}^n\be_j'\bt_{1n}^*\re_k\bd_{k}^{-1}(z_1)\bt_{1n}\be_j\be_j'\bt_{1n}^*\re_k\bd_{k}^{-1}(z_2)\bt_{1n}\be_j+o(n)\\
=&\rtr\re_k\left(\bS_1\bd_k^{-1}(z_1)\bS_1\breve\bd_k^{-1}(z_2)\right)+\alpha_x\rtr\re_k\left(\bS_1\bd_k^{-1}(z_1)\bS_1\left(\breve\bd_k'(z_2)\right)^{-1}\right)\\
&+\kappa_x\sum_{j\neq k=1}^n\re_k\be_j'\bt_{1n}^*\bd_{k}^{-1}(z_1)\bt_{1n}\be_j\be_j'\bt_{1n}^*\breve\bd_{k}^{-1}(z_2)\bt_{1n}\be_j+o(n).
\end{align*}
Here $\bt_{1n}$ is real and $\breve\bd_k(z)=\frac1n\sum_{j<k}s_j\bq_j\bq_j^*+\frac1n\sum_{j>k}s_j\breve\bq_j\breve\bq_j^*-z\bi_p$  where $\breve\bq_j$ are an i.i.d. copy of $\bq_j, j = 1,\cdots,n.$

To begin with, we calculate the limit of the first term. From (\ref{cl3}),  one can find that
\begin{align}\label{cl1}
  \frac1n\re_k\left[z_1\rtr\left(\bS_1\bd_k^{-1}(z_1)\right)-z_2\rtr\left(\bS_1\breve\bd_k^{-1}(z_2)\right)\right]\to z_1g_1(z_1)-z_2g_1(z_2)\quad{\rm a.s.}
\end{align}
and
\begin{align*}
  \beta_{jk}(z)=\frac1{1+s_jg_1(z)}+o_{\rm a.s.}(1).
\end{align*}
On the other hand,
\begin{align*}
 &\frac1n\re_k\left[z_1\rtr\left(\bS_1\bd_k^{-1}(z_1)\right)-z_2\rtr\left(\bS_1\breve\bd_k^{-1}(z_2)\right)\right]\\
 =&\frac1n\re_k\rtr\left[\bS_1\bd_k^{-1}(z_1)\left(z_1\breve\bd_k(z_2)-z_2\bd_k(z_1)\right)\breve\bd_k^{-1}(z_2)\right]\\
 =&\frac1{n^2}\re_k\rtr\left[\bS_1\bd_k^{-1}(z_1)\left((z_1-z_2)\sum_{j=1}^{k-1}s_j\bq_j\bq_j^*+z_1\sum_{j=k+1}^ns_j\breve\bq_j\breve\bq_j^*
 -z_2\sum_{j=k+1}^ns_j\bq_j\bq_j^*\right)\breve\bd_k^{-1}(z_2)\right]\\
 =&\frac{(z_1-z_2)}{n^2}\sum_{j=1}^{k-1}s_j\re_k\left(\beta_{jk}(z_1)\breve\beta_{jk}(z_2)\bq_j^*\breve\bd_{jk}^{-1}(z_2)\bS_1\bd_{jk}^{-1}(z_1)\bq_j\right)\\
 &+\frac1{n^2}\sum_{j=k+1}^ns_j\re_k\left(z_1\breve\beta_{jk}(z_2)\breve\bq_j^*\breve\bd_{jk}^{-1}(z_2)\bS_1\bd_k^{-1}(z_1)\breve\bq_j
 -z_2\beta_{jk}(z_1)\bq_j^*\breve\bd_k^{-1}(z_2)\bS_1\bd_{jk}^{-1}(z_1)\bq_j\right)\\
  =&\frac{(z_1-z_2)}{n^2}\sum_{j=1}^{k-1}\frac{s_j\re_k\rtr\left(\breve\bd_{jk}^{-1}(z_2)\bS_1\bd_{jk}^{-1}(z_1)\bS_1\right)}{(1+s_jg_1(z_1))(1+s_jg_1(z_2))}\\
 &+\frac1{n^2}\sum_{j=k+1}^n\re_k\left(\frac{z_1s_j\rtr\left(\breve\bd_{jk}^{-1}(z_2)\bS_1\bd_k^{-1}(z_1)\bS_1\right)}{1+s_jg_1(z_2)}
 -\frac{z_2s_j\rtr\left(\breve\bd_k^{-1}(z_2)\bS_1\bd_{jk}^{-1}(z_1)\bS_1\right)}{1+s_jg_1(z_1)}\right)+o_{\rm a.s.}(1)\\
   =&\left[\frac{(z_1-z_2)}{n^2}\sum_{j=1}^{k-1}\frac{s_j}{(1+s_jg_1(z_1))(1+s_jg_1(z_2))}+\frac1{n^2}\sum_{j=k+1}^n\left(\frac{z_1s_j}{1+s_jg_1(z_2)}
 -\frac{z_2s_j}{1+s_jg_1(z_1)}\right)\right]\\
 &\times\re_k\rtr\left(\breve\bd_{k}^{-1}(z_2)\bS_1\bd_k^{-1}(z_1)\bS_1\right)+o_{\rm a.s.}(1)
\end{align*}
where $\breve\beta_{jk}(z)=\frac1{1+n^{-1}s_j\bq_j^*\breve\bd_{jk}^{-1}(z)\bq_j}$ when $j<k$ and $\breve\beta_{jk}(z)=\frac1{1+n^{-1}s_j\breve\bq_j^*\breve\bd_{jk}^{-1}(z)\breve\bq_j}$ when $j>k$ . From (\ref{cl2}), it is obvious that
\begin{align}\label{cl4}
 \frac1n\sum_{k=1}^n\frac{s_k}{1+s_kg_1(z)}\to-zg_2(z)
\end{align}
and
\begin{align*}
 \frac1n\sum_{k=1}^n\frac{s_k^2}{\left(1+s_kg_1(z_1)\right)\left(1+s_kg_1(z_2)\right)}\to\frac{z_1g_2(z_1)-z_2g_2(z_2)}{g_1(z_1)-g_1(z_2)}.
\end{align*}
Similar to the analysis of Page 20 in \cite{liclt}, we get
\begin{align*}
 &\frac1n\re_k\left[z_1\rtr\left(\bS_1\bd_k^{-1}(z_1)\right)-z_2\rtr\left(\bS_1\breve\bd_k^{-1}(z_2)\right)\right]\\
   =&\left[\frac{(k-1)(z_1-z_2)}{n^2}\left(z_2g_2(z_2)-g_1(z_1)\frac{z_1g_2(z_1)-z_2g_2(z_2)}{g_1(z_1)-g_1(z_2)}\right)+\frac{n-k}{n^2}z_1z_2\left(
 g_2(z_1)-g_2(z_2)\right)\right]\\
 &\times\re_k\rtr\left(\breve\bd_{k}^{-1}(z_2)\bS_1\bd_k^{-1}(z_1)\bS_1\right)+o_{\rm a.s.}(1).
\end{align*}
Together with (\ref{cl1}), one has
\begin{align*}
 &\frac1n\re_k\left(\breve\bd_{k}^{-1}(z_2)\bS_1\bd_k^{-1}(z_1)\bS_1\right)\\
   =&\frac{z_1g_1(z_1)-z_2g_1(z_2)}{\left[\frac{(k-1)(z_1-z_2)}{n}\left(z_2g_2(z_2)-g_1(z_1)\frac{z_1g_2(z_1)-z_2g_2(z_2)}{g_1(z_1)-g_1(z_2)}\right)+\frac{n-k}{n}z_1z_2
   \left(g_2(z_1)
 -g_2(z_2)\right)\right]}+o_{\rm a.s.}(1)\\
    =&\frac{\left[z_1g_1(z_1)-z_2g_1(z_2)\right]/\left[z_1z_2\left(
 g_2(z_1)-g_2(z_2)\right)\right]}{\left[\frac{k(z_1-z_2)}{nz_1z_2\left(
 g_2(z_1)-g_2(z_2)\right)}\left(z_2g_2(z_2)-g_1(z_1)\frac{z_1g_2(z_1)-z_2g_2(z_2)}{g_1(z_1)-g_1(z_2)}\right)
    +\frac{n-k}{n}\right]}+o_{\rm a.s.}(1)\\
    =&\frac{\left[z_1g_1(z_1)-z_2g_1(z_2)\right]/\left[z_1z_2\left(
 g_2(z_1)- g_2(z_2)\right)\right]}{1-\frac knd(z_1,z_2)}+o_{\rm a.s.}(1).
\end{align*}
Consequently, we get the first probability
\begin{align*}
&n^{-2}\sum_{k=1}^n \frac{s_k^2 \re_k\left(\breve\bd_{k}^{-1}(z_2)\bS_1\bd_k^{-1}(z_1)\bS_1\right)}{(1+s_kg_1(z_1))(1+s_kg_1(z_2))}\to\int_0^{d(z_1,z_2)}\frac1{1-z}dz.
\end{align*}

Now, we are to compute the second term. It is also from Section \ref{see1} that
\begin{align*}
\frac1n\re_k\left[z_1\rtr\left(\bS_1\bd_k^{-1}(z_1)\right)-z_2\rtr\left(\bS_1\left(\breve\bd_k'(z_2)\right)^{-1}\right)\right]\to z_1g_1(z_1)-z_2g_1(z_2)\quad{\rm a.s.}
\end{align*}
Rewrite $\frac1n\re_k\left[z_1\rtr\left(\bS_1\bd_k^{-1}(z_1)\right)-z_2\rtr\left(\bS_1\left(\breve\bd_k'(z_2)\right)^{-1}\right)\right]$ as
\begin{align*}
 &\frac1n\re_k\left[z_1\rtr\left(\bS_1\bd_k^{-1}(z_1)\right)-z_2\rtr\left(\bS_1\left(\breve\bd_k'(z_2)\right)^{-1}\right)\right]\\
 =&\frac1n\re_k\rtr\left[\bS_1\bd_k^{-1}(z_1)\left(z_1\breve\bd_k'(z_2)-z_2\bd_k(z_1)\right)\left(\breve\bd_k'(z_2)\right)^{-1}\right]\\
 =&\frac1{n^2}\re_k\rtr\left[\bS_1\bd_k^{-1}(z_1)\left(\sum_{j=1}^{k-1}s_j\left(z_1\bar\bq_j\bq_j'-z_2\bq_j\bq_j^*\right)+\sum_{j=k+1}^ns_j\left(z_1\bar{\breve\bq}_j\breve\bq_j'
 -z_2\bq_j\bq_j^*\right)\right)\left(\breve\bd_k'(z_2)\right)^{-1}\right]\\
 =&\sum_{j=1}^{k-1}\frac{ s_j}{n^2}\re_k\left(z_1\breve\beta_{jk}'(z_2)\bq_j'\left(\breve\bd_{jk}'(z_2)\right)^{-1}\bS_1\bd_{k}^{-1}(z_1)\bar\bq_j
 -z_2\beta_{jk}(z_1)\bq_j^*\left(\breve\bd_k'(z_2)\right)^{-1}\bS_1\bd_{jk}^{-1}(z_1)\bq_j\right)\\
 &+\sum_{j=k+1}^n\frac{s_j}{n^2}\re_k\left(z_1\breve\beta_{jk}'(z_2)\breve\bq_j'\left(\breve\bd_{jk}'(z_2)\right)^{-1}\bS_1\bd_k^{-1}(z_1)\bar{\breve\bq}_j
 -z_2\beta_{jk}(z_1)\bq_j^*\left(\breve\bd_k'(z_2)\right)^{-1}\bS_1\bd_{jk}^{-1}(z_1)\bq_j\right)\\
=&\sum_{j=1}^{k-1}\frac{ s_j}{n^2}\re_k\left(z_1\breve\beta_{jk}'(z_2)\bq_j'\left(\breve\bd_{jk}'(z_2)\right)^{-1}\bS_1\bd_{jk}^{-1}(z_1)\bar\bq_j
 -z_2\beta_{jk}(z_1)\bq_j^*\left(\breve\bd_{jk}'(z_2)\right)^{-1}\bS_1\bd_{jk}^{-1}(z_1)\bq_j\right)\\
 &-\sum_{j=1}^{k-1}\frac{ s_j^2}{n^3}\re_k\bigg(z_1\beta_{jk}(z_1)\breve\beta_{jk}'(z_2)\bq_j'\left(\breve\bd_{jk}'(z_2)\right)^{-1}\bS_1\bd_{jk}^{-1}(z_1)\bq_j\bq_j^*\bd_{jk}^{-1}(z_1)\bar\bq_j\\
 &-z_2\beta_{jk}(z_1)\breve\beta_{jk}'(z_2)\bq_j^*\left(\breve\bd_{jk}'(z_2)\right)^{-1}\bar\bq_j\bq_j'\left(\breve\bd_{jk}'(z_2)\right)^{-1}\bS_1\bd_{jk}^{-1}(z_1)\bq_j\bigg)\\
 &+\sum_{j=k+1}^n\frac{s_j}{n^2}\re_k\left(z_1\breve\beta_{jk}'(z_2)\breve\bq_j'\left(\breve\bd_{jk}'(z_2)\right)^{-1}\bS_1\bd_k^{-1}(z_1)\bar{\breve\bq}_j
 -z_2\beta_{jk}(z_1)\bq_j^*\left(\breve\bd_k'(z_2)\right)^{-1}\bS_1\bd_{jk}^{-1}(z_1)\bq_j\right)\\
 =&\sum_{j\neq k}\frac{ s_j}{n^2}\left(\frac{z_1}{1+s_jg_1(z_2)}
 -\frac{z_2}{1+s_jg_1(z_1)}\right)\re_k\rtr\left(\bS_1\bd_k^{-1}(z_1)\bS_1\left(\breve\bd_k'(z_2)\right)^{-1}\right)\\
  &-\alpha_x\sum_{j=1}^{k-1}\frac{ s_j^2\left(z_1g_1(z_1)-z_2g_1(z_2)\right)}{n^2({1+s_jg_1(z_1)})({1+s_jg_1(z_2)})}\re_k\rtr\left(\bS_1\bd_k^{-1}(z_1)\bS_1\left(\breve\bd_k'(z_2)\right)^{-1}\right)+o_{\rm a.s.}(1)\\
   =&\left[\frac{ z_1z_2\left({g_2(z_1)-g_2(z_2)}\right)}{n}-\alpha_x\frac{ (k-1)\left(z_1g_2(z_1)-z_2g_2(z_2)\right)\left(z_1g_1(z_1)-z_2g_1(z_2)\right)}{n^2({g_1(z_1)-g_1(z_2)})}\right]\\
  &\times\re_k\rtr\left(\bS_1\bd_k^{-1}(z_1)\bS_1\left(\breve\bd_k'(z_2)\right)^{-1}\right)+o_{\rm a.s.}(1)
\end{align*}
where $\breve\beta_{jk}'(z)=\frac1{1+n^{-1}s_j\bq_j'\left(\breve\bd_{jk}(z)\right)^{-1}\bar\bq_j}$ when $j<k$ and $\breve\beta_{jk}'(z)=\frac1{1+n^{-1}s_j\breve\bq_j'\left(\breve\bd_{jk}(z)\right)^{-1}\bar{\breve\bq}_j}$ when $j>k$. This implies that
\begin{align*}
 \frac1n\rtr\re_k\left(\bS_1\bd_k^{-1}(z_1)\bS_1\left(\breve\bd_k'(z_2)\right)^{-1}\right)=\frac{ \left(z_1g_1(z_1)-z_2g_1(z_2)\right)/\left(z_1z_2({g_2(z_1)-g_2(z_2)})\right)}{1-\frac{k-1}{n}\widetilde d(z_1,z_2)}+o_{\rm a.s.}(1).
\end{align*}
Hence,
\begin{align*}
&\alpha_x n^{-2}\sum_{k=1}^n \frac{s_k^2\rtr\re_k\left(\bS_1\bd_k^{-1}(z_1)\bS_1\left(\breve\bd_k'(z_2)\right)^{-1}\right)}{(1+s_kg_1(z_1))(1+s_kg_1(z_2))}\to\int_0^{\widetilde d({z_1,z_2})}\frac1{1-z}dz.
\end{align*}

At last, we compute the limit of the third term. Applying the formula
\begin{align*}
  \left(\ba+{\bf U}\bb{\bf V}\right)^{-1}=\ba^{-1}-\ba^{-1}{\bf U}\left(\bi+\bb{\bf V}\ba^{-1}{\bf U}\right)^{-1}\bb{\bf V}\ba^{-1}
\end{align*}
and $\bt_{1n}^*\bt_{1n}$ is diagonal, it is obvious that
\begin{align}\label{cal7}
 \bt_{1n}^*\left(\bi_p+g_2(z_1)\bS_1\right)^{-1}\bt_{1n}
\end{align}
is also diagonal. By (\ref{beq15}), (\ref{cl3}), and (\ref{cl4}), it follows that
\begin{align*}
&\frac{\kappa_x}{n^{2}}\sum_{k=1}^n \frac{s_k^2\sum_{j\neq k=1}^n\re_k\be_j'\bt_{1n}^*\bd_{k}^{-1}(z_1)\bt_{1n}\be_j\be_j'\bt_{1n}^*\breve\bd_{k}^{-1}(z_2)\bt_{1n}\be_j }{(1+s_kg_1(z_1))(1+s_kg_1(z_2))}\\
=&\frac{\kappa_x}{n^{2}}\sum_{k=1}^n \frac{s_k^2\sum_{j\neq k=1}^n\re_k\be_j'\bt_{1n}^*\br_{k}^{-1}(z_1)\bt_{1n}\be_j\be_j'\bt_{1n}^*\breve\br_{k}^{-1}(z_2)\bt_{1n}\be_j }{(1+s_kg_1(z_1))(1+s_kg_1(z_2))}+o_{\rm a.s.}(1)\\
=&\frac{\kappa_x}{n^{2}}\sum_{k=1}^n \frac{s_k^2\sum_{j\neq k=1}^n\re_k\be_j'\bt_{1n}^*\left(\bi_p+g_2(z_1)\bS_1\right)^{-1}\bt_{1n}\be_j\be_j'\bt_{1n}^*\left(\bi_p+g_2(z_2)\bS_1\right)^{-1}\bt_{1n}\be_j }{z_1z_2(1+s_kg_1(z_1))(1+s_kg_1(z_2))}+o_{\rm a.s.}(1)\\
=&\frac{\kappa_x}{n^{2}}\sum_{k=1}^n \frac{s_k^2\re_k\rtr\left[\left(\bi_p+g_2(z_1)\bS_1\right)^{-1}\bS_1\left(\bi_p+g_2(z_2)\bS_1\right)^{-1}\bS_1\right] }{z_1z_2(1+s_kg_1(z_1))(1+s_kg_1(z_2))}+o_{\rm a.s.}(1)\\
=&\frac{\kappa_x(z_1g_2(z_1)-z_2g_2(z_2))}{z_1z_2(g_1(z_1)-g_1(z_2))} \frac1{n}{\rtr\left[\left(\bi_p+g_2(z_1)\bS_1\right)^{-1}\bS_1\left(\bi_p+g_2(z_2)\bS_1\right)^{-1}\bS_1\right] }+o_{\rm a.s.}(1)\\
\to&\frac{c\kappa_x(z_1g_2(z_1)-z_2g_2(z_2))}{z_1z_2(g_1(z_1)-g_1(z_2))} \int\frac{x^2}{(1+g_2(z_1)x)(1+g_2(z_2)x)}dH_1(x)=\kappa_xd(z_1,z_2).
\end{align*}

Therefore, we conclude that $M_{n1}(z)$ converges in distribution to a Gaussian random variable $M_1(z)$ with zero mean and
\begin{align*}
  {\rm Cov}\bigg(M_1(z_1),&M_1(z_2)\bigg)=\frac{\partial^2}{\partial z_2\partial z_1}\Bigg\{\int_0^{d(z_1,z_2)}\frac{1}{1-z}dz+\int_0^{\alpha_xd(z_1,z_2)}\frac{1}{1-z}dz\notag+\kappa_xd(z_1,z_2)\Bigg\}.
\end{align*}

\subsection{Tightness of $M_{n1}(z)$}

Similar to Section 3.2 of \cite{liclt}, one can show  that
\begin{align*}
\sup_{n;z_1,z_2\in\mathcal{C}_n}\frac{\re\left|M_{n1}(z_1)-M_{n1}(z_2)\right|^2}{|z_1-z_2|^2}
\end{align*}
is finite.

\subsection{Convergence of $M_{n2}(z)$}

Denote $\underline{\bs}_n=\frac1n{\bx}_{n}^*\bt_{1n}^*\bt_{1n}{\bx}_{n}{\bf T}_{2n}$. Since $m_2$ is replaced by $n$, we have the following relationship between the empirical distributions of $\bs_n$ and $\underline{\bs}_n$
\begin{align*}
F^{\underline{\bs}_n}(x)=c_nF^{\bs_n}(x)+(1-c_n)I_{[0,\infty)}(x),
\end{align*}
and hence
\begin{align}\label{qal1}
\underline m_n(z)=c_n m_n(z)+z^{-1}(c_n-1).
\end{align}
where $c_n=p/n$ and $\underline m_n(z)=m_{F^{\underline{\bs}_n}}(z)$. Denote by $\underline F$ the limiting distribution of $F^{\underline{\bs}_n}$. Then $F$ and $\underline F$ must satisfy
\begin{align*}
\underline F(x)=c F(x)+(1-c)I_{[0,\infty)}(x),
\end{align*}
and
\begin{align}\label{qal3}
\underline m(z)=c m(z)-z^{-1}(1-c)
\end{align}
where $\underline m(z)=m_{\underline F}(z)$.

{Let ${\bf W}(z)=\frac1n\sum_{j=1}^ns_j\varphi_j(z)\bS_1-z\bi_p$ and
\begin{align*}
&\varphi_j(z)=\frac1{1+s_j\re g_n(z)},\quad b_j(z)=\frac1{1+s_j\re\rtr\left(\bd_j^{-1}(z)\bS_1\right)}\\
&\rho_j(z)=\bq_j^*\bd_j^{-1}(z)\bq_j-\re\rtr\left(\bd_j^{-1}(z)\bS_1\right),\quad g_n(z)=\frac1n\rtr\left(\bd^{-1}(z)\bS_1\right).
\end{align*}
It can be verified that $\left\|{\bf W}^{-1}(z)\right\|$ is uniformly bounded on $\mathcal{C}_n$ (see \cite{liclt}). From \cite{liclt}, we also know that
\begin{align}\label{bal1}
   \re\|\bd_j^{-1}(z)\|^l\le C_l, \quad\re|\beta_j(z)|^l\le C_l,l\ge1, \quad |b_j(z)|\le C.
\end{align}
Let $\ba$ be $p\times p$ matrix whose spectral norm are uniformly bounded for $z\in\mathcal{C}_n$. Applying Lemma \ref{lep1} and (\ref{bal1}), we have for $l\ge4$
\begin{align*}
  &\re\left|\bq_j^*\bd_j^{-1}(z)\ba\bq_j-\re\rtr\left(\bd_j^{-1}(z)\ba\bS_1\right)\right|^l\\
  \le& C_l\re\left|\bq_j^*\bd_j^{-1}(z)\ba\bq_j-\rtr\left(\bd_j^{-1}(z)\ba\bS_1\right)\right|^l
  +C_l\re\left|\rtr\left(\bd_j^{-1}(z)\ba\bS_1\right)-\re\rtr\left(\bd_j^{-1}(z)\ba\bS_1\right)\right|^l\\
    \le& C_l\re\left[\rtr\left(\bd_j^{-1}(z)\ba\bS_1\ba^*\bd_j^{-1}(\bar z)\bS_1\right)\right]^{l/2}+C_l\eta_n^{2l-6}n^{l-3}\re\rtr\left(\bd_j^{-1}(z)\ba\bS_1\ba^*\bd_j^{-1}(\bar z)\bS_1\right)^{l/2}\\
  &+C_l\re\left|\sum_{k\neq j=1}^n\left(\re_k-\re_{k-1}\right)\rtr\left(\bd_{j}^{-1}(z)\ba\bS_1\right)\right|^l\\
      \le& C_ln^{l/2}+C_l\eta_n^{2l-6}n^{l-2}+C_l\re\left(\sum_{k\neq j=1}^n\left|\rtr\left(\bd_{j}^{-1}(z)\ba\bS_1\right)-\rtr\left(\bd_{kj}^{-1}(z)\ba\bS_1\right)\right|^2\right)^{l/2}\\
            \le& C_ln^{l/2}+C_l\eta_n^{2l-6}n^{l-2}+C_l\re\left(\frac1{n^2}\sum_{k\neq j=1}^ns_j^2|\beta_{kj}^2(z)|\left|\bq_j^*\bd_{kj}^{-1}(z)\ba\bS_1\bd_{kj}^{-1}(z)\bq_j\right|^2\right)^{l/2}\\
\le& C_ln^{l/2}+C_l\eta_n^{2l-6}n^{l-2}+\frac{C_ln^{l/2-1}}{n^l}\sum_{k\neq j=1}^n s_j^l\re\left(|\beta_{kj}^l(z)|\left|\bq_j^*\bd_{kj}^{-1}(z)\ba\bS_1\bd_{kj}^{-1}(z)\bq_j\right|^l\right)\\
\le& C_ln^{l/2}+C_l\eta_n^{2l-6}n^{l-2}+\frac{C_ln^{l/2-1}}{n^l}\sum_{k\neq j=1}^n\re^{1/2}\left|\bq_j^*\bd_{kj}^{-1}(z)\ba\bS_1\bd_{kj}^{-1}(z)\bq_j\right|^{2l}\\
\le& C_ln^{l/2}+C_l\eta_n^{2l-6}n^{l-2}+\frac{C_ln^{l/2-1}}{n^l}\sum_{k\neq j=1}^n\re^{1/2}\left|\rtr\left(\bd_{kj}^{-1}(z)\ba\bS_1\bd_{kj}^{-1}(z)\bS_1\right)\right|^{2l}\\
&+\frac{C_ln^{l/2-1}}{n^l}\sum_{k\neq j=1}^n\re^{1/2}\left|\bq_j^*\bd_{kj}^{-1}(z)\ba\bS_1\bd_{kj}^{-1}(z)\bq_j-\rtr\left(\bd_{kj}^{-1}(z)\ba\bS_1\bd_{kj}^{-1}(z)\bS_1\right)\right|^{2l}\\
\le& C_l\eta_n^{l-4}n^{l-2}.
\end{align*}
This implies that for $l\ge4$
\begin{align}\label{cal1}
 \re\left|\rho_j(z)\right|^l\le& C_l\eta_n^{l-4}n^{l-2}
\end{align}
and
\begin{align}\label{cal2}
\re\left|\bq_j^*\bd_j^{-1}(z)\bw^{-1}(z)\bq_j-\re\rtr\left(\bd_j^{-1}(z)\bw^{-1}(z)\bS_1\right)\right|^l
\le& C_l\eta_n^{l-4}n^{l-2}.
\end{align}
It can be obtained from\cite{liclt} that
\begin{align*}
  \sup_{z\in\mathcal{C}_n}\left|\re g_n(z)-g_{1}(z)\right|\to0\quad{\rm as} \ n\to\infty
\end{align*}
which implies that
\begin{align}\label{cal5}
\sup_{z\in\mathcal{C}_n}\left|\varphi_k(z)-\frac1{1+s_kg_{1n}^0(z)}\right|=o(1).
\end{align}

Write
$$\bd(z)-{\bf W}\left(z\right)=\frac1n\sum_{j=1}^ns_j\bq_j\bq_j^*-\frac1n\sum_{j=1}^ns_j\varphi_j(z)\bS_1.$$ Taking inverses and then expected value, we have
\begin{align}\label{sal:6}
&\bw^{-1}(z)-\re\bd^{-1}(z)\\
=&\bw^{-1}(z)\re\left[\frac1n\sum_{j=1}^ns_j\bq_j\bq_j^*\bd^{-1}(z)-\frac1n\sum_{j=1}^ns_j\varphi_j(z)\bS_1\bd^{-1}(z)\right]\notag\\
=&\bw^{-1}(z)\re\left[\frac1n\sum_{j=1}^ns_j\beta_j(z)\bq_j\bq_j^*\bd_j^{-1}(z)-\frac1n\sum_{j=1}^ns_j\varphi_j(z)\bS_1\bd^{-1}(z)\right]\notag.
\end{align}
Taking the trace on both sides and dividing by $-1$, one obtains
\begin{align}\label{beq20}
d_{n1}(z)
=&-\frac1n\sum_{j=1}^ns_j\re\beta_j(z)\left(\bq_j^*\bd_j^{-1}(z)\bw^{-1}(z)\bq_j-\re\rtr\left(\bw^{-1}(z)\bS_1\bd_j^{-1}(z)\right)\right)\\
&-\frac1n\sum_{j=1}^ns_j\re\beta_j(z)\left(\re\rtr\left(\bw^{-1}(z)\bS_1\bd_j^{-1}(z)\right)
-\re\rtr\left(\bw^{-1}(z)\bS_1\bd^{-1}(z)\right)\right)\notag\\
&-\frac1{n}\sum_{j=1}^ns_j\re\left(\beta_j(z)-\psi_j(z)\right)\re\left(\rtr\left(\bw^{-1}(z)\bS_1\bd^{-1}(z)\right)\right)\notag\\
\triangleq&\mathcal{J}_1+\mathcal{J}_2+\mathcal{J}_3\notag,
\end{align}
where $d_{n1}(z)=p\left[\re m_n(z)-\int\frac1{\frac1n\sum_{j=1}^ns_j\varphi_j(z)x -z}dH_{1n}(x)\right]$. Using
\begin{align}
  \beta_j(z)=&b_j(z)-\frac1ns_j\beta_j(z)b_j(z)\rho_j(z)\label{cal3}\\
  =&b_j(z)-\frac1ns_jb_j^2(z)\rho_j(z)+\frac1{n^2}s_j^2\beta_j(z)b_j^2(z)\rho_j^2(z),\notag
\end{align}
it follows that
\begin{align*}
\mathcal{J}_1=&\frac1{n^2}\sum_{j=1}^ns_j^2b_j^2(z)\re\rho_j(z)\left(\bq_j^*\bd_j^{-1}(z)\bw^{-1}(z)\bq_j-\re\rtr\left(\bw^{-1}(z)\bS_1\bd_j^{-1}(z)\right)\right)\\
&-\frac1{n^3}\sum_{j=1}^ns_j^3 b_j^2(z)\re\beta_j(z)\rho_j^2(z)\left(\bq_j^*\bd_j^{-1}(z)\bw^{-1}(z)\bq_j-\re\rtr\left(\bw^{-1}(z)\bS_1\bd_j^{-1}(z)\right)\right)\\
=&\frac1{n^2}\sum_{j=1}^ns_j^2b_j^2(z)\re\varepsilon_j(z)\left(\bq_j^*\bd_j^{-1}(z)\bw^{-1}(z)\bq_j-\rtr\left(\bw^{-1}(z)\bS_1\bd_j^{-1}(z)\right)\right)\\
&+\frac1{n^2}\sum_{j=1}^ns_j^2b_j^2(z){\rm Cov}\left(\rtr\left(\bd_j^{-1}(z)\bS_1\right),\rtr\left(\bw^{-1}(z)\bS_1\bd_j^{-1}(z)\right)\right)\\
&-\frac1{n^3}\sum_{j=1}^ns_j^3 b_j^2(z)\re\beta_j(z)\rho_j^2(z)\left(\bq_j^*\bd_j^{-1}(z)\bw^{-1}(z)\bq_j-\re\rtr\left(\bw^{-1}(z)\bS_1\bd_j^{-1}(z)\right)\right)\\
\triangleq&\mathcal{J}_{11}+\mathcal{J}_{12}+\mathcal{J}_{13}.
\end{align*}
Due to Lemma \ref{lep6} and Cauchy-Schwarz inequality, one finds
\begin{align*}
\left|\mathcal{J}_{12}\right|\le\frac C{n^2}\sum_{j=1}^n{\rm Var}^{1/2}\left(\rtr\left(\bd_j^{-1}(z)\bS_1\right)\right){\rm Var}^{1/2}\left(\rtr\left(\bw^{-1}(z)\bS_1\bd_j^{-1}(z)\right)\right)\le\frac C{n}.
\end{align*}
Applying (\ref{bal1}), (\ref{cal1}), and (\ref{cal2}), we have
\begin{align*}
\left|\mathcal{J}_{13}\right|\le&\frac C{n^3}\sum_{j=1}^n\re^{1/4}
\left|\bq_j^*\bd_j^{-1}(z)\bw^{-1}(z)\bq_j-\re\rtr\left(\bw^{-1}(z)\bS_1\bd_j^{-1}(z)\right)\right|^4\\
&\times\re^{1/2}\left|\beta_j^2(z)\right|\re^{1/4}\left|\rho_j^8(z)\right|\le C\eta_n.
\end{align*}
Hence,
\begin{align}\label{cal4}
\mathcal{J}_1=&\sum_{j=1}^n\frac{s_j^2b_j^2(z)}{n^2}\re\varepsilon_j(z)\left(\bq_j^*\bd_j^{-1}(z)\bw^{-1}(z)\bq_j-\rtr\left(\bw^{-1}(z)\bS_1\bd_j^{-1}(z)\right)\right)+o(1).
\end{align}
Note that from (\ref{cal3})
\begin{align*}
  b_j(z)-\varphi_j(z)=&-\frac1{n^2}s_j^2b_j(z)\varphi_j(z)\re\left(\beta_j(z)\bq_j^*\bd_j^{-1}(z)\bS_1\bd_j^{-1}(z)\bq_j\right)\\
  =&-\frac1{n^2}s_j^2b_j^2(z)\varphi_j(z)\re\left(\bq_j^*\bd_j^{-1}(z)\bS_1\bd_j^{-1}(z)\bq_j\right)\\
  &+\frac1{n^3}s_j^3b_j^2(z)\varphi_j(z)\re\left(\beta_j(z)\rho_j(z)\bq_j^*\bd_j^{-1}(z)\bS_1\bd_j^{-1}(z)\bq_j\right).
\end{align*}
From (\ref{cal1}) and Lemma \ref{lep1}, we have
\begin{align*}
&\left|\frac1{n^3}s_j^3b_j^2(z)\varphi_j(z)\re\left(\beta_j(z)\rho_j(z)\bq_j^*\bd_j^{-1}(z)\bS_1\bd_j^{-1}(z)\bq_j\right)\right|\\
\le&\frac C{n^3}\re^{1/4}|\beta_j(z)|^4\re^{1/4}|\rho_j(z)|^4\re^{1/2}\left|\bq_j^*\bd_j^{-1}(z)\bS_1\bd_j^{-1}(z)\bq_j\right|^2\le \frac C{n^{3/2}}.
\end{align*}
This yields that
\begin{align}\label{bal15}
b_j(z)-\varphi_j(z)
=&-\frac1{n^2}s_j^2b_j^2(z)\varphi_j(z)\re\rtr\left(\bd_j^{-1}(z)\bS_1\bd_j^{-1}(z)\bS_1\right)+o(n^{-1}).
\end{align}
Therefore,
\begin{align}\label{bal:9}
|b_j(z)-\varphi_j(z)|\le &\frac C{n}\re\left\|\bd_j^{-1}(z)\bS_1\bd_j^{-1}(z)\bS_1\right\|+o(n^{-1})
\le \frac C{n}.
\end{align}
Together with (\ref{cal4}), we conclude that
\begin{align*}
\mathcal{J}_1=&\frac1{n^2}\sum_{j=1}^ns_j^2\varphi_j^2(z)\re\varepsilon_j(z)\left(\bq_j^*\bd_j^{-1}(z)\bw^{-1}(z)\bq_j-\rtr\left(\bw^{-1}(z)\bS_1\bd_j^{-1}(z)\right)\right)+o(1).
\end{align*}

By (\ref{cal3}), it is obvious that
\begin{align*}
\mathcal{J}_2=&-\frac1{n^2}\sum_{j=1}^ns_j^2\re\beta_j(z)\re\left(\beta_j(z)\bq_j^*\bd_j^{-1}(z)\bw^{-1}(z)\bS_1\bd_j^{-1}(z)\bq_j\right)\\
=&-\frac1{n^2}\sum_{j=1}^ns_j^2b_j(z)\re\left(\beta_j(z)\bq_j^*\bd_j^{-1}(z)\bw^{-1}(z)\bS_1\bd_j^{-1}(z)\bq_j\right)\\
&+\frac1{n^3}\sum_{j=1}^ns_j^3b_j(z)\re\left(\beta_j(z)\rho_j(z)\right)\re\left(\beta_j(z)\bq_j^*\bd_j^{-1}(z)\bw^{-1}(z)\bS_1\bd_j^{-1}(z)\bq_j\right)\\
=&-\frac1{n^2}\sum_{j=1}^ns_j^2b_j^2(z)\re\rtr\left(\bd_j^{-1}(z)\bw^{-1}(z)\bS_1\bd_j^{-1}(z)\bS_1\right)\\
&+\frac1{n^3}\sum_{j=1}^ns_j^3b_j^2(z)\re\left(\beta_j(z)\rho_j(z)\bq_j^*\bd_j^{-1}(z)\bw^{-1}(z)\bS_1\bd_j^{-1}(z)\bq_j\right)\\
&+\frac1{n^3}\sum_{j=1}^ns_j^3b_j(z)\re\left(\beta_j(z)\rho_j(z)\right)\re\left(\beta_j(z)\bq_j^*\bd_j^{-1}(z)\bw^{-1}(z)\bS_1\bd_j^{-1}(z)\bq_j\right)\\
\triangleq&\mathcal{J}_{21}+\mathcal{J}_{22}+\mathcal{J}_{23}.
\end{align*}
Using (\ref{bal1}), (\ref{cal3}), and Lemma \ref{lep1}, we get
\begin{align*}
  \left|\mathcal{J}_{22}\right|\le&\frac C{n^3}\sum_{j=1}^n\re^{1/4}\left|\beta_j^4(z)\right|\re^{1/4}\left|\rho_j^4(z)\right|\re^{1/2}\left|\bq_j^*\bd_j^{-1}(z)\bw^{-1}(z)\bS_1\bd_j^{-1}(z)\bq_j\right|^2\\
  \le&\frac C{n^{3/2}}\re^{1/2}\Bigg[\rtr\left(\bd_j^{-1}(z)\bw^{-1}(z)\bS_1\bd_j^{-1}(z)\bS_1\bd_j^{-1}(\bar z)\bS_1\bw^{-1}(\bar z)\bd_j^{-1}(\bar z)\bS_1\right)\\
  &\quad+\left|\rtr\left(\bd_j^{-1}(z)\bw^{-1}(z)\bS_1\bd_j^{-1}(z)\bS_1\right)\right|^2\Bigg]\le\frac C{\sqrt n}
\end{align*}
and
\begin{align*}
  \left|\mathcal{J}_{23}\right|\le&\frac C{n^3}\sum_{j=1}^n\re\left|\beta_j^2(z)\right|\re^{1/4}\left|\rho_j^4(z)\right|\re^{1/2}\left|\bq_j^*\bd_j^{-1}(z)\bw^{-1}(z)\bS_1\bd_j^{-1}(z)\bq_j\right|^2\le\frac C{\sqrt n}
\end{align*}
Combining with the above two inequalities and (\ref{bal:9}), it yields that
\begin{align*}
\mathcal{J}_2
=&-\frac1{n^2}\sum_{j=1}^ns_j^2\varphi_j^2(z)\re\rtr\left(\bd_j^{-1}(z)\bw^{-1}(z)\bS_1\bd_j^{-1}(z)\bS_1\right)+o(1).
\end{align*}

From (\ref{cal3}), it follows that
\begin{align*}
  \re\left(\beta_j(z)-b_j(z)\right)=&\frac1{n^2}s_j^2b_j^3(z)\re\rho_j^2(z)-\frac1{n^3}s_j^3 b_j^3(z)\re\left(\beta_j(z)\rho_j^3(z)\right),\\
=&\frac1{n^2}s_j^2b_j^3(z)\re\varepsilon_j^2(z)-\frac1{n^3}s_j^3 b_j^3(z)\re\left(\beta_j(z)\rho_j^3(z)\right),\\
  &+\frac1{n^2}s_j^2b_j^3(z)\re\left[\rtr\left(\bd_j^{-1}(z)\bS_1\right)-\re\rtr\left(\bd_j^{-1}(z)\bS_1\right)\right]^2\\
\triangleq&\mathcal{J}_{31}+\mathcal{J}_{32}+\mathcal{J}_{33}.
\end{align*}
Using (\ref{bal1}) and (\ref{cal1}), one gets
\begin{align*}
  \left|\mathcal{J}_{32}\right|\le\frac C{n^3}\re^{1/2}\left|\beta_j(z)\right|^2\re^{1/2}\left|\rho_j^6(z)\right|\le\frac{C\eta_n}{n}.
\end{align*}
It can be obtained from Lemma \ref{lep6} that
\begin{align*}
  \left|\mathcal{J}_{33}\right|\le\frac{C}{n^2}.
\end{align*}
Together with the above equality, (\ref{bal15}), and (\ref{bal:9}), we see that
\begin{align}\label{cal6}
  \re\left(\beta_j(z)-\varphi_j(z)\right)=&\frac1{n^2}s_j^2\varphi_j^3(z)\re\varepsilon_j^2(z)
  \\
  &-\frac1{n^2}s_j^2\varphi_j^3(z)\re\rtr\left(\bd_j^{-1}(z)\bS_1\bd_j^{-1}(z)\bS_1\right)+o(n^{-1}).\notag
\end{align}
Consequently, we have
\begin{align*}
\mathcal{J}_3=&-\frac1{n^3}\sum_{j=1}^ns_j^3\varphi_j^3(z)\re\varepsilon_j^2(z)\re\left(\rtr\left(\bw^{-1}(z)\bS_1\bd^{-1}(z)\right)\right)\\
&+\frac1{n^3}\sum_{j=1}^ns_j^3\varphi_j^3(z)\re\rtr\left(\bd_j^{-1}(z)\bS_1\bd_j^{-1}(z)\bS_1\right)\re\left(\rtr\left(\bw^{-1}(z)\bS_1\bd^{-1}(z)\right)\right)+o(1).
\end{align*}

Because of the above analysis and (\ref{cal5}), we know that
\begin{align*}
  &d_{n1}(z)=\frac1{n^2}\sum_{j=1}^ns_j^2\varphi_j^2(z)\re\varepsilon_j(z)\left(\bq_j^*\bd_j^{-1}(z)\bw^{-1}(z)\bq_j-\rtr\left(\bw^{-1}(z)\bS_1\bd_j^{-1}(z)\right)\right)\\
  &-\frac1{n^2}\sum_{j=1}^ns_j^2\varphi_j^2(z)\re\rtr\left(\bd_j^{-1}(z)\bw^{-1}(z)\bS_1\bd_j^{-1}(z)\bS_1\right)\\
  &-\frac1{n^3}\sum_{j=1}^ns_j^3\varphi_j^3(z)\re\varepsilon_j^2(z)\re\left(\rtr\left(\bw^{-1}(z)\bS_1\bd^{-1}(z)\right)\right)\\
&+\frac1{n^3}\sum_{j=1}^ns_j^3\varphi_j^3(z)\re\rtr\left(\bd_j^{-1}(z)\bS_1\bd_j^{-1}(z)\bS_1\right)\re\left(\rtr\left(\bw^{-1}(z)\bS_1\bd^{-1}(z)\right)\right)+o(1)\\
=&\frac{\alpha_x}{n^2}\sum_{j=1}^ns_j^2\varphi_j^2(z)\re\rtr\left(\bd_j^{-1}(z)\bS_1\bw^{-1}(z)\left(\bd_j'(z)\right)^{-1}\bS_1\right)
\\
  &+\frac{\kappa_x}{n^2}\sum_{j=1}^ns_j^2\varphi_j^2(z)\sum_{k\neq j=1}^n\re\left(\be_k'\bt_{1n}^*\bd_j^{-1}(z)\bt_{1n}\be_k\be_k'\bt_{1n}^*\bd_j^{-1}(z)\bw^{-1}(z)\bt_{1n}\be_k\right)\\
&-\frac{\alpha_x}{n^3}\sum_{j=1}^ns_j^3\varphi_j^3(z)\re\rtr\left(\bd_j^{-1}(z)\bS_1\left(\bd_j'(z)\right)^{-1}\bS_1\right)\re\left(\rtr\left(\bw^{-1}(z)\bS_1\bd^{-1}(z)\right)\right)\\
  &-\frac{\kappa_x}{n^3}\sum_{j=1}^ns_j^3\varphi_j^3(z)\sum_{k\neq j=1}^n\re\left(\be_k'\bt_{1n}^*\bd_j^{-1}(z)\bt_{1n}\be_k\right)^2\re\left(\rtr\left(\bw^{-1}(z)\bS_1\bd^{-1}(z)\right)\right)+o(1)\\
  =&\frac{\alpha_x}{n^2}\sum_{j=1}^n\frac{s_j^2}{(1+s_jg_{1n}^0(z))^2}\re\rtr\left(\bd^{-1}(z)\bS_1\bw^{-1}(z)\left(\bd'(z)\right)^{-1}\bS_1\right)
\\
  &+\frac{\kappa_x}{n^2}\sum_{j=1}^n\frac{s_j^2}{(1+s_jg_{1n}^0(z))^2}\sum_{k\neq j=1}^n\re\left(\be_k'\bt_{1n}^*\bd^{-1}(z)\bt_{1n}\be_k\be_k'\bt_{1n}^*\bd^{-1}(z)\bw^{-1}(z)\bt_{1n}\be_k\right)\\
&-\frac{\alpha_x}{n^3}\sum_{j=1}^n\frac{s_j^3}{(1+s_jg_{1n}^0(z))^3}\re\rtr\left(\bd^{-1}(z)\bS_1\left(\bd'(z)\right)^{-1}\bS_1\right)\re\left(\rtr\left(\bw^{-1}(z)\bS_1\bd^{-1}(z)\right)\right)\\
  &-\frac{\kappa_x}{n^3}\sum_{j=1}^n\frac{s_j^3}{(1+s_jg_{1n}^0(z))^3}\sum_{k\neq j=1}^n\re\left(\be_k'\bt_{1n}^*\bd^{-1}(z)\bt_{1n}\be_k\right)^2\re\left(\rtr\left(\bw^{-1}(z)\bS_1\bd^{-1}(z)\right)\right)+o(1).
\end{align*}

Write $M_{n2}(z)$ as
\begin{align*}
&p\left[\re m_n(z)-m_n^0(z)\right]\\
=&d_{n1}(z)+p\left[\int\frac1{\frac1n\sum_{j=1}^ns_j\varphi_j(z)x- z}dH_{1n}(x)+z^{-1}\int\frac1{1+g_{2n}^0(z)x}dH_{1n}(x)\right]\\
=&d_{n1}(z)-p\left(\re g_{n}(z)-g_{1n}^0(z)\right)\frac1n\sum_{j=1}^n\frac{s_j^2\varphi_j(z)}{1+g_{1n}^0(z)s_j}\\
&\times\int\frac{x}
{\left(\frac1n\sum_{j=1}^ns_j\varphi_j(z)x- z\right)\left({z+zg_{2n}^0(z)x}\right)}dH_{1n}(x).
\end{align*}
Below we first find the relation between $\left(\re m_n(z)-m_n^0(z)\right)$ and $\left(\re g_{n}(z)-g_{1n}^0(z)\right)$. Write $\bd(z)+z\bi_p=\frac1n\sum_{k=1}^ns_k\bq_k\bq_k^*$. Multiplying by $\bd^{-1}(z)$ on the right-hand side and using (\ref{bal2}),
 we obtain
\begin{align*}
\bi_p+z\bd^{-1}(z)=&\frac1n\sum_{k=1}^n s_k\beta_k(z)\bq_k\bq_k^*\bd_k^{-1}(z).
\end{align*}
Taking the trace on both side and dividing by $p$, one gets
\begin{align*}
1+zm_n(z)=c_n-\frac{c_n}n\sum_{k=1}^n\beta_k(z).
\end{align*}
Together with (\ref{qal1}) , we have
\begin{align}\label{bal:8}
\underline m_n(z)=-\frac1{zn}\sum_{k=1}^n\beta_k(z).
\end{align}
From (\ref{cal6}) and (\ref{bal:8}), it implies
\begin{align*}
 \re\underline m_n(z)=&-\frac1{zn}\sum_{k=1}^n\varphi_k(z)-\frac{\alpha_x}{zn^3}\sum_{k=1}^ns_k^2\varphi_k^3(z)\re\rtr\left(\bd_k^{-1}(z)\bS_1\left(\bd_k'(z)\right)^{-1}\bS_1\right)\\
 &-\frac{\kappa_x}{zn^3}\sum_{k=1}^ns_k^2\varphi_k^3(z)\sum_{j\neq k=1}^n\re\left(\be_j'\bt_{1n}^*\bd_k^{-1}(z)\bt_{1n}\be_j\right)^2+o(n^{-1})\\
 =&-\frac1{zn}\sum_{k=1}^n\varphi_k(z)-\frac{\alpha_x}{zn^3}\sum_{k=1}^ns_k^2\varphi_k^3(z)\re\rtr\left(\bd^{-1}(z)\bS_1\left(\bd'(z)\right)^{-1}\bS_1\right)\\
 &-\frac{\kappa_x}{zn^3}\sum_{k=1}^ns_k^2\varphi_k^3(z)\sum_{j\neq k=1}^n\re\left(\be_j'\bt_{1n}^*\bd^{-1}(z)\bt_{1n}\be_j\right)^2+o(n^{-1}).\\
\end{align*}
Hence, we see from (\ref{cal5})
\begin{align*}
\re\underline m_n(z)-\underline m_n^0(z)=&\left(\re g_{n}(z)-g_{1n}^0(z)\right)\frac1{zn}\sum_{k=1}^n\frac{s_k}{(1+g_{1n}^0(z)s_k)^2}\\
&-\frac{\alpha_x}{zn^3}\sum_{k=1}^n\frac{s_k^2}{(1+g_{1n}^0(z)s_k)^3}\re\rtr\left(\bd^{-1}(z)\bS_1\left(\bd'(z)\right)^{-1}\bS_1\right)\\
 &-\frac{\kappa_x}{zn^3}\sum_{k=1}^n\frac{s_k^2}{(1+g_{1n}^0(z)s_k)^3}\sum_{j\neq k=1}^n\re\left(\be_j'\bt_{1n}^*\bd^{-1}(z)\bt_{1n}\be_j\right)^2+o(n^{-1})
\end{align*}
which yields that
\begin{align*}
\re g_{n}(z)-g_{1n}^0(z)=&\left[\frac1{zn}\sum_{k=1}^n\frac{s_k}{(1+g_{1n}^0(z)s_k)^2}\right]^{-1}\Bigg[\re\underline m_n(z)-\underline m_n^0(z)\\
&+\frac{\alpha_x}{zn^3}\sum_{k=1}^n\frac{s_k^2}{(1+g_{1n}^0(z)s_k)^3}\re\rtr\left(\bd^{-1}(z)\bS_1\left(\bd'(z)\right)^{-1}\bS_1\right)\\
&+\frac{\kappa_x}{zn^3}\sum_{k=1}^n\frac{s_k^2}{(1+g_{1n}^0(z)s_k)^3}\sum_{j\neq k=1}^n\re\left(\be_j'\bt_{1n}^*\bd^{-1}(z)\bt_{1n}\be_j\right)^2\Bigg]+o(n^{-1}).
\end{align*}
Combining the above equalities with $M_{n2}(z)=p\left[\re m_n(z)-m_n^0(z)\right]=n\left[\re\underline m_n(z)-\underline m_n^0(z)\right]$, we conclude that from (\ref{cal5})
\begin{align}\label{cal8}
&p\left[\re m_n(z)-m_n^0(z)\right]\\
=&d_{n1}(z)-c_nzn\sum_{k=1}^n\frac{s_k^2\varphi_j(z)}{1+g_{1n}^0(z)s_k}\int\frac{x}
{\left(\frac1n\sum_{k=1}^ns_k\varphi_k(z)x- z\right)\left({z+zg_{2n}^0(z)x}\right)}dH_{1n}(x)\notag\\
&\times\left[\sum_{k=1}^n\frac{s_k}{(1+g_{1n}^0(z)s_k)^2}\right]^{-1}\Bigg[\re\underline m_n(z)-\underline m_n^0(z)\notag\\
&+\frac{\alpha_x}{zn^3}\sum_{k=1}^n\frac{s_k^2}{(1+g_{1n}^0(z)s_k)^3}\re\rtr\left(\bd^{-1}(z)\bS_1\left(\bd'(z)\right)^{-1}\bS_1\right)\notag\\
&+\frac{\kappa_x}{zn^3}\sum_{k=1}^n\frac{s_k^2}{(1+g_{1n}^0(z)s_k)^3}\sum_{j\neq k=1}^n\re\left(\be_j'\bt_{1n}^*\bd^{-1}(z)\bt_{1n}\be_j\right)^2\Bigg]\notag\\
=&\left(d_{n1}(z)+d_{n2}(z)\right)\Bigg[1-\frac{c_n}z\sum_{k=1}^n\frac{s_k^2}{(1+g_{1n}^0(z)s_k)^2}\int\frac{x}
{\left({1+g_{2n}^0(z)x}\right)^2}dH_{1n}(x)\notag\\
&\quad\times\left[\sum_{k=1}^n\frac{s_k}{(1+g_{1n}^0(z)s_k)^2}\right]^{-1}\Bigg]^{-1}+o(1)\notag
\end{align}
where
\begin{align*}
  d_{n2}(z)=&\frac{c_n}z\sum_{k=1}^n\frac{s_k^2}{(1+g_{1n}^0(z)s_k)^2}\int\frac{x}
{\left({1+g_{2n}^0(z)x}\right)^2}dH_{1n}(x)\left[\sum_{k=1}^n\frac{s_k}{(1+g_{1n}^0(z)s_k)^2}\right]^{-1}\\
&\times\Bigg[\frac{\alpha_x}{zn^2}\sum_{k=1}^n\frac{s_k^2}{(1+g_{1n}^0(z)s_k)^3}\re\rtr\left(\bd^{-1}(z)\bS_1\left(\bd'(z)\right)^{-1}\bS_1\right)\\
&+\frac{\kappa_x}{zn^2}\sum_{k=1}^n\frac{s_k^2}{(1+g_{1n}^0(z)s_k)^3}\sum_{j\neq k=1}^n\re\left(\be_j'\bt_{1n}^*\bd^{-1}(z)\bt_{1n}\be_j\right)^2\Bigg].
\end{align*}
In the following, we show
\begin{align*}
1-\frac{c_n}z\sum_{k=1}^n\frac{s_k^2}{(1+g_{1n}^0(z)s_k)^2}\int\frac{x}
{\left({1+g_{2n}^0(z)x}\right)^2}dH_{1n}(x)\left[\sum_{k=1}^n\frac{s_k}{(1+g_{1n}^0(z)s_k)^2}\right]^{-1}\neq0.
\end{align*}
Write
\begin{align*}
&\frac1{n}\sum_{k=1}^n\frac{s_k}{\left(1+g_{1}(z)s_k\right)^2}-\frac c{zn}\sum_{k=1}^n\frac{s_k^2}{\left(1+g_{1}(z)s_k\right)^2}\int\frac{x}
{\left({1+g_2(z)x}\right)^2}dH_{1}(x)\\
=&-zg_2(z)-\frac 1n\sum_{k=1}^n\frac{s_k^2}{\left(1+g_{1}(z)s_k\right)^2}\left[g_1(z)+\frac cz\int\frac{x}
{\left({1+g_{2}(z)x}\right)^2}dH_{1}(x)\right]\\
=&-zg_2(z)\left\{1-\frac c{z^2n}\sum_{k=1}^n\frac{s_k^2}{\left(1+g_{1}(z)s_k\right)^2}\int\frac{x^2}
{\left({1+g_{2}(z)x}\right)^2}dH_{1}(x)\right\}.
\end{align*}
Note that for all $z=u+iv\in\mathcal{C}$
\begin{align*}
&\left|\frac c{z^2}\int\frac{y^2}{\left(1+g_{1}(z)y\right)^2}dH_2(y)\int\frac{x^2}
{\left({1+g_{2}(z)x}\right)^2}dH_{1}(x)\right|\\
\le&c\int\frac{y^2}{\left|1+g_{1}(z)y\right|^2}dH_2(y)\int\frac{x^2}
{\left|{z+zg_{2}(z)x}\right|^2}dH_{1}(x)\\
=&\frac{\Im \left(zg_2(z)\right)}{\Im g_1(z)}\frac{\Im g_1(z)-cv\int\frac{x}
{\left|{z+zg_{1}(z)x}\right|^2}dH_{2}(x)}{\Im \left(zg_2(z)\right)}<1.
\end{align*}

We are now in position to find the limit of $d_{n1}(z)$ and $d_{n2}(z)$. From (\ref{sal:6}), we have
\begin{align}\label{bal18}
{\bf W}^{-1}(z)-\bd^{-1}(z)
=&\frac1n\sum_{j=1}^ns_j\varphi_j(z){\bf W}^{-1}(z)\left(\bq_j\bq_j^*-\bS_1\right)\bd_{j}^{-1}(z)\\
&+\frac1n\sum_{j=1}^ns_j\left(\beta_{j}(z)-\varphi_j(z)\right){\bf W}^{-1}(z)\bq_j\bq_j^*\bd_{j}^{-1}(z)\notag\\
&+\frac1n\sum_{j=1}^ns_j\varphi_j(z){\bf W}^{-1}(z)\bS_1\left(\bd_{j}^{-1}(z)-\bd^{-1}(z)\right)\notag\\
\triangleq&{\bf G_1}(z)+{\bf G_2}(z)+{\bf G_3}(z)\notag.
\end{align}
Let ${\bf M}$ be $p\times p$ matrix with a nonrandom bound on the spectral norm of ${\bf M}$ for all parameters governing ${\bf M}$ and under all realizations of ${\bf M}$. Applying (\ref{bal1}) and (\ref{cal1}), we obtain
\begin{align}\label{bal16}
\re|\beta_{j}(z)-&\varphi_j(z)|^2
\le\frac C{n^2}\re|\beta_j(z)\left(\bq_j^*\bd_j^{-1}(z)\bq_j-\re\bd^{-1}(z)\bS_1\right)|^2\\
\le&\frac C{n^2}\re|\beta_j(z)\rho_j(z)|^2+\frac C{n^2}\re|\beta_j(z)|^2\re|\rtr\left((\bd_j^{-1}(z)-\bd^{-1}(z))\bS_1\right)|^2\notag\\
\le&\frac Cn.\notag
\end{align}
which implies that
\begin{align}\label{bal19}
\re\left|\rtr\left({\bf G_2}(z){\bf M}\right)\right|
\le\frac Cn\sum_{j=1}^n\re^{1/2}\left|\beta_{j}(z)-\varphi_j(z)\right|^2\re^{1/2}\left|\bq_j^*\bq_j\right|^2
\le Cn^{1/2}.
\end{align}
Form Lemma \ref{lep1}, we have
\begin{align}\label{bal20}
\re\left|\rtr\left({\bf G_3}(z){\bf M}\right)\right|\le&\frac C{n^2}\sum_{j=1}^n\re\left|\bq_j^*\bq_j\right|
\le C.
\end{align}
Furthermore, write
\begin{align*}
&\re\rtr\left({\bd}^{-1}(z){\bf M}\left(\bd'(z)\right)^{-1}\bS_1\right)\\
=&\re\rtr\left({\bw}^{-1}(z){\bf M}\left(\bd'(z)\right)^{-1}\bS_1\right)-\re\rtr\left({\bf G}_1(z){\bf M}\left(\bd'(z)\right)^{-1}\bS_1\right)+O(n^{1/2})\\
=&\re\rtr\left({\bw}^{-1}(z){\bf M}\left(\bd'(z)\right)^{-1}\bS_1\right)+O(n^{1/2})\\
&-\frac1n\sum_{j=1}^ns_j\varphi_j(z)\re\bq_j^*\bd_{j}^{-1}(z){\bf M}\left(\left(\bd'(z)\right)^{-1}-\left(\bd'_j(z)\right)^{-1}\right)\bS_1{\bf W}^{-1}(z)\bq_j\\
&-\frac1n\sum_{j=1}^ns_j\varphi_j(z)\re\rtr\left[{\bf W}^{-1}(z)\bS_1\bd_{j}^{-1}(z){\bf M}\left(\left(\bd_j'(z)\right)^{-1}-\left(\bd'(z)\right)^{-1}\right)\bS_1\right]\\
=&\re\rtr\left({\bw}^{-1}(z){\bf M}\left(\bd'(z)\right)^{-1}\bS_1\right)+O(n^{1/2})\\
&+\frac1{n^2}\sum_{j=1}^ns_j^2\varphi_j(z)\re\beta_j(z)\bq_j^*\bd_{j}^{-1}(z){\bf M}\left(\bd_j'(z)\right)^{-1}\bar\bq_j\bq_j'\left(\bd'_j(z)\right)^{-1}\bS_1{\bf W}^{-1}(z)\bq_j\\
&-\frac1{n^2}\sum_{j=1}^ns_j^2\varphi_j(z)\re\beta_j(z)\bq_j'\left(\bd'_j(z)\right)^{-1}\bS_1{\bf W}^{-1}(z)\bS_1\bd_{j}^{-1}(z){\bf M}\left(\bd_j'(z)\right)^{-1}\bar\bq_j\\
\triangleq&\re\rtr\left({\bw}^{-1}(z){\bf M}\left(\bd'(z)\right)^{-1}\bS_1\right)+O(n^{1/2})+r_1(z)+r_2(z).
\end{align*}
 Using Lemma \ref{lep1} and (\ref{bal1}), we have
\begin{align*}
\re|r_2(z)|\le C.
\end{align*}
Together with the above inequality, (\ref{cal3}), and (\ref{bal:9}), one gets
\begin{align*}
&\re\rtr\left({\bd}^{-1}(z){\bf M}\left(\bd'(z)\right)^{-1}\bS_1\right)=\re\rtr\left({\bw}^{-1}(z){\bf M}\left(\bd'(z)\right)^{-1}\bS_1\right)+O(n^{1/2})\\
&+\frac1{n^2}\sum_{j=1}^ns_j^2\varphi_j^2(z)\re\bq_j^*\bd_{j}^{-1}(z){\bf M}\left(\bd_j'(z)\right)^{-1}\bar\bq_j\bq_j'\left(\bd'_j(z)\right)^{-1}\bS_1{\bf W}^{-1}(z)\bq_j\\
=&\re\rtr\left({\bw}^{-1}(z){\bf M}\left(\bd'(z)\right)^{-1}\bS_1\right)+O(n^{1/2})\\
&+\frac{\alpha_x}{n^2}\sum_{j=1}^ns_j^2\varphi_j^2(z)\re\rtr\left(\bd_{j}^{-1}(z){\bf M}\left(\bd_j'(z)\right)^{-1}\bS_1\right)\rtr\left(\left(\bd'_j(z)\right)^{-1}\bS_1{\bf W}^{-1}(z)\bS_1\right)\\
=&\re\rtr\left({\bw}^{-1}(z){\bf M}\left(\bd'(z)\right)^{-1}\bS_1\right)+O(n^{1/2})\\
&+\frac{\alpha_x}{n^2}\sum_{j=1}^ns_j^2\varphi_j^2(z)\re\rtr\left(\bd^{-1}(z){\bf M}\left(\bd'(z)\right)^{-1}\bS_1\right)\re\rtr\left(\left(\bd'(z)\right)^{-1}\bS_1{\bf W}^{-1}(z)\bS_1\right).
\end{align*}
By (\ref{bal18}), (\ref{bal19}), and (\ref{bal20}), it follow that
\begin{align*}
\re\rtr\left({\bd}^{-1}(z){\bf M}\left(\bd'(z)\right)^{-1}\bS_1\right)
=&\frac{\re\rtr\left({\bw}^{-1}(z){\bf M}\bw^{-1}(z)\bS_1\right)}{1-\frac{\alpha_x}{n^2}\sum_{j=1}^ns_j^2\varphi_j^2(z)\re\rtr\left(\bw^{-1}(z)\bS_1{\bf W}^{-1}(z)\bS_1\right)}+O(n^{1/2}).
\end{align*}
Due to (\ref{cl4}) and (\ref{cal5}), we find
\begin{align*}
\frac1n\re\rtr\left({\bd}^{-1}(z)\bS_1\left(\bd'(z)\right)^{-1}\bS_1\right)
=&\frac{c_nz^{-2}d_{n3}(z)}{1-{{\alpha_x}c_n}{z^{-2}}d_{n3}(z)d_{n4}(z)}+o(1).
\end{align*}
and
\begin{align*}
&\frac1n\re\rtr\left(\bd^{-1}(z)\bS_1\bw^{-1}(z)\left(\bd'(z)\right)^{-1}\bS_1\right)
=\frac{-c_nz^{-3}\int x^2/\left(1+g_{2n}^0(z)x\right)^3dH_{1n}(x)}{1-{{\alpha_x}c_n}{z^{-2}}d_{n3}(z)d_{n4}(z)}+o(1)
\end{align*}
where $d_{n3}(z)=\int \frac{x^2}{\left(1+g_{2n}^0(z)x\right)^2}dH_{1n}(x)$ and $d_{n4}(z)=\int\frac{y^2}{(1+g_{1n}^0(z)y)^2}dH_{2n}(y)$.

Therefore, it follow from (\ref{cal7}) and (\ref{cal5})
\begin{align*}
  &d_{n1}(z)\\
  =&\frac{\alpha_x}{n^2}\sum_{j=1}^n\frac{s_j^2}{(1+s_jg_{1n}^0(z))^2}\re\rtr\left(\bd^{-1}(z)\bS_1\bw^{-1}(z)\left(\bd'(z)\right)^{-1}\bS_1\right)\\
  &+\frac{\kappa_x}{n^2}\sum_{j=1}^n\frac{s_j^2}{(1+s_jg_{1n}^0(z))^2}\re\rtr\left(\bw^{-1}(z)\bS_1\bw^{-2}(z)\bS_1\right)\\
&-\frac{\alpha_x}{n^3}\sum_{j=1}^n\frac{s_j^3}{(1+s_jg_{1n}^0(z))^3}\re\rtr\left(\bd^{-1}(z)\bS_1\left(\bd'(z)\right)^{-1}\bS_1\right)
\re\rtr\left(\bw^{-2}(z)\bS_1\right)\\
  &-\frac{\kappa_x}{n^3}\sum_{j=1}^n\frac{s_j^3}{(1+s_jg_{1n}^0(z))^3}\re\rtr\left(\bw^{-1}(z)\bS_1\bw^{-1}(z)\bS_1\right)\re\rtr\left(\bw^{-2}(z)\bS_1\right)+o(1)\\
=&-\frac{\alpha_x c_n d_{n4}(z)}{z^{3}}\frac{\int x^2/\left(1+g_{2n}^0(z)x\right)^3dH_{1n}(x)}{1-{{\alpha_x}c_n}{z^{-2}}d_{n3}(z)d_{n4}(z)}-\frac{\kappa_xc_n d_{n4}(z)}{z^3}\int \frac{x^2}{\left(1+g_{2n}^0(z)x\right)^3}dH_{1n}(x)\\
&-\frac{\alpha_xc_n^2 d_{n3}(z)}{z^4({1-{{\alpha_x}c_n}z^{-2}{}d_{n3}(z)d_{n4}(z)})}\int\frac{y^3}{(1+g_{1n}^0(z)y)^3}dH_{2n}(y)
\int \frac{x}{\left(1+g_{2n}^0(z)x\right)^2}dH_{1n}(x)\\
  &-\frac{\kappa_xc_n^2 d_{n3}(z)}{z^4}\int\frac{y^3}{(1+g_{1n}^0(z)y)^3}dH_{2n}(y)\int \frac{x}{\left(1+g_{2n}^0(z)x\right)^2}dH_{1n}(x)+o(1)  .
\end{align*}
and
\begin{align*}
  d_{n2}(z)=&\frac{c_nd_{n4}(z)}z\int\frac{x}
{\left({1+g_{2n}^0(z)x}\right)^2}dH_{1n}(x)\left[\int\frac{y}{(1+g_{1n}^0(z)y)^2}dH_{2n}(y)\right]^{-1}\\
&\times\Bigg[\frac{\alpha_x}{zn^2}\sum_{k=1}^n\frac{s_k^2}{(1+g_{1n}^0(z)s_k)^3}\re\rtr\left(\bd^{-1}(z)\bS_1\left(\bd'(z)\right)^{-1}\bS_1\right)\\
&+\frac{\kappa_x}{zn^2}\sum_{k=1}^n\frac{s_k^2}{(1+g_{1n}^0(z)s_k)^3}\re\rtr\left(\bw^{-1}(z)\bS_1\bw^{-1}(z)\bS_1\right)\Bigg]+o(1)\\
=&\frac{\alpha_x c_n^2 d_{n3}(z)d_{n4}(z)}{z^4({1-{{\alpha_x}c_n}{z^{-2}}d_{n3}(z)d_{n4}(z)})}\int\frac{x}
{\left({1+g_{2n}^0(z)x}\right)^2}dH_{1n}(x)\\
&\times\int\frac{y^2}{(1+g_{1n}^0(z)y)^3}dH_{2n}(y)\left[\int\frac{y}{(1+g_{1n}^0(z)y)^2}dH_{2n}(y)\right]^{-1}\\
+&\frac{{\kappa_x}c_n^2 d_{n3}(z)d_{n4}(z)}{z^4}\int\frac{x}
{\left({1+g_{2n}^0(z)x}\right)^2}dH_{1n}(x)\\
&\times\int\frac{y^2}{(1+g_{1n}^0(z)y)^3}dH_{2n}(y)\left[\int\frac{y}{(1+g_{1n}^0(z)y)^2}dH_{2n}(y)\right]^{-1}+o(1).
\end{align*}

From (\ref{cal8}) and the above two equalities, we conclude that
\begin{align*}
M_{n2}(z)
\to&\left(d_{1}(z)+d_{2}(z)\right)\Bigg[1-\frac{cd_{4}(z)}z\frac{\int{x}/
{\left({1+g_{2}(z)x}\right)^2}dH_{1}(x)}{\int{y}/{(1+g_{1}(z)y)^2}dH_{2}(y)}\Bigg]^{-1}\\
=&-\frac{d_{1}(z)+d_{2}(z)}{zg_2(z)(1-cz^{-2}d_3(z)d_4(z))}{\int\frac{y}{(1+g_{1}(z)y)^2}dH_{2}(y)}
\end{align*}
where $d_1(z)$ and $d_2(z)$ are the limits of $d_{n1}(z)$ and $d_{n2}(z)$ respectively. By calculating, we find
\begin{align*}
&\frac{d_{1}(z)+ d_{2}(z)}{zg_2(z)}{\int\frac{y}{(1+g_{1}(z)y)^2}dH_{2}(y)}\\
=&\left(\frac{\alpha_x}{1-{{\alpha_x}c}{z^{-2}}d_{3}(z)d_{4}(z)}+{\kappa_x}\right)\left[\frac{c d_3(z)d_{4}(z)}{z^3}-\frac{c^2 d_{3}^2(z)d_{4}^2(z)}{z^5}+\frac{c d_{5}(z)}{z^4}+\frac{c^2d_{6}(z)}{z^4}\right].
\end{align*}

\subsection{List of necessary lemmas}

\subsubsection{Lemmas that need to prove}

\begin{lemma}\label{le1}
Let $1\le j\le m_1,1\le k\le m_2$. Recall the definition of ${\bf G}_n$ in (\ref{eq31}). Then, for any $1\le a,b \le p$, we have
\begin{align*}
\frac{\partial g_{ab}}{\partial w_{jk}}=\left[\frac{\partial {\bf G}_n}{\partial w_{jk}}\right]_{ab}=\frac1n\left[\bt_{1n}\right]_{aj}\left[\bt_{1n}\bw_n\bt_{2n}\right]_{b k}
+\frac1n\left[\bt_{1n}\bw_n\bt_{2n}\right]_{ak}\left[\bt_{1n}\right]_{b j}.
\end{align*}
\end{lemma}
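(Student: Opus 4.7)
The plan is to obtain the stated identity by a direct application of the product rule, exploiting the fact that $\mathbf{G}_n$ depends on $w_{jk}$ only through the quadratic expression $\mathbf{W}_n \mathbf{T}_{2n} \mathbf{W}_n'$, sandwiched between the nonrandom matrices $\mathbf{T}_{1n}$ and $\mathbf{T}_{1n}'$. The starting observation is that $\partial \mathbf{W}_n / \partial w_{jk} = \mathbf{e}_j \mathbf{e}_k'$, where $\mathbf{e}_j \in \mathbb{R}^{m_1}$ and $\mathbf{e}_k \in \mathbb{R}^{m_2}$ are standard basis vectors, so that differentiating the quadratic form yields the two rank-one contributions $\mathbf{e}_j \mathbf{e}_k' \mathbf{T}_{2n} \mathbf{W}_n'$ and $\mathbf{W}_n \mathbf{T}_{2n} \mathbf{e}_k \mathbf{e}_j'$.

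Concretely, I will write
\begin{align*}
\frac{\partial \mathbf{G}_n}{\partial w_{jk}}
= \frac{1}{n}\mathbf{T}_{1n}\mathbf{e}_j\mathbf{e}_k'\mathbf{T}_{2n}\mathbf{W}_n'\mathbf{T}_{1n}'
+ \frac{1}{n}\mathbf{T}_{1n}\mathbf{W}_n\mathbf{T}_{2n}\mathbf{e}_k\mathbf{e}_j'\mathbf{T}_{1n}',
\end{align*}
and then extract the $(a,b)$ entry of each term. For the first piece, $[\mathbf{T}_{1n}\mathbf{e}_j]_a = [\mathbf{T}_{1n}]_{aj}$, while $[\mathbf{e}_k'\mathbf{T}_{2n}\mathbf{W}_n'\mathbf{T}_{1n}']_b = [\mathbf{T}_{1n}\mathbf{W}_n\mathbf{T}_{2n}']_{bk} = [\mathbf{T}_{1n}\mathbf{W}_n\mathbf{T}_{2n}]_{bk}$, using that $\mathbf{T}_{2n}$ is Hermitian (and here real-symmetric in the current context). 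The second piece is handled symmetrically: $[\mathbf{T}_{1n}\mathbf{W}_n\mathbf{T}_{2n}\mathbf{e}_k]_a = [\mathbf{T}_{1n}\mathbf{W}_n\mathbf{T}_{2n}]_{ak}$ and $[\mathbf{e}_j'\mathbf{T}_{1n}']_b = [\mathbf{T}_{1n}]_{bj}$. Summing the two contributions gives exactly the stated formula.

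There is no real obstacle here; the lemma is genuinely a one-line calculation once one recognizes that only the inner $\mathbf{W}_n\mathbf{T}_{2n}\mathbf{W}_n'$ depends on $w_{jk}$ and that its derivative is the sum of two rank-one matrices. The only subtlety to flag in writing is the use of the Hermiticity of $\mathbf{T}_{2n}$ when converting $\mathbf{T}_{2n}'$ back to $\mathbf{T}_{2n}$, so that the two terms have the clean symmetric form asserted in the lemma.
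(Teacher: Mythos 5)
Your proposal is correct and is essentially the same argument as the paper's: apply the product rule to $\frac1n\bt_{1n}\bw_n\bt_{2n}\bw_n'\bt_{1n}'$ using $\partial\bw_n/\partial w_{jk}=\be_j\be_k'$, obtain the two rank-one terms, and read off the $(a,b)$ entries (with the symmetry of $\bt_{2n}$ implicitly used, which you make explicit). No gaps.
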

\begin{proof}
It is obvious that
\begin{align*}
\frac{\partial {\bf G}_n}{\partial w_{jk}}=&\frac1n\bt_{1n}\frac{\partial{\bf W}_n}{\partial w_{jk}}\bt_{2n}\bw_n'\bt_{1n}'
+\frac1n\bt_{1n}\bw_n\bt_{2n}\frac{\partial{\bf W}_n'}{\partial w_{jk}}\bt_{1n}'\\
=&\frac1n\bt_{1n}{\bf e}_j{\bf e}_k'\bt_{2n}\bw_n'\bt_{1n}'
+\frac1n\bt_{1n}\bw_n\bt_{2n}{\bf e}_k{\bf e}_j'\bt_{1n}'.
\end{align*}
This yields
\begin{align*}
\left[\frac{\partial {\bf G}_n}{\partial w_{jk}}\right]_{ab}=&\frac1n\left[\bt_{1n}\right]_{aj}\left[\bt_{1n}\bw_n\bt_{2n}\right]_{b k}
+\frac1n\left[\bt_{1n}\bw_n\bt_{2n}\right]_{ak}\left[\bt_{1n}\right]_{b j}.
\end{align*}
\end{proof}

\begin{lemma}\label{le2}
Let $1\le j\le m_1,1\le k\le m_2$. Recall the definition of ${\bf H}_n(t)$ in (\ref{eq32}). Then for any $1\le d,l \le p$
\begin{align*}
\frac{\partial h_{dl}}{\partial w_{jk}}=\frac i n\left[{\bf H}_n\bt_{1n}\right]_{dj}*\left[\bt_{2n}\bw_n'\bt_{1n}'{\bf H}_n\right]_{kl}(t)+\frac i n\left[{\bf H}_n\bt_{1n}\bw_n\bt_{2n}\right]_{dk}*\left[\bt_{1n}'{\bf H}_n\right]_{jl}(t)
\end{align*}
where $h_{dl}=\left({\bf H}_n(t)\right)_{dl}$ and $f*g(t)=\int_0^tf(s)g(t-s)ds$.
\end{lemma}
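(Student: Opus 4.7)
The proof will be a direct application of Duhamel's formula for the derivative of a matrix exponential, combined with Lemma \ref{le1}.

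First, I would recall Duhamel's identity: for any smooth family of matrices $A(\theta)$,
\begin{align*}
\frac{\partial}{\partial \theta}e^{itA(\theta)} = i\int_0^t e^{is A(\theta)}\,\frac{\partial A(\theta)}{\partial \theta}\,e^{i(t-s)A(\theta)}\,ds,
\end{align*}
which follows by differentiating the power series $\sum_{k\ge0}(it)^k A^k/k!$ term by term, applying the Leibniz rule to $A^k$, and then identifying the resulting symmetric sum with the integral via $\int_0^t s^j(t-s)^{k-1-j}ds = j!(k-1-j)!\,t^k/k!$. Applied to $A = {\bf G}_n$ and $\theta = w_{jk}$, this gives
\begin{align*}
\frac{\partial {\bf H}_n(t)}{\partial w_{jk}} = i\int_0^t {\bf H}_n(s)\,\frac{\partial {\bf G}_n}{\partial w_{jk}}\,{\bf H}_n(t-s)\,ds,
\end{align*}
using that ${\bf G}_n$ commutes with ${\bf H}_n(s)$ so either ordering of the exponentials is valid.

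Next, I would substitute the two rank-one expressions for $\partial{\bf G}_n/\partial w_{jk}$ provided by Lemma \ref{le1}, namely
\begin{align*}
\frac{\partial {\bf G}_n}{\partial w_{jk}} = \tfrac1n\bt_{1n}\be_j\be_k'\bt_{2n}\bw_n'\bt_{1n}' + \tfrac1n\bt_{1n}\bw_n\bt_{2n}\be_k\be_j'\bt_{1n}'.
\end{align*}
Plugging in and reading off the $(d,l)$ entry, each rank-one factor splits the product into two scalar pieces: the first term produces
\begin{align*}
\tfrac{i}{n}\int_0^t \bigl[{\bf H}_n(s)\bt_{1n}\bigr]_{dj}\bigl[\bt_{2n}\bw_n'\bt_{1n}'{\bf H}_n(t-s)\bigr]_{kl}\,ds,
\end{align*}
and similarly the second term produces
\begin{align*}
\tfrac{i}{n}\int_0^t \bigl[{\bf H}_n(s)\bt_{1n}\bw_n\bt_{2n}\bigr]_{dk}\bigl[\bt_{1n}'{\bf H}_n(t-s)\bigr]_{jl}\,ds.
\end{align*}
Recognising each integral as the convolution $f*g(t)=\int_0^t f(s)g(t-s)\,ds$ yields exactly the claimed formula.

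There is no real obstacle here; the calculation is routine once Duhamel's formula is in hand. The only point requiring a bit of care is bookkeeping of the row/column indices of the two rank-one terms from Lemma \ref{le1}, to ensure that the correct factors land on the left and right of the convolution, and that the unitary index $j$ is contracted against $\bt_{1n}$ (producing the $dj$ and $jl$ components) while $k$ is contracted against $\bt_{2n}$ (producing the $kl$ and $dk$ components).
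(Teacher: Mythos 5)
Your proof is correct and follows essentially the same route as the paper: both rest on Duhamel's formula (the paper's Lemma \ref{lea1}) together with the rank-one form of $\partial{\bf G}_n/\partial w_{jk}$ from Lemma \ref{le1}, and your index bookkeeping for the $(d,l)$ entry matches the paper's computation. The only cosmetic difference is that the paper expands via the chain rule over the entries $g_{ab}$ and then re-sums, whereas you apply Duhamel directly in the direction of the perturbation, which collapses that sum from the start.
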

\begin{proof}
Applying Lemma \ref{le1} and Lemma \ref{lea1}, we get
\begin{align*}
\frac{\partial {\bf H}_n(t)}{\partial w_{jk}}=&\sum_{a,b=1}^p\frac{\partial {\bf H}_n(t)}{\partial g_{ab}}\frac{\partial g_{ab}}{\partial w_{jk}}
=\frac i n\sum_{a,b=1}^p\int_0^te^{is{\bf G}_n}{\bf e}_a{\bf e}_b'e^{i(t-s){\bf G}_n}ds\\
&\times\left\{\left[\bt_{1n}\right]_{aj}\left[\bt_{1n}\bw_n\bt_{2n}\right]_{b k}
+\left[\bt_{1n}\bw_n\bt_{2n}\right]_{ak}\left[\bt_{1n}\right]_{b j}\right\}.
\end{align*}
Hence, one has
\begin{align*}
\frac{\partial h_{dl}}{\partial w_{jk}}
=&\frac i n\sum_{a,b=1}^ph_{da}*h_{bl}(t)
\left\{\left[\bt_{1n}\right]_{aj}\left[\bt_{1n}\bw_n\bt_{2n}\right]_{b k}
+\left[\bt_{1n}\bw_n\bt_{2n}\right]_{ak}\left[\bt_{1n}\right]_{b j}\right\}\\
=&\frac i n\left[{\bf H}_n\bt_{1n}\right]_{dj}*\left[\bt_{2n}\bw_n'\bt_{1n}'{\bf H}_n\right]_{kl}(t)+\frac i n\left[{\bf H}_n\bt_{1n}\bw_n\bt_{2n}\right]_{dk}*\left[\bt_{1n}'{\bf H}_n\right]_{jl}(t).
\end{align*}
\end{proof}

\begin{lemma}\label{le3}
Let $1\le j\le m_1,1\le k\le m_2$. Recall the definitions of $S(\theta)$ in (\ref{eq32}) and $\widetilde f({\bf G}_n)$ in (\ref{eq30}). Then
\begin{align*}
\frac{\partial S(\theta)}{\partial w_{jk}}=\frac {2} n\left[\bt_{1n}'\widetilde f({\bf G}_n)\bt_{1n}\bw_n\bt_{2n}\right]_{jk}.
\end{align*}
\end{lemma}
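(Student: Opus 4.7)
The plan is to reduce the derivative of $S(\theta)=\rtr f({\bf G}_n)$ to the derivative of $\rtr{\bf H}_n(t)$ by Fourier inversion, and then invoke Lemma~\ref{le2} to compute the latter. Concretely, since $f(\lambda)=\int_{-\infty}^{\infty}\widehat f(t)e^{it\lambda}dt$, the spectral theorem gives
\[
S(\theta)=\int_{-\infty}^{\infty}\widehat f(t)\,\rtr {\bf H}_n(t)\,dt,
\]
so differentiating under the integral yields
\[
\frac{\partial S(\theta)}{\partial w_{jk}}=\int_{-\infty}^{\infty}\widehat f(t)\sum_{d=1}^p \frac{\partial h_{dd}(t)}{\partial w_{jk}}\,dt.
\]

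Next I would substitute the expression for $\partial h_{dl}/\partial w_{jk}$ from Lemma~\ref{le2} with $d=l$ and sum over $d$. For the first convolution term,
\[
\sum_{d=1}^p\bigl[{\bf H}_n(s)\bt_{1n}\bigr]_{dj}\bigl[\bt_{2n}\bw_n'\bt_{1n}'{\bf H}_n(t-s)\bigr]_{kd}=\bigl[\bt_{2n}\bw_n'\bt_{1n}'{\bf H}_n(t-s){\bf H}_n(s)\bt_{1n}\bigr]_{kj},
\]
and the same collapsing works for the second term. The crucial observation is that ${\bf H}_n(s){\bf H}_n(t-s)={\bf H}_n(t)$ for every $s$, so the integrand of the convolution is independent of $s$ and the convolution simply contributes a factor of $t$. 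This yields
\[
\sum_{d=1}^p\frac{\partial h_{dd}(t)}{\partial w_{jk}}=\frac{it}{n}\Bigl(\bigl[\bt_{2n}\bw_n'\bt_{1n}'{\bf H}_n(t)\bt_{1n}\bigr]_{kj}+\bigl[\bt_{1n}'{\bf H}_n(t)\bt_{1n}\bw_n\bt_{2n}\bigr]_{jk}\Bigr).
\]

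Then I would use the symmetry in the real case to merge the two summands: since ${\bf G}_n$ is real symmetric, ${\bf H}_n(t)^{\prime}={\bf H}_n(t)$, and $\bt_{2n}^{\prime}=\bt_{2n}$, so the matrix $\bt_{1n}'{\bf H}_n(t)\bt_{1n}\bw_n\bt_{2n}$ is the transpose of $\bt_{2n}\bw_n'\bt_{1n}'{\bf H}_n(t)\bt_{1n}$, whence the two bracketed scalars coincide. This produces the factor of $2$ and gives
\[
\sum_{d=1}^p\frac{\partial h_{dd}(t)}{\partial w_{jk}}=\frac{2it}{n}\bigl[\bt_{1n}'{\bf H}_n(t)\bt_{1n}\bw_n\bt_{2n}\bigr]_{jk}.
\]
Plugging back and pulling the constant matrices outside the integral,
\[
\frac{\partial S(\theta)}{\partial w_{jk}}=\frac{2}{n}\Bigl[\bt_{1n}'\Bigl(i\!\int_{-\infty}^{\infty}t\,\widehat f(t){\bf H}_n(t)\,dt\Bigr)\bt_{1n}\bw_n\bt_{2n}\Bigr]_{jk},
\]
which is exactly the claimed identity by the definition (\ref{eq30}) of $\widetilde f({\bf G}_n)$. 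The only delicate step is the convolution-collapse via ${\bf H}_n(s){\bf H}_n(t-s)={\bf H}_n(t)$ together with the cyclic/transpose symmetry that combines the two Lemma~\ref{le2} terms; the rest is bookkeeping of Fourier inversion and matrix index manipulations.
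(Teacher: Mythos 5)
Your proposal is correct and follows essentially the same route as the paper: Fourier inversion of $S(\theta)=\rtr f({\bf G}_n)$, substitution of Lemma \ref{le2} with $d=l$ summed over $d$, collapse of the convolution via ${\bf H}_n(s){\bf H}_n(t-s)={\bf H}_n(t)$ to produce the factor $t$, and the transpose symmetry (real case, ${\bf H}_n'={\bf H}_n$, $\bt_{2n}'=\bt_{2n}$) merging the two terms into the factor $2$. The paper's proof simply states these steps more tersely, writing the factor $2$ and the collapsed convolution without spelling out the semigroup and symmetry justifications you make explicit.
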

\begin{proof}
By the inverse Fourier transform, we obtain
\begin{align*}
\frac{\partial S(\theta)}{\partial w_{jk}}=\int_{-\infty}^{\infty}\widehat f(u)\rtr\frac{\partial {\bf H}_n(u)}{\partial w_{jk}}du.
\end{align*}
It follows from Lemma \ref{le2} that
\begin{align*}
\frac{\partial S(\theta)}{\partial w_{jk}}=&\frac {2i} n\int_{-\infty}^{\infty}\widehat f(u)\sum_{d=1}^p\left[{\bf H}_n\bt_{1n}\right]_{dj}*\left[\bt_{2n}\bw_n'\bt_{1n}'{\bf H}_n\right]_{kd}(u)du\\
=&\frac {2i} n\int_{-\infty}^{\infty}u\widehat f(u)\left[\bt_{1n}'{\bf H}_n(u)\bt_{1n}\bw_n\bt_{2n}\right]_{jk}du\\
=&\frac {2} n\left[\bt_{1n}'\widetilde f({\bf G}_n)\bt_{1n}\bw_n\bt_{2n}\right]_{jk}.
\end{align*}
\end{proof}

\begin{lemma}\label{le5}
Let $1\le j\le m_1,1\le k\le m_2,1\le d,l\le p$. Recall the definitions of  ${\bf H}_{n}(u)$ in (\ref{eq32}). Then
\begin{align*}
\left[\frac{\partial^2 {\bf H}_n(u)}{\partial w_{jk}^2}\right]_{dl}
=&\frac{2i}n\left[\bt_{2n}\right]_{kk}\left[{\bf H}_n\bt_{1n}\right]_{dj}*\left[\bt_{1n}'{\bf H}_n\right]_{jl}(u)\\
&-\frac {2}{n^2}\left[{\bf H}_n\bt_{1n}\right]_{dj}*\left[\bt_{2n}\bw_n'\bt_{1n}'{\bf H}_n\bt_{1n}\right]_{kj}*\left[\bt_{2n}\bw_n'\bt_{1n}'{\bf H}_n\right]_{kl}(u)\\
&-\frac {2}{n^2}\left[{\bf H}_n\bt_{1n}\bw_n\bt_{2n}\right]_{dk}*\left[\bt_{1n}'{\bf H}_n\bt_{1n}\right]_{jj}*\left[\bt_{2n}\bw_n'\bt_{1n}'{\bf H}_n\right]_{kl}(u)\\
&-\frac {2}{n^2}\left[{\bf H}_n\bt_{1n}\right]_{dj}*\left[\bt_{2n}\bw_n'\bt_{1n}'{\bf H}_n\bt_{1n}\bw_n\bt_{2n}\right]_{kk}*\left[\bt_{1n}'{\bf H}_n\right]_{jl}(u)\\
&-\frac {2}{n^2}\left[{\bf H}_n\bt_{1n}\bw_n\bt_{2n}\right]_{dk}*\left[\bt_{1n}'{\bf H}_n\bt_{1n}\bw_n\bt_{2n}\right]_{jk}*\left[\bt_{1n}'{\bf H}_n\right]_{jl}(u).
\end{align*}
\end{lemma}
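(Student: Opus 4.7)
\medskip

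\noindent\textbf{Proof proposal for Lemma \ref{le5}.}
The plan is to differentiate the first-derivative formula from Lemma \ref{le2} once more in $w_{jk}$, using the product rule for convolutions (which holds because convolution is bilinear and the variable $w_{jk}$ does not enter the integration variable). Writing Lemma \ref{le2} schematically as
\begin{align*}
\frac{\partial h_{dl}}{\partial w_{jk}}(t)
=\tfrac{i}{n}\bigl[\mathbf{H}_n\mathbf{T}_{1n}\bigr]_{dj}\!*\!\bigl[\mathbf{T}_{2n}\mathbf{W}_n'\mathbf{T}_{1n}'\mathbf{H}_n\bigr]_{kl}(t)
+\tfrac{i}{n}\bigl[\mathbf{H}_n\mathbf{T}_{1n}\mathbf{W}_n\mathbf{T}_{2n}\bigr]_{dk}\!*\!\bigl[\mathbf{T}_{1n}'\mathbf{H}_n\bigr]_{jl}(t),
\end{align*}
I will differentiate each convolution factor one at a time, assembling the resulting eight contributions.

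\medskip

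\noindent\textbf{Step 1 (two auxiliary derivatives).} I first record the building blocks I need. Differentiating through $\mathbf{H}_n$ via Lemma \ref{le2} and contracting with $\mathbf{T}_{1n}$ on the right gives
\begin{align*}
\tfrac{\partial}{\partial w_{jk}}\bigl[\mathbf{H}_n\mathbf{T}_{1n}\bigr]_{dj}(s)
=\tfrac{i}{n}\bigl[\mathbf{H}_n\mathbf{T}_{1n}\bigr]_{dj}\!*\!\bigl[\mathbf{T}_{2n}\mathbf{W}_n'\mathbf{T}_{1n}'\mathbf{H}_n\mathbf{T}_{1n}\bigr]_{kj}(s)
+\tfrac{i}{n}\bigl[\mathbf{H}_n\mathbf{T}_{1n}\mathbf{W}_n\mathbf{T}_{2n}\bigr]_{dk}\!*\!\bigl[\mathbf{T}_{1n}'\mathbf{H}_n\mathbf{T}_{1n}\bigr]_{jj}(s).
\end{align*}
For factors that also contain $\mathbf{W}_n$ I must account for the elementary identity $\partial w_{ab}/\partial w_{jk}=\delta_{aj}\delta_{bk}$, which produces an instantaneous (non-convolved) term carrying the factor $[\mathbf{T}_{2n}]_{kk}$. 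For instance,
\begin{align*}
\tfrac{\partial}{\partial w_{jk}}\bigl[\mathbf{T}_{2n}\mathbf{W}_n'\mathbf{T}_{1n}'\mathbf{H}_n\bigr]_{kl}(s)
=&\,[\mathbf{T}_{2n}]_{kk}\bigl[\mathbf{T}_{1n}'\mathbf{H}_n\bigr]_{jl}(s)\\
&+\tfrac{i}{n}\bigl[\mathbf{T}_{2n}\mathbf{W}_n'\mathbf{T}_{1n}'\mathbf{H}_n\mathbf{T}_{1n}\bigr]_{kj}\!*\!\bigl[\mathbf{T}_{2n}\mathbf{W}_n'\mathbf{T}_{1n}'\mathbf{H}_n\bigr]_{kl}(s)\\
&+\tfrac{i}{n}\bigl[\mathbf{T}_{2n}\mathbf{W}_n'\mathbf{T}_{1n}'\mathbf{H}_n\mathbf{T}_{1n}\mathbf{W}_n\mathbf{T}_{2n}\bigr]_{kk}\!*\!\bigl[\mathbf{T}_{1n}'\mathbf{H}_n\bigr]_{jl}(s),
\end{align*}
and similarly for the remaining two factors $[\mathbf{H}_n\mathbf{T}_{1n}\mathbf{W}_n\mathbf{T}_{2n}]_{dk}(s)$ and $[\mathbf{T}_{1n}'\mathbf{H}_n]_{jl}(s)$, each producing one ``non-convolved'' term (proportional to $[\mathbf{T}_{2n}]_{kk}$ in the first case, absent in the second since $\mathbf{W}_n$ does not appear) plus two convolved terms coming from $\mathbf{H}_n$.

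\medskip

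\noindent\textbf{Step 2 (assemble and match).} Applying the product rule for convolutions to the two summands in Lemma \ref{le2} and substituting the derivatives computed in Step 1, each of the two terms yields four pieces. The two ``non-convolved'' contributions coming from differentiating the explicit $\mathbf{W}_n$ in $[\mathbf{T}_{2n}\mathbf{W}_n'\mathbf{T}_{1n}'\mathbf{H}_n]_{kl}$ and in $[\mathbf{H}_n\mathbf{T}_{1n}\mathbf{W}_n\mathbf{T}_{2n}]_{dk}$ are identical, and together they furnish the leading $\tfrac{2i}{n}[\mathbf{T}_{2n}]_{kk}[\mathbf{H}_n\mathbf{T}_{1n}]_{dj}*[\mathbf{T}_{1n}'\mathbf{H}_n]_{jl}(u)$ term. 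The remaining six pieces (each carrying $(i/n)^2=-1/n^2$) come in three matching pairs, reflecting the symmetry of the original first-derivative formula; the pairings yield, with coefficient $-2/n^2$, exactly the four convolved terms written in the claim, plus the extra term
$[\mathbf{H}_n\mathbf{T}_{1n}\mathbf{W}_n\mathbf{T}_{2n}]_{dk}*[\mathbf{T}_{1n}'\mathbf{H}_n\mathbf{T}_{1n}\mathbf{W}_n\mathbf{T}_{2n}]_{jk}*[\mathbf{T}_{1n}'\mathbf{H}_n]_{jl}(u)$, matching the statement term by term.

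\medskip

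\noindent\textbf{Main obstacle.} There is no conceptual difficulty: the result is purely an application of Duhamel/product rules to matrix exponentials of a quadratic form in $\mathbf{W}_n$, combined with Lemma \ref{lea1}. The only real hazard is bookkeeping — keeping straight which index ($j$ vs.\ $k$, $d$ vs.\ $l$) ends up contracted on which side when one substitutes the building blocks of Step 1 into the product rule, and ensuring that the pairs of identical terms really do combine to give the factor of $2$ on each line. I would organize the verification by tabulating the eight pieces produced by the two product-rule expansions and checking that the five displayed terms appear with the correct multiplicities.
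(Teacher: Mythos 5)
Your proposal is correct and follows essentially the same route as the paper's proof: differentiate the first-derivative formula of Lemma \ref{le2} once more via the convolution product rule, re-using Lemma \ref{le2} for the implicit $\mathbf{H}_n$-dependence and collecting the instantaneous $[\mathbf{T}_{2n}]_{kk}$-contributions from the explicit $\mathbf{W}_n$-factors, exactly as the paper does. Only the tally in your Step 2 is slightly off (the expansion gives ten pieces, $2+3$ from the first summand and $3+2$ from the second, whose two instantaneous pieces give the $\frac{2i}{n}$ term and whose remaining eight combine in four identical pairs to give the four $-\frac{2}{n^2}$ convolution terms), but this does not affect the conclusion, which matches the statement term by term.
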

\begin{proof}
Applying Lemma \ref{le2}, we have
\begin{align*}
&\left[\frac{\partial^2 {\bf H}_n(u)}{\partial w_{jk}^2}\right]_{dl}\\
  =&\frac in\frac{\partial \left[{\bf H}_n\bt_{1n}\right]_{dj}*\left[\bt_{2n}\bw_n'\bt_{1n}'{\bf H}_n\right]_{kl}(u)}{\partial w_{jk}}+\frac in\frac{\partial\left[{\bf H}_n\bt_{1n}\bw_n\bt_{2n}\right]_{dk}*\left[\bt_{1n}'{\bf H}_n\right]_{jl}(u)}{\partial w_{jk}}\\
=&\frac{2i}n\left[\bt_{2n}\right]_{kk}\left[{\bf H}_n\bt_{1n}\right]_{dj}*\left[\bt_{1n}'{\bf H}_n\right]_{jl}(u)\\
&+\frac in\left[\frac{\partial {\bf H}_n}{\partial w_{jk}}\bt_{1n}\right]_{dj}*\left[\bt_{2n}\bw_n'\bt_{1n}'{\bf H}_n\right]_{kl}(u)
+\frac in\left[{\bf H}_n\bt_{1n}\right]_{dj}*\left[\bt_{2n}\bw_n'\bt_{1n}'\frac{\partial {\bf H}_n}{\partial w_{jk}}\right]_{kl}(u)\\
&+\frac in\left[\frac{\partial{\bf H}_n}{\partial w_{jk}}\bt_{1n}\bw_n\bt_{2n}\right]_{dk}*\left[\bt_{1n}'{\bf H}_n\right]_{jl}(u)
+\frac in\left[{\bf H}_n\bt_{1n}\bw_n\bt_{2n}\right]_{dk}*\left[\bt_{1n}'\frac{\partial{\bf H}_n}{\partial w_{jk}}\right]_{jl}(u)\\
=&\frac{2i}n\left[\bt_{2n}\right]_{kk}\left[{\bf H}_n\bt_{1n}\right]_{dj}*\left[\bt_{1n}'{\bf H}_n\right]_{jl}(u)\\
&-\frac {2}{n^2}\left[{\bf H}_n\bt_{1n}\right]_{dj}*\left[\bt_{2n}\bw_n'\bt_{1n}'{\bf H}_n\bt_{1n}\right]_{kj}*\left[\bt_{2n}\bw_n'\bt_{1n}'{\bf H}_n\right]_{kl}(u)\\
&-\frac {2}{n^2}\left[{\bf H}_n\bt_{1n}\bw_n\bt_{2n}\right]_{dk}*\left[\bt_{1n}'{\bf H}_n\bt_{1n}\right]_{jj}*\left[\bt_{2n}\bw_n'\bt_{1n}'{\bf H}_n\right]_{kl}(u)\\
&-\frac {2}{n^2}\left[{\bf H}_n\bt_{1n}\right]_{dj}*\left[\bt_{2n}\bw_n'\bt_{1n}'{\bf H}_n\bt_{1n}\bw_n\bt_{2n}\right]_{kk}*\left[\bt_{1n}'{\bf H}_n\right]_{jl}(u)\\
&-\frac {2}{n^2}\left[{\bf H}_n\bt_{1n}\bw_n\bt_{2n}\right]_{dk}*\left[\bt_{1n}'{\bf H}_n\bt_{1n}\bw_n\bt_{2n}\right]_{jk}*\left[\bt_{1n}'{\bf H}_n\right]_{jl}(u).
\end{align*}
\end{proof}

\begin{lemma}\label{le6}
Let $1\le j\le m_1,1\le k\le m_2,1\le d,l\le p$. Recall the definitions of  ${\bf H}_{n}(u)$ in (\ref{eq32}). Then
\begin{align*}
  &\left[\frac{\partial^3 {\bf H}_n(u)}{\partial w_{jk}^3}\right]_{dl}\\
=&-\frac{6}{n^2}\left[\bt_{2n}\right]_{kk}\left[{\bf H}_n\bt_{1n}\right]_{dj}*\left[\bt_{2n}\bw_n'\bt_{1n}'{\bf H}_n\bt_{1n}\right]_{kj}*\left[\bt_{1n}'{\bf H}_n\right]_{jl}(u)\\
&-\frac{6}{n^2}\left[\bt_{2n}\right]_{kk}\left[{\bf H}_n\bt_{1n}\bw_n\bt_{2n}\right]_{dk}*\left[\bt_{1n}'{\bf H}_n\bt_{1n}\right]_{jj}*\left[\bt_{1n}'{\bf H}_n\right]_{jl}(u)\\
&-\frac{6}{n^2}\left[\bt_{2n}\right]_{kk}\left[{\bf H}_n\bt_{1n}\right]_{dj}*\left[\bt_{1n}'{\bf H}_n\bt_{1n}\right]_{jj}*\left[\bt_{2n}\bw_n'\bt_{1n}'{\bf H}_n\right]_{kl}(u)\\
&-\frac{6}{n^2}\left[\bt_{2n}\right]_{kk}\left[{\bf H}_n\bt_{1n}\right]_{dj}*\left[\bt_{1n}'{\bf H}_n\bt_{1n}\bw_n\bt_{2n}\right]_{jk}*\left[\bt_{1n}'{\bf H}_n\right]_{jl}(u)\\
&-\frac{6i}{n^3} \left[{\bf H}_n\bt_{1n}\right]_{dj}*\left[\bt_{2n}\bw_n'\bt_{1n}'{\bf H}_n\bt_{1n}\right]_{kj}*\left[\bt_{2n}\bw_n'\bt_{1n}'{\bf H}_n\bt_{1n}\right]_{kj}*\left[\bt_{2n}\bw_n'\bt_{1n}'{\bf H}_n\right]_{kl}(u)\\
&-\frac{6i}{n^3} \left[{\bf H}_n\bt_{1n}\bw_n\bt_{2n}\right]_{dk}*\left[\bt_{1n}'{\bf H}_n\bt_{1n}\right]_{jj}*\left[\bt_{2n}\bw_n'\bt_{1n}'{\bf H}_n\bt_{1n}\right]_{kj}*\left[\bt_{2n}\bw_n'\bt_{1n}'{\bf H}_n\right]_{kl}(u)\\
&-\frac{6i}{n^3} \left[{\bf H}_n\bt_{1n}\right]_{dj}*\left[\bt_{2n}\bw_n'\bt_{1n}'{\bf H}_n\bt_{1n}\bw_n\bt_{2n}\right]_{kk}*\left[\bt_{1n}'{\bf H}_n\bt_{1n}\right]_{jj}*\left[\bt_{2n}\bw_n'\bt_{1n}'{\bf H}_n\right]_{kl}(u)\\
&-\frac{6i}{n^3} \left[{\bf H}_n\bt_{1n}\bw_n\bt_{2n}\right]_{dk}*\left[\bt_{1n}'{\bf H}_n\bt_{1n}\bw_n\bt_{2n}\right]_{jk}*\left[\bt_{1n}'{\bf H}_n\bt_{1n}\right]_{jj}\left[\bt_{2n}\bw_n'\bt_{1n}'{\bf H}_n\right]_{kl}(u)\\
&-\frac{6i}{n^3} \left[{\bf H}_n\bt_{1n}\right]_{dj}*\left[\bt_{2n}\bw_n'\bt_{1n}'{\bf H}_n\bt_{1n}\right]_{kj}*\left[\bt_{2n}\bw_n'\bt_{1n}'{\bf H}_n\bt_{1n}\bw_n\bt_{2n}\right]_{kk}*\left[\bt_{1n}'{\bf H}_n\right]_{jl}(u)\\
&-\frac{6i}{n^3} \left[{\bf H}_n\bt_{1n}\bw_n\bt_{2n}\right]_{dk}*\left[\bt_{1n}'{\bf H}_n\bt_{1n}\right]_{jj}*\left[\bt_{2n}\bw_n'\bt_{1n}'{\bf H}_n\bt_{1n}\bw_n\bt_{2n}\right]_{kk}*\left[\bt_{1n}'{\bf H}_n\right]_{jl}(u)\\
&-\frac{6i}{n^3}\ \left[{\bf H}_n\bt_{1n}\right]_{dj}*\left[\bt_{2n}\bw_n'\bt_{1n}'{\bf H}_n\bt_{1n}\bw_n\bt_{2n}\right]_{kk}*\left[\bt_{1n}'{\bf H}_n\bt_{1n}\bw_n\bt_{2n}\right]_{jk}*\left[\bt_{1n}'{\bf H}_n\right]_{jl}(u) \\
&-\frac{6i}{n^3}\left[{\bf H}_n\bt_{1n}\bw_n\bt_{2n}\right]_{dk}*\left[\bt_{1n}'{\bf H}_n\bt_{1n}\bw_n\bt_{2n}\right]_{jk}*\left[\bt_{1n}'{\bf H}_n\bt_{1n}\bw_n\bt_{2n}\right]_{jk}*\left[\bt_{1n}'{\bf H}_n\right]_{jl}(u).
\end{align*}
\end{lemma}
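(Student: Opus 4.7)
The plan is to obtain the third derivative by differentiating the second-derivative formula in Lemma~\ref{le5} once more with respect to $w_{jk}$, applying the Leibniz rule for convolutions together with two basic differentiation rules: Lemma~\ref{le2} for entries of $\mathbf{H}_n$, and the elementary identity $\partial[\bt_{1n}\mathbf{W}_n\bt_{2n}]_{ab}/\partial w_{jk} = [\bt_{1n}]_{aj}[\bt_{2n}]_{kb}$ for entries of products containing $\mathbf{W}_n$. Each factor appearing in a convolution depends on $w_{jk}$ either through $\mathbf{H}_n$ or through $\mathbf{W}_n$ (or both): differentiating through $\mathbf{H}_n$ inserts an extra $i/n$ and splits that factor into two via Lemma~\ref{le2}, extending the convolution by one; differentiating through $\mathbf{W}_n$ inserts an extra scalar $[\bt_{2n}]_{kk}$ and collapses one convolution factor into a product of two simpler entries.

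Applying this procedure to the five terms of Lemma~\ref{le5}: the first term is a two-fold convolution with no $\mathbf{W}_n$ and two $\mathbf{H}_n$ factors, so its derivative contributes four three-fold convolutions, each with prefactor $-2/n^2\,[\bt_{2n}]_{kk}$. Each of the remaining four terms is a three-fold convolution containing three $\mathbf{H}_n$ occurrences and two $\mathbf{W}_n$ occurrences: the two $\mathbf{W}_n$-derivatives contribute two three-fold convolutions with prefactor $-2/n^2\,[\bt_{2n}]_{kk}$, and the three $\mathbf{H}_n$-derivatives (each splitting into two via Lemma~\ref{le2}) contribute six four-fold convolutions with prefactor $-2i/n^3$. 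Collecting everything, the differentiation produces $4+8=12$ three-fold contributions and $24$ four-fold contributions; by symmetry these group into $4$ distinct three-fold patterns, each arising three times (giving the final coefficient $-6/n^2\,[\bt_{2n}]_{kk}$), and $8$ distinct four-fold patterns, each arising three times (giving the final coefficient $-6i/n^3$), in exact agreement with the claimed identity.

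The only real obstacle is careful bookkeeping: for each of the twelve claimed terms one must identify the three distinct progenitor terms in Lemma~\ref{le5} together with the specific factor that is differentiated, and verify that the resulting indices and matrix factors match. Since every intermediate step is a direct application of Lemma~\ref{le2} or the $\mathbf{W}_n$-chain rule, no new analytical input is required beyond the preceding lemmas; the key combinatorial observation is that each distinct convolution pattern in Lemma~\ref{le6} receives exactly three contributions of weight $2$, yielding the uniform coefficient $6$.
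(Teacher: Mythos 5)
Your proposal is correct and follows essentially the same route as the paper: the paper also obtains the third derivative by one more application of the product/Leibniz rule to the convolution formulas, using Lemma \ref{le2} (and the elementary $\mathbf{W}_n$-chain rule) and substituting Lemma \ref{le5}, the only cosmetic difference being that the paper writes it as $\partial^2/\partial w_{jk}^2$ of the Lemma \ref{le2} expression rather than $\partial/\partial w_{jk}$ of the Lemma \ref{le5} expansion. Your bookkeeping ($12=4\times3$ three-fold and $24=8\times3$ four-fold contributions, each of weight $2$, yielding the uniform coefficients $-6/n^2\,[\bt_{2n}]_{kk}$ and $-6i/n^3$) is consistent with the stated identity.
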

\begin{proof}
From Lemma \ref{le2} and Lemma \ref{le5}, one gets
\begin{align*}
  &\left[\frac{\partial^3 {\bf H}_n(u)}{\partial w_{jk}^3}\right]_{dl}
=\frac in\frac{\partial^2 \left[{\bf H}_n\bt_{1n}\right]_{dj}*\left[\bt_{2n}\bw_n'\bt_{1n}'{\bf H}_n\right]_{kl}(u)}{\partial w_{jk}^2}\\
&\qquad\qquad+\frac in\frac{\partial^2\left[{\bf H}_n\bt_{1n}\bw_n\bt_{2n}\right]_{dk}*\left[\bt_{1n}'{\bf H}_n\right]_{jl}(u)}{\partial w_{jk}^2}\\
 =&\frac{2i}n\left[\bt_{2n}\right]_{kk}\frac{\partial\left[{\bf H}_n\bt_{1n}\right]_{dj}*\left[\bt_{1n}'{\bf H}_n\right]_{jl}(u)}{\partial w_{jk}}\\
 &+\frac in\frac{\partial \left[\frac{\partial {\bf H}_n}{\partial w_{jk}}\bt_{1n}\right]_{dj}*\left[\bt_{2n}\bw_n'\bt_{1n}'{\bf H}_n\right]_{kl}(u)}{\partial w_{jk}}
+\frac in\frac{\partial \left[{\bf H}_n\bt_{1n}\right]_{dj}*\left[\bt_{2n}\bw_n'\bt_{1n}'\frac{\partial {\bf H}_n}{\partial w_{jk}}\right]_{kl}(u)}{\partial w_{jk}}\\
&+\frac in\frac{\partial\left[\frac{\partial{\bf H}_n}{\partial w_{jk}}\bt_{1n}\bw_n\bt_{2n}\right]_{dk}*\left[\bt_{1n}'{\bf H}_n\right]_{jl}(u)}{\partial w_{jk}}
+\frac in\frac{\partial\left[{\bf H}_n\bt_{1n}\bw_n\bt_{2n}\right]_{dk}*\left[\bt_{1n}'\frac{\partial{\bf H}_n}{\partial w_{jk}}\right]_{jl}(u)}{\partial w_{jk}}\\
=&\frac{4i}n\left[\bt_{2n}\right]_{kk}\left[\frac{\partial{\bf H}_n}{\partial w_{jk}}\bt_{1n}\right]_{dj}*\left[\bt_{1n}'{\bf H}_n\right]_{jl}(u)
+\frac{4i}n\left[\bt_{2n}\right]_{kk}\left[{\bf H}_n\bt_{1n}\right]_{dj}*\left[\bt_{1n}'\frac{\partial{\bf H}_n}{\partial w_{jk}}\right]_{jl}(u)\\
&+\frac{2i}n \left[\frac{\partial {\bf H}_n}{\partial w_{jk}}\bt_{1n}\right]_{dj}*\left[\bt_{2n}\bw_n'\bt_{1n}'\frac{\partial{\bf H}_n}{\partial w_{jk}}\right]_{kl}(u)
+\frac{2i}n\left[\frac{\partial{\bf H}_n}{\partial w_{jk}}\bt_{1n}\bw_n\bt_{2n}\right]_{dk}*\left[\bt_{1n}'\frac{\partial{\bf H}_n}{\partial w_{jk}}\right]_{jl}(u)\\
&+\frac in\left[\frac{\partial^2 {\bf H}_n}{\partial w_{jk}^2}\bt_{1n}\right]_{dj}*\left[\bt_{2n}\bw_n'\bt_{1n}'{\bf H}_n\right]_{kl}(u)
+\frac in \left[{\bf H}_n\bt_{1n}\right]_{dj}*\left[\bt_{2n}\bw_n'\bt_{1n}'\frac{\partial^2 {\bf H}_n}{\partial w_{jk}^2}\right]_{kl}(u)\\
&+\frac in\left[\frac{\partial^2{\bf H}_n}{\partial w_{jk}^2}\bt_{1n}\bw_n\bt_{2n}\right]_{dk}*\left[\bt_{1n}'{\bf H}_n\right]_{jl}(u)
+\frac in\left[{\bf H}_n\bt_{1n}\bw_n\bt_{2n}\right]_{dk}*\left[\bt_{1n}'\frac{\partial^2{\bf H}_n}{\partial w_{jk}^2}\right]_{jl}(u)\\
=&-\frac{6}{n^2}\left[\bt_{2n}\right]_{kk}\left[{\bf H}_n\bt_{1n}\right]_{dj}*\left[\bt_{2n}\bw_n'\bt_{1n}'{\bf H}_n\bt_{1n}\right]_{kj}*\left[\bt_{1n}'{\bf H}_n\right]_{jl}(u)\\
&-\frac{6}{n^2}\left[\bt_{2n}\right]_{kk}\left[{\bf H}_n\bt_{1n}\bw_n\bt_{2n}\right]_{dk}*\left[\bt_{1n}'{\bf H}_n\bt_{1n}\right]_{jj}*\left[\bt_{1n}'{\bf H}_n\right]_{jl}(u)\\
&-\frac{6}{n^2}\left[\bt_{2n}\right]_{kk}\left[{\bf H}_n\bt_{1n}\right]_{dj}*\left[\bt_{1n}'{\bf H}_n\bt_{1n}\right]_{jj}*\left[\bt_{2n}\bw_n'\bt_{1n}'{\bf H}_n\right]_{kl}(u)\\
&-\frac{6}{n^2}\left[\bt_{2n}\right]_{kk}\left[{\bf H}_n\bt_{1n}\right]_{dj}*\left[\bt_{1n}'{\bf H}_n\bt_{1n}\bw_n\bt_{2n}\right]_{jk}*\left[\bt_{1n}'{\bf H}_n\right]_{jl}(u)\\
&-\frac{6i}{n^3} \left[{\bf H}_n\bt_{1n}\right]_{dj}*\left[\bt_{2n}\bw_n'\bt_{1n}'{\bf H}_n\bt_{1n}\right]_{kj}*\left[\bt_{2n}\bw_n'\bt_{1n}'{\bf H}_n\bt_{1n}\right]_{kj}*\left[\bt_{2n}\bw_n'\bt_{1n}'{\bf H}_n\right]_{kl}(u)\\
&-\frac{6i}{n^3} \left[{\bf H}_n\bt_{1n}\bw_n\bt_{2n}\right]_{dk}*\left[\bt_{1n}'{\bf H}_n\bt_{1n}\right]_{jj}*\left[\bt_{2n}\bw_n'\bt_{1n}'{\bf H}_n\bt_{1n}\right]_{kj}*\left[\bt_{2n}\bw_n'\bt_{1n}'{\bf H}_n\right]_{kl}(u)\\
&-\frac{6i}{n^3} \left[{\bf H}_n\bt_{1n}\right]_{dj}*\left[\bt_{2n}\bw_n'\bt_{1n}'{\bf H}_n\bt_{1n}\bw_n\bt_{2n}\right]_{kk}*\left[\bt_{1n}'{\bf H}_n\bt_{1n}\right]_{jj}*\left[\bt_{2n}\bw_n'\bt_{1n}'{\bf H}_n\right]_{kl}(u)\\
&-\frac{6i}{n^3} \left[{\bf H}_n\bt_{1n}\bw_n\bt_{2n}\right]_{dk}*\left[\bt_{1n}'{\bf H}_n\bt_{1n}\bw_n\bt_{2n}\right]_{jk}*\left[\bt_{1n}'{\bf H}_n\bt_{1n}\right]_{jj}\left[\bt_{2n}\bw_n'\bt_{1n}'{\bf H}_n\right]_{kl}(u)\\
&-\frac{6i}{n^3} \left[{\bf H}_n\bt_{1n}\right]_{dj}*\left[\bt_{2n}\bw_n'\bt_{1n}'{\bf H}_n\bt_{1n}\right]_{kj}*\left[\bt_{2n}\bw_n'\bt_{1n}'{\bf H}_n\bt_{1n}\bw_n\bt_{2n}\right]_{kk}*\left[\bt_{1n}'{\bf H}_n\right]_{jl}(u)\\
&-\frac{6i}{n^3} \left[{\bf H}_n\bt_{1n}\bw_n\bt_{2n}\right]_{dk}*\left[\bt_{1n}'{\bf H}_n\bt_{1n}\right]_{jj}*\left[\bt_{2n}\bw_n'\bt_{1n}'{\bf H}_n\bt_{1n}\bw_n\bt_{2n}\right]_{kk}*\left[\bt_{1n}'{\bf H}_n\right]_{jl}(u)\\
&-\frac{6i}{n^3}\ \left[{\bf H}_n\bt_{1n}\right]_{dj}*\left[\bt_{2n}\bw_n'\bt_{1n}'{\bf H}_n\bt_{1n}\bw_n\bt_{2n}\right]_{kk}*\left[\bt_{1n}'{\bf H}_n\bt_{1n}\bw_n\bt_{2n}\right]_{jk}*\left[\bt_{1n}'{\bf H}_n\right]_{jl}(u) \\
&-\frac{6i}{n^3}\left[{\bf H}_n\bt_{1n}\bw_n\bt_{2n}\right]_{dk}*\left[\bt_{1n}'{\bf H}_n\bt_{1n}\bw_n\bt_{2n}\right]_{jk}*\left[\bt_{1n}'{\bf H}_n\bt_{1n}\bw_n\bt_{2n}\right]_{jk}*\left[\bt_{1n}'{\bf H}_n\right]_{jl}(u).
\end{align*}
\end{proof}

\begin{lemma}\label{le7}
Let $1\le j\le m_1,1\le k\le m_2,1\le d,l\le p$. Recall the definitions of  ${\bf H}_{n}(u)$ in (\ref{eq32}). Then
\begin{footnotesize}
\begin{align*}
&\left[\frac{\partial^4 {\bf H}_n(u)}{\partial w_{jk}^4}\right]_{dl}\\
=&-\frac{24}{n^2}\left[\bt_{2n}\right]_{kk}^2\left[{\bf H}_n\bt_{1n}\right]_{dj}*\left[\bt_{1n}'{\bf H}_n\bt_{1n}\right]_{jj}*\left[\bt_{1n}'{\bf H}_n\right]_{jl}(u)\\
&-\frac {72i}{n^3}\left[\bt_{2n}\right]_{kk}\left[{\bf H}_n\bt_{1n}\right]_{dj}*\left[\bt_{2n}\bw_n'\bt_{1n}'{\bf H}_n\bt_{1n}\right]_{kj}*\left[\bt_{2n}\bw_n'\bt_{1n}'{\bf H}_n\bt_{1n}\right]_{kj}*\left[\bt_{1n}'{\bf H}_n\right]_{jl}(u)\\
&-\frac {72i}{n^3}\left[\bt_{2n}\right]_{kk}\left[{\bf H}_n\bt_{1n}\bw_n\bt_{2n}\right]_{dk}*\left[\bt_{1n}'{\bf H}_n\bt_{1n}\right]_{jj}*\left[\bt_{2n}\bw_n'\bt_{1n}'{\bf H}_n\bt_{1n}\right]_{kj}*\left[\bt_{1n}'{\bf H}_n\right]_{jl}(u)\\
&-\frac {48i}{n^3}\left[\bt_{2n}\right]_{kk}\left[{\bf H}_n\bt_{1n}\right]_{dj}*\left[\bt_{2n}\bw_n'\bt_{1n}'{\bf H}_n\bt_{1n}\bw_n\bt_{2n}\right]_{kk}*\left[\bt_{1n}'{\bf H}_n\bt_{1n}\right]_{jj}*\left[\bt_{1n}'{\bf H}_n\right]_{jl}(u)\\
&-\frac{72i}{n^3}\left[\bt_{2n}\right]_{kk} \left[{\bf H}_n\bt_{1n}\right]_{dj}*\left[\bt_{2n}\bw_n'\bt_{1n}'{\bf H}_n\bt_{1n}\right]_{kj}*\left[\bt_{1n}'{\bf H}_n\bt_{1n}\right]_{jj}*\left[\bt_{2n}\bw_n'\bt_{1n}'{\bf H}_n\right]_{kl}(u)\\
&-\frac{24i}{n^3}\left[\bt_{2n}\right]_{kk}\left[{\bf H}_n\bt_{1n}\bw_n\bt_{2n}\right]_{dk}*\left[\bt_{1n}'{\bf H}_n\bt_{1n}\right]_{jj}*\left[\bt_{1n}'{\bf H}_n\bt_{1n}\right]_{jj}*\left[\bt_{2n}\bw_n'\bt_{1n}'{\bf H}_n\right]_{kl}(u)\\
&+\frac {24}{n^4}\left[{\bf H}_n\bt_{1n}\right]_{dj}*\left[\bt_{2n}\bw_n'\bt_{1n}'{\bf H}_n\bt_{1n}\right]_{kj}*\left[\bt_{2n}\bw_n'\bt_{1n}'{\bf H}_n\bt_{1n}\right]_{kj}*\left[\bt_{2n}\bw_n'\bt_{1n}'{\bf H}_n\bt_{1n}\right]_{kj}*\left[\bt_{2n}\bw_n'\bt_{1n}'{\bf H}_n\right]_{kl}(u)\\
&+\frac {72}{n^4}\left[{\bf H}_n\bt_{1n}\right]_{dj}*\left[\bt_{2n}\bw_n'\bt_{1n}'{\bf H}_n\bt_{1n}\right]_{kj}*\left[\bt_{2n}\bw_n'\bt_{1n}'{\bf H}_n\bt_{1n}\right]_{kj}*\left[\bt_{2n}\bw_n'\bt_{1n}'{\bf H}_n\bt_{1n}\bw_n\bt_{2n}\right]_{kk}*\left[\bt_{1n}'{\bf H}_n\right]_{jl}(u)\\
&+\frac {72}{n^4}\left[{\bf H}_n\bt_{1n}\bw_n\bt_{2n}\right]_{dk}*\left[\bt_{1n}'{\bf H}_n\bt_{1n}\right]_{jj}*\left[\bt_{2n}\bw_n'\bt_{1n}'{\bf H}_n\bt_{1n}\right]_{kj}*\left[\bt_{2n}\bw_n'\bt_{1n}'{\bf H}_n\bt_{1n}\right]_{kj}*\left[\bt_{2n}\bw_n'\bt_{1n}'{\bf H}_n\right]_{kl}(u)\\
&+\frac {72}{n^4}\left[{\bf H}_n\bt_{1n}\bw_n\bt_{2n}\right]_{dk}*\left[\bt_{1n}'{\bf H}_n\bt_{1n}\right]_{jj}*\left[\bt_{2n}\bw_n'\bt_{1n}'{\bf H}_n\bt_{1n}\right]_{kj}*\left[\bt_{2n}\bw_n'\bt_{1n}'{\bf H}_n\bt_{1n}\bw_n\bt_{2n}\right]_{kk}*\left[\bt_{1n}'{\bf H}_n\right]_{jl}(u)\\
&+\frac {72}{n^4}\left[{\bf H}_n\bt_{1n}\right]_{dj}*\left[\bt_{2n}\bw_n'\bt_{1n}'{\bf H}_n\bt_{1n}\bw_n\bt_{2n}\right]_{kk}*\left[\bt_{1n}'{\bf H}_n\bt_{1n}\right]_{jj}*\left[\bt_{2n}\bw_n'\bt_{1n}'{\bf H}_n\bt_{1n}\right]_{kj}*\left[\bt_{2n}\bw_n'\bt_{1n}'{\bf H}_n\right]_{kl}(u)\\
&+\frac {24}{n^4}\left[{\bf H}_n\bt_{1n}\right]_{dj}*\left[\bt_{2n}\bw_n'\bt_{1n}'{\bf H}_n\bt_{1n}\bw_n\bt_{2n}\right]_{kk}*\left[\bt_{1n}'{\bf H}_n\bt_{1n}\right]_{jj}*\left[\bt_{2n}\bw_n'\bt_{1n}'{\bf H}_n\bt_{1n}\bw_n\bt_{2n}\right]_{kk}*\left[\bt_{1n}'{\bf H}_n\right]_{jl}(u)\\
&+\frac {24}{n^4}\left[{\bf H}_n\bt_{1n}\bw_n\bt_{2n}\right]_{dk}*\left[\bt_{1n}'{\bf H}_n\bt_{1n}\right]_{jj}*\left[\bt_{2n}\bw_n'\bt_{1n}'{\bf H}_n\bt_{1n}\bw_n\bt_{2n}\right]_{kk}*\left[\bt_{1n}'{\bf H}_n\bt_{1n}\right]_{jj}*\left[\bt_{2n}\bw_n'\bt_{1n}'{\bf H}_n\right]_{kl}(u)\\
&+\frac {24}{n^4}\left[{\bf H}_n\bt_{1n}\bw_n\bt_{2n}\right]_{dk}*\left[\bt_{1n}'{\bf H}_n\bt_{1n}\bw_n\bt_{2n}\right]_{jk}*\left[\bt_{1n}'{\bf H}_n\bt_{1n}\bw_n\bt_{2n}\right]_{jk}*\left[\bt_{1n}'{\bf H}_n\bt_{1n}\bw_n\bt_{2n}\right]_{jk}*\left[\bt_{1n}'{\bf H}_n\right]_{jl}(u).
\end{align*}
\end{footnotesize}
\end{lemma}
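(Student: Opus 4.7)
The plan is to prove Lemma \ref{le7} by directly iterating the derivative rule one more time on the third-derivative expression established in Lemma \ref{le6}. That is, I will write
$$\left[\frac{\partial^4 {\bf H}_n(u)}{\partial w_{jk}^4}\right]_{dl}=\frac{\partial}{\partial w_{jk}}\left[\frac{\partial^3 {\bf H}_n(u)}{\partial w_{jk}^3}\right]_{dl},$$
and differentiate each of the $12$ convolutions appearing in Lemma \ref{le6} factor by factor. The only two rules needed are: (i) Lemma \ref{le2}, which tells us how an entry $h_{dl}=[{\bf H}_n]_{dl}$ depends on $w_{jk}$ through the matrix exponential (producing a convolution of two entries); and (ii) the elementary fact that for any fixed deterministic matrices $A,B$, the factor $[A\bw_n B]_{jk}$ depends on $w_{jk}$ only linearly through the single entry $w_{jk}$, so its partial derivative equals $[A]_{jj}[B]_{kk}$. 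Rule (ii) is precisely what generates every appearance of the free $[\bt_{2n}]_{kk}$ in the final formula.

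With these rules in place, each factor in a convolutional product of the form $F_1*F_2*\cdots *F_r$ is classified as ``constant'' ($[\bt_{2n}]_{kk}$ or $[\bt_{1n}]_{dj}$-type), ``linear'' (a factor of the form $[A\bw_n B]_{jk}$, whose $w_{jk}$-derivative is constant and whose ${\bf H}_n$-derivative is handled by (i)), or ``pure Fourier'' (an entry like $[{\bf H}_n\bt_{1n}]_{dj}$, whose derivative is entirely given by (i)). Applying $\partial/\partial w_{jk}$ to each of the $12$ terms of Lemma \ref{le6} then mechanically generates a finite tree of convolutions of length $4$ or $5$; matching these against the $14$ terms listed in Lemma \ref{le7} and verifying the numerical coefficients $-24/n^2$, $-72i/n^3$, $-48i/n^3$, $-24i/n^3$, $24/n^4$, $72/n^4$ is what remains.

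The main obstacle will be purely combinatorial bookkeeping rather than any analytic difficulty. Each of the $12$ parent terms in Lemma \ref{le6} contributes several children after differentiation, so I will need to organize the computation by grouping children with identical symbolic convolutional structure and summing their (signed) coefficients. A convenient way to control the bookkeeping is to label each child by the ordered list of its factors up to the obvious $j\leftrightarrow k$ and $d\leftrightarrow l$ symmetries exhibited by Lemmas \ref{le2}, \ref{le5}, \ref{le6}, and then exploit that symmetry to predict which children must have equal coefficients. A useful sanity check is a ``power of $1/n$'' tally: from Lemma \ref{le2}, rule (i) always brings down a factor of $i/n$, while rule (ii) brings down nothing; since the Lemma \ref{le6} expansion has either $1/n^2$ (terms of length $3$) or $1/n^3$ (terms of length $4$), the children must land in $\{1/n^2,1/n^3,1/n^4\}$, matching exactly the three coefficient groups of the final formula.

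Once coefficients are confirmed, the proof is complete by substitution. I will record the computation in blocks corresponding to the $1/n^2$-children (which survive only if rule (ii) is applied, contributing to the leading $[\bt_{2n}]_{kk}^2$ term), the $1/n^3$-children (mixed: one application of rule (i) and a rule (ii) somewhere, accounting for all $[\bt_{2n}]_{kk}$ terms), and the $1/n^4$-children (purely rule (i), accounting for the eight $24/n^4$ and $72/n^4$ terms). No new estimates are required; the lemma is an exact identity following from differentiation under the integral sign together with Lemma \ref{lea1} applied iteratively.
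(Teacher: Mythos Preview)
Your plan is correct and is essentially the same as the paper's own proof: both reduce the fourth derivative to repeated applications of Lemma~\ref{le2} together with the Leibniz rule for the convolutional products, and then match coefficients. The only cosmetic difference is the order of expansion: you propose writing $\partial^4=\partial(\partial^3)$ and differentiating each of the twelve terms of Lemma~\ref{le6} once, whereas the paper writes $\partial^4=\partial^3(\partial)$ starting from the two terms of Lemma~\ref{le2} and then substitutes the expressions of Lemmas~\ref{le5} and~\ref{le6} at the end; both organizations give the same tree of convolutions and the same bookkeeping.
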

\begin{proof}
Using Lemma \ref{le5} and Lemma \ref{le6}, it yields
\begin{footnotesize}
\begin{align*}
&\left[\frac{\partial^4 {\bf H}_n(u)}{\partial w_{jk}^4}\right]_{dl}
=\frac in\frac{\partial^3 \left[{\bf H}_n\bt_{1n}\right]_{dj}*\left[\bt_{2n}\bw_n'\bt_{1n}'{\bf H}_n\right]_{kl}(u)}{\partial w_{jk}^3}+\frac in\frac{\partial^3\left[{\bf H}_n\bt_{1n}\bw_n\bt_{2n}\right]_{dk}*\left[\bt_{1n}'{\bf H}_n\right]_{jl}(u)}{\partial w_{jk}^3}\\
=&\frac{2i}n\left[\bt_{2n}\right]_{kk}\frac{\partial^2 \left[{\bf H}_n\bt_{1n}\right]_{dj}*\left[\bt_{1n}'{\bf H}_n\right]_{jl}(u)}{\partial w_{jk}^2}
+\frac in\frac{\partial^2 \left[\frac{\partial{\bf H}_n}{\partial w}\bt_{1n}\right]_{dj}*\left[\bt_{2n}\bw_n'\bt_{1n}'{\bf H}_n\right]_{kl}(u)}{\partial w^2}\\
&+\frac in\frac{\partial^2 \left[{\bf H}_n\bt_{1n}\right]_{dj}*\left[\bt_{2n}\bw_n'\bt_{1n}'\frac{\partial{\bf H}_n}{\partial w}\right]_{kl}(u)}{\partial w^2}
+\frac in\frac{\partial^2\left[\frac{\partial{\bf H}_n}{\partial w}\bt_{1n}\bw_n\bt_{2n}\right]_{dk}*\left[\bt_{1n}'{\bf H}_n\right]_{jl}(u)}{\partial w^2}\\
&+\frac in\frac{\partial^2\left[{\bf H}_n\bt_{1n}\bw_n\bt_{2n}\right]_{dk}*\left[\bt_{1n}'\frac{\partial{\bf H}_n}{\partial w}\right]_{jl}(u)}{\partial w^2}\\
=&\frac{4i}n\left[\bt_{2n}\right]_{kk}\frac{\partial \left[\frac{\partial{\bf H}_n}{\partial w}\bt_{1n}\right]_{dj}*\left[\bt_{1n}'{\bf H}_n\right]_{jl}(u)}{\partial w}
+\frac{4i}n\left[\bt_{2n}\right]_{kk}\frac{\partial \left[{\bf H}_n\bt_{1n}\right]_{dj}*\left[\bt_{1n}'\frac{\partial{\bf H}_n}{\partial w}\right]_{jl}(u)}{\partial w}\\
&+\frac{2i}n\frac{\partial \left[\frac{\partial{\bf H}_n}{\partial w}\bt_{1n}\right]_{dj}*\left[\bt_{2n}\bw_n'\bt_{1n}'\frac{\partial{\bf H}_n}{\partial w}\right]_{kl}(u)}{\partial w}
+\frac{2i}n\frac{\partial\left[\frac{\partial{\bf H}_n}{\partial w}\bt_{1n}\bw_n\bt_{2n}\right]_{dk}*\left[\bt_{1n}'\frac{\partial{\bf H}_n}{\partial w}\right]_{jl}(u)}{\partial w}\\
&+\frac in\frac{\partial \left[\frac{\partial^2{\bf H}_n}{\partial w^2}\bt_{1n}\right]_{dj}*\left[\bt_{2n}\bw_n'\bt_{1n}'{\bf H}_n\right]_{kl}(u)}{\partial w}
+\frac in\frac{\partial \left[{\bf H}_n\bt_{1n}\right]_{dj}*\left[\bt_{2n}\bw_n'\bt_{1n}'\frac{\partial^2{\bf H}_n}{\partial w^2}\right]_{kl}(u)}{\partial w}\\
&+\frac in\frac{\partial\left[\frac{\partial^2{\bf H}_n}{\partial w^2}\bt_{1n}\bw_n\bt_{2n}\right]_{dk}*\left[\bt_{1n}'{\bf H}_n\right]_{jl}(u)}{\partial w}
+\frac in\frac{\partial\left[{\bf H}_n\bt_{1n}\bw_n\bt_{2n}\right]_{dk}*\left[\bt_{1n}'\frac{\partial^2{\bf H}_n}{\partial w^2}\right]_{jl}(u)}{\partial w}\\
=&\frac{6i}n\left[\bt_{2n}\right]_{kk} \left[\frac{\partial^2{\bf H}_n}{\partial w^2}\bt_{1n}\right]_{dj}*\left[\bt_{1n}'{\bf H}_n\right]_{jl}(u)
+\frac{12i}n\left[\bt_{2n}\right]_{kk} \left[\frac{\partial{\bf H}_n}{\partial w}\bt_{1n}\right]_{dj}*\left[\bt_{1n}'\frac{\partial{\bf H}_n}{\partial w}\right]_{jl}(u)\\
&+\frac{6i}n\left[\bt_{2n}\right]_{kk} \left[{\bf H}_n\bt_{1n}\right]_{dj}*\left[\bt_{1n}'\frac{\partial^2{\bf H}_n}{\partial w^2}\right]_{jl}(u)
+\frac{3i}n \left[\frac{\partial^2{\bf H}_n}{\partial w^2}\bt_{1n}\right]_{dj}*\left[\bt_{2n}\bw_n'\bt_{1n}'\frac{\partial{\bf H}_n}{\partial w}\right]_{kl}(u)\\
&+\frac{3i}n \left[\frac{\partial{\bf H}_n}{\partial w}\bt_{1n}\right]_{dj}*\left[\bt_{2n}\bw_n'\bt_{1n}'\frac{\partial^2{\bf H}_n}{\partial w^2}\right]_{kl}(u)
+\frac{3i}n\left[\frac{\partial^2{\bf H}_n}{\partial w}\bt_{1n}\bw_n\bt_{2n}\right]_{dk}*\left[\bt_{1n}'\frac{\partial{\bf H}_n}{\partial w}\right]_{jl}(u)\\
&+\frac{3i}n\left[\frac{\partial{\bf H}_n}{\partial w}\bt_{1n}\bw_n\bt_{2n}\right]_{dk}*\left[\bt_{1n}'\frac{\partial^2{\bf H}_n}{\partial w^2}\right]_{jl}(u)
+\frac in \left[\frac{\partial^3{\bf H}_n}{\partial w^3}\bt_{1n}\right]_{dj}*\left[\bt_{2n}\bw_n'\bt_{1n}'{\bf H}_n\right]_{kl}(u)\\
&+\frac in \left[{\bf H}_n\bt_{1n}\right]_{dj}*\left[\bt_{2n}\bw_n'\bt_{1n}'\frac{\partial^3{\bf H}_n}{\partial w^3}\right]_{kl}(u)
+\frac in\left[\frac{\partial^3{\bf H}_n}{\partial w^3}\bt_{1n}\bw_n\bt_{2n}\right]_{dk}*\left[\bt_{1n}'{\bf H}_n\right]_{jl}(u)\\
&+\frac in\left[{\bf H}_n\bt_{1n}\bw_n\bt_{2n}\right]_{dk}*\left[\bt_{1n}'\frac{\partial^3{\bf H}_n}{\partial w^3}\right]_{jl}(u)\\
=&-\frac{24}{n^2}\left[\bt_{2n}\right]_{kk}^2\left[{\bf H}_n\bt_{1n}\right]_{dj}*\left[\bt_{1n}'{\bf H}_n\bt_{1n}\right]_{jj}*\left[\bt_{1n}'{\bf H}_n\right]_{jl}(u)\\
&-\frac {72i}{n^3}\left[\bt_{2n}\right]_{kk}\left[{\bf H}_n\bt_{1n}\right]_{dj}*\left[\bt_{2n}\bw_n'\bt_{1n}'{\bf H}_n\bt_{1n}\right]_{kj}*\left[\bt_{2n}\bw_n'\bt_{1n}'{\bf H}_n\bt_{1n}\right]_{kj}*\left[\bt_{1n}'{\bf H}_n\right]_{jl}(u)\\
&-\frac {72i}{n^3}\left[\bt_{2n}\right]_{kk}\left[{\bf H}_n\bt_{1n}\bw_n\bt_{2n}\right]_{dk}*\left[\bt_{1n}'{\bf H}_n\bt_{1n}\right]_{jj}*\left[\bt_{2n}\bw_n'\bt_{1n}'{\bf H}_n\bt_{1n}\right]_{kj}*\left[\bt_{1n}'{\bf H}_n\right]_{jl}(u)\\
&-\frac {48i}{n^3}\left[\bt_{2n}\right]_{kk}\left[{\bf H}_n\bt_{1n}\right]_{dj}*\left[\bt_{2n}\bw_n'\bt_{1n}'{\bf H}_n\bt_{1n}\bw_n\bt_{2n}\right]_{kk}*\left[\bt_{1n}'{\bf H}_n\bt_{1n}\right]_{jj}*\left[\bt_{1n}'{\bf H}_n\right]_{jl}(u)\\
&-\frac{72i}{n^3}\left[\bt_{2n}\right]_{kk} \left[{\bf H}_n\bt_{1n}\right]_{dj}*\left[\bt_{2n}\bw_n'\bt_{1n}'{\bf H}_n\bt_{1n}\right]_{kj}*\left[\bt_{1n}'{\bf H}_n\bt_{1n}\right]_{jj}*\left[\bt_{2n}\bw_n'\bt_{1n}'{\bf H}_n\right]_{kl}(u)\\
&-\frac{24i}{n^3}\left[\bt_{2n}\right]_{kk}\left[{\bf H}_n\bt_{1n}\bw_n\bt_{2n}\right]_{dk}*\left[\bt_{1n}'{\bf H}_n\bt_{1n}\right]_{jj}*\left[\bt_{1n}'{\bf H}_n\bt_{1n}\right]_{jj}*\left[\bt_{2n}\bw_n'\bt_{1n}'{\bf H}_n\right]_{kl}(u)\\
&+\frac {24}{n^4}\left[{\bf H}_n\bt_{1n}\right]_{dj}*\left[\bt_{2n}\bw_n'\bt_{1n}'{\bf H}_n\bt_{1n}\right]_{kj}*\left[\bt_{2n}\bw_n'\bt_{1n}'{\bf H}_n\bt_{1n}\right]_{kj}*\left[\bt_{2n}\bw_n'\bt_{1n}'{\bf H}_n\bt_{1n}\right]_{kj}*\left[\bt_{2n}\bw_n'\bt_{1n}'{\bf H}_n\right]_{kl}(u)\\
&+\frac {72}{n^4}\left[{\bf H}_n\bt_{1n}\right]_{dj}*\left[\bt_{2n}\bw_n'\bt_{1n}'{\bf H}_n\bt_{1n}\right]_{kj}*\left[\bt_{2n}\bw_n'\bt_{1n}'{\bf H}_n\bt_{1n}\right]_{kj}*\left[\bt_{2n}\bw_n'\bt_{1n}'{\bf H}_n\bt_{1n}\bw_n\bt_{2n}\right]_{kk}*\left[\bt_{1n}'{\bf H}_n\right]_{jl}(u)\\
&+\frac {72}{n^4}\left[{\bf H}_n\bt_{1n}\bw_n\bt_{2n}\right]_{dk}*\left[\bt_{1n}'{\bf H}_n\bt_{1n}\right]_{jj}*\left[\bt_{2n}\bw_n'\bt_{1n}'{\bf H}_n\bt_{1n}\right]_{kj}*\left[\bt_{2n}\bw_n'\bt_{1n}'{\bf H}_n\bt_{1n}\right]_{kj}*\left[\bt_{2n}\bw_n'\bt_{1n}'{\bf H}_n\right]_{kl}(u)\\
&+\frac {72}{n^4}\left[{\bf H}_n\bt_{1n}\bw_n\bt_{2n}\right]_{dk}*\left[\bt_{1n}'{\bf H}_n\bt_{1n}\right]_{jj}*\left[\bt_{2n}\bw_n'\bt_{1n}'{\bf H}_n\bt_{1n}\right]_{kj}*\left[\bt_{2n}\bw_n'\bt_{1n}'{\bf H}_n\bt_{1n}\bw_n\bt_{2n}\right]_{kk}*\left[\bt_{1n}'{\bf H}_n\right]_{jl}(u)\\
&+\frac {72}{n^4}\left[{\bf H}_n\bt_{1n}\right]_{dj}*\left[\bt_{2n}\bw_n'\bt_{1n}'{\bf H}_n\bt_{1n}\bw_n\bt_{2n}\right]_{kk}*\left[\bt_{1n}'{\bf H}_n\bt_{1n}\right]_{jj}*\left[\bt_{2n}\bw_n'\bt_{1n}'{\bf H}_n\bt_{1n}\right]_{kj}*\left[\bt_{2n}\bw_n'\bt_{1n}'{\bf H}_n\right]_{kl}(u)\\
&+\frac {24}{n^4}\left[{\bf H}_n\bt_{1n}\right]_{dj}*\left[\bt_{2n}\bw_n'\bt_{1n}'{\bf H}_n\bt_{1n}\bw_n\bt_{2n}\right]_{kk}*\left[\bt_{1n}'{\bf H}_n\bt_{1n}\right]_{jj}*\left[\bt_{2n}\bw_n'\bt_{1n}'{\bf H}_n\bt_{1n}\bw_n\bt_{2n}\right]_{kk}*\left[\bt_{1n}'{\bf H}_n\right]_{jl}(u)\\
&+\frac {24}{n^4}\left[{\bf H}_n\bt_{1n}\bw_n\bt_{2n}\right]_{dk}*\left[\bt_{1n}'{\bf H}_n\bt_{1n}\right]_{jj}*\left[\bt_{2n}\bw_n'\bt_{1n}'{\bf H}_n\bt_{1n}\bw_n\bt_{2n}\right]_{kk}*\left[\bt_{1n}'{\bf H}_n\bt_{1n}\right]_{jj}*\left[\bt_{2n}\bw_n'\bt_{1n}'{\bf H}_n\right]_{kl}(u)\\
&+\frac {24}{n^4}\left[{\bf H}_n\bt_{1n}\bw_n\bt_{2n}\right]_{dk}*\left[\bt_{1n}'{\bf H}_n\bt_{1n}\bw_n\bt_{2n}\right]_{jk}*\left[\bt_{1n}'{\bf H}_n\bt_{1n}\bw_n\bt_{2n}\right]_{jk}*\left[\bt_{1n}'{\bf H}_n\bt_{1n}\bw_n\bt_{2n}\right]_{jk}*\left[\bt_{1n}'{\bf H}_n\right]_{jl}(u).
\end{align*}
\end{footnotesize}
\end{proof}
\begin{lemma}\label{le4}
Suppose ${\bf A},{\bf B},{\bf C},{\bf D},{\bf F}$ are random matrices. Then we get for $j=1,\cdots,m_1,k=1,\cdots,m_2$
\begin{align*}
&\left|\sum_{j,k}\re{\bf A}_{jj}{\bf B}_{jk}{\bf C}_{kk}\right|\le \left(\re\rtr{\bf A}\ba^*\right)^{1/2}\left(\re\rtr{\bf B}'{\bf B}{\bf B}'{\bf B}\right)^{1/4}\left[\re\left(\rtr{\bf C}{\bf C}^*\right)^2\right]^{1/4}
\end{align*}
and
\begin{align*}
&\left|\sum_{j,k}\re{\bf B}_{jk}{\bf D}_{jk}{\bf F}_{jk}\right|\le \re\|{\bf B}\|_2\re\|{\bf D}\|_2\left(\re\rtr{\bf B}{\bf B}^*\right)^{1/4}\left(\re\rtr{\bf D}{\bf D}^*\right)^{1/4}\left(\re\rtr{\bf F}{\bf F}^*\right)^{1/2}.
\end{align*}
\end{lemma}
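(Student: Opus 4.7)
The plan is to treat both inequalities as chains of Cauchy--Schwarz applications, combined with H\"older's inequality for the expectations, with one matrix-level Cauchy--Schwarz being the only nontrivial ingredient in the first bound.

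For the first inequality, I would introduce the vector $c=(C_{11},\ldots,C_{m_2 m_2})^\top$ so that
$\sum_{j,k}A_{jj}B_{jk}C_{kk}=\sum_{j}A_{jj}(Bc)_j$, and apply Cauchy--Schwarz over $j$ to obtain the pathwise estimate $\bigl|\sum_{j,k}A_{jj}B_{jk}C_{kk}\bigr|\le\bigl(\sum_j|A_{jj}|^2\bigr)^{1/2}(c^*B^*Bc)^{1/2}$. The quantity $c^*B^*Bc=\rtr(B^*B\,cc^*)$ is then the Hilbert--Schmidt inner product of the Hermitian matrices $B^*B$ and $cc^*$; the matrix Cauchy--Schwarz $|\rtr(XY)|\le(\rtr X^*X)^{1/2}(\rtr Y^*Y)^{1/2}$ yields $c^*B^*Bc\le(\rtr(B^*B)^2)^{1/2}\|c\|^2=(\rtr B'BB'B)^{1/2}\sum_k|C_{kk}|^2$. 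Plugging this in and taking expectation, H\"older with exponents $(2,4,4)$ (which satisfy $\tfrac12+\tfrac14+\tfrac14=1$ and convert the powers $\tfrac12,\tfrac14,\tfrac12$ into first powers inside expectations with the right total exponents) gives $(\re\sum|A_{jj}|^2)^{1/2}(\re\rtr B'BB'B)^{1/4}(\re(\sum|C_{kk}|^2)^2)^{1/4}$, and the elementary bounds $\sum_j|A_{jj}|^2\le\rtr AA^*$ and $\sum_k|C_{kk}|^2\le\rtr CC^*$ give the stated inequality.

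For the second inequality, I would first use scalar Cauchy--Schwarz over the index pairs $(j,k)$ to split off $F$: $|\sum_{j,k}B_{jk}D_{jk}F_{jk}|\le(\sum_{j,k}|B_{jk}|^2|D_{jk}|^2)^{1/2}(\rtr FF^*)^{1/2}$. For the middle sum, I would use the fact that each individual entry is bounded by the spectral norm, $|B_{jk}|\le\|B\|_2$ and $|D_{jk}|\le\|D\|_2$, in the form
$\sum_{j,k}|B_{jk}|^2|D_{jk}|^2\le\|B\|_2\|D\|_2\sum_{j,k}|B_{jk}||D_{jk}|\le\|B\|_2\|D\|_2(\rtr BB^*)^{1/2}(\rtr DD^*)^{1/2}$, the last step again being scalar Cauchy--Schwarz. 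This produces the pathwise bound $\|B\|_2^{1/2}\|D\|_2^{1/2}(\rtr BB^*)^{1/4}(\rtr DD^*)^{1/4}(\rtr FF^*)^{1/2}$, from which a final H\"older step distributes the expectation over the five factors in the powers claimed.

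The main obstacle is not analytic but bookkeeping: one must pick the H\"older exponents so that they sum to $1$ and so that each factor is raised to exactly the power appearing on the right-hand side. The only genuinely non-routine ingredient is the Hilbert--Schmidt Cauchy--Schwarz for $\rtr(B^*B\,cc^*)$ in the first inequality, which is essential in order to obtain $(\rtr B'BB'B)^{1/4}$ rather than the cruder $(\rtr BB^*)^{1/2}$ that a direct entry-wise Cauchy--Schwarz would give.
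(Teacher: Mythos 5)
Your treatment of the first inequality is correct and is essentially the paper's argument: the paper applies Cauchy--Schwarz twice with the expectation kept inside the sums (first over $j$, then over the pair $(k_1,k_2)$ after expanding $\sum_j|\sum_k {\bf B}_{jk}{\bf C}_{kk}|^2$), which produces exactly the exponents $1/2,1/4,1/4$; your pathwise version (Cauchy--Schwarz over $j$, the Hilbert--Schmidt Cauchy--Schwarz for $c^*{\bf B}^*{\bf B}c$, then H\"older with exponents $(2,4,4)$) yields the same right-hand side, so for this part the two routes are interchangeable.

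The second inequality is where there is a genuine gap. Your chain is fine up to the pathwise bound $\|{\bf B}\|_2^{1/2}\|{\bf D}\|_2^{1/2}(\rtr{\bf B}{\bf B}^*)^{1/4}(\rtr{\bf D}{\bf D}^*)^{1/4}(\rtr{\bf F}{\bf F}^*)^{1/2}$, but the announced last step, ``a final H\"older step distributes the expectation over the five factors in the powers claimed,'' cannot be carried out. H\"older turns $\E\prod_i X_i^{\alpha_i}$ into $\prod_i(\E X_i^{\alpha_i p_i})^{1/p_i}$ with $\sum_i 1/p_i=1$; your pathwise exponents sum to $2$ while the exponents on the stated right-hand side ($1,1,1/4,1/4,1/2$) sum to $3$, so no admissible choice of $p_i$ reproduces the factors $\E\|{\bf B}\|_2$ and $\E\|{\bf D}\|_2$ to the first power. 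Concretely, for deterministic matrices your step would assert $\|{\bf B}\|_2^{1/2}\|{\bf D}\|_2^{1/2}\le\|{\bf B}\|_2\|{\bf D}\|_2$, which fails when the norms are below one; what your bound plus H\"older with exponents $(8,8,8,8,2)$ actually gives is $(\E\|{\bf B}\|_2^4)^{1/8}(\E\|{\bf D}\|_2^4)^{1/8}(\E(\rtr{\bf B}{\bf B}^*)^2)^{1/8}(\E(\rtr{\bf D}{\bf D}^*)^2)^{1/8}(\E\rtr{\bf F}{\bf F}^*)^{1/2}$, a bound of the same flavor but not the stated one. The paper proceeds differently: it applies Cauchy--Schwarz with the expectation inside the double sum, getting $(\sum_{j,k}\E|{\bf B}_{jk}|^4)^{1/4}(\sum_{j,k}\E|{\bf D}_{jk}|^4)^{1/4}(\E\rtr{\bf F}{\bf F}^*)^{1/2}$, and then extracts the spectral norms through the row sums, $\sum_k|{\bf B}_{jk}|^2\le({\bf B}{\bf B}^*)_{jj}\le\|{\bf B}\|_2^2$, before taking the remaining quarter powers. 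Admittedly the paper is itself cavalier about where the expectation sits on the norm factor (its chain really yields something like $(\E\|{\bf B}\|_2^2)^{1/4}$ rather than $\E\|{\bf B}\|_2$), and in the applications this is immaterial since all the normalized norms have bounded moments; but as written your final step is not a valid inequality, so you should either follow the paper's in-expectation Cauchy--Schwarz chain or state the bound in the form your argument actually delivers.
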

\begin{proof}
Using the Cauchy-Schwarz inequality, we have
\begin{align*}
\left|\sum_{j,k}\re{\bf A}_{jj}{\bf B}_{jk}{\bf C}_{kk}\right|\le &
\left(\sum_{j}\re\left|{\bf A}_{jj}\right|^2\right)^{1/2}\left(\sum_{j}\re\left|\sum_{k}{\bf B}_{jk}{\bf C}_{kk}\right|^2\right)^{1/2}\\
\le&\left(\re\rtr{\bf A}\ba^*\right)^{1/2}\left(\sum_{k_1,k_2}\re\left({\bf B}'{\bf B}\right)_{k_1k_2}{\bf C}_{k_1k_1}{\bf C}_{k_2k_2}\right)^{1/2}\\
\le&\left(\re\rtr{\bf A}\ba^*\right)^{1/2}\left(\sum_{k_1,k_2}\re\left({\bf B}'{\bf B}\right)^2_{k_1k_2}\right)^{1/4}\left(\sum_{k_1,k_2}\re\left|{\bf C}_{k_1k_1}\right|^2\left|{\bf C}_{k_2k_2}\right|^2\right)^{1/4}\\
\le&\left(\re\rtr{\bf A}\ba^*\right)^{1/2}\left(\re\rtr{\bf B}'{\bf B}{\bf B}'{\bf B}\right)^{1/4}\left[\re\left(\rtr{\bf C}{\bf C}^*\right)^2\right]^{1/4}
\end{align*}
and
\begin{align*}
\left|\sum_{j,k}\re{\bf B}_{jk}{\bf D}_{jk}{\bf F}_{jk}\right|\le &\left(\sum_{j,k}\re\left|{\bf B}_{jk}\right|^2\left|{\bf D}_{jk}\right|^2\right)^{1/2}\left(\sum_{j,k}\re\left|{\bf F}_{jk}\right|^2\right)^{1/2}\\
\le &\left(\sum_{j,k}\re\left|{\bf B}_{jk}\right|^4\right)^{1/4}\left(\sum_{j,k}\re\left|{\bf D}_{jk}\right|^4\right)^{1/4}\left(\re\rtr{\bf F}{\bf F}^*\right)^{1/2}\\
\le &\left[\sum_{j}\left(\sum_k\re\left|{\bf B}_{jk}\right|^2\right)^2\right]^{1/4}\left[\sum_{j}\left(\sum_k\re\left|{\bf D}_{jk}\right|^2\right)^2\right]^{1/4}\left(\re\rtr{\bf F}{\bf F}^*\right)^{1/2}\\
\le &\re\|{\bf B}\|_2\re\|{\bf D}\|_2\left[\sum_{j,k}\re\left|{\bf B}_{jk}\right|^2\right]^{1/4}\left[\sum_{j,k}\re\left|{\bf D}_{jk}\right|^2\right]^{1/4}\left(\re\rtr{\bf F}{\bf F}^*\right)^{1/2}\\
\le&\re\|{\bf B}\|_2\re\|{\bf D}\|_2\left(\re\rtr{\bf B}{\bf B}^*\right)^{1/4}\left(\re\rtr{\bf D}{\bf D}^*\right)^{1/4}\left(\re\rtr{\bf F}{\bf F}^*\right)^{1/2}.
\end{align*}
\end{proof}

\begin{lemma}\label{le8}
Suppose ${\bf A},{\bf B},{\bf C},{\bf D}$ are random Hermitian matrices and ${\bf F},{\bf L},{\bf M}, {\bf Q}, {\bf R}$ are $m_1\times m_2$ random matrices. Then we get for $j=1,\cdots,m_1,k=1,\cdots,m_2$
\begin{align*}
\sum_{j,k}\re\left|{\bf A}_{jj}{\bf B}_{jj}{\bf C}_{kk}{\bf D}_{kk}{\bf F}_{jk}\right|\le&\bigg(\min\left\{{\rm rank}(\ba),{\rm rank (\bb)}\right\}\min\left\{{\rm rank}({\bf C}),{\rm rank ({\bf D})}\right\}\bigg)^{1/2}\\
&\times\left(\re\left\|{\bf A}{\bf B}{\bf C}{\bf D}\right\|_2^2\right)^{1/2}\left(\re\rtr{\bf F}{\bf F}^*\right)^{1/2},\\
\sum_{j,k}\re\left|{\bf A}_{jj}{\bf C}_{kk}{\bf F}_{jk}{\bf L}_{jk}{\bf M}_{jk}\right|
      \le&\left({\rm rank}(\ba){\rm rank}({\bf C})\re\left\|{\bf A}{\bf C}\right\|_2^2\right)^{1/2}\re\|{\bf F}\|_2\re\|{\bf L}\|_2\re\|{\bf M}\|_2\\
&\times\left(\re\rtr{\bf F}{\bf F}^*\right)^{1/4}\left(\re\rtr{\bf L}{\bf L}^*\right)^{1/8}\left(\re\rtr{\bf M}{\bf M}^*\right)^{1/8},
\end{align*}
and
\begin{align*}
\sum_{j,k}\re\left|{\bf F}_{jk}{\bf L}_{jk}{\bf M}_{jk}{\bf Q}_{jk}{\bf R}_{jk}\right|
     \le&\re\left\|{\bf F}\right\|_2\re\|{\bf L}\|_2\re\|{\bf M}\|_2\re\|{\bf Q}\|_2\re\|{\bf R}\|_2\left(\re\rtr{\bf F}{\bf F}^*\right)^{1/4}\left(\re\rtr{\bf L}{\bf L}^*\right)^{1/4}\\
&\qquad\times\left(\re\rtr{\bf M}{\bf M}^*\right)^{1/4}\left(\re\rtr{\bf Q}{\bf Q}^*\right)^{1/8}\left(\re\rtr{\bf R}{\bf R}^*\right)^{1/8}.
\end{align*}
\end{lemma}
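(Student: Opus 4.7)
The plan is to establish all three inequalities in Lemma~\ref{le8} by iterated applications of the Cauchy--Schwarz inequality, in the same spirit as the proof of Lemma~\ref{le4}, together with two elementary pointwise estimates that I would invoke throughout: (a) for any matrix ${\bf X}$, Cauchy--Schwarz on a single row gives $|{\bf X}_{jj}|^{2}\le ({\bf X}{\bf X}^{*})_{jj}$, and when ${\bf X}$ is Hermitian, spectrally writing ${\bf X}={\bf U}\boldsymbol{\Lambda}{\bf U}^{*}$ with only $r=\mathrm{rank}({\bf X})$ nonzero eigenvalues and applying Cauchy--Schwarz with weights $|U_{ji}|^{2}$ gives $|{\bf X}_{jj}|^{2}\le ({\bf X}^{2})_{jj}$ and the rank-refined bound $\sum_{j}\bigl(({\bf X}^{2})_{jj}\bigr)^{2}\le r\,\rtr({\bf X}^{4})$; (b) for any matrix ${\bf X}$, $|{\bf X}_{jk}|\le \|{\bf X}\|_{2}$ and $\sum_{j,k}|{\bf X}_{jk}|^{2}=\rtr({\bf X}{\bf X}^{*})$.

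For the first inequality I would peel off ${\bf F}$ by Cauchy--Schwarz in $(j,k)$:
\begin{align*}
\sum_{j,k}\re|\ba_{jj}\bb_{jj}{\bf C}_{kk}\bd_{kk}{\bf F}_{jk}|\le \Bigl(\re\sum_{j}|\ba_{jj}\bb_{jj}|^{2}\Bigr)^{1/2}\Bigl(\re\sum_{k}|{\bf C}_{kk}\bd_{kk}|^{2}\Bigr)^{1/2}\bigl(\re\rtr({\bf F}{\bf F}^{*})\bigr)^{1/2}.
\end{align*}
Observation (a) gives $|\ba_{jj}\bb_{jj}|^{2}\le (\ba^{2})_{jj}(\bb^{2})_{jj}$; a further Cauchy--Schwarz in $j$ together with the rank-refined estimate bounds the $j$-sum, and an analogous treatment of the $k$-sum produces the $\bigl(\min\{\mathrm{rank}(\ba),\mathrm{rank}(\bb)\}\min\{\mathrm{rank}({\bf C}),\mathrm{rank}(\bd)\}\bigr)^{1/2}$ prefactor while recombining the residual trace pieces into $\|\ba\bb{\bf C}\bd\|_{2}^{2}$.

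The second inequality is handled by the same peeling, now separating the diagonal factors $(\ba_{jj}{\bf C}_{kk})$ from the three off-diagonal factors, so that Cauchy--Schwarz yields
\begin{align*}
\sum_{j,k}\re|\ba_{jj}{\bf C}_{kk}{\bf F}_{jk}{\bf L}_{jk}{\bf M}_{jk}|\le \Bigl(\re\sum_{j}(\ba^{2})_{jj}\sum_{k}({\bf C}^{2})_{kk}\Bigr)^{1/2}\Bigl(\re\sum_{j,k}|{\bf F}_{jk}{\bf L}_{jk}{\bf M}_{jk}|^{2}\Bigr)^{1/2}.
\end{align*}
The first factor yields $(\mathrm{rank}(\ba)\mathrm{rank}({\bf C})\|\ba{\bf C}\|_{2}^{2})^{1/2}$ after the rank-refined bound in (a); the second factor is processed by iterating Cauchy--Schwarz on the three off-diagonal matrices exactly as in the proof of Lemma~\ref{le4}, extracting one $\|\cdot\|_{2}$ per application via (b) and halving the remaining Frobenius exponent at each step to produce the $1/4,\,1/8,\,1/8$ exponents on $\rtr({\bf F}{\bf F}^{*}),\rtr({\bf L}{\bf L}^{*}),\rtr({\bf M}{\bf M}^{*})$. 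For the third inequality, with five off-diagonal factors and no diagonal pieces, I would perform the split $5=(2)+(3)$ by Cauchy--Schwarz and then $3=(1)+(2)$, invoking (b) at each stage to extract an $\|\cdot\|_{2}$; tracking the exponents carefully reproduces the stated $1/4,1/4,1/4,1/8,1/8$ distribution on the five Frobenius traces.

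The arguments themselves are entirely elementary, resting only on (a) and (b). The main obstacle is purely bookkeeping: one must choose the Cauchy--Schwarz pairings in the right order so that the spectral and Frobenius exponents land in the slots prescribed by the lemma---in particular in inequality (iii), where the symmetric split $(2)+(3)$ at the first step is what produces the distinctive symmetric $1/4$-weight on ${\bf F},{\bf L},{\bf M}$ and the smaller $1/8$-weight on ${\bf Q},{\bf R}$.
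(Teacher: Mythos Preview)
Your overall strategy---iterated Cauchy--Schwarz to peel off factors, then reduce high powers of entries to spectral/Frobenius data---is exactly what the paper does. The off-diagonal parts (the treatment of ${\bf F},{\bf L},{\bf M},{\bf Q},{\bf R}$ via the chain $\sum_{j,k}|{\bf X}_{jk}|^{2p}\le\|{\bf X}\|_2^{2p-2}\,\rtr({\bf X}{\bf X}^*)$) match the paper's argument essentially line for line.

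The one place where your proposal diverges, and where it falls short of the stated lemma, is the extraction of the rank factor for the diagonal pieces in the first two inequalities. Your route---bound $|\ba_{jj}\bb_{jj}|^{2}\le(\ba^{2})_{jj}(\bb^{2})_{jj}$, then Cauchy--Schwarz in $j$, then your rank-refined estimate $\sum_j\bigl((\ba^{2})_{jj}\bigr)^{2}\le r_{\ba}\,\rtr(\ba^{4})$---produces $(r_{\ba}r_{\bb})^{1/2}\bigl(\rtr(\ba^{4})\,\rtr(\bb^{4})\bigr)^{1/2}$, i.e.\ the \emph{geometric mean} of the ranks and fourth-order traces rather than $\min\{r_{\ba},r_{\bb}\}\,\|\ba\|_2^{2}\|\bb\|_2^{2}$. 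Both the rank factor and the norm factor you obtain are larger than what the lemma asserts, so your chain does not actually land on the stated bound (your ``recombining into $\|\ba\bb{\bf C}\bd\|_2^{2}$'' goes the wrong way, since $\rtr(\ba^{4})\ge\|\ba\|_2^{4}$).

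The paper closes this gap by recognising $\sum_j|\ba_{jj}\bb_{jj}|^{2}=\rtr(\ba\circ\bb\circ\ba^{*}\circ\bb^{*})$ as a trace of a Hadamard product and invoking Lemma~\ref{lel2} (Corollary~A.22 of Bai--Silverstein), which bounds $|\rtr(\ba_1\circ\cdots\circ\ba_l)|$ by $\min_i\{\mathrm{rank}(\ba_i)\}\prod_i\|\ba_i\|_2$ and hence delivers the $\min$ directly. If you prefer to stay elementary, you can still recover the $\min$: skip the Cauchy--Schwarz in $j$ and instead bound asymmetrically, $\sum_j|\ba_{jj}|^{2}|\bb_{jj}|^{2}\le\|\bb\|_2^{2}\sum_j|\ba_{jj}|^{2}\le\|\bb\|_2^{2}\cdot r_{\ba}\|\ba\|_2^{2}$ (the last step via $\sum_j|\ba_{jj}|^{2}\le\rtr|\ba|^{2}\le r_{\ba}\|\ba\|_2^{2}$), then symmetrise in $\ba,\bb$.
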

\begin{proof}
Applying Cauchy-Schwarz inequality and Lemma \ref{lel2}, it implies that
\begin{align*}
&\sum_{j,k}\re\left|{\bf A}_{jj}{\bf B}_{jj}{\bf C}_{kk}{\bf D}_{kk}{\bf F}_{jk}\right|\le\left(\sum_{j,k}\re\left|{\bf A}_{jj}{\bf B}_{jj}{\bf C}_{kk}{\bf D}_{kk}\right|^2\right)^{1/2}\left(\sum_{j,k}\re\left|{\bf F}_{jk}\right|^2\right)^{1/2}\\
=&\bigg(\re\rtr\left({\bf A}\circ{\bf B}\circ\ba^*\circ\bb^*\right)\rtr\left({\bf C}\circ{\bf D}\circ{\bf C}^*\circ{\bf D}^*\right)\bigg)^{1/2}\left(\re\rtr{\bf F}{\bf F}^*\right)^{1/2}\\
\le&\left(\min\left\{{\rm rank}(\ba),{\rm rank (\bb)}\right\}\min\left\{{\rm rank}({\bf C}),{\rm rank ({\bf D})}\right\}\re\left\|{\bf A}{\bf B}{\bf C}{\bf D}\right\|_2^2\right)^{1/2}\left(\re\rtr{\bf F}{\bf F}^*\right)^{1/2}\\
\le&\bigg(\min\left\{{\rm rank}(\ba),{\rm rank (\bb)}\right\}\min\left\{{\rm rank}({\bf C}),{\rm rank ({\bf D})}\right\}\bigg)^{1/2}\left(\re\left\|{\bf A}{\bf B}{\bf C}{\bf D}\right\|_2^2\right)^{1/2}\left(\re\rtr{\bf F}{\bf F}^*\right)^{1/2}
\end{align*}
and
\begin{align*}
&\sum_{j,k}\re\left|{\bf A}_{jj}{\bf C}_{kk}{\bf F}_{jk}{\bf L}_{jk}{\bf M}_{jk}\right|\le\left(\sum_{j,k}\re\left|{\bf A}_{jj}{\bf C}_{kk}\right|^2\right)^{1/2}\left(\sum_{j,k}\re\left|{\bf F}_{jk}{\bf L}_{jk}{\bf M}_{jk}\right|^2\right)^{1/2}\\
=&\bigg(\re\rtr\left({\bf A}\circ\ba^*\right)\rtr\left({\bf C}\circ{\bf C}^*\right)\bigg)^{1/2}\left(\sum_{j,k}\re\left|{\bf F}_{jk}{\bf L}_{jk}{\bf M}_{jk}\right|^2\right)^{1/2}\\
\le&\left({\rm rank}(\ba){\rm rank}({\bf C})\re\left\|{\bf A}{\bf C}\right\|_2^2\right)^{1/2}\left(\sum_{j,k}\re\left|{\bf F}_{jk}\right|^4\right)^{1/4}\left(\sum_{j,k}\re\left|{\bf L}_{jk}\right|^8\right)^{1/8}\left(\sum_{j,k}\re\left|{\bf M}_{jk}\right|^8\right)^{1/8}\\
\le&\left({\rm rank}(\ba){\rm rank}({\bf C})\re\left\|{\bf A}{\bf C}\right\|_2^2\right)^{1/2}\left[\sum_{j}\left(\sum_{k}\re\left|{\bf F}_{jk}\right|^2\right)^{2}\right]^{1/4}\left[\sum_{j}\left(\sum_{k}\re\left|{\bf L}_{jk}\right|^2\right)^{4}\right]^{1/8}\\
&\qquad\times\left[\sum_{j}\left(\sum_{k}\re\left|{\bf M}_{jk}\right|^2\right)^{4}\right]^{1/8}\\
\le&\left({\rm rank}(\ba){\rm rank}({\bf C})\re\left\|{\bf A}{\bf C}\right\|_2^2\right)^{1/2}\re\|{\bf F}\|_2\re\|{\bf L}\|_2\re\|{\bf M}\|_2\left[\sum_{j,k}\re\left|{\bf F}_{jk}\right|^2\right]^{1/4}\left[\sum_{j,k}\re\left|{\bf L}_{jk}\right|^2\right]^{1/8}\\
&\qquad\times\left[\sum_{j,k}\re\left|{\bf M}_{jk}\right|^2\right]^{1/8}\\
\le&\left({\rm rank}(\ba){\rm rank}({\bf C})\re\left\|{\bf A}{\bf C}\right\|_2^2\right)^{1/2}\re\|{\bf F}\|_2\re\|{\bf L}\|_2\re\|{\bf M}\|_2\left(\re\rtr{\bf F}{\bf F}^*\right)^{1/4}\left(\re\rtr{\bf L}{\bf L}^*\right)^{1/8}\\
&\qquad\times\left(\re\rtr{\bf M}{\bf M}^*\right)^{1/8}.
\end{align*}
Here, $\circ$ means Hadamard product. Furthermore, we have
\begin{align*}
&\sum_{j,k}\re\left|{\bf F}_{jk}{\bf L}_{jk}{\bf M}_{jk}{\bf Q}_{jk}{\bf R}_{jk}\right|\le\left(\sum_{j,k}\re\left|{\bf F}_{jk}{\bf L}_{jk}\right|^2\right)^{1/2}\left(\sum_{j,k}\re\left|{\bf M}_{jk}{\bf Q}_{jk}{\bf R}_{jk}\right|^2\right)^{1/2}\\
\le&\left(\sum_{j,k}\re\left|{\bf F}\right|_{jk}^4\right)^{1/4}\left(\sum_{j,k}\re\left|{\bf L}_{jk}\right|^4\right)^{1/4}\left(\sum_{j,k}\re\left|{\bf M}_{jk}\right|^4\right)^{1/4}\left(\sum_{j,k}\re\left|{\bf Q}_{jk}\right|^8\right)^{1/8}\left(\sum_{j,k}\re\left|{\bf R}_{jk}\right|^8\right)^{1/8}\\
\le&\re\left\|{\bf F}\right\|_2\re\|{\bf L}\|_2\re\|{\bf M}\|_2\re\|{\bf Q}\|_2\re\|{\bf R}\|_2\left(\sum_{j,k}\re\left|{\bf F}\right|_{jk}^2\right)^{1/4}\left(\sum_{j,k}\re\left|{\bf L}_{jk}\right|^2\right)^{1/4}\left(\sum_{j,k}\re\left|{\bf M}_{jk}\right|^2\right)^{1/4}\\
&\qquad\times\left(\sum_{j,k}\re\left|{\bf Q}_{jk}\right|^2\right)^{1/8}\left(\sum_{j,k}\re\left|{\bf R}_{jk}\right|^2\right)^{1/8}\\
\le&\re\left\|{\bf F}\right\|_2\re\|{\bf L}\|_2\re\|{\bf M}\|_2\re\|{\bf Q}\|_2\re\|{\bf R}\|_2\left(\re\rtr{\bf F}{\bf F}^*\right)^{1/4}\left(\re\rtr{\bf L}{\bf L}^*\right)^{1/4}\left(\re\rtr{\bf M}{\bf M}^*\right)^{1/4}\\
&\qquad\times\left(\re\rtr{\bf Q}{\bf Q}^*\right)^{1/8}\left(\re\rtr{\bf R}{\bf R}^*\right)^{1/8}.
\end{align*}
 This completes the proof of the lemma.
\end{proof}

\begin{lemma}\label{ble3}
Let $\ba=(a_{jk})$ be a $p\times p$ nonrandom matrix and $\bbx=(x_1,\cdots,x_{m_1})'$ be a random vector of independent entries. Assume that $\re x_j=0$, $\re|x_j|^2=1$,  $\sup_{j}\re|x_j|^{6}\le M$, and $|x_j|\le \eta_n\sqrt n/t_{\cdot j}$, $p/n\to c\in(0,\infty)$. Here $t_{\cdot j}$ is defined in Section 2. Then for any $l>3$, as $n\to \infty$
\begin{align*}
\re|\bbx^*\bt_{1n}^*\ba\bt_{1n}\bbx-\rtr\ba\bS_1|^l\le C_l \eta_n^{2l-6}n^{l-1}\|\ba\|^l
\end{align*}
where $C_l$ is a constant depending on $l$ only and ${\bS_1}=\bt_{1n}\bt_{1n}^*$.
\end{lemma}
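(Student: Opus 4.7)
The plan is to bound this centered quadratic form by the standard Rosenthal/Burkholder technique (in the spirit of Lemma~B.26 of \cite{bai2010spectral}), adapted to the non-identically distributed, truncated entries of $\bbx$. Set $\mb B = \bt_{1n}^*\ba\bt_{1n}$; then $\rtr\ba\bS_1 = \rtr\mb B = \re[\bbx^*\mb B\bbx]$, so the natural decomposition is
\[
\bbx^*\mb B\bbx - \rtr\mb B \;=\; \sum_{j=1}^{m_1}(|x_j|^2 - 1)\mb B_{jj} \;+\; \sum_{j\ne k}\bar x_j \mb B_{jk} x_k \;\triangleq\; D + O.
\]
Three structural estimates will be used throughout: (i) $|\mb B_{jj}|\le\|\ba\|\,t_{\cdot j}^2$ by Cauchy--Schwarz against the $j$-th column of $\bt_{1n}$; (ii) $\rtr\mb B\mb B^* = \rtr\bS_1\ba\bS_1\ba^*\le\|\ba\|^2\|\bS_1\|\rtr\bS_1\le Cn\|\ba\|^2$; and (iii) $\re|x_j|^{2l}\le M(\eta_n\sqrt n/t_{\cdot j})^{2l-6}$ for $l\ge 3$, obtained by combining $|x_j|\le\eta_n\sqrt n/t_{\cdot j}$ with $\re|x_j|^6\le M$.

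For the diagonal piece $D$, Rosenthal's inequality for independent centered sums gives
\[
\re|D|^l \le C_l\Big(\sum_j|\mb B_{jj}|^2\re(|x_j|^2-1)^2\Big)^{l/2} + C_l\sum_j|\mb B_{jj}|^l\re\big||x_j|^2-1\big|^l.
\]
The first term is $\le C_l(\rtr\mb B\mb B^*)^{l/2}\le C_l n^{l/2}\|\ba\|^l$ by (ii). The second term is controlled by (i), (iii), and $\sum_j t_{\cdot j}^6\le K_1^4\sum_j t_{\cdot j}^2\le Cn$ via
\[
C_l\|\ba\|^l\eta_n^{2l-6} n^{l-3}\sum_j t_{\cdot j}^{6} \;\le\; C_l\,\eta_n^{2l-6}\,n^{l-2}\|\ba\|^l,
\]
and both contributions are absorbed in the target $C_l\eta_n^{2l-6}n^{l-1}\|\ba\|^l$.

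For the off-diagonal piece, set $Y_j = \bar x_j\sum_{k<j}\mb B_{jk}x_k + x_j\sum_{k<j}\bar x_k\mb B_{kj}$, so that $\{Y_j\}$ is a martingale-difference array for $\mathcal{F}_j = \sigma(x_1,\ldots,x_j)$ and $O = \sum_j Y_j$. Burkholder's inequality in Rosenthal form yields
\[
\re|O|^l\le C_l\,\re\Big(\sum_j\re_{j-1}|Y_j|^2\Big)^{l/2} \;+\; C_l\sum_j\re|Y_j|^l.
\]
Each $\re_{j-1}|Y_j|^2$ is a quadratic form in $x_1,\ldots,x_{j-1}$ whose kernel is built from the $j$-th row/column of $\mb B$; summing over $j$ and centering once more, the first term is of order $(\rtr\mb B\mb B^*)^{l/2}\le Cn^{l/2}\|\ba\|^l$ by (ii). For the second term, conditioning on $x_j$ and applying H\"older with the operator-norm bound $\|\mb B\|\le\|\ba\|\|\bS_1\|\le C\|\ba\|$ together with (iii) yields a contribution of order $\eta_n^{2l-6}n^{l-1}\|\ba\|^l$. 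The main obstacle is the second layer of centering needed to control $(\sum_j\re_{j-1}|Y_j|^2)^{l/2}$: one must avoid a crude Frobenius-only bound and instead mix Frobenius with operator-norm estimates to keep the power of $\|\ba\|$ at $l$. Since the target rate $\eta_n^{2l-6}n^{l-1}$ is well above the optimal $n^{l/2}$, rough Cauchy--Schwarz bookkeeping suffices throughout, and no sharp quadratic-form tail bounds are needed.
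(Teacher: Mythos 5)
Your decomposition, the entrywise bound $|\bh_{jk}|\le t_{\cdot j}t_{\cdot k}\|\ba\|$ with $\bh=\bt_{1n}^*\ba\bt_{1n}$, the truncation-based moment bound $\re|x_j|^{2l}\le M\eta_n^{2l-6}n^{l-3}t_{\cdot j}^{6-2l}$, and the Rosenthal/Burkholder treatment of the diagonal part and of $\sum_j\re|Y_j|^l$ all coincide with the paper's argument, and those pieces are sound. The genuine gap is the conditional square-function term $\re\big(\sum_j\re_{j-1}|Y_j|^2\big)^{l/2}$. After Jensen this is a moment of order $l/2$ of the quadratic form $\bbx^*\bh\bh^*\bbx$, and your ``centering once more'' requires exactly the estimate being proved, at exponent $l/2$ and with $\ba$ replaced by $\ba\bS_1\ba^*$; as written the step is circular. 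The claim that ``rough Cauchy--Schwarz bookkeeping suffices'' does not close it: an operator-norm bound gives $\bbx^*\bh\bh^*\bbx\le C\|\ba\|^2|\bbx|^2$ with $\re|\bbx|^2=m_1$, and $m_1$ is not assumed to be $O(n)$ in this model, while the standard quadratic-form lemma (Lemma B.26 of Bai--Silverstein) cannot be invoked directly because the truncation levels $\eta_n\sqrt n/t_{\cdot j}$ are not uniform in $j$ --- that non-uniformity, compensated only through the weights $t_{\cdot j}$ in $|\bh_{jk}|$, is the whole reason this lemma needs its own proof.

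The paper closes this hole by induction on $l$ over the ranges $3^{\theta}<l\le 3^{\theta+1}$: it first proves the crude base bound $\re|\bbx^*\bh\bbx-\rtr\bh|^{l}\le C\(n+n^{l/2}\)\|\ba\|^{l}$ for $1<l\le 3$ using Burkholder and convexity alone, and then at each later stage controls $\re\(\bbx^*\bh\bh^*\bbx\)^{l/2}$ by applying the already-established estimate to $\bh\bh^*=\bt_{1n}^*\(\ba\bS_1\ba^*\)\bt_{1n}$, which is again of the admissible form with $\|\ba\bS_1\ba^*\|\le C\|\ba\|^2$, at the halved exponent $l/2$, together with $\(\rtr\bh\bh^*\)^{l/2}\le Cn^{l/2}\|\ba\|^{l}$. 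To make your proposal complete you must add this recursion (or an equivalent device, such as an explicit combinatorial expansion of integer moments); with that ingredient supplied, the rest of your outline goes through and matches the paper's proof.
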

\begin{proof}
Let $\bh=(h_{jk})=\bt_{1n}^*\ba\bt_{1n}$, we have
\begin{align*}
\bbx^*\bh\bbx-\rtr\bh=\sum_{j=1}^{m_1}h_{jj}\left(|x_j|^2-1\right)+\sum_{j=1}^{m_1}\sum_{k=1}^{j-1}\left(h_{kj}\bar x_kx_j+h_{jk}\bar x_jx_k\right).
\end{align*}
At first, we deduce
\begin{align*}
|h_{jk}|=\left|\be_j'\bt_{1n}^*\ba\bt_{1n}\be_k\right|\le\|\ba\|\sqrt{\be_j'\bt_{1n}^*\bt_{1n}\be_j}\sqrt{\be_k'\bt_{1n}^*\bt_{1n}\be_k}=t_{\cdot j}t_{\cdot k}\|\ba\|
\end{align*}
where $\be_j$ is a vector with the $j$-th element $1$ and the remaining elements zero.

Now, assume $1<l\le 3$. By Lemma \ref{mle4} and Lemma \ref{mle5}, we have
\begin{align*}
&\re\left|\sum_{j=1}^{m_1}h_{jj}\left(|x_j|^2-1\right)\right|^l\le C\re\left[\sum_{j=1}^{m_1}|h_{jj}|^2\left(|x_j|^2-1\right)^2\right]^{l/2}\\
\le& C\sum_{j=1}^{m_1}|h_{jj}|^l\re\left||x_j|^2-1\right|^{l}\le C\sum_{j=1}^{m_1}t_{\cdot j}^{2l}\|\ba\|^{l}\le C\sum_{j=1}^{m_1}t_{\cdot j}^{2}\|\ba\|^l\le Cn\|\ba\|^l.
\end{align*}
Furthermore, by the Holder inequality,
\begin{align*}
&\re\left|\sum_{j=1}^{m_1}\sum_{k=1}^{j-1}\left(h_{kj}\bar x_kx_j+h_{jk}\bar x_jx_k\right)\right|^l
\le C\left[\re\left|\sum_{j=1}^{m_1}\sum_{k=1}^{j-1}\left(h_{kj}\bar x_kx_j+h_{jk}\bar x_jx_k\right)\right|^2\right]^{l/2}\\
\le&C\left[\sum_{j=1}^{m_1}\sum_{k=1}^{j-1}\left(|h_{kj}|^2+|h_{jk}|^2\right)\right]^{l/2}\le C\left[\rtr\bh\bh^*\right]^{l/2}\le Cn^{l/2}\|\ba\|^l.
\end{align*}
Combining the two inequalities above, we obtain for $1<l\le 3$
\begin{align}\label{mal10}
\re|\bbx^*\bh\bbx-\rtr\bh\bh^*|^l\le C\left(n+n^{l/2}\right)\|\ba\|^l.
\end{align}

We shall proceed with the proof of the lemma by induction on $l$. And consider the case $3<l\le9$. Using Lemma \ref{mle6} and Lemma \ref{mle5},
\begin{align*}
&\re\left|\sum_{j=1}^{m_1}h_{jj}\left(|x_j|^2-1\right)\right|^l\\
\le &C\left[\left(\sum_{j=1}^{m_1}|h_{jj}|^2\re\left(|x_j|^2-1\right)^2\right)^{{l/2}}+\sum_{j=1}^{m_1}|h_{jj}|^l\re\left(|x_j|^2-1\right)^l\right]\\
\le &C\left[\left(\rtr\bh\bh^*\right)^{{l/2}}+\sum_{j=1}^{m_1}t_{\cdot j}^{2l}\|\ba\|^l\re|x_j|^{2l}\right]\le C\left[n^{l/2}\|\ba\|^{{l}}+\sum_{j=1}^{m_1}t_{\cdot j}^{2l}\|\ba\|^l\frac{\eta_n^{2l-6}n^{l-3}}{t_{\cdot j}^{2l-6}}\re|x_j|^6\right]\\
\le &C\left[n^{l/2}\|\ba\|^{{l}}+{\eta_n^{2l-6}n^{l-3}}\sum_{j=1}^{m_1}t_{\cdot j}^{6}\|\ba\|^l\right]\le C\left[n^{l/2}\|\ba\|^{{l}}+\eta_n^{2l-6}n^{l-2}\|\ba\|^l\right]\le
C\eta_n^{2l-6}n^{l-1}\|\ba\|^l.
\end{align*}
For the same reason, with notation $\re_j(\cdot)$ for the conditional expectation given $\{x_1,\cdots,x_j\}$, we have
\begin{align*}
&\re\left|\sum_{j=1}^{m_1}\sum_{k=1}^{j-1}h_{kj}\bar x_kx_j\right|^l\\
\le &C\left[\re\left(\sum_{j=1}^{m_1}\re_{j-1}\left|\sum_{k=1}^{j-1}h_{kj}\bar x_kx_j\right|^2\right)^{l/2}+\sum_{j=1}^{m_1}\re\left|\sum_{k=1}^{j-1}h_{kj}\bar x_kx_j\right|^l\right]\\
\le &C\left[\re\left(\sum_{j=1}^{m_1}\left|\sum_{k=1}^{j-1}h_{kj}\bar x_k\right|^2\right)^{l/2}+\sum_{j=1}^{m_1}\re\left|\sum_{k=1}^{j-1}h_{kj}\bar x_k\right|^l\right]\\
\le &C\left[\re\left(\sum_{j=1}^{m_1}\left|\re_{j-1}\sum_{k=1}^{m_1}h_{kj}\bar x_k\right|^2\right)^{l/2}+\sum_{j=1}^{m_1}\left(\sum_{k=1}^{j-1}|h_{kj}|^2\right)^{l/2}
+\sum_{j=1}^{m_1}\sum_{k=1}^{j-1}\left|h_{kj}\right|^l\re|x_k|^l\right]\\
\le &C\left[\re\left(\sum_{j=1}^{m_1}\left|\sum_{k=1}^{m_1}h_{kj}\bar x_k\right|^2\right)^{l/2}+\sum_{j=1}^{m_1}\left((\bh^*\bh)_{jj}\right)^{l/2}
+\sum_{j=1}^{m_1}\sum_{k=1}^{j-1}t_{\cdot j}^lt_{\cdot k}^l\|\ba\|^l\frac{\eta_n^{l-3}n^{l/2-3/2}}{t_{\cdot k}^{l-3}}\right]\\
\le &C\left[\re\left(\bbx^*\bh\bh^*\bbx\right)^{l/2}+\rtr\left(\bh^*\bh\right)^{l/2}+{\eta_n^{l-3}n^{l/2+1/2}}\|\ba\|^l\right]\\
\le &C\left[(n+n^{l/4})\|\ba\|^{l}+{\eta_n^{2l-6}n^{l-1}}\|\ba\|^l\right]\le C{\eta_n^{2l-6}n^{l-1}}\|\ba\|^l.
\end{align*}
The last inequality is from (\ref{mal10}) with $\bh$ replaced by $\bh\bh^*$.
Together with the two inequalities above, we conclude for $3<l\le 9$
\begin{align*}
\re|\bbx^*\bh\bbx-\rtr\bh\bh^*|^l\le C{\eta_n^{2l-6}n^{l-1}}\|\ba\|^l.
\end{align*}

In the following, consider the case $3^{\theta}<l\le3^{\theta+1}$ with $\theta\ge2$. Likewise, using Lemma \ref{mle6} and Lemma \ref{mle5}, we deduce
\begin{align*}
&\re\left|\sum_{j=1}^{m_1}h_{jj}\left(|x_j|^2-1\right)\right|^l\\
\le &C_l\left[\left(\sum_{j=1}^{m_1}|h_{jj}|^2\re\left(|x_j|^2-1\right)^2\right)^{{l/2}}+\sum_{j=1}^{m_1}|h_{jj}|^l\re\left(|x_j|^2-1\right)^l\right]\\
\le &C_l\left[\left(\rtr\bh\bh^*\right)^{{l/2}}+\sum_{j=1}^{m_1}t_{\cdot j}^{2l}\|\ba\|^l\re|x_j|^{2l}\right]\le C_l\left[n^{l/2}\|\ba\|^{{l}}+\sum_{j=1}^{m_1}t_{\cdot j}^{2l}\|\ba\|^l\frac{\eta_n^{2l-6}n^{l-3}}{t_{\cdot j}^{2l-6}}\right]\\
\le &C_l\left[n^{l/2}\|\ba\|^{{l}}+\eta_n^{2l-6}n^{l-3}\sum_{j=1}^{m_1}t_{\cdot j}^{6}\|\ba\|^l\right]\le C\left[n^{l/2}\|\ba\|^{{l}}+\eta_n^{2l-6}n^{l-2}\|\ba\|^l\right]
\le C_l\eta_n^{2l-6}n^{l-1}\|\ba\|^l.
\end{align*}
and
\begin{align*}
&\re\left|\sum_{j=1}^{m_1}\sum_{k=1}^{j-1}h_{kj}\bar x_kx_j\right|^l\\
\le &C_l\left[\re\left(\sum_{j=1}^{m_1}\re_{j-1}\left|\sum_{k=1}^{j-1}h_{kj}\bar x_kx_j\right|^2\right)^{l/2}+\sum_{j=1}^{m_1}\re\left|\sum_{k=1}^{j-1}h_{kj}\bar x_kx_j\right|^l\right]\\
\le &C_l\left[\re\left(\sum_{j=1}^{m_1}\left|\sum_{k=1}^{j-1}h_{kj}\bar x_k\right|^2\right)^{l/2}+\sum_{j=1}^{m_1}\re|x_j|^l\re\left|\sum_{k=1}^{j-1}h_{kj}\bar x_k\right|^l\right]\\
\le &C_l\left[\re\left(\sum_{j=1}^{m_1}\left|\re_{j-1}\sum_{k=1}^{m_1}h_{kj}\bar x_k\right|^2\right)^{l/2}+\sum_{j=1}^{m_1}\re|x_j|^l\left(\sum_{k=1}^{j-1}|h_{kj}|^2\right)^{l/2}
+\sum_{j=1}^{m_1}\re|x_j|^l\sum_{k=1}^{j-1}\left|h_{kj}\right|^l\re|x_k|^l\right]\\
\le &C_l\left[\re\left(\sum_{j=1}^{m_1}\left|\sum_{k=1}^{m_1}h_{kj}\bar x_k\right|^2\right)^{l/2}+\sum_{j=1}^{m_1}\re|x_j|^lt_{\cdot j}^l\left(\sum_{k=1}^{m_1}t_{\cdot k}^2\right)^{l/2}\|\ba\|^l
+\sum_{j=1}^{m_1}\re|x_j|^{l}t_{\cdot j}^l\sum_{k=1}^{j-1}\re|x_k|^{l}t_{\cdot k}^l\|\ba\|^l\right]\\
\le &C_l\left[\re\left(\sum_{j=1}^{m_1}\left|\sum_{k=1}^{m_1}h_{kj}\bar x_k\right|^2\right)^{l/2}+n^{l/2}\sum_{j=1}^{m_1}\re|x_j|^lt_{\cdot j}^l\|\ba\|^l
+\left(\sum_{j=1}^{m_1}\re|x_j|^{l}t_{\cdot j}^l\right)^2\|\ba\|^l\right]\\
\le &C_l\left[\re\left(\sum_{j=1}^{m_1}\left|\sum_{k=1}^{m_1}h_{kj}\bar x_k\right|^2\right)^{l/2}+n^{l/2}\sum_{j=1}^{m_1}\frac{\eta_n^{l-6}n^{l/2-3}}{t_{\cdot j}^{l-6}}t_{\cdot j}^l\|\ba\|^l
+\left(\sum_{j=1}^{m_1}\frac{\eta_n^{l-6}n^{l/2-3}}{t_{\cdot j}^{l-6}}t_{\cdot j}^l\right)^2\|\ba\|^l\right]\\
\le &C_l\left[\re\left(\sum_{j=1}^{m_1}\left|\sum_{k=1}^{m_1}h_{kj}\bar x_k\right|^2\right)^{l/2}+{\eta_n^{l-6}n^{l-3}}\sum_{j=1}^{m_1}t_{\cdot j}^6\|\ba\|^l
+\left({\eta_n^{l-6}n^{l/2-3}}\sum_{j=1}^{m_1}t_{\cdot j}^6\right)^2\|\ba\|^l\right]\\
\le &C_l\left[\re\left(\bbx^*\bh\bh^*\bbx\right)^{l/2}+{\eta_n^{l-6}n^{l-2}}\|\ba\|^l
+{\eta_n^{2l-12}n^{l-4}}\|\ba\|^l\right]\\
\le& C_l\left[\re\left(\bbx^*\bh\bh^*\bbx\right)^{l/2}+{\eta_n^{2l-6}n^{l-1}}\|\ba\|^l\right].
\end{align*}
Using the induction hypothesis with $\bh$ replaced by $\bh\bh^*$, it follows that
\begin{align*}
\re\left(\bbx^*\bh\bh^*\bbx\right)^{l/2}\le &C_l\left[\re\left|\bbx^*\bh\bh^*\bbx-\rtr\bh\bh^*\right|^{l/2}+\left(\rtr\bh\bh^*\right)^{l/2}\right]\\
\le&C_l\left[\eta_n^{l-6}n^{l/2-1}\|\ba\|^l+n^{l/2}\|\ba\|^{l}\right]\le C_ln^{l/2}\|\ba\|^l.
\end{align*}
Consequently, we get
\begin{align*}
\re|\bbx^*\bh\bbx-\rtr\bh\bh^*|^l\le C{\eta_n^{2l-6}n^{l-1}}\|\ba\|^l.
\end{align*}
\end{proof}

\subsubsection{Existing Lemmas}

\begin{lemma}[Theorem A.37 in Bai and Silverstein (2010) \cite{bai2010spectral}]\label{lel1}
If $\ba$ and $\bb$ are two $n\times p$ matrices and $\lambda_k$ and $\delta_k,k=1,2,\cdots,n$, denote their singular values, then
\begin{align*}
  \min_{\pi}\sum_{k=1}^n|\lambda_k-\delta_{\pi(k)}|^2\le\rtr\left[(\ba-\bb)(\ba-\bb)^*\right]\le\max_{\pi}\sum_{k=1}^n|\lambda_k-\delta_{\pi(k)}|^2.
\end{align*}
If the singular values are arranged in descending order, then we have
\begin{align*}
    \sum_{k=1}^{\nu}|\lambda_k-\delta_{k}|^2\le\rtr\left[(\ba-\bb)(\ba-\bb)^*\right],
\end{align*}
where $\nu=\min\{p,n\}$.
\end{lemma}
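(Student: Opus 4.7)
This is a Hoffman--Wielandt inequality for singular values of rectangular matrices, and my plan follows the classical route through the singular value decomposition and Birkhoff's theorem on doubly stochastic matrices. First I would expand the Frobenius norm to recast both claimed inequalities in terms of the cross trace $\Re\rtr(\ba\bb^*)$. Indexing singular values $\lambda_k,\delta_k$ from $1$ to $n$ (padded by zeros when $p<n$), one has
\begin{align*}
\rtr[(\ba-\bb)(\ba-\bb)^*]&=\sum_{k=1}^n\lambda_k^2+\sum_{k=1}^n\delta_k^2-2\Re\rtr(\ba\bb^*),\\
\sum_{k=1}^n|\lambda_k-\delta_{\pi(k)}|^2&=\sum_{k=1}^n\lambda_k^2+\sum_{k=1}^n\delta_k^2-2\sum_{k=1}^n\lambda_k\delta_{\pi(k)},
\end{align*}
so the first displayed inequality is equivalent to the sandwich $\min_\pi\sum_k\lambda_k\delta_{\pi(k)}\le\Re\rtr(\ba\bb^*)\le\max_\pi\sum_k\lambda_k\delta_{\pi(k)}$.

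To bound the cross trace I would diagonalise via SVD. Writing $\ba=\mathbf{U}_1\boldsymbol{\Sigma}_1\mathbf{V}_1^*$ and $\bb=\mathbf{U}_2\boldsymbol{\Sigma}_2\mathbf{V}_2^*$ and setting the unitaries $\mathbf{P}=\mathbf{U}_2^*\mathbf{U}_1$, $\mathbf{Q}=\mathbf{V}_1^*\mathbf{V}_2$, a direct calculation gives
$$\Re\rtr(\ba\bb^*)=\sum_{i,j=1}^{\nu}\lambda_i\delta_j\,\Re\bigl[Q_{ij}P_{ji}\bigr].$$
The AM--GM inequality then produces the weights $w_{ij}=\tfrac12\bigl(|Q_{ij}|^2+|P_{ji}|^2\bigr)$ satisfying $|\Re[Q_{ij}P_{ji}]|\le w_{ij}$, and $(w_{ij})$ is doubly substochastic because the rows and columns of a unitary matrix have unit norm. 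Extending $w$ to an $n\times n$ doubly stochastic matrix (by the standard border construction, padding $\lambda$'s and $\delta$'s by zeros beyond index $\nu$) and invoking Birkhoff's theorem decomposes it as $\sum_\pi\alpha_\pi\mathbf{P}_\pi$, a convex combination of permutation matrices. Combined with the non-negativity $\lambda_i,\delta_j\ge 0$, this sandwiches $\Re\rtr(\ba\bb^*)$ between $\min_\pi$ and $\max_\pi$ of $\sum_k\lambda_k\delta_{\pi(k)}$, which, by the reformulation of the first paragraph, establishes the first inequality.

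The second (sorted-order) inequality then follows as a corollary: when both sequences are arranged in descending order, the rearrangement inequality gives $\max_\pi\sum_k\lambda_k\delta_{\pi(k)}=\sum_k\lambda_k\delta_k$, so the minimum on the left of the first inequality collapses to $\sum_k(\lambda_k-\delta_k)^2$, and the padded zero singular values contribute nothing, reducing the sum to $\sum_{k=1}^{\nu}(\lambda_k-\delta_k)^2$. The main obstacle I anticipate is the sign control in the AM--GM step: the bound $|\Re[Q_{ij}P_{ji}]|\le w_{ij}$ controls only the absolute value of the cross trace, so one must carefully exploit $\lambda_i,\delta_j\ge 0$ together with the Birkhoff representation of the doubly stochastic extension to produce a genuine two-sided sandwich rather than only an absolute-value estimate. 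A related bookkeeping step is the passage from the $\nu\times\nu$ doubly substochastic weight matrix to an $n\times n$ doubly stochastic one when $p\ne n$; this is harmless for the permutation-sum bound because the padded singular values are all zero, but it has to be spelled out so that $\max_\pi\sum_k\lambda_k\delta_{\pi(k)}$ is computed over the correct permutation group.
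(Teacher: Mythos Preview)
The paper does not supply its own proof of this lemma: it is placed in the ``Existing Lemmas'' subsection and is merely quoted as Theorem~A.37 of Bai and Silverstein (2010), with no argument given. Your proposal follows the classical Hoffman--Wielandt route (expand the Frobenius norm, reduce to a two-sided bound on $\Re\rtr(\ba\bb^*)$, diagonalise by SVD, and apply Birkhoff's theorem to the doubly stochastic weight matrix), which is precisely the standard proof found in Bai and Silverstein and elsewhere; the outline is correct and the two technical points you flag (passing from the absolute-value bound to a two-sided sandwich via nonnegativity of the singular values, and padding the $\nu\times\nu$ substochastic matrix to an $n\times n$ doubly stochastic one) are exactly the places where care is needed, and both are handled routinely.
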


\begin{lemma}[Duhamel formula]\label{lea1}
Let ${\bf M}_1,{\bf M}_2$ be $n\times n$ matrices and $t\in\mathbb{R}$. Then we have
\begin{align*}
e^{({\bf M}_1+{\bf M}_2)t}=e^{{\bf M}_1t}+\int_0^te^{{\bf M}_1(t-s)}{\bf M}_2e^{({\bf M}_1+{\bf M}_2)s}ds.
\end{align*}
Moreover, if ${\bf A}(t)$ is a matrix-valued function of $t\in\mathbb{R}$ that is $C^{\infty}$ in the sense that each matrix element $\left[{\bf A}(t)\right]_{jk}$ is $C^{\infty}$. Then
\begin{align*}
\frac{de^{{\bf A}(t)}}{dt}=\int_0^1e^{s{\bf A}(t)}{\bf A}'(t)e^{(1-s){\bf A}(t)}ds.
\end{align*}
\end{lemma}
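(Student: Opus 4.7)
The lemma has two parts, and I would handle them in sequence, using the first to prove the second.

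\textbf{Part 1 (Duhamel formula for constant matrices).} The plan is the classical ``differentiate and integrate'' argument. I would introduce the auxiliary function
\[
\phi(s) \;=\; e^{{\bf M}_1(t-s)}\,e^{({\bf M}_1+{\bf M}_2)s}, \qquad s\in[0,t].
\]
Since ${\bf M}_1$ commutes with $e^{{\bf M}_1(t-s)}$, a direct differentiation gives
\[
\phi'(s) \;=\; -{\bf M}_1 e^{{\bf M}_1(t-s)} e^{({\bf M}_1+{\bf M}_2)s} + e^{{\bf M}_1(t-s)} ({\bf M}_1+{\bf M}_2)\, e^{({\bf M}_1+{\bf M}_2)s}
\;=\; e^{{\bf M}_1(t-s)}\,{\bf M}_2\, e^{({\bf M}_1+{\bf M}_2)s}.
\]
Integrating from $0$ to $t$ and using $\phi(t)=e^{({\bf M}_1+{\bf M}_2)t}$ and $\phi(0)=e^{{\bf M}_1 t}$ yields the stated identity. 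This step is routine; the only point worth mentioning is that termwise differentiation of the power series $e^{{\bf M}_1(t-s)}$ with respect to $s$ is justified by absolute and uniform convergence on compact intervals.

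\textbf{Part 2 (derivative of a matrix exponential with time-dependent exponent).} Here the natural idea is to introduce the two-parameter family
\[
G(s,t) \;=\; e^{s{\bf A}(t)}, \qquad s\in[0,1],
\]
and study $H(s,t) := \partial_t G(s,t)$. Differentiating the identity $\partial_s G(s,t) = {\bf A}(t)\,G(s,t)$ with respect to $t$ (and using joint smoothness of $G$ in $(s,t)$ to exchange $\partial_s$ and $\partial_t$) produces the linear inhomogeneous ODE in $s$
\[
\partial_s H(s,t) \;=\; {\bf A}(t)\, H(s,t) \;+\; {\bf A}'(t)\, e^{s{\bf A}(t)}, \qquad H(0,t)=0,
\]
the initial condition coming from $G(0,t)=I$. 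Now I would apply Part 1 (with ${\bf M}_1 = {\bf A}(t)$ and the inhomogeneity ${\bf A}'(t)\, e^{s{\bf A}(t)}$, viewing $t$ as a frozen parameter) to obtain the variation-of-parameters representation
\[
H(s,t) \;=\; \int_0^s e^{(s-u){\bf A}(t)}\, {\bf A}'(t)\, e^{u{\bf A}(t)}\, du.
\]
Evaluating at $s=1$ and changing variables $u \mapsto 1-s'$ yields the formula in the statement.

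\textbf{Main obstacle.} The calculations themselves are short, so the only real subtlety is analytic bookkeeping: justifying termwise differentiation of the exponential series in each variable, justifying the exchange $\partial_s\partial_t = \partial_t\partial_s$ on $G(s,t)$, and checking that the matrix-valued integrands are continuous in the relevant variables so the Duhamel-type integral makes sense. All of this follows from the $C^\infty$ assumption on ${\bf A}(t)$ together with the fact that the partial sums of $e^{s{\bf A}(t)}$ converge in operator norm, uniformly on compact sets in $(s,t)$; I would note this briefly rather than belabor it.
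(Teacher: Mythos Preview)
Your proof is correct and entirely standard. Note, however, that the paper does not actually prove this lemma: it is listed in the appendix under ``Existing Lemmas'' (alongside Burkholder's inequality and similar cited results) and is simply stated without argument. So there is no paper proof to compare against; your write-up supplies what the paper omits. One minor remark: in Part~2 you say you ``apply Part~1'' to solve the inhomogeneous ODE $\partial_s H = {\bf A}(t)H + {\bf A}'(t)e^{s{\bf A}(t)}$, but strictly speaking Part~1 concerns $e^{({\bf M}_1+{\bf M}_2)s}$ rather than a general forcing term --- what you are really using is the variation-of-parameters formula (whose proof is the same $\phi$-trick). This is only a phrasing issue, not a mathematical gap.
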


\begin{lemma}[Corollary A.22 in Bai and Silverstein (2010) \cite{bai2010spectral}]\label{lel2}
Suppose that $\ba_j,j=1,\cdots,l$ are $l$ $p\times p$ Hermitian matrices whose eigenvalues are bounded by $M_j$. Then
\begin{align*}
\left|\rtr\left(\ba_1\circ\cdots\circ\ba_l\right)\right|\le \min\{{\rm rank}(\ba_1),\cdots,{\rm rank}(\ba_l)\}M_1\cdots M_l.
\end{align*}
\end{lemma}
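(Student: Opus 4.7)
The plan is to use the spectral decomposition of a single one of the Hermitian matrices to convert the trivial bound (which would yield a factor of $p$, the dimension) into a bound involving only the rank. The natural starting point is the entry-wise definition of the Hadamard product, which gives $\rtr(\ba_1 \circ \cdots \circ \ba_l) = \sum_{i=1}^p (\ba_1)_{ii} \cdots (\ba_l)_{ii}$. Applying the pointwise bound $|(\ba_j)_{ii}| \le \|\ba_j\| \le M_j$ (a diagonal entry of a Hermitian matrix is dominated by its operator norm, which equals the largest absolute eigenvalue) and summing over $i$ produces only the crude estimate $p\, M_1 \cdots M_l$, so the goal is to refine the factor $p$ to $\min_j \mathrm{rank}(\ba_j)$.

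To this end, I would fix an index $j^\star$ achieving the minimum rank and write the spectral decomposition $\ba_{j^\star} = \sum_{k=1}^{r} \lambda_k \mathbf{u}_k \mathbf{u}_k^*$, where $r = \mathrm{rank}(\ba_{j^\star})$, the $\mathbf{u}_k$ are orthonormal, and $|\lambda_k| \le M_{j^\star}$. Reading off the diagonal gives
\[
(\ba_{j^\star})_{ii} = \sum_{k=1}^{r} \lambda_k |u_{k,i}|^2,
\qquad
|(\ba_{j^\star})_{ii}| \le M_{j^\star} \sum_{k=1}^{r} |u_{k,i}|^2.
\]
Substituting this into the trace identity and using the trivial bound $|(\ba_j)_{ii}| \le M_j$ for the remaining indices $j \neq j^\star$ yields
\[
|\rtr(\ba_1 \circ \cdots \circ \ba_l)| \le \Bigl(\prod_{j=1}^l M_j\Bigr) \sum_{i=1}^p \sum_{k=1}^{r} |u_{k,i}|^2.
\]

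The finishing step is to exchange the two sums and apply the orthonormality relation $\sum_{i=1}^p |u_{k,i}|^2 = \|\mathbf{u}_k\|^2 = 1$, which collapses the double sum to $r = \min_j \mathrm{rank}(\ba_j)$ and delivers the claimed bound. There is essentially no obstacle: the entire argument amounts to recognizing that one factor of $p$ can be traded for the rank of the least full-rank matrix, with all other factors handled by the crude pointwise bound on the diagonal. Because the statement is symmetric in $\ba_1, \ldots, \ba_l$, one is free to expand whichever matrix has smallest rank, so no case analysis is needed; the only care required is to keep track of absolute values when the eigenvalues $\lambda_k$ may be negative, but the triangle inequality handles this cleanly since the $|u_{k,i}|^2$ weights are nonnegative.
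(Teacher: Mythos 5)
Your proof is correct. The paper itself gives no argument for this lemma --- it is listed among the ``Existing Lemmas'' and simply cited as Corollary A.22 of Bai and Silverstein (2010) --- so there is nothing in the paper to compare against; your derivation (trace of the Hadamard product as $\sum_i\prod_j(\ba_j)_{ii}$, the crude diagonal bound $|(\ba_j)_{ii}|\le M_j$ for $j\neq j^\star$, and the spectral decomposition of the minimum-rank factor together with $\sum_i|u_{k,i}|^2=1$ to trade the factor $p$ for $r=\min_j{\rm rank}(\ba_j)$) is the standard elementary proof of this inequality and is complete as written.
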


\begin{lemma}[Burkholder (1973) \cite{burkholder1973distribution}]\label{mle4}
Let $\{X_k\}$ be a complex martingale difference sequence with respect to the increasing $\sigma$-field $\{\mathcal{F}_k\}$. Then, for $l>1$,
\begin{align*}
\re\left|\sum X_k\right|^l\le C_l\re\left(\sum|X_k|^2\right)^{l/2}.
\end{align*}
\end{lemma}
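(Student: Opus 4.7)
The plan is to prove a complex Burkholder-Davis-Gundy type square-function inequality following the standard route. First, reduce to real martingale differences by writing $X_k = Y_k + i Z_k$ with $Y_k, Z_k$ real. Both sequences form real $\{\mathcal{F}_k\}$-martingale differences, $|X_k|^2 = Y_k^2 + Z_k^2$, and $|\sum X_k|^l \leq 2^{l-1}(|\sum Y_k|^l + |\sum Z_k|^l)$; since $\sum Y_k^2, \sum Z_k^2 \leq \sum |X_k|^2$, the complex statement reduces to the real one at the cost of adjusting $C_l$.

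For real martingale differences with $l \geq 2$, I would proceed by an It\^o-type telescoping. Set $S_k = \sum_{j \leq k} X_j$ and $f(x) = |x|^l \in C^2$ with $|f''(x)| = l(l-1)|x|^{l-2}$, and Taylor-expand $f(S_k) - f(S_{k-1}) = f'(S_{k-1}) X_k + \tfrac{1}{2} f''(\xi_k) X_k^2$. The linear term has vanishing conditional expectation given $\mathcal{F}_{k-1}$ by the martingale-difference property, while the quadratic remainder is dominated by $C_l (S_n^*)^{l-2} X_k^2$ where $S_n^* = \max_{k \leq n} |S_k|$. Summing yields
\[
\re |S_n|^l \leq C_l \, \re\!\left[(S_n^*)^{l-2} \sum_k X_k^2\right].
\]
H\"older with exponents $l/(l-2)$ and $l/2$, combined with Doob's $L^l$ maximal inequality $\re(S_n^*)^l \leq (l/(l-1))^l \, \re|S_n|^l$, then absorbs the $S_n^*$ factor and delivers $\re|S_n|^l \leq C_l \, \re(\sum X_k^2)^{l/2}$.

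For $1 < l < 2$ the Taylor route fails because $|x|^l \notin C^2$. Here I would invoke duality: the conjugate exponent $l' = l/(l-1) > 2$ lies in the range already handled, so for any predictable sequence $\{H_k\}$ the martingale transform $T_n = \sum H_k X_k$ satisfies a square-function bound in $L^{l'}$. Pairing $\re[\bar T_n S_n]$ with a judicious choice of $\{H_k\}$ (related to the sign pattern of $S_n$) recovers the desired bound via $L^l$-$L^{l'}$ duality. Alternatively, a Davis decomposition $X_k = X_k' + X_k''$, splitting each term into a predictable-jump piece and a piece with small $L^1$ total variation, handles this range directly.

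The main obstacle is the case $1 < l < 2$, where the loss of $C^2$ regularity forces the more delicate duality or Davis-decomposition arguments. For the paper's applications however, $l$ is always taken well above $2$ (the earlier concentration lemma requires $l > 3$), so only the easier $l \geq 2$ branch is actually invoked, and the full statement may simply be cited from Burkholder (1973).
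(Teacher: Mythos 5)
The paper offers no proof of this lemma: it appears in the list of existing lemmas and is simply cited from Burkholder (1973), which is also your own fallback. Your reduction of the complex case to the real one is fine, and your $l\ge 2$ branch (Taylor expansion of $|x|^l$, vanishing of the conditional linear term, domination of the remainder by $C_l\,(S_n^*)^{l-2}X_k^2$, then H\"older with exponents $l/(l-2)$ and $l/2$, Doob's maximal inequality, and absorption of $\re|S_n|^l$, which is finite whenever the right-hand side is, since the sum has finitely many terms) is the standard argument and is essentially correct modulo routine integrability remarks.

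The genuine gap is the range $1<l<2$. The duality you sketch does not close: pairing $S_n$ against $g\in L^{l'}$ and the martingale differences $Y_k$ of $\re\left[g\mid\mathcal F_k\right]$, Cauchy--Schwarz and H\"older leave you needing the \emph{converse} square-function inequality $\re\left(\sum|Y_k|^2\right)^{l'/2}\le C\,\re|T_n|^{l'}$ at the conjugate exponent $l'>2$; your Taylor argument establishes only the direct inequality at exponents $\ge 2$, so nothing "already handled" applies, and the two directions are not interchangeable. Moreover, a multiplier sequence $H_k$ "related to the sign pattern of $S_n$" is $\mathcal F_n$-measurable, hence not predictable, so that pairing is not a legitimate martingale transform. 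The Davis decomposition (or Burkholder's martingale-transform theorem combined with Khintchine's inequality) would indeed settle $1<l<2$, but as written it is a name, not an argument. Finally, your closing claim that the paper only ever invokes $l\ge2$ is inaccurate: the base case of Lemma \ref{ble3} applies this lemma for all $1<l\le 3$, and the induction step reuses the bound at exponent $l/2$, so exponents below $2$ are genuinely used there; they cannot be recovered from the $l=2$ case by Jensen, since concavity of $t^{l/2}$ makes that inequality point the wrong way.
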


\begin{lemma}[Burkholder (1973) \cite{burkholder1973distribution}]\label{mle6}
Let $\{X_k\}$ be a complex martingale difference sequence with respect to the increasing $\sigma$-field $\{\mathcal{F}_k\}$. Then, for $l\ge2$,
\begin{align*}
\re\left|\sum X_k\right|^l\le C_l\left[\re\left(\re\left(\sum|X_k|^2|\mathcal{F}_{k-1}\right)^{l/2}\right)+\sum\re|X_k|^l\right].
\end{align*}
\end{lemma}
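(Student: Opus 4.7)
The plan is to reduce this Rosenthal-type martingale inequality to the simpler Burkholder-Davis-Gundy inequality already stated as Lemma \ref{mle4} via a Davis-type decomposition of the quadratic variation followed by induction on $l$. The essential idea is that Lemma \ref{mle4} controls $\E|\sum X_k|^l$ by the full quadratic variation $\sum|X_k|^2$, and the present lemma refines this by separating the contribution of the predictable part $\sum\E(|X_k|^2\mid\mathcal{F}_{k-1})$ from the individual moments $\sum\E|X_k|^l$, which typically behaves much better in random-matrix applications.

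First, I would apply Lemma \ref{mle4} directly to obtain
\begin{align*}
\E\left|\sum X_k\right|^l \le C_l\,\E\left(\sum |X_k|^2\right)^{l/2}.
\end{align*}
Then I would write $\sum|X_k|^2 = V_n + M_n$ where $V_n = \sum_k\E(|X_k|^2\mid\mathcal{F}_{k-1})$ is the predictable compensator (the conditional quadratic variation) and $M_n = \sum_k Y_k$ with $Y_k = |X_k|^2 - \E(|X_k|^2\mid\mathcal{F}_{k-1})$ is a new martingale difference sequence adapted to $\{\mathcal{F}_k\}$. Using the elementary inequality $(a+b)^{l/2}\le 2^{l/2-1}(a^{l/2}+b^{l/2})$ valid for $l\ge 2$ and nonnegative $a,b$, this yields
\begin{align*}
\E\left(\sum|X_k|^2\right)^{l/2} \le C_l\Big(\E V_n^{l/2} + \E|M_n|^{l/2}\Big).
\end{align*}
The first summand is exactly the conditional quadratic variation term appearing in the target bound.

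Second, I would handle $\E|M_n|^{l/2}$ by induction on $l$, with base case $l=2$ trivial since $\E|M_n|\le(\E M_n^2)^{1/2}=(\sum_k\E Y_k^2)^{1/2}\le(\sum_k\E|X_k|^4)^{1/2}$, which after a straightforward bookkeeping is absorbed into the stated right-hand side. For the induction step at level $l>2$, I would apply Lemma \ref{mle4} to the martingale $M_n$ with exponent $l/2$, giving $\E|M_n|^{l/2}\le C\,\E(\sum Y_k^2)^{l/4}$, and use the pointwise bound $|Y_k|\le |X_k|^2+\E_{k-1}|X_k|^2$ to split this further. The resulting predictable piece folds back into the $V_n^{l/2}$ term already handled, and the martingale piece has strictly smaller exponent, so repeated application (finitely many times, since $l$ is fixed) drives the exponent below $2$, at which point subadditivity of $x\mapsto x^{l/4}$ for $l\le 4$ produces a bound of the form $\sum_k\E|X_k|^l$.

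The main obstacle is controlling the bookkeeping of constants across the iteration: each iterate introduces an extra Burkholder constant and an extra $\max_k|X_k|$-type factor via $|Y_k|\le 2\max_j|X_j|^2$, and one must check that after finitely many rounds all surviving terms are dominated by either $\E V_n^{l/2}$ or $\sum_k\E|X_k|^l$ with a constant depending only on $l$. A cleaner alternative that avoids the induction altogether is to invoke the good-$\lambda$ inequality between $(\sum|X_k|^2)^{1/2}$ and $\max_k|X_k|$ adapted to martingales and conclude $\E(\sum|X_k|^2)^{l/2}\le C_l(\E V_n^{l/2}+\E\max_k|X_k|^l)$ directly, after which $\E\max_k|X_k|^l\le\sum_k\E|X_k|^l$ is immediate; I would use this route if the induction constants proved hard to track.
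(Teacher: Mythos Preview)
The paper does not prove this lemma at all: it is listed under ``Existing Lemmas'' and simply cited as a result of Burkholder~(1973), so there is no proof in the paper to compare against. Your proposal is therefore not a comparison target but an independent attempt to reprove a classical inequality.

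As for the content of your sketch, the strategy is standard and essentially correct: reduce to the BDG inequality (Lemma~\ref{mle4}), decompose the quadratic variation into its predictable compensator $V_n$ plus the compensated martingale $M_n$, and then control $\E|M_n|^{l/2}$ either by iteration or by a good-$\lambda$ argument. One small point to make precise in the splitting step: the inequality $(a+b)^{l/2}\le 2^{l/2-1}(a^{l/2}+b^{l/2})$ needs nonnegative arguments, and $M_n$ may be negative; you should first use $\sum|X_k|^2=V_n+M_n\le V_n+|M_n|$ (which holds since $M_n\le|M_n|$) before applying convexity. The iterative bookkeeping you flag is indeed the only genuine nuisance, and your fallback via the good-$\lambda$ route (yielding $\E(\sum|X_k|^2)^{l/2}\le C_l(\E V_n^{l/2}+\E\max_k|X_k|^l)$ directly) is the cleaner path and is how the inequality is usually proved in the literature.
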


\begin{lemma}[(3.3.41) of Horn and Johnson (1991) \cite{Horn1991Topics}]\label{mle5}
For $n\times n$ Hermitian $\ba=(a_{jk})$ with eigenvalues $\lambda_1,\cdots,\lambda_n$, and convex function $f(\cdot)$, we have
\begin{align*}
\sum_{j=1}^nf(a_{jj})\le\sum_{j=1}^nf(\lambda_j).
\end{align*}
\end{lemma}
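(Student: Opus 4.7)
The statement is the classical Schur majorization inequality comparing the diagonal entries of a Hermitian matrix with its eigenvalues through an arbitrary convex function. The plan is to reduce the problem to Jensen's inequality by exploiting the fact that the matrix transforming eigenvalues to diagonal entries is doubly stochastic.

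First I would invoke the spectral theorem: since $\ba$ is Hermitian, there exists a unitary matrix $\bf U=(u_{jk})$ and a diagonal matrix $\Lambda=\operatorname{diag}(\lambda_1,\dots,\lambda_n)$ with $\ba={\bf U}\Lambda {\bf U}^*$. Reading off the $(j,j)$ entry of this identity gives the representation
\begin{equation*}
a_{jj}=\sum_{k=1}^n |u_{jk}|^2\,\lambda_k.
\end{equation*}
Next I would observe that the matrix $\bd=(d_{jk})$ with $d_{jk}=|u_{jk}|^2$ is doubly stochastic, because the unitarity of $\bf U$ forces $\sum_k |u_{jk}|^2=1$ for every row $j$ and $\sum_j |u_{jk}|^2=1$ for every column $k$. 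In particular, each row $(d_{j1},\dots,d_{jn})$ is a probability vector.

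The heart of the argument is then a direct application of Jensen's inequality for the convex function $f$: for each fixed $j$,
\begin{equation*}
f(a_{jj})=f\!\left(\sum_{k=1}^n d_{jk}\lambda_k\right)\le \sum_{k=1}^n d_{jk}\,f(\lambda_k).
\end{equation*}
Summing over $j$ and interchanging the order of summation yields
\begin{equation*}
\sum_{j=1}^n f(a_{jj})\le \sum_{k=1}^n f(\lambda_k)\sum_{j=1}^n d_{jk}=\sum_{k=1}^n f(\lambda_k),
\end{equation*}
where the final equality uses the column-stochastic property of $\bd$. This is the desired inequality.

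There is essentially no obstacle here: the only nontrivial input is the spectral theorem for Hermitian matrices, and the decisive structural fact is that the transition matrix $(|u_{jk}|^2)$ inherits double stochasticity from unitarity. If one wanted to give a self-contained proof without citing Horn-Johnson, the mild care-point would be verifying that Jensen's inequality applies in the form used even when some $\lambda_k$ lie outside the relative interior of the domain of $f$; this is automatic for convex $f$ defined on all of $\mathbb{R}$, which is the setting implicitly used in the applications of the lemma in the present paper.
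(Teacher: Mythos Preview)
Your proof is correct and is exactly the standard argument for this classical inequality. The paper itself does not prove this lemma; it is listed under ``Existing Lemmas'' and merely cited from Horn and Johnson (1991), so there is no paper proof to compare against.
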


\begin{lemma}[Lemma 2.4 in \cite{bai2004clt}]\label{lep2}
Suppose for each $n$ $Y_{n1},Y_{n2},\cdots,Y_{nr_n}$ is a real martingale difference sequence with respect to the increasing $\sigma$-field $\{\mathcal{F}_{nj}\}$ having second moments. If as $n\to\infty$,
\begin{align*}
&(i)\quad\quad\quad\quad\qquad\qquad\sum_{j=1}^{r_n}\re(Y_{nj}^2|\mathcal{F}_{n,j-1})\xrightarrow{i.p.}\sigma^2,\qquad\qquad\qquad\qquad\qquad\qquad\\
&{\rm where}\ \sigma^2\ {\rm is\ a\ positive\ constant}, \ {\rm and\ for\ each}\ \varepsilon\ge0,\\
&(ii)\quad\quad\quad\quad\qquad\qquad\sum_{j=1}^{r_n}\re(Y_{nj}^2I(|Y_{nj}\ge\varepsilon|))\to0,\qquad\qquad\qquad\qquad\qquad\qquad\\
&{\rm then}\\
&\quad\quad\quad\quad\qquad\qquad\qquad\sum_{j=1}^{r_n}Y_{nj}\xrightarrow{D}N(0,\sigma^2).\qquad\qquad\qquad\qquad
\end{align*}
\end{lemma}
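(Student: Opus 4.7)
The plan is to prove the conclusion via characteristic functions: it suffices to show that for every real $t$, $\re\exp(itT_n)\to\exp(-t^2\sigma^2/2)$ with $T_n=\sum_{j=1}^{r_n}Y_{nj}$, since L\'evy's continuity theorem then delivers the stated weak convergence. The skeleton of the argument is a telescoping comparison between the \emph{martingale product} $\prod_{j=1}^{r_n}e^{itY_{nj}}$ and the \emph{predictable product} $\prod_{j=1}^{r_n}(1-\tfrac{t^2}{2}\sigma_{nj}^2)$, where $\sigma_{nj}^2:=\re(Y_{nj}^2\mid\mathcal{F}_{n,j-1})$. The former equals $e^{itT_n}$, whose expectation is the object of interest. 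The latter, a product of predictable bounded factors, converges (in mean) to $\exp(-\tfrac{t^2}{2}\sigma^2)$ as soon as $\sum_j\sigma_{nj}^2\xrightarrow{i.p.}\sigma^2$ and $\max_j\sigma_{nj}^2\xrightarrow{i.p.}0$, both of which will follow from (i) and (ii). The substance of the proof is to control the discrepancy between the two products in expectation.

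First I would perform a Lindeberg-type truncation. For fixed $\varepsilon>0$, replace each $Y_{nj}$ by $\widetilde Y_{nj}:=Y_{nj}I(|Y_{nj}|\le\varepsilon)-\re(Y_{nj}I(|Y_{nj}|\le\varepsilon)\mid\mathcal{F}_{n,j-1})$, which is again a martingale difference and is bounded by $2\varepsilon$. The martingale difference property yields the orthogonality $\re|\sum_j(Y_{nj}-\widetilde Y_{nj})|^2=\sum_j\re|Y_{nj}-\widetilde Y_{nj}|^2$, and condition (ii) together with the identity $Y_{nj}-\widetilde Y_{nj}=Y_{nj}I(|Y_{nj}|>\varepsilon)-\re(Y_{nj}I(|Y_{nj}|>\varepsilon)\mid\mathcal{F}_{n,j-1})$ bounds this by $4\sum_j\re[Y_{nj}^2 I(|Y_{nj}|\ge\varepsilon)]\to 0$, so $T_n$ and $\widetilde T_n=\sum_j\widetilde Y_{nj}$ are asymptotically $L^2$-equivalent. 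An analogous calculation shows that the conditional variances of the truncated sequence still sum in probability to $\sigma^2$, and moreover $\max_j\re(\widetilde Y_{nj}^2\mid\mathcal{F}_{n,j-1})\le 4\varepsilon^2$. We may therefore assume from now on that $|Y_{nj}|\le 2\varepsilon$ uniformly.

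With uniformly small increments in hand, expand the telescope
$$\prod_{j=1}^{r_n}e^{itY_{nj}}-\prod_{j=1}^{r_n}\bigl(1-\tfrac{t^2}{2}\sigma_{nj}^2\bigr)=\sum_{k=1}^{r_n}\Bigl(\prod_{j<k}e^{itY_{nj}}\Bigr)\Bigl(e^{itY_{nk}}-1+\tfrac{t^2}{2}\sigma_{nk}^2\Bigr)\Bigl(\prod_{j>k}\bigl(1-\tfrac{t^2}{2}\sigma_{nj}^2\bigr)\Bigr),$$
and take expectations. The leftmost factor has modulus one, the rightmost has modulus at most one once $\max_j\sigma_{nj}^2\le 1$, and, conditioning on $\mathcal{F}_{n,k-1}$ and invoking $\re(Y_{nk}\mid\mathcal{F}_{n,k-1})=0$, a third-order Taylor expansion of $e^{itY_{nk}}$ gives
$$\bigl|\re\bigl(e^{itY_{nk}}-1+\tfrac{t^2}{2}\sigma_{nk}^2\;\big|\;\mathcal{F}_{n,k-1}\bigr)\bigr|\le \tfrac{|t|^3}{6}\re(|Y_{nk}|^3\mid\mathcal{F}_{n,k-1})\le \tfrac{|t|^3\varepsilon}{3}\sigma_{nk}^2.$$
Summing over $k$ and using condition (i), the expected telescope is bounded by $C|t|^3\varepsilon\,\sigma^2+o(1)$, which vanishes upon letting $n\to\infty$ and then $\varepsilon\downarrow 0$. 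Combined with the explicit convergence of the predictable product (from $\log(1-x)=-x+O(x^2)$ together with $\max_j\sigma_{nj}^2\to 0$ and $\sum_j\sigma_{nj}^2\to\sigma^2$, followed by bounded convergence), this completes the argument. The main obstacle is precisely the uniform smallness $\max_j\sigma_{nj}^2\to 0$ and the uniform boundedness $|Y_{nj}|\le 2\varepsilon$ that drive the Taylor estimate; these cannot be extracted from (i) alone and are the reason the Lindeberg condition (ii) and the careful $\varepsilon$-to-$0$ passage are indispensable.
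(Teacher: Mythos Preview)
The paper does not prove this lemma; it is quoted verbatim from \cite{bai2004clt} and listed among the ``Existing Lemmas'' in the appendix, so there is no in-paper argument to compare against. Your characteristic-function strategy (truncate, compare $\prod_k e^{itY_{nk}}$ with the predictable product $\prod_k(1-\tfrac{t^2}{2}\sigma_{nk}^2)$, then pass to the limit) is indeed the classical route to the martingale CLT.

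There is, however, a genuine gap in the telescoping step. In your identity
\[
\prod_{k}e^{itY_{nk}}-\prod_{k}\Bigl(1-\tfrac{t^2}{2}\sigma_{nk}^2\Bigr)
=\sum_{k}\Bigl(\prod_{j<k}e^{itY_{nj}}\Bigr)\Bigl(e^{itY_{nk}}-1+\tfrac{t^2}{2}\sigma_{nk}^2\Bigr)\Bigl(\prod_{j>k}\bigl(1-\tfrac{t^2}{2}\sigma_{nj}^2\bigr)\Bigr),
\]
the rightmost factor $\prod_{j>k}(1-\tfrac{t^2}{2}\sigma_{nj}^2)$ involves the \emph{future} conditional variances $\sigma_{nj}^2=\re(Y_{nj}^2\mid\mathcal{F}_{n,j-1})$ for $j>k$, and these are not $\mathcal{F}_{n,k-1}$-measurable. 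Consequently you cannot condition on $\mathcal{F}_{n,k-1}$ and simultaneously (i) pull out the left factor, (ii) reduce the middle factor to its conditional mean $\re(e^{itY_{nk}}-1+\tfrac{t^2}{2}\sigma_{nk}^2\mid\mathcal{F}_{n,k-1})$, and (iii) bound the right factor by~$1$. If instead you first pass to absolute values so that the outer factors are bounded by~$1$, you lose the cancellation $\re(Y_{nk}\mid\mathcal{F}_{n,k-1})=0$ that makes the Taylor remainder third-order. Either way the displayed bound $\sum_k|\re[\text{term}_k]|\le C|t|^3\varepsilon\,\sigma^2+o(1)$ is not justified as written.

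The standard repair (see, e.g., Billingsley's \emph{Probability and Measure}, Theorem~35.12, or Hall--Heyde, Theorem~3.2) introduces a stopping time $\tau_n=\min\{k:\sum_{j\le k}\sigma_{nj}^2>\sigma^2+1\}$ so that the stopped conditional-variance sum is almost surely bounded, and then works with the multiplicative martingale $M_k=\prod_{j\le k}e^{itY_{nj}}/\phi_{nj}$, where $\phi_{nj}=\re(e^{itY_{nj}}\mid\mathcal{F}_{n,j-1})$, rather than a raw telescope. After truncation the $\phi_{nj}$ are bounded away from~$0$, $M_{r_n}$ is a bounded martingale with mean~$1$, and $\prod_j\phi_{nj}\to e^{-t^2\sigma^2/2}$ in probability; Slutsky-type arguments finish. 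Your truncation and limit-of-the-predictable-product paragraphs are fine and carry over unchanged.
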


\begin{lemma}[Lemma B.26 in \cite{bai2004clt}]\label{lep1}
Let ${\bf A}=(a_{jk})$ be an $n\times n$ nonrandom matrix and $\bx=(x_1,\cdots,x_n)'$ be a random vector of independent entries. Assume that $\re x_j=0$, $\re|x_j|^2=1$ and $\re|x_j|^l\le \nu_l$. Then for $p\ge1$,
\begin{align*}
&\re\left|\bx^*{\bf A}\bx-\rtr{\bf A}\right|^p
\le C_p\left[\left(\nu_4\rtr{\bf A}{\bf A}^*\right)^{p/2}+\nu_{2p}\rtr\left({\bf A}{\bf A}^*\right)^{p/2}\right]
\end{align*}
where $C_p$ is a constant depending on $p$ only.
\end{lemma}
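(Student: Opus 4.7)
\medskip

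\noindent\textbf{Proof proposal for Lemma \ref{lep1}.}

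The plan is to reduce the quadratic form to a single martingale difference sum and then apply the Burkholder inequality of Lemma \ref{mle6} exactly once; the resulting two pieces (the conditional-variance piece and the individual-moment piece) will be shown to match the two terms on the right-hand side. First I would expand
\begin{align*}
\bx^*\ba\bx-\rtr\ba=\sum_{j=1}^n a_{jj}\bigl(|x_j|^2-1\bigr)+\sum_{j\neq k}a_{jk}\bar x_jx_k
\end{align*}
and, letting $\mathcal{F}_j=\sigma(x_1,\dots,x_j)$, collect the contributions that become measurable at stage $j$:
\begin{align*}
U_j\triangleq a_{jj}(|x_j|^2-1)+\bar x_j\sum_{k<j}a_{jk}x_k+x_j\sum_{k<j}a_{kj}\bar x_k.
\end{align*}
Using only $\E x_j=0$ and $\E|x_j|^2=1$ one checks that $\E[U_j\mid\mathcal{F}_{j-1}]=0$, so $\{U_j,\mathcal{F}_j\}$ is a martingale difference sequence whose sum equals $\bx^*\ba\bx-\rtr\ba$.

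Next I would invoke Lemma \ref{mle6}:
\begin{align*}
\E\bigl|\bx^*\ba\bx-\rtr\ba\bigr|^p\le C_p\Bigl[\E\Bigl(\sum_{j=1}^n\E_{j-1}|U_j|^2\Bigr)^{p/2}+\sum_{j=1}^n\E|U_j|^p\Bigr].
\end{align*}
For the conditional-variance sum I would expand $\E_{j-1}|U_j|^2$, use independence of $x_j$ from $\mathcal{F}_{j-1}$ together with $\E||x_j|^2-1|^2\le\nu_4$ and $\E|x_j|^2=1$, and control the cross terms by Cauchy--Schwarz, arriving at
\begin{align*}
\E_{j-1}|U_j|^2\le C\nu_4\Bigl[|a_{jj}|^2+\sum_{k<j}(|a_{jk}|^2+|a_{kj}|^2)|x_k|^2\Bigr].
\end{align*}
Summing in $j$ and using $\sum_k|x_k|^2\cdot(\text{column sums of }|\ba|^2)\le$ the row/column totals of $\ba\ba^*$, the expected sum is $\le C\nu_4\rtr(\ba\ba^*)$. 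For $p\ge 2$ Jensen/Minkowski then gives the $(\nu_4\rtr\ba\ba^*)^{p/2}$ bound; the case $1\le p<2$ follows from the $p=2$ bound and Lyapunov.

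For the individual-moment sum I would condition on $\mathcal{F}_{j-1}$, split $U_j$ into its three pieces, and apply Lemma \ref{mle4}/\ref{mle6} once more to the inner $x_k$--sum. This yields
\begin{align*}
\E|U_j|^p\le C_p\Bigl[\nu_{2p}|a_{jj}|^p+\nu_p\nu_p\Bigl(\sum_{k<j}|a_{jk}|^2\Bigr)^{p/2}+\nu_p\nu_p\Bigl(\sum_{k<j}|a_{kj}|^2\Bigr)^{p/2}+\nu_p\nu_p\sum_{k<j}(|a_{jk}|^p+|a_{kj}|^p)\Bigr],
\end{align*}
and since $\nu_p^2\le\nu_{2p}$, after summation in $j$ every contribution is dominated by $C_p\nu_{2p}\bigl[\sum_j(\ba\ba^*)_{jj}^{p/2}+\sum_{j,k}|a_{jk}|^p\bigr]$. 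The crucial step here is to recognise $\sum_j(\ba\ba^*)_{jj}^{p/2}\le\rtr(\ba\ba^*)^{p/2}$: this follows from Lemma \ref{mle5} applied to the Hermitian positive-semidefinite matrix $\ba\ba^*$ with the convex function $t\mapsto t^{p/2}$ (for $p\ge 2$; for $p<2$ one uses $|a_{jj}|\le(\ba\ba^*)_{jj}^{1/2}$ and a direct comparison). The term $\sum_{j,k}|a_{jk}|^p$ is handled similarly, being bounded by $\rtr(\ba\ba^*)^{p/2}$ for $p\ge 2$ via the same convex-function argument on the singular values.

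The main obstacle will be the conditional-variance piece: unlike the individual moments, $\sum_j\E_{j-1}|U_j|^2$ is a \emph{random} quadratic form in the $|x_k|^2$, so bringing the $p/2$-th power inside requires either a further recursive application of the same lemma (which is fine for $p\ge 4$) or, for $2\le p<4$, a direct Minkowski bound using $\E|x_k|^2=1$. Once that bookkeeping is settled, combining the two pieces produces exactly $C_p\bigl[(\nu_4\rtr\ba\ba^*)^{p/2}+\nu_{2p}\rtr(\ba\ba^*)^{p/2}\bigr]$, as claimed.
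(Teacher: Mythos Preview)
The paper does not supply a proof of this lemma: it is listed under ``Existing Lemmas'' and simply quoted as Lemma~B.26 of \cite{bai2004clt}. So there is no in-paper argument to compare against; your proposal is effectively a reconstruction of the original Bai--Silverstein proof, and the overall architecture (martingale-difference decomposition $U_j$, one application of Burkholder's inequality Lemma~\ref{mle6}, then bounding the conditional-variance piece and the individual-moment piece separately, with Lemma~\ref{mle5} used to pass from diagonal sums to $\rtr(\ba\ba^*)^{p/2}$) is exactly the standard route.

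One point deserves tightening. The displayed bound
\[
\E_{j-1}|U_j|^2\le C\nu_4\Bigl[|a_{jj}|^2+\sum_{k<j}(|a_{jk}|^2+|a_{kj}|^2)|x_k|^2\Bigr]
\]
is not literally true: the pieces $\bigl|\sum_{k<j}a_{jk}x_k\bigr|^2$ and $\bigl|\sum_{k<j}a_{kj}\bar x_k\bigr|^2$ contain off-diagonal cross terms $x_k\bar x_\ell$, so $\sum_j\E_{j-1}|U_j|^2$ is a genuine quadratic form $\bx^*\bb\bx$ (with $\bb$ built from the strictly lower/upper triangular parts of $\ba$), not merely a linear functional of the $|x_k|^2$. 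You already flag this as ``the main obstacle'' and suggest a recursive application of the lemma, which is the right fix: one bounds $\E\bigl(\bx^*\bb\bx\bigr)^{p/2}$ by the induction hypothesis at exponent $p/2$, using $\rtr(\bb\bb^*)\le C\bigl(\rtr\ba\ba^*\bigr)^2$ and $\rtr\bb\le C\,\rtr\ba\ba^*$. With that correction in place your sketch goes through and matches the original proof; just be sure to set the argument up as an induction on (say) dyadic ranges of $p$ rather than appealing to the lemma for all $p$ at once.
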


\begin{lemma}[Inequality (4.8) in \cite{bai2004clt}]\label{lep6}
Let ${\bf M}$ be $N\times N$ nonrandom matrix, we find for $j\in\left\{1,2,\cdots,n\right\}$
\begin{align*}
\re\left|\rtr\bd_j^{-1}{\bf M}-\re\rtr\bd_j^{-1}{\bf M}\right|^2\le C\|{\bf M}\|^2
\end{align*}
\end{lemma}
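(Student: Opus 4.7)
\textbf{Proof plan for Lemma \ref{lep6}.} My strategy is a martingale difference decomposition along the sample columns, followed by a rank-one perturbation reduction that isolates the $\bq_k$-dependence, and a final quadratic-form fluctuation estimate via Lemma \ref{lep1}. Let $\mathcal{F}_k=\sigma(\bq_1,\ldots,\bq_k)$ and write $\re_k[\cdot]=\re[\cdot\mid\mathcal{F}_k]$, with $\re_0=\re$. Since $\bd_j$ (and hence $\bd_j^{-1}$) is independent of $\bq_j$, one has $(\re_j-\re_{j-1})\rtr\bd_j^{-1}\mb M=0$, so by orthogonality of martingale differences
\[
\re\bigl|\rtr\bd_j^{-1}\mb M-\re\rtr\bd_j^{-1}\mb M\bigr|^2=\sum_{k\neq j}\re\bigl|(\re_k-\re_{k-1})\rtr\bd_j^{-1}\mb M\bigr|^2.
\]

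For each $k\neq j$ I invoke the rank-one update $\bd_j^{-1}=\bd_{jk}^{-1}-\tfrac{s_k}{n}\beta_{jk}\bd_{jk}^{-1}\bq_k\bq_k^*\bd_{jk}^{-1}$ to obtain
\[
\rtr\bd_j^{-1}\mb M=\rtr\bd_{jk}^{-1}\mb M-\tfrac{s_k}{n}\beta_{jk}\bq_k^*\bd_{jk}^{-1}\mb M\bd_{jk}^{-1}\bq_k.
\]
Since $\bd_{jk}^{-1}$, $\beta_{jk}$ and $\rtr(\bd_{jk}^{-1}\mb M\bd_{jk}^{-1}\bS_1)$ are all independent of $\bq_k$, the difference operator $\re_k-\re_{k-1}$ annihilates them, leaving
\[
(\re_k-\re_{k-1})\rtr\bd_j^{-1}\mb M=-\tfrac{s_k}{n}(\re_k-\re_{k-1})\Bigl[\beta_{jk}\bigl(\bq_k^*\bd_{jk}^{-1}\mb M\bd_{jk}^{-1}\bq_k-\rtr(\bd_{jk}^{-1}\mb M\bd_{jk}^{-1}\bS_1)\bigr)\Bigr].
\]

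Applying Jensen (so that $\re|(\re_k-\re_{k-1})X|^2\le 4\re|X|^2$) together with Lemma \ref{lep1} conditionally on the $\sigma$-field generated by $\{\bq_i:i\neq k\}$ gives, with $\mb A_k\triangleq\bd_{jk}^{-1}\mb M\bd_{jk}^{-1}$,
\[
\re\bigl|(\re_k-\re_{k-1})\rtr\bd_j^{-1}\mb M\bigr|^2\le \frac{Cs_k^2}{n^2}\,\re\bigl[|\beta_{jk}|^2\,\rtr(\mb A_k\bS_1\mb A_k^*\bS_1)\bigr].
\]
Using $\|\bd_{jk}^{-1}\|\le v_0^{-1}$ on $\mathcal{C}_n$, the moment bound $\re|\beta_{jk}|^2\le C$ (cf.~(\ref{cal11}) and (\ref{cal12})), and $\|\bS_1\|\le\tau$, the Frobenius estimate $\rtr(\mb A_k\bS_1\mb A_k^*\bS_1)\le\|\bS_1\|^2\rtr(\mb A_k\mb A_k^*)\le Cp\|\mb M\|^2\le Cn\|\mb M\|^2$ yields an individual bound of order $n^{-1}\|\mb M\|^2$; summing over the $n-1$ values of $k$ produces the desired $C\|\mb M\|^2$.

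The only delicate point is controlling $\re|\beta_{jk}|^2$: because the spectrum of $\bt_{2n}$ need not be non-negative, one cannot use the elementary $|z|\cdot|\beta_{jk}|\le v_0^{-1}$ trick verbatim, and I would instead combine the imaginary-part identity $\Im(n^{-1}s_j\bq_j^*\bd_{jk}^{-1}(z)\bq_j)$ with the splitting $\beta_{jk}=b_{jk}+O_{L^2}(n^{-1/2})$ (via the expansion in (\ref{cal3}) and the a priori bounds (\ref{bal1})), reducing matters to the deterministic quantity $b_{jk}(z)$, which is uniformly bounded on $\mathcal{C}_n$. This is the main technical hurdle; everything else is bookkeeping.
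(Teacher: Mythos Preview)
Your overall plan---martingale decomposition along the sample columns, rank-one resolvent identity, then a quadratic-form bound via Lemma~\ref{lep1}---is precisely the route taken in \cite{bai2004clt}, and the present paper does not give its own proof of this lemma at all: it simply cites inequality (4.8) there. So at the level of strategy you are on target.

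There is, however, a genuine slip in the execution. In the Sherman--Morrison step
\[
\bd_j^{-1}=\bd_{jk}^{-1}-\frac{s_k}{n}\cdot\frac{1}{1+n^{-1}s_k\,\bq_k^*\bd_{jk}^{-1}\bq_k}\,\bd_{jk}^{-1}\bq_k\bq_k^*\bd_{jk}^{-1},
\]
the scalar that appears is $\beta_{kj}=(1+n^{-1}s_k\bq_k^*\bd_{jk}^{-1}\bq_k)^{-1}$ (in the paper's convention the \emph{first} index labels the removed column), and this quantity manifestly depends on $\bq_k$. Your assertion that ``$\bd_{jk}^{-1}$, $\beta_{jk}$ and $\rtr(\bd_{jk}^{-1}\mb M\bd_{jk}^{-1}\bS_1)$ are all independent of $\bq_k$'' is therefore false for the coefficient that actually arises, and so the displayed identity
\[
(\re_k-\re_{k-1})\rtr\bd_j^{-1}\mb M=-\tfrac{s_k}{n}(\re_k-\re_{k-1})\Bigl[\beta_{kj}\bigl(\bq_k^*\mb A_k\bq_k-\rtr(\mb A_k\bS_1)\bigr)\Bigr]
\]
is not justified: the term $(\re_k-\re_{k-1})[\beta_{kj}\,\rtr(\mb A_k\bS_1)]$ does not vanish. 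In particular you cannot apply Lemma~\ref{lep1} conditionally on $\{\bq_i:i\neq k\}$ with $\beta_{kj}$ treated as a constant. Your closing paragraph---where you write $\Im(n^{-1}s_j\bq_j^*\bd_{jk}^{-1}(z)\bq_j)$---confirms the index confusion: that imaginary part belongs to the paper's $\beta_{jk}$, which is \emph{not} the Sherman--Morrison factor for removing $\bq_k$.

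The standard repair (and what \cite{bai2004clt} effectively do) is to substitute the $\bq_k$-free surrogate $\widetilde\beta_{kj}=(1+n^{-1}s_k\rtr\bd_{jk}^{-1}\bS_1)^{-1}$, for which $(\re_k-\re_{k-1})[\widetilde\beta_{kj}\rtr(\mb A_k\bS_1)]=0$ genuinely, and then control the remainder via $\beta_{kj}-\widetilde\beta_{kj}=-n^{-1}s_k\beta_{kj}\widetilde\beta_{kj}\varepsilon_k$ together with a second centering of $\bq_k^*\mb A_k\bq_k$ around $\rtr(\mb A_k\bS_1)$. With the uniform bound $|\widetilde\beta_{kj}|\le C$ (this is the analogue of (\ref{cal11})) and the moment bounds $\re|\beta_{kj}|^l\le C_l$ from (\ref{bal1}), the remainder contributes $O(n^{-1}\|\mb M\|^2)$ per summand and the argument closes as you intended.
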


%\bibliographystyle{plainnat}
%\bibliography{article}
\end{document}